\setlist{
    itemsep=1ex,
    listparindent=\parindent,
    parsep=0em,
    topsep=2ex,
}
\newcommand\linkcolor{black}
\newcommand\pref[1]{\textcolor{\linkcolor}{(\ref{#1})}}
\crefname{fact}{Fact}{Facts}
\crefname{inequality}{Inequality}{Inequalities}
\crefname{lemma}{Lemma}{Lemmas}
\theoremstyle{plain}
\newtheorem{theorem}{Theorem}[section]
\newtheorem*{theorem*}{Theorem}
\newtheorem{corollary}{Corollary}[theorem]
\newtheorem{lemma}[theorem]{Lemma}
\newtheorem*{lemma*}{Lemma}
\newtheorem{proposition}[theorem]{Proposition}
\theoremstyle{definition}
\newtheorem{definition}[theorem]{Definition}
\newtheorem{conjecture}[theorem]{Conjecture}
\newtheorem{question}[theorem]{Question}
\newtheorem*{remark*}{Remark}
\newcommand\C{{\mathbb C}}
\newcommand\E{{\mathbb E}}
\newcommand\N{{\mathbb N}}
\newcommand\bbP{{\mathbb P}}
\newcommand\R{{\mathbb R}}
\newcommand\bsone{\boldsymbol{1}}
\newcommand\cB{{\mathcal B}}
\newcommand\cC{{\mathcal C}}
\newcommand\cD{{\mathcal D}}
\newcommand\cE{{\mathcal E}}
\newcommand\cF{{\mathcal F}}
\newcommand\cG{{\mathcal G}}
\newcommand\cH{{\mathcal H}}
\newcommand\cI{{\mathcal I}}
\newcommand\cM{{\mathcal M}}
\newcommand\cN{{\mathcal N}}
\newcommand\cP{{\mathcal P}}
\newcommand\cQ{{\mathcal Q}}
\newcommand\cR{{\mathcal R}}
\newcommand\cS{{\mathcal S}}
\newcommand\cT{{\mathcal T}}
\newcommand\cU{{\mathcal U}}
\newcommand\cUbar{\overline{\cU}}
\newcommand\cUhat{\widehat{\cU}}
\newcommand\cV{{\mathcal V}}
\newcommand\cW{{\mathcal W}}
\newcommand\cX{{\mathcal X}}
\newcommand\cY{{\mathcal Y}}
\newcommand\cZ{{\mathcal Z}}
\newcommand\sB{{\mathscr B}}
\newcommand\sD{{\mathscr D}}
\newcommand\sK{{\mathscr K}}
\newcommand\sP{{\mathscr P}}
\newcommand\erdos{Erd\H{o}s}
\newcommand\renyi{R\'enyi}
\newcommand\erdosrenyi{\erdos--\renyi}
\newcommand{\vast}{\bBigg@{3}}
\newcommand\pig[1]{\scalerel*[5pt]{\big#1}{%
	\ensurestackMath{\addstackgap[1.1pt]{\big#1}}}}
\newcommand\blambda{\boldsymbol{\lambda}}
\newcommand\bsd{\boldsymbol{d}}
\newcommand\bsm{\boldsymbol{m}}
\newcommand\close{{\ensuremath\mathsf{close}}}
\newcommand\far{{\ensuremath\mathsf{far}}}
\newcommand\Covnm{\mathrm{Cov}_{n,m}}
\newcommand\cb{{\ensuremath\mathsf{cb}}}
\newcommand\bt{{\ensuremath\mathsf{btw}}}
\newcommand\sparse{{\ensuremath\mathsf{sp}}}
\newcommand\low{{\ensuremath\mathsf{lo}}}
\newcommand\high{{\ensuremath\mathsf{hi}}}
\newcommand\High{{\ensuremath\mathsf{High}}}
\DeclareMathOperator{\argmax}{argmax}
\DeclareMathOperator{\image}{image}
\DeclareMathOperator{\ind}{ind}
\DeclareMathOperator{\homind}{hom_{\ind}}
\DeclareMathOperator{\Cov}{Cov}
\DeclareMathOperator{\extremal}{ex}
\DeclareMathOperator{\rand}{rand}
\DeclarePairedDelimiter\ceil{\lceil}{\rceil}
\DeclarePairedDelimiter\floor{\lfloor}{\rfloor}
\DeclarePairedDelimiter\norm{\lVert}{\rVert}
\renewcommand\geq{\geqslant}
\renewcommand\leq{\leqslant}
\title[The typical structure of dense claw-free graphs]{The typical structure of dense claw-free graphs}
\author{Will Perkins}
\author{Sam van der Poel}
\address{Georgia Institute of Technology}
\email{wperkins3@gatech.edu, samvanderpoel@gatech.edu}
\date{\today}
\begin{document}

\begin{abstract}
We analyze the asymptotic number and typical structure of claw-free graphs at constant edge densities. The first of our main results is a formula for the asymptotics of the logarithm of the number of claw-free graphs of edge density $\gamma \in (0,1)$. We show that the problem exhibits a second-order phase transition at edge density $\gamma^\ast=\frac{5-\sqrt{5}}{4}$.  The asymptotic formula arises by solving a variational problem over graphons. For  $\gamma\geq\gamma^\ast$ there is a unique optimal graphon, while for  $\gamma<\gamma^\ast$ there is an infinite set of optimal graphons. By analyzing more detailed structure, we prove that for  $\gamma<\gamma^\ast$, there is in fact a unique  graphon $W$ such that almost all claw-free graphs at edge density $\gamma$ are close in cut metric to $W$.

We also analyze the probability of claw-freeness in the \erdosrenyi{} random graph $G(n,p)$ for constant $p$, obtaining a formula for the large-deviation rate function for claw-freeness.  In this case, the problem exhibits a first-order phase transition at $p^\ast=\frac{3-\sqrt{5}}{2}$, separating distinct structural regimes.  At the critical point $p^\ast$, the corresponding graphon variational problem has infinitely many solutions, and we again pinpoint a unique optimal graphon that describes the typical structure of $G(n,p^\ast)$ conditioned on being claw-free.
\end{abstract}

\maketitle

\section{Introduction}
\label{sec:intro}
A graph is \emph{claw-free} if none of its induced subgraphs is isomorphic to $K_{1,3}$. In this paper we study the \emph{evolution of structure} of dense claw-free graphs: how does the  structure of a typical claw-free graph change as its edge density varies? First, we derive formulas for the asymptotics of the logarithm of the number of claw-free graphs at constant edge densities and the asymptotics of the logarithm of the probability that the \erdosrenyi{} random graph $G(n,p)$ is claw-free for constant $p$. The formulas arise from variational problems over the space of graphons, and we show that the solutions to these problems exhibit phase transitions. The optimal graphons provide a rough structural description of claw-free graphs, and after a more detailed analysis, we make finer structural statements and derive more precise asymptotic formulas.

A classical example of asymptotic enumeration and typical structure of constrained graphs is the case of triangle-free graphs. \citeauthor{mantel1907vraagstuk} proved in \citeyear{mantel1907vraagstuk} that the complete balanced bipartite graph is the extremal triangle-free graph, i.e. the one with the most edges \cite{mantel1907vraagstuk}. In fact, the logarithm of the number of subgraphs of the extremal graph is asymptotic to the logarithm of the number of triangle-free graphs. \citeauthor{erdos1976enumeration} proved the much stronger result that almost all\footnote{All except a fraction that tends to zero as $n\to\infty$.} triangle-free graphs are bipartite \cite{erdos1976enumeration}; and so by asymptotically enumerating bipartite graphs, one obtains an asymptotic formula for the  number of triangle-free graphs. Many works have also studied how the structure of a typical triangle-free graph depends on the edge density \cite{promel1996asymptotic,luczak2000triangle,osthus2003densities,balogh2016typical}. Related questions ask for the probability $G(n,p)$ is triangle-free and the typical structure of $G(n,p)$ conditioned on being triangle-free, the answers to which are known for constant $p$ and some regimes of $p=o(1)$ \cite{janson1990exponential,promel1996asymptotic,luczak2000triangle}.

To derive typical structure results, it has proven useful to study graphons, the limit objects of large dense graphs \cite{lovasz2006limits,borgs2008convergent}. \citeauthor{chatterjee2011large} proved an important result in this regard: a large deviation principle for the \erdosrenyi{} random graph in which the rate function is given by a variational problem over graphons \cite{chatterjee2011large}. The optima and optimizers of this variational problem are informative about the number and structure of graphs in a broad range of properties defined by subgraph density constraints. An important example, and one that predates the study of graphons, is the case of $H$-free graphs where $H$ is a constant-sized subgraph and $\chi(H)=r+1\geq3$. A typical $H$-free graph of edge density $\gamma\in\big(0,\frac{r-1}{r}\big)$ is within $o(n^2)$ edit distance of a nearly balanced $r$-partite graph with edge density approximately $\frac{r}{r-1}\gamma$ between color classes (see \cite{bottcher2012perfect} for a proof based on the earlier work of \citeauthor{erdos1986asymptotic} \cite{erdos1986asymptotic}), and note that the range $\gamma>\frac{r-1}{r}$ is excluded by the \erdos--Stone theorem. In the language of graphons, the variational problem over $H$-free graphons with edge density $\gamma$ has a unique\footnote{Uniqueness here is meant up to equivalence of graphons, which is discussed in more detail in \Cref{subsec:mainresults}.} optimizer with an $r\times r$ block structure, density $\frac{r}{r-1}\gamma$ in off-diagonal blocks, and density 0 in diagonal blocks. Notably, the solution to this variational problem maintains this structure and the optimum varies smoothly with $\gamma$; that is, there is no phase transition.

More recently, the lower- and upper-tail large deviation problems have received much attention: for $\delta>0$, what is the probability that the number of copies of a fixed graph $H$ in $G(n,p)$ is at most $1-\delta$ or at least $1+\delta$ times the expected number? For constant $p$, the logarithmic asymptotics of the lower- and upper-tail probabilities, i.e. the large deviation rate functions, are given by the variational problem of \citeauthor{chatterjee2011large}. For the upper-tail variational problem, \citeauthor{lubetzky2015replica} identified the `replica symmetric' regime for cliques and more generally $d$-regular $H$; that is, the set of parameters for which the optimizing graphon is constant~\cite{lubetzky2015replica} (see also work on the variational problem for $p =o(1)$~\cite{lubetzky2017variational}). For the lower-tail variational problem for triangles, \citeauthor{zhao2017lower} showed that for small $\delta$ the optimal graphon is constant while for larger $\delta$ it is not~\cite{zhao2017lower}.  In both cases, these solutions show that a phase transition (in the sense of a non-analyticity of the rate function)  occurs as $\delta$ (or $p$) varies. A related line of research studies the variational problem with fixed subgraph densities; for example, \citeauthor{radin2015singularities} proved that edge- and triangle-constrained graphs exhibit a phase transition and fully characterized the optimizers for certain positive edge and triangle density pairs \cite{radin2013phase,radin2015singularities}. Some works have shown that in the sparse case $p=o(1)$, the lower- and upper-tail large deviation problems still reduce to certain variational problems \cite{chatterjee2016nonlinear,eldan2018gaussian,cook2020large,augeri2020nonlinear,harel2022upper,kozma2023lower}; in this case the upper-tail problem is completely solved for cliques~\cite{lubetzky2017variational}, while the lower-tail problem is still widely open~\cite{zhao2017lower}.

In this paper, we fully characterize the entropy density of claw-free graphs for constant edge densities and the large deviation rate function for claw-freeness in $G(n,p)$ for constant $p$.  In contrast to the problem of $H$-freeness described above, phase transitions occur in both problems; in the case of entropy density the phase transition is second-order, while in the case of the rate function, the phase transition is first-order.  We obtain these results by solving variational problems  over claw-free graphons.  In both cases, for certain parameter regimes there are infinitely many distinct optimizers, yet in all cases can still identify the unique typical structure (in the sense of cut metric) of claw-free graphs using a finer analysis of the counts and probabilities. These  more fine-grained results advance the research direction set in motion by \citeauthor{promel1991excluding} of proving strong typical structure results for induced-$H$-free graphs \cite{promel1991excluding}.

\subsection{Main Results}\label{subsec:mainresults}
Let $\cC(n)$ be the set of claw-free graphs on $n$ vertices, and let $\cC(n,m)$ be the set of claw-free graphs on $n$ vertices and $m$ edges. The binary entropy $H:[0,1]\to\R$ is the function $H(x)=-x\log_2(x)-(1-x)\log_2(1-x)$ with the convention $0\log_20=0$. Define $r^\ast:[0,1]\to\R$ by
\begin{equation}
r^\ast(\gamma)=\begin{cases}\frac{5+\sqrt{5}}{10}H\pig(\frac{3-\sqrt{5}}{2}\pig)\gamma&\gamma\in\big[0,\frac{5-\sqrt{5}}{4}\big)\\[5pt]\frac{1}{2}H(2\gamma-1)&\gamma\in\big[\frac{5-\sqrt{5}}{4},1\big]\end{cases}\,.\label{eqn:ratefunction}
\end{equation}
If $\cP(n)$ is a set of graphs on $n$ vertices, defined for all $n\in\N$, then the \emph{entropy density} of $\cP(n)$ is defined to be the number $\lim_{n\to\infty}\binom{n}{2}^{-1}\log_2|\cP(n)|$, provided the limit exists\footnote{The limit exists in particular for hereditary properties \cite{alekseev1993entropy,bollobas1997hereditary} and a wide range of properties defined by subgraph density constraints \cite{chatterjee2011large}.}. Our first main result states that $r^\ast(\gamma)$ is the entropy density of $\cC(n,m)$.

\begin{theorem}\label{thm:CnmRateFunction}
Let $\gamma\in(0,1)$ be a fixed constant and let $n,m\in\N$. If $m\sim\gamma\binom{n}{2}$ then
$$\lim_{n\to\infty}\frac{1}{\binom{n}{2}}\log_2|\cC(n,m)|=r^\ast(\gamma)\,.$$
\end{theorem}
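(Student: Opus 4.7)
My plan is to identify $r^\ast(\gamma)$ as the value of a graphon variational problem and solve that problem explicitly. Set
\[
r(\gamma) := \sup\left\{ \int_{[0,1]^2} H(W(x,y))\,dx\,dy : W \text{ a claw-free graphon with } \int W = \gamma\right\},
\]
where a graphon is \emph{claw-free} if the induced $K_{1,3}$ density $t(K_{1,3}^{\mathrm{ind}},W)$ vanishes. A counting-to-variational reduction (a Szemer\'edi-regularity argument combined with entropy counting, adapted to the induced-subgraph-free setting as in prior enumerative work on $H$-free graphs and in the Chatterjee--Varadhan framework) should yield $\binom{n}{2}^{-1}\log_2|\cC(n,m)| \to r(\gamma)$, reducing \Cref{thm:CnmRateFunction} to the identity $r(\gamma) = r^\ast(\gamma)$.

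For the lower bound $r(\gamma)\geq r^\ast(\gamma)$, I would exhibit an explicit one-parameter family. Let $W_{k,c}$ denote the step graphon obtained by partitioning $[0,1]$ into $k$ equal intervals (with $k$ even), grouping them into $k/2$ consecutive pairs, setting $W_{k,c}=1$ on each diagonal block, $W_{k,c}=c$ on the two off-diagonal blocks within each pair, and $W_{k,c}=0$ elsewhere. A direct computation shows $W_{k,c}$ has edge density $(1+c)/k$, entropy $H(c)/k$, and is claw-free because each vertex's neighborhood is a union of two cliques and hence has independence number at most $2$. Imposing $(1+c)/k = \gamma$ and maximizing $H(c)/k = \gamma H(c)/(1+c)$ over $(k,c)$ with $k\geq 2$ gives: for $\gamma\leq\gamma^\ast:=(5-\sqrt{5})/4$, the optimum is $c=c^\ast:=(3-\sqrt{5})/2$ with $k=(1+c^\ast)/\gamma$, yielding entropy $\gamma H(c^\ast)/(1+c^\ast) = \tfrac{5+\sqrt{5}}{10}H(c^\ast)\gamma$; for $\gamma\geq\gamma^\ast$, the constraint $k\geq 2$ forces $c=2\gamma-1$, yielding entropy $\tfrac12 H(2\gamma-1)$. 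These match the two branches of $r^\ast$, and $c^\ast$ arises as the unique root in $(0,1)$ of $(1-c)^2 = c$, the first-order condition for maximizing $H(c)/(1+c)$. Integrality of $k$ is handled by standard approximation as $n\to\infty$.

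For the upper bound, the key structural lemma I would prove is: if $W$ is claw-free, then for a.e.\ $x\in[0,1]$, the ``forced-edge graph'' $G_x$ on $N(x):=\{y : W(x,y)>0\}$, defined by $\{y,z\}\in G_x \iff W(y,z)=1$, has independence number at most $2$; otherwise a positive-measure triple in $N(x)^3$ would contribute positive mass to the induced claw density. By Mantel's theorem the complement of $G_x$ in $N(x)\times N(x)$ has edge-density at most $\tfrac12$, so at least half the pairs in $N(x)$ are forced edges (contributing zero entropy). Combining this per-vertex rigidity with the concavity of $H$ and integrating in $x$, I would argue that every optimizer is equivalent (up to measure-preserving rearrangement) to a $\{0,c,1\}$-valued step graphon whose forced-edge support is a disjoint union of cliques paired into bipartite blocks of constant value $c$---precisely a $W_{k,c}$-type graphon, up to clique-size reweighting. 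Maximizing entropy over this family then recovers $r^\ast(\gamma)$.

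The main obstacle is the final reduction to the step-graphon family. Because $t(K_{1,3}^{\mathrm{ind}},W)=0$ is a degenerate constraint at the boundary of the feasible set, a direct Lagrange-multiplier analysis forces $W$ to be constant in the gradient of $H$ and hence is inconsistent with claw-freeness, while naive averaging of $W$ on a partition can destroy claw-freeness and so cannot be used to pass to step graphons. Showing that every optimizer is effectively $\{0,c,1\}$-valued with the matching-of-cliques block structure will require a Ramsey-type dichotomy on the ``certain-edge'' support of $W$ together with a careful entropy--edge-density trade-off. I expect the regime $\gamma<\gamma^\ast$ to be especially delicate: as the abstract emphasizes, the variational problem there admits infinitely many non-equivalent optimizers (indexed by the number of pairs $m\geq 1$ and the clique-size vector $(p_1,\ldots,p_m)$ with $\sum p_i = 1/2$ and $\sum p_i^2 = \gamma/(2(1+c^\ast))$), so the upper-bound argument must accommodate the entire family uniformly rather than appeal to a unique extremizer.
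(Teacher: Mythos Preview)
Your reduction to the variational problem and the lower-bound construction are correct and essentially match the paper's; the integrality of $k$ is a non-issue, since a single co-bipartite block of continuous width $\lambda$ (rather than $k$ equal blocks) already attains entropy $\gamma H(c)/(1+c)$ at edge density $\gamma$, so no approximation is needed.

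The genuine gap is in the upper bound. Your per-vertex Mantel lemma is correct and is in fact equivalent to claw-freeness of $W$: for a.e.\ $x$, the graphon $\mathbf{1}\{W<1\}$ restricted to $N(x)^2$ has zero triangle density, hence measure at most $|N(x)|^2/2$. But integrating this in $x$ only yields a three-variable inequality of the form $\int_{O_W}|N(y)\cap N(z)|\,dy\,dz\geq\tfrac12\int|N(x)|^2\,dx$, which does not control $|R_W|$ against $|O_W|$, and I do not see how to extract either the key global inequality $|R_W|\leq|O_W|$ or the block structure from it. The paper closes this gap by going back to the discrete world: it approximates $W$ by \emph{types} of claw-free graphs (regularity with sparse/dense cluster labels), where an induced embedding lemma yields the strictly stronger constraint that no triangle in the resulting three-edge-colouring is $(\text{red},\text{red},\text{red})$ or $(\text{red},\text{red},\text{green})$. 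This says the red neighbourhood of every cluster is a \emph{blue clique}, not merely that the non-blue graph is triangle-free; one then gets $d_r(u)\leq d_b(v)+1$ for every red neighbour $v$ of $u$, and a short injection argument proves $e_r\leq e_b+\lfloor k/2\rfloor$, which in the limit gives $|R_W|\leq|O_W|$. That stronger rigidity depends on the internal dense/sparse labels on clusters, which have no graphon analogue (they live on the diagonal), so it is not clear your direct graphon route can be completed without essentially reproducing this discrete detour. Once $|R_W|\leq|O_W|$ is in hand, a two-variable constrained maximization immediately gives $H(W)\leq r^\ast(\gamma)$, so you would not actually need the full block-structure classification you anticipate for this theorem.
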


The graph of $r^\ast(\gamma)$ is depicted in the first plot of \Cref{fig:rate-functions}. Note $r^\ast(3/4)=1/2$ is the entropy density of $\cC(n)$ (known by prior work of \citeauthor{promel1992excluding} \cite{promel1992excluding}); a theorem of \citeauthor{balogh2011excluding} \cite{balogh2011excluding} implies almost all claw-free graphs are co-bipartite, so the value $r^\ast(3/4)=1/2$ reflects the fact that almost all co-bipartite graphs have edge density approximately $3/4$. 

The entropy density $r^\ast(\gamma)$ has a continuous first derivative and discontinuous second derivative, so there is a second-order phase transition at $\gamma^\ast=\frac{5-\sqrt{5}}{4}$ (see \cite{jaeger1998ehrenfest} for background on the classification of phase transitions).

Next, define $r_\ast(p):[0,1]\to\R$ by
\begin{equation}
r_\ast(p)=\begin{cases}-\log_2(1-p)&p\in\big[0,\frac{3-\sqrt{5}}{2}\big)\\[5pt]-\frac{1}{2}\log_2p&p\in\big[\frac{3-\sqrt{5}}{2},1\big]\end{cases}\,.\label{eqn:ratefunctionGNP}
\end{equation}
Our second main result states that $r_\ast(p)$ is the large deviation rate function for the event that $G(n,p)$ is claw-free.

\begin{theorem}\label{thm:LDPforGNP}
If $p\in(0,1)$ is a fixed constant, $n\in\N$, and $G\sim G(n,p)$ then 
$$\lim_{n\to\infty}\frac{1}{\binom{n}{2}}\log_2\bbP\{G\in\cC(n)\}=-r_\ast(p)\,.$$
\end{theorem}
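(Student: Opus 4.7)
The plan is to deduce Theorem~\ref{thm:LDPforGNP} from Theorem~\ref{thm:CnmRateFunction} by a tilting-and-optimization argument (essentially a discrete Laplace principle / Legendre duality). Writing
\[
\bbP\{G(n,p)\in\cC(n)\} \;=\; \sum_{m=0}^{\binom{n}{2}} |\cC(n,m)|\,p^{m}(1-p)^{\binom{n}{2}-m},
\]
sandwiching this sum between its maximum term and $\binom{n}{2}+1$ times that maximum, and invoking Theorem~\ref{thm:CnmRateFunction} uniformly in $\gamma=m/\binom{n}{2}$ (a point I return to below), the problem reduces to verifying
\[
\sup_{\gamma\in[0,1]}f_p(\gamma) \;=\; -r_\ast(p), \qquad f_p(\gamma):=r^{\ast}(\gamma)+\gamma\log_2 p+(1-\gamma)\log_2(1-p).
\]

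To maximize $f_p$, I would treat the two pieces of $r^\ast$ separately. On $[0,\gamma^\ast]$, $f_p$ is affine with slope $c+\log_2\big(p/(1-p)\big)$, where $c:=\tfrac{5+\sqrt{5}}{10}H\big(\tfrac{3-\sqrt{5}}{2}\big)$. A short calculation exploiting the identity $p^\ast=(1-p^\ast)^2$ (which characterizes $p^\ast=\tfrac{3-\sqrt{5}}{2}$) yields $c=-\log_2(1-p^\ast)$, so this slope is negative for $p<p^\ast$, zero at $p=p^\ast$, and positive for $p>p^\ast$. On $[\gamma^\ast,1]$, $f_p(\gamma)=\tfrac12 H(2\gamma-1)+\gamma\log_2 p+(1-\gamma)\log_2(1-p)$ is strictly concave; solving $f_p'(\gamma)=0$ yields the unique solution $\gamma=(p+1)/2$, which lies in $[\gamma^\ast,1]$ precisely when $p\geq p^\ast$, and at which the coefficient of $\log_2(1-p)$ cancels, leaving $f_p((p+1)/2)=\tfrac12\log_2 p$. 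Combining the two cases: for $p<p^\ast$ the supremum is $\log_2(1-p)$, attained at $\gamma=0$; for $p>p^\ast$ it is $\tfrac12\log_2 p$, attained at $\gamma=(p+1)/2$; and at $p=p^\ast$ both candidates coincide, $f_p$ is in fact constant on $[0,\gamma^\ast]$, and the resulting one-parameter family of optimizing edge densities is consistent with the infinite family of optimizing graphons announced in the abstract. In every case the supremum matches $-r_\ast(p)$ as defined in~\eqref{eqn:ratefunctionGNP}.

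The main obstacle I anticipate is equipping the Laplace reduction with enough uniformity. The upper bound requires $|\cC(n,m)|\leq 2^{(r^\ast(\gamma_m)+o(1))\binom{n}{2}}$ with the $o(1)$ uniform in $\gamma_m$ over compact subintervals of $(0,1)$; I expect this to fall out of the graphon compactness and upper semicontinuity already underlying the proof of Theorem~\ref{thm:CnmRateFunction}. The boundary cases are more elementary: when $p<p^\ast$ the lower bound $\bbP\{G\in\cC(n)\}\geq\bbP\{G=\emptyset\}=(1-p)^{\binom{n}{2}}$ supplies the matching rate without any uniformity at $\gamma=0$, while the contribution to the upper bound from $\gamma$ close to $0$ or to $1$ is disposed of via the trivial bound $|\cC(n,m)|\leq\binom{\binom{n}{2}}{m}$ combined with standard binomial tail estimates on $e(G(n,p))$.
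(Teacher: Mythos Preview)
Your approach is correct and is essentially the same as the paper's: both reduce the problem to maximizing
\[
f_p(\gamma)=r^\ast(\gamma)+\gamma\log_2\!\left(\frac{p}{1-p}\right)+\log_2(1-p)
\]
over $\gamma$, and the analysis of this maximization (the affine piece on $[0,\gamma^\ast]$, the concave piece on $[\gamma^\ast,1]$, the critical point $\gamma=(p+1)/2$, and the identity $p^\ast=(1-p^\ast)^2$ giving the threshold) is identical to the paper's computation following~\eqref{eqn:gnpldpsup}. The only difference is in how the reduction is packaged: the paper does not pass through Theorem~\ref{thm:CnmRateFunction} and a discrete Laplace argument, but instead invokes Proposition~\ref{lemma:LDPforGNPkl}, which directly identifies the rate with $\sup_{W}I_p(W)$ over claw-free graphons, and then decomposes this supremum over the edge density $\gamma$ using $\phi(\gamma)=r^\ast(\gamma)$. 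This graphon-level route has the advantage that the uniformity-in-$\gamma$ issue you correctly flag is absorbed into the proof of Proposition~\ref{lemma:LDPforGNPkl} (via compactness of the graphon space and upper semicontinuity of entropy), rather than having to be argued separately.
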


The graph of $r_\ast(p)$ is depicted in the second plot of \Cref{fig:rate-functions}. The case $p=1/2$ corresponds with counting claw-free graphs, so the rate function recovers the known entropy density of $\cC(n)$ as a special case. Since $r_\ast(p)$ has a discontinuous first derivative, there is a first-order phase transition at $p^\ast=\frac{3-\sqrt{5}}{2}$. As we will show in the structural results below,  the first-order phase transition is also reflected in a discontinuity at $p^\ast$ in the typical edge density of the conditioned \erdosrenyi{} random graph. We also note that the rate function $r_\ast(p)$ is non-monotone, which differs from what one observes for  monotone graph properties.

\begin{figure}
	\begin{center}
	\pgfplotsset{compat=1.17}
\pgfmathsetmacro{\a}{(5-sqrt(5))/4}
\pgfmathsetmacro{\b}{(3-sqrt(5))/2}
\pgfmathsetmacro{\c}{(5+sqrt(5))/10}
\pgfmathsetmacro{\Hb}{0.9594187} 
\newcommand{\scale}{1.0}

\makebox[\textwidth]{%
\begin{tikzpicture}[scale=\scale]
	\pgfplotsset{set layers}
	
	\begin{groupplot}[
		group style={
			group size=2 by 1,
			horizontal sep=2cm,
			vertical sep=0cm
		},
		width=7cm,
		height=5cm,
		grid=both,
		major grid style={solid,gray!30},
		every axis plot/.append style={thick,black},
		xticklabel style={/pgf/number format/fixed},
		yticklabel style={/pgf/number format/fixed}
	]

	\nextgroupplot[
		xmin=0, xmax=1.05,
		ymin=0, ymax=0.65,
		xlabel={\(\gamma\)},
		axis x line=bottom,
		axis y line=middle,
		xtick={0,0.2,0.4,0.6,0.8,1},
		ytick={0,0.2,0.4,0.6},
		label style={font=\small},
		tick label style={font=\small},
		line cap=round,
	]

	\addplot[smooth,domain=0:\a,samples=50] {\c * \Hb * x};
	\addplot[smooth,domain=\a:1,samples=100] {0.5 * (-(2*x-1)*log2(2*x-1) - (1-(2*x-1))*log2(1-(2*x-1)))};
	\node[right,fill=white] at (rel axis cs:0.025,0.925) {\small$r^\ast(\gamma)$};

	\nextgroupplot[
		xmin=0, xmax=1.05,
		ymin=0, ymax=0.8125,
		xlabel={$p$},
		axis x line=bottom,
		axis y line=middle,
		xtick={0,0.2,0.4,0.6,0.8,1},
		ytick={0,0.25,0.5,0.75},
		xlabel near ticks,
		label style={font=\small},
		tick label style={font=\small},
		line cap=round,
	]

	\addplot[smooth,domain=0:\b,samples=50] {-log2(1-x)};
	\addplot[smooth,domain=\b:1,samples=100] {-0.5 * log2(x)};
	\node[right,fill=white] at (rel axis cs:0.025,0.925) {\small$r_\ast(p)$};

	\end{groupplot}
  
\end{tikzpicture}
}
	\end{center}
	\vspace{-2mm}
	\caption{The entropy density $r^\ast(\gamma)$ and rate function $r_\ast(p)$.}
	\label{fig:rate-functions}
\end{figure}
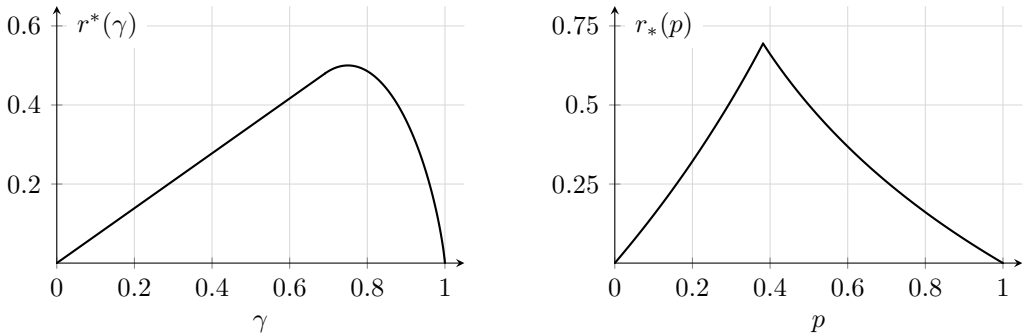

We briefly provide some intuition for the forms of the functions $r^\ast(\gamma)$ and $r_\ast(p)$. For  $\gamma\in\big[\frac{5-\sqrt{5}}{4},1\big]$, the quantity $r^\ast(\gamma)$ is entropy density of the set $\cB_c(n,m)$ of co-bipartite graphs on $n$ vertices and $m$ edges. For  $\gamma\in\big[0,\frac{5-\sqrt{5}}{4}\big)$, the quantity $r^\ast(\gamma)$ is the entropy density of the set of graphs $G$ on $n$ vertices and $m$ edges that are the disjoint union of a co-bipartite graph with parts of sizes approximately $\big(\frac{5+\sqrt{5}}{20}\gamma\big)^{1/2}n$ and an empty graph on the remaining vertices. The density of edges between the two cliques in such a graph $G$ is approximately $\frac{3-\sqrt{5}}{2}$. For  $p\in\big[\frac{3-\sqrt{5}}{2},1\big]$, the quantity $r_\ast(p)$ is the large deviation rate function for the event that $G(n,p)$ is co-bipartite, and for  $p\in\big[0,\frac{3-\sqrt{5}}{2}\big)$, $r_\ast(p)$ is the rate function for the event that $G(n,p)$ has no edges.

The functions $r^\ast(\gamma)$ and $r_\ast(p)$ are the solutions to  variational problems over graphons, which are measurable functions $W:[0,1]^2\to[0,1]$ satisfying $W(x,y)=W(y,x)$ for all $x,\,y$. The set of all graphons is denoted $\cW$. Graphons are representations of \emph{graph limits}, which are the limits of large graphs with respect to the \emph{cut metric} (these terms are introduced formally in \Cref{sec:prelim}; see~\cite{lovasz2012large} for more background). We say that two graphons are \emph{equivalent} if they represent the same graph limit\footnote{\citeauthor{borgs2010moments} proved \cite[Corollary~2.2]{borgs2010moments} that graphons $W_1$ and $W_2$ are equivalent if and only if there exist measure-preserving maps $\sigma_i:[0,1]\to[0,1]$ such that $W_1(\sigma_1(x),\sigma_1(y))=W_2(\sigma_2(x),\sigma_2(y))$ almost everywhere. See also \cite[Theorem~13.10]{lovasz2012large}.\label{ftnt:GraphonUniquness}}.

For  $\gamma\in(0,1)$, define the following variational problem over graphons $W\in\cW$ :
\[\arraycolsep=3pt
\begin{array}{lll}
\phi(\gamma) \hspace{0.5mm}:= & \sup & \displaystyle\int_{[0,1]^2}H(W(x,y)) \, dx \, dy \\[16pt]
&\text{s.t.} & \displaystyle\int_{[0,1]^2}W(x,y) \, dx \, dy=\gamma \,, \\[12pt]
&& \displaystyle\int_{[0,1]^4}\Bigg(\prod_{i=2}^4W(x_1,x_i)\Bigg)\Bigg(\prod_{2\leq i<j\leq4}(1-W(x_i,x_j))\Bigg) \, dx_1 \, dx_2 \, dx_3 \, dx_4  =0 \,.
\end{array}\]
The first constraint asserts that $W$ has edge density $\gamma$. The second constraint asserts that $W$ has zero claw-density; for the reader familiar with the $W$-random graph $G(n,W)$, defined formally later, this is equivalent to saying that $G(n,W)$ is claw-free with probability 1. Our next result states the solution to the variational problem $\phi(\gamma)$ and describes the number of graphons achieving the supremum.

\begin{theorem}\label{thm:FixedDensityVarProblem}
For all $\gamma\in(0,1)$ we have $\phi(\gamma)=r^\ast(\gamma)$. For all $\gamma\in\big[\frac{5-\sqrt{5}}{4},1\big)$, there is a unique graphon (up to equivalence) achieving $\phi(\gamma)$, and for all $\gamma\in\big(0,\frac{5-\sqrt{5}}{4}\big)$, there is an infinite number of distinct (nonequivalent) graphons achieving $\phi(\gamma)$.
\end{theorem}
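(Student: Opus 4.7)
My plan has three parts: explicit optimizers for the lower bound, a structural reduction yielding the upper bound, and an extremal analysis for uniqueness. Write $p^\ast = (3-\sqrt 5)/2$, $\gamma^\ast = (5-\sqrt 5)/4$, and $\varphi = (1+\sqrt 5)/2$; I will use throughout the algebraic identity $H(p^\ast)/(1+p^\ast) = \log_2 \varphi = \tfrac{5+\sqrt 5}{10} H(p^\ast)$, reflecting the fact that $p^\ast = 1/\varphi^2$. For the lower bound, when $\gamma \in [\gamma^\ast, 1)$ I take the balanced co-bipartite graphon $W_\gamma$ equal to $1$ on $[0,1/2]^2 \cup [1/2,1]^2$ and to $2\gamma-1$ on the two off-diagonal blocks; when $\gamma \in (0,\gamma^\ast)$ I set $s = \sqrt{2\gamma/(1+p^\ast)} \le 1$ and take $W_\gamma$ supported on $[0,s]^2$, equal to $1$ on $[0,s/2]^2 \cup [s/2,s]^2$, to $p^\ast$ on the off-diagonal subblocks, and to $0$ outside $[0,s]^2$. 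Both constructions are claw-free because any three would-be leaves of a claw must, by pigeonhole, include two in a common clique-block and hence be joined by $W \equiv 1$; both have edge density $\gamma$, and the identity above gives entropy exactly $r^\ast(\gamma)$.

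For the upper bound, let $W$ be any claw-free graphon with edge density $\gamma$. The zero-claw-density integral forces, for a.e.\ $x$, that the neighborhood $N(x) := \{y : W(x,y) > 0\}$ contain no triple $(y_1, y_2, y_3)$ with $W(y_i, y_j) < 1$ on all three pairs; equivalently, $1 - W$ restricted to $N(x) \times N(x)$ has zero triangle density and so by the classical characterization of triangle-free graphons is supported on a bipartite set. Hence a.e.\ $N(x)$ decomposes as $A_x \sqcup B_x$ with $W \equiv 1$ a.e.\ on $A_x^2 \cup B_x^2$. Patching this pointwise local structure into a single global structure yields, up to measure-preserving rearrangement, a decomposition $[0,1] = \bigsqcup_i (A_i \sqcup B_i) \sqcup C$ with $W \equiv 1$ on each $A_i^2 \cup B_i^2$, $W \equiv 0$ on $C \times [0,1]$ and on cross-terms between distinct components, and $W$ supported only on $A_i \times B_i$ within component $i$. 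Concavity of $H$ then justifies averaging $W$ to a constant $p_i$ on each $A_i \times B_i$ block (preserving edge density and claw-freeness) and a Jensen-type argument in the part-size ratio shows each component may be balanced, $|A_i| = |B_i| = s_i/2$. The problem reduces to maximizing $\tfrac12 \sum_i s_i^2 H(p_i)$ subject to $\tfrac12 \sum_i s_i^2 (1+p_i) = \gamma$ and $\sum_i s_i \le 1$; a Lagrange analysis shows that if $\sum_i s_i < 1$ then all $p_i = p^\ast$ and the value is $\gamma \log_2 \varphi$ (feasible iff $\gamma \le \gamma^\ast$), while if $\sum_i s_i = 1$ the optimum is attained by a single component ($n = 1$) with $p = 2\gamma - 1$ and value $\tfrac12 H(2\gamma-1)$. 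The envelope of these two regimes is exactly $r^\ast(\gamma)$.

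The main obstacle will be the patching step: going from the family of local neighborhood bipartitions $\{(A_x, B_x)\}_x$ to a single consistent global component decomposition of $[0,1]$. I would attack this by an exchange argument, showing that any inconsistency between two neighborhoods (say a vertex $z$ assigned to $A_x$ from the perspective of $x$ but to $B_{x'}$ from the perspective of $x'$) allows a positive-measure local modification of $W$ that strictly increases $\int H(W)$ while preserving both edge density and claw-freeness, contradicting optimality. For the uniqueness statement, in the regime $\gamma \ge \gamma^\ast$ the structural reduction already singles out the one-component boundary solution; a strict-concavity analysis of $2a(1-a) H(p(a))$ in the part-size $a$ at fixed edge density pins down $|A| = |B| = 1/2$ and then $p = 2\gamma - 1$, yielding uniqueness up to graphon equivalence. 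In the regime $\gamma < \gamma^\ast$, the total squared support $s_\gamma^2 = 2\gamma/(1+p^\ast)$ can be freely partitioned into pieces $s_i$ with $\sum_i s_i \le 1$, each with cross-density $p^\ast$; distinct partitions produce pairwise-nonequivalent graph limits (distinguished, for example, by the densities of connected subgraphs such as $C_4$), so the variational optimum is attained by an infinite family of nonequivalent graphons.
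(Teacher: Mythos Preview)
Your lower-bound constructions and the nonuniqueness argument for $\gamma<\gamma^\ast$ are correct and match the paper's family $\mathcal{V}_\gamma$. The upper-bound argument, however, has a genuine gap at the first structural step. You assert that because $1-W$ restricted to $N(x)\times N(x)$ has zero triangle density, ``by the classical characterization of triangle-free graphons'' the set $N(x)$ splits as $A_x\sqcup B_x$ with $W\equiv 1$ on $A_x^2\cup B_x^2$. But zero triangle density does not force bipartiteness, even for $\{0,1\}$-valued graphons: the $C_5$ blowup is the standard counterexample. Concretely, partition $[0,1]$ into five equal intervals $P_1,\dots,P_5$, set $W\equiv a\in(0,1)$ on cyclically adjacent pairs $P_i\times P_{i\pm 1}$ and $W\equiv 1$ elsewhere. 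This graphon is claw-free (for any center $x$ the ``$W<1$'' graph on $N(x)$ is the pentagon, which has no triangle), yet $N(x)=\bigcup_i P_i$ cannot be covered by two $W$-cliques, since maximal cliques in the complement of $C_5$ have size $2$. So the local bipartition fails. You invoke optimality only at the patching stage, but the error occurs earlier; and since the pentagon already takes values in $\{a,1\}$, even the concavity reduction to three-valued graphons does not rescue the argument without a further, substantive use of optimality at this point.

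The paper avoids neighborhood analysis entirely. After the concavity reduction to $W\in\{0,a,1\}$, it approximates $W$ by a sequence of types (colored regular partitions) of claw-free graphs, observes via an embedding lemma that the type has no triangle colored (random, random, random) or (random, random, sparse), and proves a self-contained edge-coloring lemma---an extension of Mantel's theorem---showing that under these forbidden patterns $e_{\mathrm{random}}\leq e_{\mathrm{dense}}+n/2$. Passing to the limit gives the single inequality $|\{0<W<1\}|\leq|\{W=1\}|$, after which the problem is exactly the two-variable optimization $\max\, yH((\gamma-x)/y)$ over $0<y\leq x$, $x+y\leq 1$, whose solution is $r^\ast(\gamma)$. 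The description of optimizers (unique for $\gamma\geq\gamma^\ast$, an infinite family for $\gamma<\gamma^\ast$) then follows from a stability version of the same edge-coloring lemma. Your Lagrangian endgame is essentially this two-variable problem, but you reach it through an unproven structure theorem rather than through the inequality $|R_W|\leq|O_W|$; that inequality, not the patching, is where the real content lies.
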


By a slight extension of the Chatterjee--Varadhan framework to fixed-density induced-$H$-free graphons, \Cref{lemma:FnmGraphonOptAsymps} proves $\phi(\gamma)$ is the entropy density of $\cC(n,m)$. Thus from \Cref{thm:FixedDensityVarProblem} and \Cref{lemma:FnmGraphonOptAsymps} we can deduce \Cref{thm:CnmRateFunction}.

\begin{figure}
	\begin{center}
	\pgfmathsetmacro{\density}{0.125}
\pgfmathsetmacro{\RHS}{2*\density*(1+sqrt(5)/5)}

\begin{tikzpicture}[baseline={(0,0)},scale=3]
	\tikzset{cobipartite/.code={
		\pgfkeys{/cobipartite/.cd,
			x/.initial=0,
			y/.initial=0,
			intensity/.initial=0
		}    
        \pgfkeys{/cobipartite/.cd, #1}
        \def\xpos{\pgfkeysvalueof{/cobipartite/x}}
        \def\ypos{\pgfkeysvalueof{/cobipartite/y}}
        \def\intensity{\pgfkeysvalueof{/cobipartite/intensity}}
        
        \begin{scope}[shift={(\xpos,\ypos)}, scale=1]
            \fill[black] (0,0) -- (0.5,0) -- (0.5,0.5) -- (0,0.5) -- cycle;
            \fill[black!\intensity] (0.5,0) -- (1,0) -- (1,0.5) -- (0.5,0.5) -- cycle;
            \fill[black] (0.5,0.5) -- (1,0.5) -- (1,1) -- (0.5,1) -- cycle;
            \fill[black!\intensity] (0,0.5) -- (0.5,0.5) -- (0.5,1) -- (0,1) -- cycle;
        \end{scope}
    }}
    
    \begin{scope}[shift={(-2.375,0)}]
    	\pgfmathsetmacro{\crossdensity}{(3-sqrt(5))/2}
		\pgfmathsetmacro{\rho}{100*\crossdensity}
		\draw[cobipartite={x=0, y=0, intensity=\rho}];
		\draw (0,0) rectangle (1,1);
	\end{scope}
	
	\begin{scope}[shift={(-1.125,0)}]
		\pgfmathsetmacro{\crossdensity}{3/5}
		\pgfmathsetmacro{\rho}{100*\crossdensity}
		\draw[cobipartite={x=0, y=0, intensity=\rho}];
		\draw (0,0) rectangle (1,1);
	\end{scope}
	
	\begin{scope}[shift={(0.125,0)}]
		\pgfmathsetmacro{\crossdensity}{2/3}
		\pgfmathsetmacro{\rho}{100*\crossdensity}
		\draw[cobipartite={x=0, y=0, intensity=\rho}];
		\draw (0,0) rectangle (1,1);
	\end{scope}
	
	\begin{scope}[shift={(1.375,0)}]
		\pgfmathsetmacro{\crossdensity}{6/7}
		\pgfmathsetmacro{\rho}{100*\crossdensity}
		\draw[cobipartite={x=0, y=0, intensity=\rho}];
		\draw (0,0) rectangle (1,1);
	\end{scope}
	
\end{tikzpicture}
	\end{center}
	\caption{Unique optimizers of the variational problem $\phi(\gamma)$ at edge densities $\gamma=\frac{5-\sqrt{5}}{4}$, $\frac{4}{5}$, $\frac{5}{6}$, and $\frac{13}{14}$. Each gray area represents a density in $\big[\frac{3-\sqrt{5}}{2},1\big]$ and black areas represent density 1.}
	\label{fig:graphons-super}
\end{figure}

\begin{figure}
	\begin{center}
	\pgfmathsetmacro{\density}{0.125}
\pgfmathsetmacro{\crossdensity}{(3-sqrt(5))/2}
\pgfmathsetmacro{\intensity}{100*\crossdensity}
\pgfmathsetmacro{\RHS}{2*\density*(1+sqrt(5)/5)}

\begin{tikzpicture}[baseline={(0,0)},scale=3]
	\tikzset{cobipartite/.code={
		\pgfkeys{/cobipartite/.cd,
			x/.initial=0,
			y/.initial=0,
			scale/.initial=1
		}    
        \pgfkeys{/cobipartite/.cd, #1}
        \def\xpos{\pgfkeysvalueof{/cobipartite/x}}
        \def\ypos{\pgfkeysvalueof{/cobipartite/y}}
        \def\scale{\pgfkeysvalueof{/cobipartite/scale}}
        
        \begin{scope}[shift={(\xpos,\ypos)}, scale=\scale]
            \fill[black] (0,0) -- (0.5,0) -- (0.5,0.5) -- (0,0.5) -- cycle;
            \fill[black!\intensity] (0.5,0) -- (1,0) -- (1,0.5) -- (0.5,0.5) -- cycle;
            \fill[black] (0.5,0.5) -- (1,0.5) -- (1,1) -- (0.5,1) -- cycle;
            \fill[black!\intensity] (0,0.5) -- (0.5,0.5) -- (0.5,1) -- (0,1) -- cycle;
        \end{scope}
    }}
    
	\begin{scope}[shift={(-2.375,0)}]
		\draw[cobipartite={x=0, y=0, scale=sqrt(\RHS)}];
		\draw (0,0) rectangle (1,1);
	\end{scope}
	
	\begin{scope}[shift={(-1.125,0)}]
		\pgfmathsetmacro{\lambdaOne}{sqrt(\RHS/2)}
		\pgfmathsetmacro{\lambdaTwo}{sqrt(\RHS-pow(\lambdaOne,2))}
		\draw[cobipartite={x=0, y=0, scale=\lambdaOne}];
		\draw[cobipartite={x=\lambdaOne, y=\lambdaOne, scale=\lambdaTwo}];
		\draw (0,0) rectangle (1,1);
	\end{scope}
	
	\begin{scope}[shift={(0.125,0)}]
		\pgfmathsetmacro{\lambdaOne}{0.55}
		\pgfmathsetmacro{\lambdaTwo}{0.2}
		\pgfmathsetmacro{\lambdaThree}{sqrt(\RHS-pow(\lambdaOne,2)-pow(\lambdaTwo,2))}
		\draw[cobipartite={x=0, y=0, scale=\lambdaOne}];
		\draw[cobipartite={x=\lambdaOne, y=\lambdaOne, scale=\lambdaTwo}];
		\draw[cobipartite={x=\lambdaOne+\lambdaTwo, y=\lambdaOne+\lambdaTwo, scale=\lambdaThree}];
		\draw (0,0) rectangle (1,1);
	\end{scope}
	
	\begin{scope}[shift={(1.375,0)}]
		\pgfmathsetmacro{\lambdaOne}{0.45}
		\pgfmathsetmacro{\lambdaTwo}{0.38}
		\pgfmathsetmacro{\lambdaThree}{0.117}
		\pgfmathsetmacro{\lambdaFour}{sqrt(\RHS-pow(\lambdaOne,2)-pow(\lambdaTwo,2)-pow(\lambdaThree,2))}
		\draw[cobipartite={x=0, y=0, scale=\lambdaOne}];
		\draw[cobipartite={x=\lambdaOne, y=\lambdaOne, scale=\lambdaTwo}];
		\draw[cobipartite={x=\lambdaOne+\lambdaTwo, y=\lambdaOne+\lambdaTwo, scale=\lambdaThree}];
		\draw[cobipartite={x=\lambdaOne+\lambdaTwo+\lambdaThree, y=\lambdaOne+\lambdaTwo+\lambdaThree, scale=\lambdaFour}];
		\draw (0,0) rectangle (1,1);
	\end{scope}

\end{tikzpicture}
	\end{center}
	\caption{Examples of optimizers of the variational problem $\phi(\gamma)$ at edge density $\gamma=\frac{1}{4}$. The white, gray, and black areas represent densities $0$, $\frac{3-\sqrt{5}}{2}$, and $1$, respectively.}
	\label{fig:graphons-sub}
\end{figure}

In the range $\gamma\in\big[\frac{5-\sqrt{5}}{4},1\big)$, the unique graphon achieving $\phi(\gamma)$ has the structure of a co-bipartite graph; see \Cref{fig:graphons-super} for a depiction and \Cref{eqn:deffstarlambdagraphon} for a formal definition. In the range $\gamma\in\big(0,\frac{5-\sqrt{5}}{4}\big)$, the optimal graphons have the structure of a graph that is the vertex-disjoint union of several co-bipartite graphs and a graph with $o(n^2)$ edges; see \Cref{fig:graphons-sub} for a depiction and \Cref{eqn:flambdaGraphonDef} for a formal definition of these graphons.

For all $p\in(0,1)$ define the relative entropy
\begin{equation}
I_p(x):=x\log_2\frac{p}{x}+(1-x)\log_2\frac{1-p}{1-x}=H(x)+x\log_2\left(\frac{p}{1-p}\right)+\log_2(1-p)\,,\label{eqn:defIpx}
\end{equation}
and define the variational problem
\[\arraycolsep=3pt
\begin{array}{lll}
\psi(p) \hspace{0.5mm}:= & \sup & \displaystyle\int_{[0,1]^2}I_p(W(x,y))\,dx\,dy \\[16pt]
& \text{s.t.} & \displaystyle\int_{[0,1]^4}\Bigg(\prod_{i=2}^4W(x_1,x_i)\Bigg)\Bigg(\prod_{2\leq i<j\leq4}(1-W(x_i,x_j))\Bigg)\,dx_1\,dx_2\,dx_3\,dx_4=0 \,.
\end{array}\]
Our next result states that $r_\ast$ is the solution to the variational problem $\psi$, and describes the number of graphons achieving the supremum.

\begin{theorem}\label{thm:GNPVarProblem}
For all $p\in(0,1)$ we have $\psi(p)=-r_\ast(p)$. For all $p\in\big(0,\frac{3-\sqrt{5}}{2}\big)$, there is a unique graphon (up to equivalence) achieving $\psi(p)$, for $p=\frac{3-\sqrt{5}}{2}$, there is an infinite number of distinct (nonequivalent) graphons achieving $\psi(p)$, and for all $p\in\big(\frac{3-\sqrt{5}}{2},1\big)$, there is a unique graphon (up to equivalence) achieving $\psi(p)$.
\end{theorem}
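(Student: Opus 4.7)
The plan is to reduce $\psi(p)$ to a one-variable optimization by invoking \Cref{thm:FixedDensityVarProblem}. Writing $L_p := \log_2\frac{p}{1-p}$ and using the identity $I_p(x) = H(x) + xL_p + \log_2(1-p)$ from \Cref{eqn:defIpx}, the objective splits as $\int I_p(W) = \int H(W) + L_p\int W + \log_2(1-p)$. Since the claw-free constraint is the same as in the definition of $\phi(\gamma)$, I would first fix the edge density $\gamma(W) = \int W$ and then optimize over $\gamma$, obtaining
$$\psi(p) = \log_2(1-p) + \sup_{\gamma \in [0,1]}\bigl[\phi(\gamma) + L_p\gamma\bigr].$$
Substituting $\phi = r^\ast$ from \Cref{thm:FixedDensityVarProblem} reduces the task to maximizing the explicit one-variable function $F_p(\gamma) := r^\ast(\gamma) + L_p\gamma$ on $[0,1]$.

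Next I would analyze $F_p$ on the two pieces of $r^\ast$. On $[0,\gamma^\ast]$ with $\gamma^\ast = (5-\sqrt{5})/4$, $F_p$ is linear with slope $a + L_p$, where $a := \frac{5+\sqrt{5}}{10}H(p^\ast)$; on $[\gamma^\ast,1]$, $F_p$ is strictly concave with a unique interior stationary point $\gamma = (p+1)/2$ obtained by setting the derivative to zero. A short algebraic calculation using $(1-p^\ast)^2 = p^\ast$ shows that $H(p^\ast) = -\gamma^\ast\log_2 p^\ast$ and hence $a = -\log_2(1-p^\ast) = -L_{p^\ast}$. The linear slope is therefore $L_p - L_{p^\ast}$, and this also matches the right derivative of the concave piece at $\gamma^\ast$, confirming that $F_p$ is $C^1$ across $\gamma^\ast$. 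Three cases follow: for $p < p^\ast$, the slope is negative throughout, so the unique maximum is at $\gamma = 0$ and $\psi(p) = \log_2(1-p)$; for $p > p^\ast$, the slope is positive on $[0,\gamma^\ast]$ while the concave piece is maximized at $\gamma = (p+1)/2 \in (\gamma^\ast,1)$, and substitution shows the $\log_2(1-p)$ terms cancel to give $\psi(p) = \tfrac{1}{2}\log_2 p$; for $p = p^\ast$, $F_p$ is constant on $[0,\gamma^\ast]$ and strictly decreasing on $[\gamma^\ast,1]$, so the supremum is attained exactly on $[0,\gamma^\ast]$. In all three cases $\psi(p) = -r_\ast(p)$.

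To characterize the optimizers I would combine the set of optimal $\gamma$ values with \Cref{thm:FixedDensityVarProblem}. For $p < p^\ast$, the only optimal $\gamma$ is $0$, whose only claw-free realization is $W \equiv 0$. For $p > p^\ast$, the unique optimal $\gamma = (p+1)/2$ lies in $(\gamma^\ast,1)$, and at this density \Cref{thm:FixedDensityVarProblem} yields a unique (up to equivalence) optimal graphon, namely the co-bipartite graphon with two diagonal blocks of density $1$ and off-diagonal density $p$. For $p = p^\ast$, every $\gamma \in [0,\gamma^\ast]$ is optimal and \Cref{thm:FixedDensityVarProblem} supplies infinitely many nonequivalent optimizers for each $\gamma \in (0,\gamma^\ast)$, so the total set of optimizers is infinite. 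The only step that demands real care is verifying the identity $a = -L_{p^\ast}$, which is the algebraic mechanism placing the phase transition precisely at $p^\ast$; once this and \Cref{thm:FixedDensityVarProblem} are in hand, the rest of the argument is one-variable calculus.
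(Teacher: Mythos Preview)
Your proposal is correct and follows essentially the same route as the paper: both split $I_p(W)=H(W)+L_p\int W+\log_2(1-p)$, reduce $\psi(p)$ to the one-variable problem $\sup_\gamma\{r^\ast(\gamma)+L_p\gamma\}+\log_2(1-p)$ via \Cref{thm:FixedDensityVarProblem}, and then read off the three regimes and the corresponding optimizer sets from the piecewise form of $r^\ast$. Your explicit verification of the identity $a=-L_{p^\ast}$ is a slightly more detailed version of the paper's observation that the linear slope vanishes exactly at $p=p^\ast$, but the content is the same.
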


In the range $p\in\big(\frac{3-\sqrt{5}}{2},1\big)$, the unique graphon achieving $\psi(p)$ resembles the adjacency of a large co-bipartite graph with edge density $(1+p)/2$ (see \Cref{eqn:CpStarFormalDef} for a formal definition). In the range $p\in\big(0,\frac{3-\sqrt{5}}{2}\big)$, the zero graphon is the unique optimal graphon. Finally, for $p=\frac{3-\sqrt{5}}{2}$, the optimal graphons have various edge densities and resemble those in \Cref{fig:graphons-sub}.

\citeauthor{chatterjee2011large} showed that in addition to providing the rate function for $G(n,p)$, optimal graphons also give a rough structural description of the conditional \erdosrenyi{} random graph. By a slight modification of their proof, we show that if $m\sim\gamma\binom{n}{2}$ then almost every $G\in\cC(n,m)$ is close in cut metric of an optimal graphon. Since the optimal graphons achieving $\phi(\gamma)$ have a simple description---they have the structure of a disjoint union of co-bipartite graphs and a sparse graph---it is desirable to estimate the number of claw-free graphs that have precisely this simple structure. For example, does a statement analogous to the \erdos--Kleitman--Rothschild theorem hold for claw-free graphs with edge density greater than $\frac{5-\sqrt{5}}{4}$? The next theorem answers this affirmatively and describes structure in the subcritical regime.

Let $\cB_c(n,m)$ denote the set of co-bipartite graphs on $n$ vertices and $m$ edges. For functions $f,g:\N\to\R$, the notation $f(n)\sim g(n)$ means $\lim_{n\to\infty}f(n)/g(n)=1$.

\begin{theorem}\label{thm:almostallCB}
Let $\gamma\in(0,1)$ be a fixed constant, let $n,m\in\N$, and assume $m\sim\gamma\binom{n}{2}$. Then the following hold.
\begin{enumerate}[(\emph{\roman*})]
	\item\label{thm:almostallCBsupercritical} If $\gamma\in\big(\frac{5-\sqrt{5}}{4},1\big)$ then almost all claw-free graphs on $n$ vertices and $m$ edges are co-bipartite, that is,
	$$|\cC(n,m)|\sim|\cB_c(n,m)|\sim\left(\frac{r+1}{2}+\sum_{k=1}^\infty(2\gamma-1)^{k^2+rk}\right)\binom{n}{\floor{n/2}}\binom{\floor{n^2/4}}{m-\binom{\floor{n/2}}{2}-\binom{\ceil{n/2}}{2}}\,,$$
	where $r=n\bmod{2}$.
	\item\label{thm:almostallCBcritical} If $\gamma=\frac{5-\sqrt{5}}{4}$ and $m=\floor{\gamma \binom{n}{2}}$ then almost every $G\in\cC(n,m)$ is the vertex-disjoint union of a co-bipartite graph and a graph with at most $\log n$ vertices.
	\item\label{thm:almostallCBsubcritical} If $\gamma\in\big(0,\frac{5-\sqrt{5}}{4}\big)$ then almost every $G\in\cC(n,m)$ is the vertex-disjoint union of two graphs $G_1\cup G_2$ where $G_1$ is a co-bipartite graph with parts of sizes
	$$\Bigg(\sqrt{\frac{5+\smash[t]{\sqrt{5}}\rule{0pt}{9.25pt}}{20}\gamma}+o(1)\Bigg)n$$
	and $G_2$ has $\big(1-\big(\frac{5+\sqrt{5}}{5}\gamma\big)^{1/2}+o(1)\big)n$ vertices and $\Omega(n)\leq e(G_2)\leq o(n^2)$ edges.
\end{enumerate}
\end{theorem}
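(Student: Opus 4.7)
The plan is to handle the three regimes separately, in each case using that (by the Chatterjee--Varadhan-style structural analysis already set up in the paper) almost every $G\in\cC(n,m)$ is $o(1)$-close in cut metric to one of the optimal graphons from \Cref{thm:FixedDensityVarProblem} and, crucially in the subcritical regime, to the unique typical-structure graphon identified earlier in the paper. In each regime I then upgrade cut-metric closeness to an exact structural decomposition via a stability argument driven by claw-freeness, and finally count. The main obstacle throughout will be this claw-free promotion step: turning a quantitative $o(1)$ cut-metric bound into an exact clique or co-bipartite decomposition with enough slack that the $o(1)$ error does not swamp the explicit counting.

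For part (i), let $W^\ast$ denote the unique optimizer at density $\gamma$: the symmetric $2\times 2$ block graphon with diagonal blocks of value $1$ and off-diagonal blocks of value $2\gamma-1$. Cut-metric proximity to $W^\ast$ delivers a near-balanced partition $V(G)=A\cup B$ with $G[A]$ and $G[B]$ of density $1-o(1)$ and cross-density $\approx 2\gamma-1$. To promote approximate cliques to actual cliques I would use the claw-free constraint: if $uv$ is a non-edge in $A$, then since $A$ is almost complete there are many common neighbors $w\in A$ of $u$ and $v$, and since the cross-density is bounded away from $1$ many vertices $b\in B$ are adjacent to $w$ but non-adjacent to both $u$ and $v$; the induced subgraph $G[\{u,v,w,b\}]$ is then a claw centered at $w$, a contradiction. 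Hence $A$ and $B$ are cliques and $G$ is co-bipartite. The exact asymptotic follows by summing $\binom{n}{a}\binom{a(n-a)}{m-\binom{a}{2}-\binom{n-a}{2}}$ over $0\leq a\leq n$, dividing by the overcounting factor $2$, and expanding the binomial ratio at $|A|=\lfloor n/2\rfloor+k$ relative to $|A|=\lfloor n/2\rfloor$ by a local central-limit-type computation; this produces a factor $(2\gamma-1)^{k^2+rk}(1+o(1))$ at each $k$, yielding the claimed constant $\tfrac{r+1}{2}+\sum_{k\geq 1}(2\gamma-1)^{k^2+rk}$.

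For part (ii), the unique optimizer at $\gamma=\gamma^\ast$ is the same co-bipartite-like graphon, so the argument of part (i) again partitions $V(G)$ into a co-bipartite core plus a small exceptional set $S$. The new difficulty is that at criticality the rate function has vanishing second derivative, so the entropy penalty for having $|S|=s$ exceptional vertices is not exponentially small in $s$. A careful entropy balance, bounding the at-most $\exp(O(s\log n))$ ways to place and connect the exceptional vertices against the variational loss incurred by deviating from $W^\ast$, should be summable in $s$ only up to $s\leq\log n$ with high probability, giving the sharp bound rather than something weaker.

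For part (iii), the unique typical-structure graphon (from the preceding section) consists of a single co-bipartite block of side $\lambda=\sqrt{\tfrac{5+\sqrt{5}}{20}\gamma}$ with internal cross-density $\tfrac{3-\sqrt{5}}{2}$, surrounded by zero. Cut-metric closeness gives $V(G)=V_1\cup V_2$ with $|V_1|=(\lambda+o(1))n$, $G[V_1]$ close to co-bipartite at the stated cross-density, and $e(G[V_2])=o(n^2)$. Applying the claw-free promotion of part (i) inside $V_1$ makes $G[V_1]$ exactly co-bipartite. The lower bound $e(G_2)=\Omega(n)$ follows because if $e(G_2)=o(n)$ then matching $m\sim\gamma\binom{n}{2}$ forces the cross-density of $G[V_1]$ above its entropy-optimal value $\tfrac{3-\sqrt{5}}{2}$ by an $\Omega(1)$ amount, which costs $\Omega(n^2)$ in entropy and contradicts proximity to the variational optimum.
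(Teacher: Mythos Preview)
Your promotion step in (i) does not work as a deterministic argument. Cut-metric closeness to $W^\ast$ bounds only the \emph{total} number of non-edges inside $A\cup B$ by $o(n^2)$; a single vertex can still miss $\Theta(n)$ neighbors in its part, so for a given non-edge $uv$ you cannot guarantee many common neighbors $w$, and even then an average cross-density gives no control over whether a specific $b\in B$ with $wb\in E$ and $ub,vb\notin E$ exists. Concretely: take a co-bipartite graph at density $\gamma$, isolate one vertex, and restore the edge count with extra cross edges---the result is claw-free, $o(1)$-close to $W^\ast$, and not co-bipartite. The paper therefore cannot and does not argue deterministically. It runs a Kleitman--Rothschild scheme: classify graphs by their defect graph $T(G)$ (non-edges inside the near-cliques plus edges to a small exceptional set), sort defect vertices by low/medium/high degree into the parts, and for each configuration apply Janson's inequality over a tailored family of potential induced $K_{1,3}$'s to show the number of claw-free completions is exponentially smaller than $|\cB_c(n,m)|$. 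Summing over defect types then yields $|\cC(n,m)|\sim|\cB_c(n,m)|$.

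For (iii) you are assuming precisely what must be proved. By \Cref{thm:FixedDensityVarProblem} there are infinitely many nonequivalent optimizers $W_{\blambda}$ for $\gamma<\gamma^\ast$, all with the same entropy, and \Cref{prop:CNMtypstrucconddistr} only places $G$ near the optimizer \emph{set} $\cX^\ast_\gamma$, not near any single element. Singling out the one-block graphon $W^\ast$ is the substance of the theorem. The paper does it by comparing $|N_\tau(n,m,W)|$ across optimizers: relative to any multi-block $W$, the one-block structure frees up $\Omega(n)$ extra vertices for the sparse part, on which one can place any of the $n^{\Theta(n)}$ cubic claw-free graphs (via McKay's asymptotics), so $|N_\tau(n,m,W^\ast)|\geq n^{\Theta(n)}|N_\tau(n,m,W)|$. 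Your entropy argument for $e(G_2)=\Omega(n)$ is likewise wrong: moving $o(n)$ edges between $G_2$ and the cross part perturbs the cross-density by $O(1/n)$ and hence the log-count by $O(1)$, not $\Omega(n^2)$; the actual lower bound again comes from the cubic-claw-free comparison.
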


The asymptotic formula for $|\cB_c(n,m)|$ in \Cref{thm:almostallCB} \ref{thm:almostallCBsupercritical} is obtained by a standard calculation, included  in \Cref{sec:counting}.

In the subcritical regime $\gamma\in\big(0,\frac{5-\sqrt{5}}{4}\big)$, \Cref{thm:almostallCB} \ref{thm:almostallCBsubcritical} implies there is a unique graphon $W^\ast$ (up to equivalence) such that almost every $G\in\cC(n,m)$ is within $o(1)$ in cut metric of $W^\ast$. In fact, in \Cref{sec:subcritical} we show that the number of claw-free graphs with edge density $\gamma\in\big(0,\frac{5-\sqrt{5}}{4}\big)$ is at least $n^{\Theta(n)}$ times larger than the number of graphs that are the disjoint union of two or more co-bipartite graphs each having $\Theta(n)$ vertices and a sparse graph, which are graphs close in cut metric to an optimal graphon besides $W^\ast$. For example, among the optimal graphons in \Cref{fig:graphons-sub}, there are at least $n^{\Theta(n)}$ times as many claw-free graphs of edge density $1/4$ that are close to the leftmost graphon as there are to any of the other three graphons.

Our last main result describes the typical structure of $G(n,p)$ conditioned on being claw-free.

\begin{theorem}\label{thm:GnpTypicalStructure}
Let $p\in(0,1)$ be a fixed constant and let $G$ be the \erdosrenyi{} random graph $G(n,p)$ conditioned on being claw-free. Then the following hold.
\begin{enumerate}[(\emph{\roman*})]
	\item If $p\in\big(\frac{3-\sqrt{5}}{2},1\big)$ then $G$ is co-bipartite with high probability.
	\item If $p\in\big(0,\frac{3-\sqrt{5}}{2}\big]$ then $G$ has $o(n^2)$ edges with high probability.
\end{enumerate}
\end{theorem}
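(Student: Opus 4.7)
The strategy is to analyze the conditional distribution of $e(G)$, using that $G(n,p) \mid \{e(G) = m,\, G \in \cC(n)\}$ is uniform on $\cC(n,m)$. Define
\[F(\gamma) := r^\ast(\gamma) + \gamma \log_2 p + (1-\gamma)\log_2(1-p).\]
By \Cref{thm:CnmRateFunction} and \Cref{thm:LDPforGNP}, $\log_2(|\cC(n,m)|\, p^m(1-p)^{\binom{n}{2}-m}) = \binom{n}{2} F(m/\binom{n}{2}) + o(n^2)$ and $\log_2 \Pr[G \in \cC(n)] = \binom{n}{2}\max_\gamma F(\gamma) + o(n^2)$, so the behavior of $F$ governs the conditional law of $e(G)/\binom{n}{2}$.

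\textbf{Part (i), $p \in (p^\ast, 1)$.} $F$ is affine on $[0, \gamma^\ast]$ with slope $\tfrac{5+\sqrt 5}{10}H(p^\ast) + \log_2\tfrac{p}{1-p}$, which equals $0$ at $p = p^\ast$ by continuity of $(r^\ast)'$ and hence is strictly positive for $p > p^\ast$. On $[\gamma^\ast, 1]$, $F(\gamma) = \tfrac12 H(2\gamma-1) + \gamma\log_2 p + (1-\gamma)\log_2(1-p)$ is strictly concave with unique interior critical point $\gamma_0(p) := (1+p)/2 > \gamma^\ast$. Hence $F$ is uniquely maximized at $\gamma_0(p)$ with a uniform gap outside every $\delta$-neighborhood, and a Laplace-type estimate gives $e(G)/\binom{n}{2} \to \gamma_0(p)$ in probability. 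Since the conditional distribution of $G$ given $e(G) = m$ is uniform on $\cC(n,m)$ and \Cref{thm:almostallCB} \ref{thm:almostallCBsupercritical} yields $|\cB_c(n,m)|/|\cC(n,m)| \to 1$ uniformly in compact subsets of $(\gamma^\ast, 1)$ (as extracted from its proof), summing over $m$ near $\gamma_0(p)\binom{n}{2}$ concludes $\Pr[G \in \cB_c(n) \mid G \in \cC(n)] \to 1$.

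\textbf{Part (ii), $p \in (0, p^\ast)$.} The slope of the affine piece of $F$ is now negative, and the interior critical point $(1+p)/2 < \gamma^\ast$ lies below $\gamma^\ast$, so $F$ is also strictly decreasing on $[\gamma^\ast, 1]$. Thus $F$ is uniquely maximized at $\gamma = 0$ with a uniform gap on $[\varepsilon, 1]$ for every $\varepsilon > 0$, and Laplace concentration gives $e(G) = o(n^2)$ with high probability.

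\textbf{Part (ii), $p = p^\ast$ --- the main obstacle.} At criticality, the identity $(1-p^\ast)^2 = p^\ast$ forces $F \equiv \log_2(1-p^\ast)$ on the entire interval $[0, \gamma^\ast]$, so the exponential bound alone does not separate the sparse regime from the positive-density regime. We compare subexponential corrections. For the denominator, the $(n-1)!!$ perfect matchings of $K_n$ all lie in $\cC(n, n/2)$, giving
\[\Pr[G \in \cC(n)] \geq (n-1)!!\,(p^\ast)^{n/2}(1-p^\ast)^{\binom{n}{2}-n/2} = 2^{\binom{n}{2}\log_2(1-p^\ast) + \tfrac{n}{2}\log_2 n - O(n)}.\]
For the numerator $\Pr[e(G) \geq \varepsilon n^2,\, G \in \cC(n)]$, we invoke \Cref{thm:almostallCB} \ref{thm:almostallCBsubcritical} and the enumeration in its proof to refine \Cref{thm:CnmRateFunction}: for $m/\binom{n}{2} \in [\varepsilon, \gamma^\ast]$, $|\cC(n,m)|$ is dominated (up to a negligible exceptional set) by graphs consisting of a co-bipartite clump on $\Theta(n)$ vertices plus a sparse remainder with $o(n^2)$ edges, so the Gibbs-weighted contribution is $2^{\binom{n}{2}\log_2(1-p^\ast) + o(n\log n)}$; the range $m/\binom{n}{2} > \gamma^\ast$ is exponentially smaller by strict inequality in $F$. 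Dividing yields a ratio of $2^{-(1-o(1))\tfrac{n}{2}\log_2 n} \to 0$. The principal technical task is establishing this subexponentially sharp bound on $|\cC(n,m)|$, strictly tighter than the $o(n^2)$ accuracy of \Cref{thm:CnmRateFunction}, so that the $\Theta(n \log n)$ gain from the matching construction is not matched by the clump-plus-sparse enumeration.
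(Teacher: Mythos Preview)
Your treatment of part (i) and of part (ii) for $p<p^\ast$ is correct and aligns with the paper's argument: both reduce to a Laplace/partition-function analysis of $F(\gamma)$ together with the structural input from \Cref{thm:almostallCB}.

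The gap is at $p=p^\ast$. Your scheme is to lower-bound the denominator by perfect matchings, giving a correction of $\tfrac{n}{2}\log_2 n$, and to upper-bound the numerator by $2^{\binom{n}{2}\log_2(1-p^\ast)+o(n\log n)}$. The second claim is not established by \Cref{thm:almostallCB}\ref{thm:almostallCBsubcritical} and is in fact false as stated. The clump-plus-sparse decomposition says that for $m\sim\gamma\binom{n}{2}$ with $\gamma\in[\varepsilon,\gamma^\ast]$, almost every $G\in\cC(n,m)$ carries a co-bipartite clump on $2\mu(\gamma)n$ vertices and a sparse remainder on $(1-2\mu(\gamma))n$ vertices; but the number of claw-free graphs on that remainder is itself $n^{\Theta(n)}$ (for instance, cubic claw-free graphs already give $e^{\frac{7}{3}\cdot\frac{(1-2\mu)n}{2}\log n+O(n)}$ by the McKay--Wormald asymptotics), so the numerator's correction is $\Theta(n\log n)$, not $o(n\log n)$. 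For $\gamma$ near $\varepsilon$, the remainder has nearly $n$ vertices, and its contribution can exceed your $\tfrac{n}{2}\log_2 n$ matching bound. Thus the ratio you compute is not shown to tend to zero.

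The paper resolves this not by separate absolute bounds but by a \emph{direct comparison}: for any dense optimizer $W$, the sparse remainder in graphs near $W$ sits on at most $s\le(1-\omega+\alpha)n$ vertices, while graphs near $W_0$ have all $n$ vertices available. The extra $k\approx\omega n/4$ vertices can host a cubic claw-free graph, yielding the injection $|\cC(s,t)|\cdot\kappa(k)\le|\cC(n,t+3k)|$ and hence $Z_p(N_\tau(n,W))\le n^{-\zeta n}Z_p(N_\omega(n,W_0))$. The unknown count $|\cC(s,t)|$ cancels in this comparison; it never needs to be bounded absolutely. Your matching idea can be salvaged in this framework---matchings on the $k$ extra vertices give a gain of $k^{\Theta(k)}$---but it must be applied to the \emph{difference} in sparse-part sizes, not to the full denominator.
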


The second part of the theorem shows that at the critical point $p^\ast=\frac{3-\sqrt{5}}{2}$, although there are infinitely many optimal graphons as proven in \Cref{thm:GNPVarProblem}, the conditional \erdosrenyi{} converges in cut metric to the all-zero graphon in probability.

\subsection{Related Work}\label{subsec:RelatedWork}
The study of asymptotic enumeration and typical structure has a long history in graph theory. Beyond the many works on triangle-free graphs mentioned above, there has been much research on analogous questions about $H$-free graphs for general graphs $H$ with $r+1=\chi(H)\geq3$. Using Szemer\'edi's regularity lemma, \citeauthor{erdos1986asymptotic} proved that the log-number of $H$-free graphs is asymptotic to the extremal number $\extremal(n,H)$ \cite{erdos1986asymptotic}. \citeauthor{promel1992asymptotic} proved that almost all $H$-free graphs are $r$-colorable if and only if $H$ is edge-critical (i.e. the removal of an edge reduces the chromatic number) \cite{promel1992asymptotic}, generalizing the \erdos--Kleitman--Rothschild theorem. \citeauthor{balogh2009typical} proved that if $\cF$ is a finite set of graphs and $r+1=\min_{H\in\cF}\chi(H)\geq3$, then almost all $\cF$-free graphs are within $o(n^2)$ edit distance of an $r$-colorable graph \cite{balogh2009typical}.

Several results for $H$-free graphs have induced-$H$-free analogues. For example, \citeauthor{alon2011structure} proved a generalization of the Balogh--Bollob\'as--Simonovits theorem for hereditary properties \cite{alon2011structure}. Induced analogues of the \erdos--Stone theorem and \erdos--Frankl--R\"odl theorem, proven by \citeauthor{promel1993excluding} \cite{promel1992excluding,promel1993excluding}, give an asymptotic formula for the log-number of induced-$H$-free graphs.

The study of typical structure of induced-$H$-free graphs was initiated by \citeauthor{promel1991excluding}, who examined induced-$C_4$-free graphs \cite{promel1991excluding}. Recent works have also examined \emph{sparse} induced-$C_4$-free graphs \cite{kalvari2021typical,morris2024asymmetric}. \citeauthor{balogh2011excluding} proved an induced analogue to \citeauthor{promel1992asymptotic}'s result about edge-critical graphs: they defined a notion of criticality that characterizes the graphs $H$ for which almost all induced-$H$-free graphs have a simple structure \cite{balogh2011excluding}. \citeauthor{mckay2003asymptotic} proved first-order asymptotics for the number of cubic claw-free graphs \cite{mckay2003asymptotic}, which plays an important role in the proof of \Cref{thm:almostallCB} \ref{thm:almostallCBsubcritical}. \citeauthor{bottcher2012perfect} analyzed induced-$C_5$-free graphs, deriving the entropy density and showing it has two critical points \cite{bottcher2012perfect}.

Prior works have also examined the probability that $G(n,p)$ belongs to a general hereditary property $\cH$. Using the extremal theory of 2-colored multigraphs, \citeauthor{marchant2011structure} proved that there is always an ``elementary property'' $\cH'$ contained in $\cH$ such that the rate function for the event $G(n,p)\in\cH'$ coincides with the rate function for the event $G(n,p)\in\cH$ \cite{marchant2011structure}. \citeauthor{marchant2011structure} also calculated the rate function for some specific hereditary properties including the set of induced-$C^\ast_6$-free graphs (where $C^\ast_6$ is the cycle $C_6$ with a diagonal chord). See also the related works \cite{bollobas2000structure,marchant2010extremal}.

Graphons have been used extensively to study graph properties, random graphs, and typical structure. Many works have sought to solve the variational problem over graphons that are limits of constrained graph classes \cite{radin2013phase,radin2014asymptotics,aristoff2015asymptotic,lubetzky2015replica,kenyon2017multipodal,kenyon2017asymptotics,zhao2017lower,zhu2017asymptotic,neeman2023typical}. In this paper, we prove the existence of a phase in which there is more than one optimal graphon, but where the typical structure of the underlying property is still captured by a unique graphon; this phenomenon has previously been observed in string graphs \cite{janson2017string,pach2020almost}.

Besides the topics of this paper, there is broad interest in claw-free graphs in graph theory and theoretical computer science. Historically, interest in claw-free graphs originated with \citeauthor{beineke1968derived}'s characterization of line graphs in terms of forbidden induced subgraphs \cite{beineke1968derived}.
Later, \citeauthor{chudnovsky2005structure} proved a detailed characterization of claw-free graphs through decomposition theorems  \cite{chudnovsky2005structure,chudnovsky2008clawV}, and showed that the independence polynomial of claw-free graphs is real-rooted (extending the Heilmann--Lieb theorem) \cite{chudnovsky2007roots}, which is relevant in statistical physics.
Claw-free graphs also have favorable algorithmic properties as several hard problems (e.g. maximum independent set and approximating the independence polynomial) can be solved in polynomial time in claw-free graphs \cite{minty1980maximal,patel2017deterministic}. The survey \cite{faudree1997claw} describes many other properties of claw-free graphs and the areas in which they appear.

\subsection{Overview of Proofs}
The solutions to the variational problems $\phi$ and $\psi$ are presented in \Cref{sec:GraphonOpt}. The proofs combine a version of the regularity lemma (\Cref{lemma:TypeLemma}) with the solution to an extremal problem over edge colorings (\Cref{sec:edgecolorings}). Every optimizer $W$ in the variational problem $\phi$ is the limit of a sequence of regular partitions of claw-free graphs. The solution to the extremal problem allows us to deduce these regular partitions closely resemble (in a Hamming distance sense) the extremal structure, which can be informally described as a union of co-bipartite graphs and an empty graph. It then follows that $W$ has the same structure subject to an edge density constraint, and examples of these graphons are depicted in \Cref{fig:graphons-super,fig:graphons-sub}.

\Cref{thm:almostallCB} is proven in \Cref{sec:supercritical,sec:subcritical}. The proofs use the Kleitman--Rothschild method, a technique for asymptotic enumeration that was first applied to the enumeration of partially-ordered sets \cite{kleitman1975asymptotic}. In this paper, the method works by defining several subsets of $\cC(n,m)$, showing that the graphs $G\in\cC(n,m)$ not contained in those subsets have a special structure, and proving that the cardinality of those subsets is much smaller than that of $\cC(n,m)$. The first step is to define $\cC_\far\,$, the set of all $G\in\cC(n,m)$ that are far in cut metric from the set of optimal graphons $\cX^\ast_\gamma$ for the variational problem $\phi(\gamma)$. \Cref{prop:CNMtypstrucconddistr} proves that almost all $G\in\cC(n,m)$ are contained in $\cC_\close:=\cC(n,m)\setminus\cC_\far$. In \Cref{lemma:SuperCloseStructure,lemma:WtoWtildeMetrics} we prove that every graph $G\in\cC_\close$ is within $o(n^2)$ edit distance of a graph $G'$ that is the disjoint union of several co-bipartite graphs with parts denoted $\Pi_i=\{A_i,B_i\}$, $i=1,\dots,\ell$, and a sparse graph. The graph $T(G)$ with edge set $E(G)\,\Delta\,E(G')$ is called the \emph{defect graph} since we think of its edges as defects in $G$.

The bulk of the proof of \Cref{thm:almostallCB} deals with classifying defect graphs according to various graph theoretic conditions and bounding the number of graphs $G$ for which a given defect graph $T$ is optimal. The key tool in these steps is the version of Janson's inequality due to \citeauthor{riordan2015janson} (\Cref{thm:JansonsInequality}). The inequality is applied by first setting up a random graph $H$ that is the disjoint union of random co-bipartite graphs on the parts $\Pi_i=\{A_i,B_i\}$, and including the defect graph. We then define a class $\sK$ of subgraphs $C\subseteq K_V$ isomorphic to $K_{1,3}$ and uses Janson's inequality to obtain an exponential tail bound for the probability $H$ contains none of the induced copies in $\sK$. The tail bounds are written independently in \Cref{sec:JansonPenalties}.

To prove \Cref{thm:almostallCB} \ref{thm:almostallCBsubcritical} we first fix an optimizer $W$ for $\phi(\gamma)$ that has more than one co-bipartite block (one can visualize this by inspecting \Cref{fig:graphons-sub}). A detailed analysis shows almost every $G\in\cC(n,m)$ that is close in cut metric to $W$ is in fact a disjoint union of several co-bipartite graphs (where the number of co-bipartite graphs is roughly given by the number of such blocks in $W$) and a sparse graph. If $W^\ast$ is the optimizer with a single co-bipartite block (i.e. the leftmost graphon in \Cref{fig:graphons-sub}), then the graphs $G\in\cC(n,m)$ close to $W^\ast$ in cut metric have at least $cn$ more vertices in the sparse part of the graph than those close to $W$, where $c>0$ is a constant. We can then lower bound the number of claw-free graphs close to $W^\ast$ as follows: for every $G\in\cC(n,m)$ that is close to $W$, replace the co-bipartite subgraphs of $G$ with a single co-bipartite graph similar in size to the co-bipartite block in $W^\ast$; this frees up a set $U$ of at least $cn$ vertices in $G$; finally, place a cubic claw-free graph on $U$. Since the main result of \cite{mckay2003asymptotic} implies the number of cubic claw-free graphs is $n^{\Theta(n)}$, we conclude there are at least $n^{\Theta(n)}$ as many graphs $G\in\cC(n,m)$ that are close to $W^\ast$ in cut metric than that are close to $W$.

\subsection{Future Work}
A natural direction for future research is to examine the typical structure of sparse claw-free graphs, which could give a more precise understanding of the sparse graph $G_2$ in \Cref{thm:almostallCB} \ref{thm:almostallCBsubcritical}. The proof of \Cref{thm:almostallCB} \ref{thm:almostallCBsubcritical} lower-bounds the number of claw-free graphs close to the optimal graphon $W^\ast$ using cubic claw-free graphs, so the following is a pertinent question.

\begin{question}
What are the asymptotics (of the logarithm) of $|\cC(n,cn)|$ for fixed  $c>0$?
\end{question}

The use of an asymmetric container lemma, as done by \citeauthor{morris2024asymmetric} to study sparse induced-$C_4$-free graphs \cite{morris2024asymmetric}, could help analyze sparse claw-free graphs.

Another direction for future research could examine the typical structure of fixed-density induced-$H$-free graphs for other graphs $H$, and analyze whether phase transitions occur in those cases as well.

\begin{question}
For  graphs $H$ with coloring number\footnote{The coloring number was defined by \citeauthor{promel1992excluding} \cite{promel1992excluding} and is the suitable replacement for chromatic number in the induced setting.} at least three, does the variational problem over induced-$H$-free graphons exhibit a phase transition in the edge density? Is there always a phase in which the optimizer is nonunique?
\end{question}

The proofs for typical structure in claw-free graphs suggest similar results could hold for the class of induced-$K_{1,r+1}$-free graphs for $r\geq3$. To establish \Cref{conj:Kr1rplus1}, it may be useful to prove asymptotics for the number of $(r+1)$-regular induced-$K_{1,r+1}$-free graphs, similar to the cubic claw-free case.

\begin{conjecture}\label{conj:Kr1rplus1}
For all $r\geq3$, the class $\cF(n,m)$ of induced-$K_{1,r+1}$-free graphs on $n$ vertices and $m\sim\gamma\binom{n}{2}$ edges exhibits a phase transition in the edge density $\gamma\in(0,1)$. In one phase, almost every $G\in\cF(n,m)$ is co-$r$-partite, and in the other phase, almost every $G\in\cF(n,m)$ is the disjoint union of a co-$r$-partite graph and a sparse graph.
\end{conjecture}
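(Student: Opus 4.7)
The plan is to extend the variational-problem-plus-Kleitman--Rothschild framework developed here for claw-free graphs ($r=2$) to the class of induced-$K_{1,r+1}$-free graphs. For fixed $r \geq 3$ and $\gamma \in (0,1)$, I would define the natural analogue of $\phi(\gamma)$: maximize $\int H(W)$ over graphons with edge density $\gamma$ and zero induced $K_{1,r+1}$-density. Using the regularity-lemma reduction (\Cref{lemma:TypeLemma}) as in \Cref{sec:GraphonOpt}, every optimizer would then be a limit of regular partitions of induced-$K_{1,r+1}$-free graphs, and the partitions must closely resemble an extremal coloring. The discrete extremal problem, the analogue of \Cref{sec:edgecolorings}, is to show that on $N$ vertices the entropy-maximizing induced-$K_{1,r+1}$-free structure consists of an $r$-clique partition with uniform cross-density $p^\ast_r$.

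Once the variational problem is formulated, I would restrict attention to the structured class of co-$r$-partite-block graphons, where each block has equal parts and uniform cross-density. A block of total measure $\alpha$ with cross-density $\gamma'$ contributes edge weight $\alpha^2(1/r + (r-1)\gamma'/r)$ and entropy $\alpha^2 (r-1) H(\gamma')/r$. Maximizing entropy at fixed $\gamma$ yields the first-order optimality condition $H'(\gamma'_r)(1+(r-1)\gamma'_r) = (r-1) H(\gamma'_r)$, whose unique root $\gamma'_r \in (0,1)$ determines a critical edge density $\gamma^\ast_r$. For $\gamma \geq \gamma^\ast_r$ the unique optimizer is the full co-$r$-partite graphon; for $\gamma < \gamma^\ast_r$ the optimizers form a $\lambda$-family resembling \Cref{fig:graphons-sub}, namely disjoint unions of co-$r$-partite blocks of cross-density $\gamma'_r$ together with an empty part. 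A Chatterjee--Varadhan-type argument, combined with a fixed-density version along the lines of \Cref{lemma:FnmGraphonOptAsymps}, then yields the entropy density and shows almost every $G \in \cF(n,m)$ is close in cut metric to this set of optimizers.

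To upgrade ``close in cut metric'' to the exact structural conclusions of \Cref{conj:Kr1rplus1}, I would apply the Kleitman--Rothschild method of \Cref{sec:supercritical,sec:subcritical}. For each $G \in \cF(n,m)$ close to an optimizer, fix an approximate co-$r$-partition $\Pi=\{A_1,\dots,A_r\}$ per block together with a defect graph $T$ encoding deviations from block-co-$r$-partite structure. Applying \Cref{thm:JansonsInequality} to a family $\sK$ of potential induced $K_{1,r+1}$'s built from $\Pi \cup T$ gives an exponential penalty per defect edge; summing over $\Pi$ and $T$ proves trivial defect in the supercritical regime (so $G$ is co-$r$-partite with high probability) and a disjoint decomposition of several co-$r$-partite pieces plus a sparse remainder in the subcritical regime. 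The uniqueness of the single-block typical structure then follows by the $n^{\Theta(n)}$-swapping argument of \Cref{sec:subcritical}: from any $G$ close to a multi-block optimizer, merge the co-$r$-partite pieces into a single one of the prescribed size, freeing a linear-size vertex set on which an arbitrary $(r+1)$-regular induced-$K_{1,r+1}$-free graph may be placed.

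The hard part, I expect, is twofold. First, the discrete extremal problem for induced $K_{1,r+1}$: the claw case involves three-vertex neighborhood configurations and admits a clean edge-coloring argument (\Cref{sec:edgecolorings}), whereas for $r \geq 3$ the configurations live on $r+1$ vertices and ruling out nonstandard extremal structures likely requires an inductive or substantially more intricate case analysis to show the $r$-clique partition is the unique entropy-maximizer. Second, the swapping step demands a lower bound of $n^{\Theta(n)}$ on the number of $(r+1)$-regular induced-$K_{1,r+1}$-free graphs, extending \cite{mckay2003asymptotic} from the cubic claw-free case; this appears to require adapting configuration-model or switching techniques to higher-regularity constrained classes and is, in my view, the most delicate technical step.
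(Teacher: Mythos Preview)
The statement you are addressing is \emph{Conjecture}~\ref{conj:Kr1rplus1}, not a theorem: the paper does not prove it and does not claim to. It is presented in the Future Work subsection as an open problem, with the remark that ``to establish \Cref{conj:Kr1rplus1}, it may be useful to prove asymptotics for the number of $(r+1)$-regular induced-$K_{1,r+1}$-free graphs, similar to the cubic claw-free case.'' So there is no proof in the paper to compare your proposal against.

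What you have written is a coherent programme, and it tracks the paper's own suggestion closely: generalize the graphon variational problem, replace the edge-coloring extremal lemma by an $r$-part analogue, run the Kleitman--Rothschild machinery with Janson penalties, and use an $(r+1)$-regular construction for the subcritical swapping argument. You have also correctly identified the two genuine obstacles. The extremal/stability step for $r\geq3$ is not a routine extension of \Cref{lemma:c4ATMOSTc3} and \Cref{lemma:EdgeColStab}: the claw case exploits that the red neighbourhood of any vertex is a blue clique, which gives the clean degree-matching argument; for $K_{1,r+1}$ the corresponding constraint is only that any $r+1$ red neighbours span a blue edge, and it is not clear that the co-$r$-partite structure is forced without a substantially new idea. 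The $(r+1)$-regular lower bound is exactly what the paper flags as needed and is not available in the literature. Until those two ingredients are supplied, your outline remains a plausible strategy rather than a proof, which is consistent with the paper's decision to state this as a conjecture.
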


Further directions of research also include solving the lower-tail variational problem for claws, or the fixed (non-zero) claw-density variational problem, which are likely to require new techniques.

\section{Preliminaries}
\label{sec:prelim}
This section describes probabilistic tools and background on graph regularity and graph limits that we will need. Unless stated otherwise, graphs are labeled and simple on a vertex set $V$ of size $n$.

\subsection{Notation}
Let $\cG(n)$ denote the set of graphs on $V$. Let $\cG:=\bigcup_{n\in\N}\cG(n)$ denote the set of all graphs. Let $\cC(n)$ denote the set of claw-free graphs on $n$ vertices and let $\cB(n)$ denote the set of bipartite graphs on $n$ vertices. Let $\cG(n,m)$ denote the set of graphs on $n$ vertices with $m$ edges, and define the sets $\cC(n,m):=\cC(n)\cap\cG(n,m)$ and $\cB(n,m):=\cB(n)\cap\cG(n,m)$. For all $n,m\in\N:=\{1,2,\dots\}$ let $\cU(n)$ denote the set of unlabeled graphs on $n$ vertices. Let $\cU(n,m)$ denote those unlabeled graphs on $n$ vertices and $m$ edges. Let $\cU:=\bigcup_{n\in\N}\cU(n)$ denote the set of all unlabeled graphs.

If $G$ is a graph and $u,v\in V$ are distinct vertices then $uv$ denotes the unordered pair $\{u,v\}$. For a set $S$ and $k\in\N$, let $\binom{S}{k}:=\{T\subseteq S:|T|=k\}$. We denote $v(G):=|V(G)|$ and $e(G):=|E(G)|$. For all $A,B\subseteq V(G)$, denote $E(A):=E(G)\cap\binom{A}{2}$, $e(A):=|E(A)|$, $E(A,B):=\{uv\in E(G):u\in A,v\in B\}$, and $e(A,B):=|E(A,B)|$. The \emph{density} between $A$ and $B$ is the number $d(A,B):=e(A,B)/(|A|\cdot|B|)$. For all $v\in V(G)$, the neighborhood (resp. degree) of $v$ in $A$ is defined by $N(v,A):=N(v)\cap A$ (resp. $d(v,A):=|N(v,A)|$). The complementary neighborhood (resp. degree) of $v$ in $A$ is defined by $\overline{N}(v,A):=A\setminus N(v)$ (resp. $\overline{d}(v,A):=|\overline{N}(v,A)|$). Define $G[A,B]$ to be the graph with edge set $E=\{ab\in E(G):a\in A,\,b\in B\}$ and vertex set $\bigcup E$. For each of the previous definitions, we sometimes use subscripts (e.g. $E_G(A)$, $e_G(A)$, $d_G(A,B)$, etc) to emphasize the underlying graph. The complement of a graph $G$ is denoted $\overline{G}$ or $G^c$. For all $n\in\N$, denote $[n]:=\{1,\dots,n\}$. For functions $f,g:\N\to\R$, the formula $f(n)\ll g(n)$ means $\lim_{n\to\infty}f(n)/g(n)=0$.

\subsection{Probabilistic Preliminaries}
In \Cref{sec:JansonPenalties} we use the following version of Janson's inequality due to \citeauthor{riordan2015janson} \cite{riordan2015janson} to prove exponential penalties for defect edges. The reason we need this general version is that the event of containing an induced $K_{1,3}$ is not edge-monotone, but if the copies of $K_{1,3}$ are chosen judiciously then the events of containing those copies satisfy the general up-set conditions.

\begin{theorem}[\normalfont{Janson's inequality, \cite{riordan2015janson}}]\label{thm:JansonsInequality}
Let $(\Omega,\cF,\bbP)$ be a probability space. Let $\cI\subseteq\cF$ be a family of events such that for all $A,B\in\cI$ we have $\bbP\{A\cap B\}\geq\bbP\{A\}\bbP\{B\}$, $A\cap B\in\cI$, and $A\cup B\in\cI$. Let $B_1,\dots,B_n\in\cI$ and define
$$\mu:=\sum_{i=1}^n\bbP\{B_i\}\hspace{7mm}\text{and}\hspace{7mm}\Delta:=\sum_{i\sim j}\bbP\{B_i\wedge B_j\}\,,$$
where the second sum is over unordered pairs $i$, $j$ such that $B_i$, $B_j$ are not independent. Then
$$\bbP\left\{\bigwedge_{i=1}^n\overline{B_i}\right\}\leq\exp\left(-\min\left\{\frac{\mu}{2}\,,\,\frac{\mu^2}{4\Delta}\right\}\right)\,.$$
\end{theorem}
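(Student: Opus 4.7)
The plan is to follow the classical proof of Janson's inequality (due to Janson, polished by Janson, {\L}uczak, and Ruci{\'n}ski) and check that each step survives the move to the abstract setting of $\cI$. The classical argument depends only on two structural features of up-events in a product probability space: arbitrary pairs are positively correlated, and the family is closed under finite unions and intersections. Both are explicitly listed as hypotheses on $\cI$, so the task is to assemble the abstract-side analogue rather than redevelop ideas.

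Step~1 is the standard reduction. Applying the chain rule to $\bbP(\bigwedge_i\overline{B_i})$ and using $-\log(1-x)\ge x$ on $[0,1)$ yields
\[
-\log\bbP\!\Big(\bigwedge_{i=1}^n\overline{B_i}\Big)\ge\sum_{i=1}^n\bbP\!\big(B_i\bigm|\textstyle\bigwedge_{j<i}\overline{B_j}\big),
\]
so it suffices to prove $\sum_i\bbP(B_i\mid\wedge_{j<i}\overline{B_j})\ge\min\{\mu/2,\mu^2/(4\Delta)\}$. Step~2 is the main conditional estimate: fix $i$, split $\{1,\dots,i-1\}=D_i\sqcup N_i$ according to whether $B_j$ and $B_i$ are dependent, and set $F_i:=\bigwedge_{j\in D_i}\overline{B_j}$, $E_i:=\bigwedge_{j\in N_i}\overline{B_j}$. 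A union bound on $\overline{F_i}=\bigvee_{j\in D_i}B_j$ inside the event $E_i$ gives
\[
\bbP(B_i\wedge E_i\wedge F_i)\ge\bbP(B_i\wedge E_i)-\sum_{j\in D_i}\bbP(B_i\wedge B_j\wedge E_i).
\]
Joint independence of $B_i$ from $\{B_j:j\in N_i\}$---the dependency-graph convention implicit in ``$B_i,B_j$ independent''---yields $\bbP(B_i\wedge E_i)=\bbP(B_i)\bbP(E_i)$. For the correction terms, $B_i\wedge B_j\in\cI$ and $\overline{E_i}=\bigvee_{j\in N_i}B_j\in\cI$ by closure, and positive correlation delivers $\bbP\bigl((B_i\wedge B_j)\wedge\overline{E_i}\bigr)\ge\bbP(B_i\wedge B_j)\bbP(\overline{E_i})$, which rearranges to $\bbP(B_i\wedge B_j\wedge E_i)\le\bbP(B_i\wedge B_j)\bbP(E_i)$. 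Dividing through by $\bbP(E_i\wedge F_i)\le\bbP(E_i)$ gives $\bbP(B_i\mid\wedge_{j<i}\overline{B_j})\ge\bbP(B_i)-\sum_{j\in D_i}\bbP(B_i\wedge B_j)$.

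Step~3 aggregates and splits into two regimes. Summing over $i$ yields $\sum_i\bbP(B_i\mid\cdots)\ge\mu-\Delta$, so if $\Delta\le\mu/2$ we immediately have $\ge\mu/2$. Otherwise, pass to a random sub-family $S\subseteq\{1,\dots,n\}$ obtained by independently retaining each index with probability $p=\mu/(2\Delta)\in(0,1]$. The sub-family has mean $p\mu$ and pair-sum $p^2\Delta\le p\mu/2$, so the already-proved case yields $\bbP(\wedge_{i\in S}\overline{B_i})\le\exp(-p\mu/2)=\exp(-\mu^2/(4\Delta))$. Averaging over $S$ and using $\bbP(\wedge_{i\in S}\overline{B_i})\ge\bbP(\wedge_{i\le n}\overline{B_i})$ transfers the bound to the full family.

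The main obstacle is the identity $\bbP(B_i\wedge E_i)=\bbP(B_i)\bbP(E_i)$ in Step~2: positive correlation in $\cI$ alone yields only the \emph{opposite} inequality, so one must promote the pairwise-independence condition to joint independence of $B_i$ with $\{B_j:j\in N_i\}$. In the intended applications (subgraph-indicator events in a product space) this is automatic from disjointness of the underlying variables; in the fully abstract statement it must be taken as the intended reading of the dependency-graph hypothesis. After that subtlety is settled, all remaining steps go through using only closure of $\cI$ and positive correlation.
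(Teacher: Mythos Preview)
The paper does not prove this theorem: it is quoted as a preliminary result from Riordan and Warnke \cite{riordan2015janson} and used as a black box in \Cref{sec:JansonPenalties}. There is therefore no ``paper's own proof'' to compare your proposal against.

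That said, your outline is the standard Janson-type argument and is essentially sound, with two remarks. First, you are right to flag the identity $\bbP(B_i\wedge E_i)=\bbP(B_i)\bbP(E_i)$: the statement as written only defines $i\sim j$ via pairwise non-independence, which does not by itself give joint independence of $B_i$ from $\{B_j:j\in N_i\}$. In the Riordan--Warnke formulation this is part of the hypotheses (a dependency-graph condition), and in every application in the present paper it holds because the events live on disjoint sets of independent edge indicators; your reading of the hypothesis is the intended one. Second, in Step~3 the sentence ``the already-proved case yields $\bbP(\wedge_{i\in S}\overline{B_i})\le\exp(-p\mu/2)$'' is not literally true for each fixed $S$: what holds for each $S$ is $-\log\bbP(\wedge_{i\in S}\overline{B_i})\ge\mu_S-\Delta_S$, and it is only after taking the expectation over the random $S$ that one obtains $p\mu-p^2\Delta=\mu^2/(4\Delta)$ at $p=\mu/(2\Delta)$. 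With these two points clarified your argument goes through.
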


\subsection{Preliminaries on Graph Regularity}\label{subsec:graphregularity}
Let $G$ be a graph and $A,B\subseteq V(G)$. The pair $\{A,B\}$ is said to be \emph{$\epsilon$-regular} if for all subsets $A'\subseteq A$ and $B'\subseteq B$ with $|A'|\geq\epsilon|A|$ and $|B'|\geq\epsilon|B|$, we have $|d(A',B')-d(A,B)|\leq\epsilon$. For $\epsilon>0$, a partition $\{V_0,\dots,V_k\}$ of $V(G)$ (where $V_0$ is possibly empty) is said to be an \emph{$\epsilon$-regular partition} of $G$ if the following conditions hold:
\begin{enumerate}[(\emph{\roman*})]
    \item $|V_0|\leq\epsilon|V(G)|$,
    \item $|V_i|=|V_j|$ for all $i,j\in[k]$,
    \item all but at most $\epsilon k^2$ pairs $\{V_i,V_j\}$, $1\leq i<j\leq k$, are $\epsilon$-regular.
\end{enumerate}
The sets $V_1,\dots,V_k$ are called the \emph{clusters} or \emph{parts} and $V_0$ the \emph{exceptional set} of the $\epsilon$-regular partition.

In our proofs we will need regular partitions equipped with additional information, namely labels on clusters and pairs of clusters. When applying induced embedding lemmas, these labels describe whether edges, non-edges, or both edges and non-edges of a fixed graph $H$ can be embedded in a given cluster or between a pair of clusters. These kinds of regular partitions have appeared in several past works including \cite{alon2000efficient,bollobas2000structure,alon2008characterization,marchant2011structure,bottcher2012perfect}. We will use the definitions and embedding lemmas from \cite{bottcher2012perfect}, which we record now.

For a graph $G$ and constants $\mu>0$, $\epsilon>0$, and $k\in\N$, a \emph{$(\mu,\epsilon,k)$-subpartition} of $G$ is a set of pairwise disjoint subsets $W_1,\dots,W_k\subseteq V(G)$ such that $|W_i|\geq\mu|V(G)|$ for all $i\in[k]$, and every pair $\{W_i,W_j\}$ with $i\neq j$ is $\epsilon$-regular. A $(\mu,\epsilon,k)$-subpartition of a graph $G$ is said to be \emph{dense} if every pair $\{W_i,W_j\}$ of distinct subsets has $d(W_i,W_j)\geq\frac{1}{2}$, and \emph{sparse} if every pair $\{W_i,W_j\}$ of distinct subsets has $d(W_i,W_j)<\frac{1}{2}$.

\begin{lemma}\label{lemma:RegSubpartition}{\normalfont(\cite[Corollary~3.4]{alon2000efficient})}
For all $\epsilon>0$ and $k\in\N$ there exists $\mu=\mu(\epsilon,k)>0$ such that every graph on at least $\mu^{-1}$ vertices has a dense or sparse $(\mu,\epsilon,k)$-subpartition.
\end{lemma}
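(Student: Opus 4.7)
The plan is to combine Szemerédi's regularity lemma with a two-color Ramsey argument applied to the reduced partition.

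First, I would set $K_0 := R(k,k)$, the two-color Ramsey number, and choose an auxiliary regularity parameter $\epsilon'>0$ small enough (say $\epsilon' \leq \min\{\epsilon, 1/(4K_0)\}$) that any graph on $K \geq 2K_0$ vertices with at most $\epsilon' K^2$ edges contains an independent set of size $K_0$; this is a one-line consequence of a greedy deletion that at each step picks a vertex of minimum degree and removes it together with its neighbors. Now apply Szemerédi's regularity lemma to $G$ with parameter $\epsilon'$ and a lower bound of $2K_0$ on the number of non-exceptional parts. This yields a partition $V_0, V_1, \dots, V_K$ with $K$ bounded above by some $M = M(\epsilon', K_0)$, equal cluster sizes $|V_i| \geq (1-\epsilon')n/K$, and at most $\epsilon' K^2$ pairs that fail to be $\epsilon'$-regular.

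Next, I would form the \emph{badness graph} on $[K]$ whose edges are the non-regular pairs. By the choice of $\epsilon'$, this graph contains an independent set $S \subseteq [K]$ with $|S| = K_0$, so that every pair $\{V_i, V_j\}$ with $i,j \in S$ is $\epsilon'$-regular, hence $\epsilon$-regular. I would then 2-color the edges of the complete graph on $S$ by assigning color ``dense'' to pairs with $d(V_i, V_j) \geq 1/2$ and ``sparse'' to pairs with $d(V_i, V_j) < 1/2$. Since $|S| = R(k,k)$, Ramsey's theorem produces a monochromatic set $T \subseteq S$ of size $k$, and the clusters $\{V_i : i \in T\}$ form either a dense or a sparse $(\mu, \epsilon, k)$-subpartition, where $\mu := (1-\epsilon')/M$ depends only on $\epsilon$ and $k$. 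The hypothesis $|V(G)| \geq \mu^{-1}$ is precisely what is needed so that Szemerédi's lemma applies in the first step.

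The only real obstacle is parameter bookkeeping: the regularity constant fed into Szemerédi's lemma must be strong enough to survive the cleaning of the badness graph, which is why we take $\epsilon'$ noticeably smaller than $1/K_0$ rather than use $\epsilon$ directly. Once this is arranged, each of the three ingredients---regularity, cleaning of irregular pairs, and Ramsey's theorem on the dense-versus-sparse coloring---is textbook.
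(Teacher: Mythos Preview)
The paper does not give its own proof of this lemma; it is simply cited from \cite[Corollary~3.4]{alon2000efficient}. Your argument---Szemer\'edi regularity, then clean out irregular pairs via a Tur\'an-type independent-set bound, then Ramsey on the dense/sparse dichotomy---is correct and is exactly the standard proof found in that reference, so there is nothing further to compare.
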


\begin{definition}[Type coloring and type of a graph]\label{defn:typecoloringtype}
A \emph{type coloring} of a graph $R=(V,E)$ is a mapping $\sigma:V\cup E\to\{0,\frac{1}{2},1\}$ such that $\sigma(V)\subseteq\{0,1\}$. Let $G$ be a graph and $V(G)=V_0\cup V_1\cup\cdots\cup V_k$ an $\epsilon$-regular partition of $G$. Let $V_R:=[k]$ and $E_R:=\{ij:\{V_i,V_j\}\text{ is }\epsilon\text{-regular}\}$. For all constants $\delta>0$, $\epsilon'>0$, and $k'\in\N$, an \emph{$(\epsilon,\epsilon',\delta,k')$-type} associated with the $\epsilon$-regular partition $V(G)=V_0\cup V_1\cup\cdots\cup V_k$ is a type-colored graph $R=(V_R,E_R,\sigma)$, where $\sigma:V_R\cup E_R\to\{0,\frac{1}{2},1\}$ is a mapping such that for all $ij\in E_R$,
$$\sigma(ij)=\begin{cases}0&d(V_i,V_j)\leq \delta\\\frac{1}{2}&\delta<d(V_i,V_j)<1-\delta\\1&d(V_i,V_j)\geq1-\delta\end{cases}$$
and for all $i\in V_R$,
$$\sigma(i)=\begin{cases}0&G[V_i]\text{ has a sparse }(\mu,\epsilon',k')\text{-subpartition}\\1&G[V_i]\text{ has a dense }(\mu,\epsilon',k')\text{-subpartition}\end{cases}\,,$$
where $\mu=\mu(\epsilon',k')$ is the constant from \Cref{lemma:RegSubpartition}. A type $R'$ is said to \emph{refine} a type $R$ if the partition $P'$ associated to $R'$ is a refinement of the partition associated to $R$, that is, for all $S'\in P'$ there exists $S\in P$ such that $S'\subseteq S$.
\end{definition}

We refer to the three types of edges $\sigma(ij)=0$, $\frac{1}{2}$, and $1$ as \emph{sparse}, \emph{random-like}, and \emph{dense}, respectively. Similarly, we refer to the two types of edges $\sigma(i)=0$ and $1$ as \emph{sparse} and \emph{dense}, respectively.

The following version of the regularity lemma asserts that every sufficiently large graph has a type.

\begin{lemma}[\normalfont{Type Lemma, \cite[Lemma~2.7]{bottcher2012perfect}}]\label{lemma:TypeLemma}
For all $\epsilon,\epsilon'>0$ and all $l,k'\in\N$, there exist positive integers $u$ and $n_0$ such that for all $\delta>0$, every graph $G$ on at least $n_0$ vertices has an $(\epsilon,\epsilon',\delta,k')$-type $R=(V_R,E_R,\sigma)$ with $l\leq|V_R|\leq u$.
\end{lemma}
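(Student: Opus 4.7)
\bigskip

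\textbf{Proof plan.} The plan is to combine Szemer\'edi's regularity lemma with the subpartition lemma (\Cref{lemma:RegSubpartition}) and then read off the type labels from the resulting partition. First, I would apply the classical regularity lemma with parameter $\epsilon$ and lower bound $l$ on the number of parts: there exist $u_1 = u_1(\epsilon,l)$ and $n_1 = n_1(\epsilon,l)$ such that every graph on at least $n_1$ vertices admits an $\epsilon$-regular partition $\{V_0,V_1,\dots,V_k\}$ with $l \leq k \leq u_1$, $|V_0| \leq \epsilon n$, and $|V_i| = |V_j|$ for $i,j \in [k]$. This already defines $V_R := [k]$ and $E_R := \{ij : \{V_i,V_j\}\ \text{is }\epsilon\text{-regular}\}$, regardless of $\delta$.

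Next I would define the type coloring $\sigma$. For each edge $ij \in E_R$ this is automatic: compute $d(V_i,V_j)$ and set $\sigma(ij) \in \{0,\tfrac{1}{2},1\}$ according to the three thresholds $\delta$ and $1-\delta$ specified in \Cref{defn:typecoloringtype}. For each vertex $i \in V_R$, I would apply \Cref{lemma:RegSubpartition} with parameters $\epsilon'$ and $k'$ to the induced graph $G[V_i]$; this produces either a dense or a sparse $(\mu(\epsilon',k'),\epsilon',k')$-subpartition, and I set $\sigma(i) = 1$ or $0$ accordingly (if both exist, either choice is fine). To make this step valid, each cluster $V_i$ must contain at least $\mu(\epsilon',k')^{-1}$ vertices, which is guaranteed by choosing $n_0$ large enough that $(1-\epsilon)n_0/u_1 \geq \mu(\epsilon',k')^{-1}$.

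Finally I would set $u := u_1$ and $n_0 := \max\bigl\{n_1,\, \lceil u_1 \mu(\epsilon',k')^{-1}/(1-\epsilon) \rceil\bigr\}$. Crucially, both $u_1$ (from Szemer\'edi) and the lower bound $u_1\mu(\epsilon',k')^{-1}/(1-\epsilon)$ depend only on $\epsilon,\epsilon',l,k'$, so the constants $u$ and $n_0$ are indeed uniform in $\delta$ as the lemma demands.

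There is no substantial obstacle here: the lemma is a careful assembly of two well-known regularity results, and the only point requiring attention is the uniformity in $\delta$. The key observation is that $\delta$ enters only in the classification of densities into the three symbols $\{0,\tfrac{1}{2},1\}$ for already-regular pairs, and plays no role in either the existence of the underlying $\epsilon$-regular partition or in the sparse/dense dichotomy provided by \Cref{lemma:RegSubpartition}. Consequently the same partition supplies a valid $(\epsilon,\epsilon',\delta,k')$-type for every $\delta > 0$, and the quantitative bounds on the number of parts and on $n$ can be fixed before $\delta$ is revealed.
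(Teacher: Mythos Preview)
The paper does not prove this lemma; it is quoted verbatim from \cite[Lemma~2.7]{bottcher2012perfect} and used as a black box. Your proposed argument---apply Szemer\'edi's regularity lemma to get the partition and edge structure, then apply \Cref{lemma:RegSubpartition} inside each cluster to decide the vertex labels, with $\delta$ entering only at the harmless threshold step---is correct and is exactly the standard derivation one finds in the cited source.
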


Let $H=(V_H,E_H)$ be a graph and $R=(V_R,E_R,\sigma)$ a type-colored graph. A \emph{colored homomorphism} from $H$ to $R$ is a mapping $\phi:V_H\to V_R$ such that the following conditions hold:
\begin{enumerate}[(\emph{\roman*})]
	\item For all distinct $u,v\in V_H$, if $\phi(u)\neq\phi(v)$ then $\phi(u)\phi(v)\in E_R$.
	\item If $uv\in E_H$ then either $\phi(u)\neq\phi(v)$ and $\sigma(\phi(u)\phi(v))\in\{\frac{1}{2},1\}$, or $\phi(u)=\phi(v)$ and $\sigma(\phi(u))=1$.
	\item If $uv\not\in E_H$ then either $\phi(u)\neq\phi(v)$ and $\sigma(\phi(u)\phi(v))\in\{0,\frac{1}{2}\}$, or $\phi(u)=\phi(v)$ and $\sigma(\phi(u))=0$.
\end{enumerate}

\begin{lemma}{\normalfont(Induced Embedding Lemma, \cite[Lemma~2.9]{bottcher2012perfect})}\label{lemma:InducedEmbedding}
For all $k'\in\N$ and all $\delta>0$ there exist $\epsilon,\epsilon'>0$ such that the following holds. Let $G$ be a graph with an $(\epsilon,\epsilon',\delta,k')$-type $R$. If $H$ is a graph on $v(H)\leq k'$ vertices and there exists a colored homomorphism from $H$ to $R$, then $G$ contains $H$ as an induced subgraph.
\end{lemma}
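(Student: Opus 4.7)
The plan is to extend the usual induced embedding lemma for $\epsilon$-regular partitions by refining each cluster that receives more than one vertex of $H$, using the dense or sparse $(\mu,\epsilon',k')$-subpartition recorded in the vertex coloring $\sigma(i)$. Fix $k'$ and $\delta$. First I would choose $\epsilon'$ sufficiently small in terms of $\delta$ and $k'$, then set $\mu=\mu(\epsilon',k')$ from \Cref{lemma:RegSubpartition}, and finally pick $\epsilon\leq\mu$ much smaller than $(\min\{\delta/2,1-\delta/2\})^{k'}$. Let $H$ have vertices $u_1,\dots,u_h$ with $h\leq k'$, let $\phi:V_H\to V_R$ be the given colored homomorphism, and set $S_i:=\phi^{-1}(i)$ for each $i\in V_R$.

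For each $i$ with $|S_i|\geq 2$, the color $\sigma(i)\in\{0,1\}$ guarantees by definition a dense (resp.\ sparse) $(\mu,\epsilon',k')$-subpartition of $G[V_i]$, from which I select $|S_i|\leq k'$ subclusters $W_{i,1},\dots,W_{i,|S_i|}\subseteq V_i$ and fix a bijection with $S_i$; for $|S_i|\leq 1$ I set $W_{i,1}:=V_i$. Then (a) every pair $(W_{i,j},W_{i,j'})$ with $j\neq j'$ is $\epsilon'$-regular of density $\geq 1/2$ if $\sigma(i)=1$ and $\leq 1/2$ if $\sigma(i)=0$; and (b) for $i\neq i'$ with $ii'\in E_R$, the pair $(W_{i,j},W_{i',j'})$ is $\epsilon$-regular of density within $\epsilon$ of $d(V_i,V_{i'})$, since $|W_{i,j}|\geq \mu|V_i|\geq\epsilon|V_i|$. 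In particular this cross-cluster density lies in $[1-\delta,1]$, in $[\delta/2,1-\delta/2]$, or in $[0,\delta]$ according to whether $\sigma(ii')$ is $1$, $1/2$, or $0$.

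Next I would run the standard greedy induced-embedding procedure on this refined system. Order $u_1,\dots,u_h$ arbitrarily and for each unembedded $u_a$ maintain a candidate set $C_a$ initialized to the subcluster assigned to $u_a$. When embedding $u_t$, choose $x_t\in C_t$ and update $C_a\leftarrow C_a\cap N(x_t)$ if $u_tu_a\in E_H$ and $C_a\leftarrow C_a\setminus N(x_t)$ otherwise. The definition of a colored homomorphism ensures that every update asks about adjacency across a pair — either $(W_{i,j_t},W_{i,j_a})$ when $\phi(u_t)=\phi(u_a)$, or $(W_{\phi(u_t),\cdot},W_{\phi(u_a),\cdot})$ otherwise — whose density is bounded away from $0$ when we intersect with $N(x_t)$ and from $1$ when we intersect with its complement. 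Hence a standard typical-vertex argument via $\epsilon$- or $\epsilon'$-regularity shows that all but an $O(\epsilon)$-fraction of $x_t\in C_t$ preserve every $|C_a|$ up to the appropriate density factor. Iterating over $h\leq k'$ rounds shrinks each candidate set by at most a factor of $(\min\{\delta/2,1-\delta/2\})^{k'}$, which is much larger than $\epsilon$ by the parameter choice, so a valid $x_t$ exists at every step, and the produced embedding is induced by construction.

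The main obstacle is the parameter hierarchy. One must simultaneously ensure that (i) each cluster $V_i$ is large enough for \Cref{lemma:RegSubpartition} to apply (absorbed into the size condition implicit in possessing an $(\epsilon,\epsilon',\delta,k')$-type, since $|V_i|$ is a linear fraction of $|V(G)|$), (ii) the chosen subclusters inherit cross-cluster $\epsilon$-regularity, which forces $\mu\geq\epsilon$, and (iii) the $k'$-fold multiplicative shrinkage in the greedy embedding does not drop any $|C_a|$ below the $\epsilon|W|$ threshold required by regularity. Chaining the quantifier order as $\epsilon'$ before $\mu(\epsilon',k')$ before $\epsilon$, as above, resolves all three constraints, and no ingredients beyond the classical induced counting/embedding lemma of \cite{alon2000efficient} and the type framework of \cite{bottcher2012perfect} are needed.
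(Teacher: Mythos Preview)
The paper does not give its own proof of this lemma; it is quoted verbatim as \cite[Lemma~2.9]{bottcher2012perfect} and used as a black box. So there is nothing in the present paper to compare your argument against.

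That said, your sketch follows the standard route and is essentially what the cited proof in \cite{bottcher2012perfect} does: refine each cluster $V_i$ receiving more than one vertex of $H$ into subclusters via the dense/sparse $(\mu,\epsilon',k')$-subpartition recorded by $\sigma(i)$, and then greedily embed $H$ one vertex at a time into the resulting system of regular pairs. One technical slip: you assert that the cross-cluster subpairs $(W_{i,j},W_{i',j'})$ are $\epsilon$-regular because $|W_{i,j}|\geq\mu|V_i|\geq\epsilon|V_i|$. Regularity does not transfer that cleanly; the slicing lemma only gives that such a subpair is roughly $(\epsilon/\mu)$-regular with density within $\epsilon$ of $d(V_i,V_{i'})$. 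This does not break the argument, but it means you must take $\epsilon\ll\mu$ (not merely $\epsilon\leq\mu$), and in the greedy step the candidate sets must remain above both the $\epsilon'$-threshold for within-cluster pairs and the $(\epsilon/\mu)$-threshold for cross-cluster pairs. Your quantifier order ($\epsilon'$, then $\mu=\mu(\epsilon',k')$, then $\epsilon$) already accommodates this once stated correctly.
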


The following is a simple consequence of \Cref{lemma:InducedEmbedding} stating that if a type $R$ of a graph $G$ has certain type-colored triangles (namely, three random-like edges, or two random-like edges and one sparse edge), then $G$ necessarily contains $K_{1,3}$ as an induced subgraph.

\begin{lemma}\label{lemma:ClawForbiddenColorings}
For all $k'\geq4$ and $\delta>0$, there exist constants $\epsilon_0,\epsilon'_0>0$ such that the following holds for all $0<\epsilon\leq\epsilon_0$ and $0<\epsilon'\leq\epsilon'_0$. Let $G$ be a graph with an $(\epsilon,\epsilon',\delta,k')$-type $R=(V_R,E_R,\sigma)$. Suppose there exist distinct vertices $u,v,w\in V(R)$ such that either (or both) of the following cases hold: (\emph{i}) $\sigma(uv)=\sigma(vw)=\sigma(wu)=\frac{1}{2}$; or (\emph{ii}) $\sigma(uv)=\sigma(uw)=\frac{1}{2}$ and $\sigma(vw)=0$. Then $G$ contains $K_{1,3}$ as an induced subgraph.
\end{lemma}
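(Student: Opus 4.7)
The plan is to apply the Induced Embedding Lemma (\Cref{lemma:InducedEmbedding}) with $H=K_{1,3}$, which has $4\leq k'$ vertices; the constants $\epsilon_0,\epsilon_0'$ are exactly those delivered by \Cref{lemma:InducedEmbedding} for parameters $\delta$ and $k'$. It then suffices, in each of the cases (i) and (ii), to exhibit a colored homomorphism $\phi\colon V(K_{1,3})\to V_R$. I will write $K_{1,3}$ as the star with center $c$ and leaves $\ell_1,\ell_2,\ell_3$, so that the edges are the three pairs $c\ell_i$ and the non-edges are the three pairs $\ell_i\ell_j$.

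Both hypotheses give two random-like edges $uv$ and $uw$ meeting at $u$, with the third edge $vw$ either random-like (case (i)) or sparse (case (ii)); in particular $\sigma(vw)\in\{0,\tfrac12\}$, which is exactly the set of labels permitting a non-edge between distinct clusters. The one piece of information not spelled out by the hypotheses is the vertex color $\sigma(u)\in\{0,1\}$, so I would split the argument on this value. If $\sigma(u)=1$, I would set $\phi(c)=\phi(\ell_1)=u$, $\phi(\ell_2)=v$, and $\phi(\ell_3)=w$: the edge $c\ell_1$ collapses inside the dense cluster $u$ and is licensed by $\sigma(u)=1$; each remaining $K_{1,3}$-edge lands on $uv$ or $uw$ (both random-like); and each $K_{1,3}$-non-edge lands on one of $uv,uw,vw$, all of which carry a label in $\{0,\tfrac12\}$. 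If instead $\sigma(u)=0$, I would set $\phi(c)=v$ and $\phi(\ell_1)=\phi(\ell_2)=\phi(\ell_3)=u$: now each $c\ell_i$ maps to the random-like pair $uv$, and each non-edge $\ell_i\ell_j$ collapses inside the sparse cluster $u$, licensed by $\sigma(u)=0$.

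Checking the three defining properties of a colored homomorphism in each subcase is then a short case check, after which \Cref{lemma:InducedEmbedding} produces the claimed induced $K_{1,3}$ in $G$. There is no serious technical obstacle here; the only mildly subtle point is that the hypothesis asserts nothing about $\sigma(u)$, so the argument must split on its value. This split is essentially forced by pigeonhole: since $K_{1,3}$ has four vertices but only three clusters of $R$ are in play, some pair of vertices of $K_{1,3}$ must land in a common cluster, and whether that pair should be an edge or a non-edge of $K_{1,3}$ is dictated precisely by $\sigma(u)$.
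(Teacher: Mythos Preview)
Your proof is correct and essentially identical to the paper's: both take the constants from \Cref{lemma:InducedEmbedding}, split on the value of $\sigma(u)$, and use exactly the same two colored homomorphisms (center and one leaf to $u$ when $\sigma(u)=1$; center to $v$ and all leaves to $u$ when $\sigma(u)=0$). There is nothing to add.
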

\begin{proof}
Let $\epsilon_0=\epsilon_{\ref{lemma:InducedEmbedding}}>0$ and $\epsilon'_0=\epsilon'_{\ref{lemma:InducedEmbedding}}>0$ be the constants provided when \Cref{lemma:InducedEmbedding} is applied with $k'$ and $\delta$. Let $0<\epsilon\leq\epsilon_0$ and $0<\epsilon'\leq\epsilon'_0$ and assume $R=(V_R,E_R,\sigma)$ is an $(\epsilon,\epsilon',\delta,k')$-type for $G$ satisfying the hypotheses of the lemma. By \Cref{lemma:InducedEmbedding}, it suffices to prove there exists a colored homomorphism from $K_{1,3}$ to $R$. Let $V(K_{1,3})=\{a,b,c,d\}$ be a labeling of the vertices of $K_{1,3}$ where $a$ is the vertex of degree three. First, if $\sigma(u)=0$, then in both cases (\emph{i}) and (\emph{ii}), the mapping given by $a\mapsto v$ and $b,c,d\mapsto u$ is a colored homomorphism. Otherwise we have $\sigma(u)=1$, and the mapping given by $a,b\mapsto u$, $c\mapsto v$, and $d\mapsto w$ is a colored homomorphism.
\end{proof}

\subsection{Preliminaries on Graph Limits and the Cut Metric}\label{subsec:graphlimits}
We record some facts about graph limits that we will need. See \cite{lovasz2012large} for a general reference on the subject.

Let $F$ and $G$ be graphs. Let $\hom(F,G)$ denote the set of homomorphisms from $F$ to $G$, that is, maps $\phi:V(F)\to V(G)$ such that $uv\in E(F)$ implies $\phi(uv)\in E(G)$. Let $\homind(F,G)$ denote the set of induced homomorphisms from $F$ to $G$, that is, maps $\phi:V(F)\to V(G)$ such that $uv\in E(F)$ if and only if $\phi(uv)\in E(G)$. The \emph{homomorphism density} of $F$ in $G$ is defined by
$$t(F,G):=\frac{|\hom(F,G)|}{v(G)^{v(F)}}\,,$$
and, assuming $v(F)\leq v(G)$, the \emph{induced homomorphism density} of $F$ in $G$ is defined by
$$t_{\ind}(F,G):=\frac{|\homind(F,G)|}{v(G)(v(G)-1)\cdots(v(G)-v(F)+1)}\,.$$
The definitions of $t$ and $t_{\ind}$ make sense for unlabeled graphs by fixing an arbitrary labeling of the vertex sets.

A sequence $\{G_n\}_{n\in\N}$ of graphs is said to be \emph{convergent} if $\lim_{n\to\infty}t(F,G_n)$ exists for all fixed graphs $F$. We obtain an equivalence relation on the set of convergent graph sequences by considering two sequences equivalent if and only if they have the same limiting homomorphism densities for all graphs $F$. For all subsets $\cV\subseteq\cU$ of unlabeled graphs, let $\overline{\cV}$ denote the set of equivalence classes $\Gamma\subseteq\cU^\N$ of convergent graph sequences such that there exists a graph sequence $\{G_n\}_{n\in\N}\in\cV^\N\cap\Gamma$. The set $\cUbar$ is a completion of $\cU$, where $\cU$ is isometrically embedded in $\cUbar$ with respect to the cut metric (defined below). For all subsets $\cV\subseteq\cU$ denote $\widehat{\cV}:=\overline{\cV}\setminus\cV$, and call the elements of $\cUhat$ \emph{graph limits}.

If $\{G_n\}_{n\in\N}\in\Gamma\in\cUhat$ then write $G_n\to\Gamma$ and define, for all graphs $F$,
$$t(F,\Gamma):=\lim_{n\to\infty}t(F,G_n)\,.$$
Let $\cW$ denote the set of symmetric measurable functions $W:[0,1]^2\to[0,1]$ (symmetric here means $W(x,y)=W(y,x)$). The elements of $\cW$ are called \emph{graphons}. For all $W\in\cW$ and all graphs $F$, define the \emph{homomorphism density} of $F$ in $W$
$$t(F,W):=\int_{[0,1]^{v(f)}}\prod_{uv\in E(F)}W(x_u,x_v)\prod_{u\in V(F)}dx_u\,,$$
and the \emph{induced homomorphism density} of $F$ in $W$
$$t_{\ind}(F,W):=\int_{[0,1]^{v(F)}}\prod_{uv\in E(F)}W(x_u,x_v)\prod_{uv\in E(F^c)}(1-W(x_u,x_v))\prod_{u\in V(F)}dx_u\,.$$
\citeauthor{lovasz2006limits} proved in \cite{lovasz2006limits} that for all graph limits $\Gamma$ there exists a graphon $W\in\cW$ such that $t(F,W)=t(F,\Gamma)$ for all graphs $F$, and conversely for all $W\in\cW$ there is a unique graph limit $\Gamma$ such that $t(F,W)=t(F,\Gamma)$ for all $F$. We say that the graphon $W$ \emph{represents} $\Gamma$ and write $\widehat{W}:=\Gamma$. Similarly if $\cZ\subseteq\cW$ then we denote $\widehat{\cZ}:=\{\widehat{W}:W\in\cZ\}$. Two graphons are \emph{equivalent} if they represent the same graph limit; see \Cref{ftnt:GraphonUniquness} for a characterization of equivalent graphons. If $W$ is a graphon representing $\Gamma$ and $G_n\to\Gamma$ then we also write $G_n\to W$.

On several occasions we will work with graphons that are naturally associated to matrices, graphs, and regular partitions. For a matrix $A\in\R^{n\times n}$ let $W_A\in\cW$ denote the graphon defined by $W_A(x,y)=A_{\ceil{nx},\ceil{ny}}$ for $\min\{x,y\}>0$ and $W_A(x,y)=0$ for $\min\{x,y\}=0$. For all graphs $G\in\cG(n)$ let $A_G$ denote the adjacency matrix of $G$ with all ones on the diagonal, and if $G$ is a weighted graph then let $A_G$ denote the weights matrix of $G$. For all (weighted or unweighted) graphs $G$, denote $W_G:=W_{A_G}$. Let $R$ be an $(\epsilon,\epsilon',\delta,k')$-type associated to an $\epsilon$-regular partition $V(G)=V_0\cup V_1\cup\cdots\cup V_k$ of $G$ (as in \Cref{defn:typecoloringtype}). The \emph{density matrix} of $R$ is defined to be the $k\times k$ matrix $Q$ with $Q_{ij}=d(V_i,V_j)$. The graphon $W_R\in\cW$ is defined as follows: let $a:=|V_0|/n$ and let $b:=|V_1|/n$; for all $x\in[0,1]$ let $i_x=0$ if $x\in[0,a]$ or $\ceil*{\frac{x-a}{b}}$ if $x\in(a,1]$; let $W_R(x,y)=d(V_{i_x},V_{i_y})$. For all graph limits $\Gamma\in\cUhat$ let $W_\Gamma\in\cW$ be a canonically chosen graphon that represents $\Gamma$.

The \emph{cut norm} of a graphon $W\in\cW$ is defined to be
$$\norm{W}_\square:=\sup_{S,T\subseteq[0,1]}\left|\int_{S\times T}W(x,y)\,dx\,dy\right|\,.$$
For all $W,W'\in\cW$ define
$$\delta_\square(W,W'):=\inf_\phi\norm{W-W'\circ(\phi\otimes\phi)}_\square\,,$$
where the infimum is over all measure-preserving bijections $\phi:[0,1]\to[0,1]$. Let $\cG_w$ denote the set of weighted graphs with weights in $[0,1]$, and we view $\cG$ as being a subset of $\cG_w$. For all $\Gamma\in\cUbar\cup\cG_w\cup\cW$, define $K(\Gamma)$ to be the graphon $W_\Gamma$ if $\Gamma\in\cUbar\cup\cG_w$, and define $K(\Gamma):=\Gamma$ if $\Gamma\in\cW$. The \emph{cut metric} $\delta_\square$ is then defined on pairs $(\Gamma_1,\Gamma_2)\in(\cUbar\cup\cG_w\cup\cW)^2$ by
\begin{equation}
\delta_\square(\Gamma_1,\Gamma_2):=\delta_\square(K(\Gamma_1),K(\Gamma_2))\,.\label{eqn:cutmetricWholeDomain}
\end{equation}
The \emph{cut norm} of a real $n\times n$ matrix $A$ is defined to be
$$\norm{A}_\square:=\frac{1}{n^2}\max_{S,T\subseteq[n]}\left|\sum_{i\in S,j\in T}A_{ij}\right|\,.$$
The \emph{cut distance} between two weighted graphs $G,G'$ on $n$ vertices is defined as
$$d_\square(G,G'):=\norm{A_G-A_{G'}}_\square\,,$$
and one also defines the distance
$$\widehat{\delta}_\square(G,G'):=\min_{P}\norm{A_G-P^TA_{G'}P}_\square\,,$$
where the minimum is over $n\times n$ permutation matrices $P$.

The following lemma, almost immediate from the proof of \cite[Theorem~2.2]{lovasz2006limits} is a key preliminary step in our solution to the graphon variational problem since it provides, for a given claw-free graphon, a sequence of types of claw-free graphs. 

\begin{lemma}\label{lemma:UFHatGraphonSeq}
For all $W\in\cW$ such that $t_{\ind}(K_{1,3},W)=0$ and all $\delta>0$, there exists a sequence $\{W_m\}_{m\in\N}$ of graphons such that the following conditions hold:
\begin{enumerate}[(\emph{\roman*})]
	\item $\norm{W_m-W}_1\to0$ as $m\to\infty$.
	\item For all $m$ there exist constants $\eta_m,\eta'_m>0$ and $k'\geq4$ such that $W_m$ is the graphon associated with an $(\eta_m,\eta'_m,\delta/5,k')$-type $R_m$ of an claw-free graph. Additionally, the constants $\eta_m,\eta'_m,\delta/5,k'$ satisfy the hypotheses of \Cref{lemma:InducedEmbedding} for all $F\in\cF$.
	\item $\eta_m\to0$ as $m\to\infty$.
	\item The number of clusters of $R_m$ diverges as $m\to\infty$.
\end{enumerate}
\end{lemma}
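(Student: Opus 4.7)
The plan is to mimic the construction in the proof of \cite[Theorem~2.2]{lovasz2006limits}: sample a $W$-random graph $G_m$ for a sequence $n_m \to \infty$, use a sample-compatible partition to produce a type $R_m$, and set $W_m := W_{R_m}$. The hypothesis $t_{\ind}(K_{1,3}, W) = 0$ is used crucially: the expected number of induced claws in $G_m \sim G(n_m, W)$ equals $n_m(n_m-1)(n_m-2)(n_m-3) \cdot t_{\ind}(K_{1,3}, W) = 0$, so $G_m$ is claw-free almost surely and therefore $R_m$ will indeed be a type of a claw-free graph as required.

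\textbf{Construction.} Fix $k' \geq 4$ and, for each $m \in \N$, choose $\eta_m \to 0$ and $\eta'_m \to 0$ small enough to satisfy the hypotheses of \Cref{lemma:InducedEmbedding} applied with parameters $k'$ and $\delta/5$. Take $k_m := 2^m$ and let $n_m$ grow much faster than $k_m$. Sample $x_1, \ldots, x_{n_m}$ uniformly and independently in $[0,1]$, form $G_m \sim G(n_m, W)$, and consider the sample-based partition $V_i^m := \{v : x_v \in ((i-1)/k_m,\, i/k_m]\}$ for $i = 1, \ldots, k_m$. By Chernoff plus union-bound arguments, with probability $1 - o(1)$, simultaneously: the partition is equitable, every pair $\{V_i^m, V_j^m\}$ is $\eta_m$-regular, each $V_i^m$ admits either a dense or a sparse $(\mu, \eta'_m, k')$-subpartition (with $\mu$ the constant from \Cref{lemma:RegSubpartition}), and each cluster density $d(V_i^m, V_j^m)$ is within $o(1)$ of $k_m^2 \int_{((i-1)/k_m,\, i/k_m] \times ((j-1)/k_m,\, j/k_m]} W$. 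This data defines an $(\eta_m, \eta'_m, \delta/5, k')$-type $R_m$ of the claw-free graph $G_m$, and we set $W_m := W_{R_m}$.

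\textbf{Verification and main obstacle.} Properties (ii)--(iv) are immediate from the construction. For (i), let $W^{(k_m)}$ denote the step function obtained by averaging $W$ over the $k_m \times k_m$ dyadic equipartition of $[0,1]^2$, and apply the triangle inequality
\begin{equation*}
\|W_m - W\|_1 \leq \|W_m - W^{(k_m)}\|_1 + \|W^{(k_m)} - W\|_1.
\end{equation*}
The first term is $o(1)$ by the density concentration. The second term is $o(1)$ by the martingale convergence theorem applied to the filtration of nested dyadic equipartitions (for which $W^{(k_m)}$ is the conditional expectation of $W$). The main technical point is the uniform concentration required in the construction: $\eta_m$-regularity of all $\binom{k_m}{2}$ pairs, the dense/sparse subpartition condition for all $k_m$ clusters, and the block-density approximation must all hold simultaneously. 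Each of these involves controlling densities over exponentially many subsets, but Chernoff bounds combined with the choice $n_m \gg k_m / \eta_m^{O(1)}$ keep the total failure probability $o(1)$, at which point a good realization of the sample can be fixed.
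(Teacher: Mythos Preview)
There is a genuine gap in the regularity step. You claim that, with high probability, \emph{every} pair $\{V_i^m,V_j^m\}$ in the sample-based dyadic partition is $\eta_m$-regular, and you attribute this to ``Chernoff plus union bound''. Chernoff controls the deviation of an empirical density $d(A,B)$ from its conditional expectation $\frac{1}{|A||B|}\sum_{u\in A,\,v\in B} W(x_u,x_v)$, but it says nothing about how that expectation compares to the block average $d(V_i^m,V_j^m)$. If $W$ varies across the dyadic cell $I_i\times I_j$, an adversary can pick $A\subseteq V_i^m$, $B\subseteq V_j^m$ consisting of vertices whose latent coordinates lie in a sub-rectangle where $W$ is far from its block average; no amount of edge-concentration fixes this, and the pair is genuinely irregular. (Concretely, if $W$ restricted to $I_i\times I_j$ looks like a $\{0,1\}$-valued step function at a finer scale, the pair will have density near $1/2$ while suitable subsets have density near $0$ or $1$.) So the claim as stated is false for general claw-free $W$, and Chernoff alone cannot rescue even the weaker statement that at most $\eta_m k_m^2$ pairs are irregular.

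The paper sidesteps this entirely by applying the Type Lemma (a regularity lemma) to each sampled graph $G_n\sim G(n,W)$, which produces an $\eta$-regular partition automatically for any prescribed $\eta$, at the cost of losing control over which partition one gets. The $L^1$-convergence $\|W_m-W\|_1\to 0$ is then obtained not by comparison with dyadic block averages, but by the Lov\'asz--Szegedy diagonalisation: take a nested sequence of regular partitions of increasing refinement, pass to a subsequence along which the density matrices $Q_{n,j}$ converge to limits $Q_j$, and set $W_j:=W_{Q_j}$. Your route could likely be repaired by replacing ``every pair regular'' with ``at most $\eta_m k_m^2$ irregular pairs'' and deriving this from $\|W-W^{(k_m)}\|_1\to 0$ (Markov over blocks) rather than from Chernoff, but that is a different argument from the one you wrote.
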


\begin{definition}[$W$-random graph]\label{defn:GammaRandomGraph}
For all $n\in\N$ and $W\in\cW$, let $G(n,W)$ be the random graph on the vertex set $[n]$ sampled as follows. Let $X_1,\dots,X_n$ and $Y_{ij}$, $\{i,j\}\in\binom{[n]}{2}$, be i.i.d. uniformly distributed on $[0,1]$. For all $\{i,j\}\in\binom{[n]}{2}$, form an edge between vertices $i$ and $j$ if and only if $Y_{ij}\leq W(X_i,X_j)$. If $\Gamma$ is a graph limit then let $G(n,\Gamma)$ denote the random graph $G(n,W)$ for any graphon $W$ representing $\Gamma$ (and note the distribution of $G(n,\Gamma)$ is independent of $W$).
\end{definition}

\begin{proof}[Proof of \Cref{lemma:UFHatGraphonSeq}]
The sequence of random graphs $G_n:=G(n,W)$ converges to $W$ in cut metric with probability 1 (see \cite[Theorem~2.5]{lovasz2006limits}), and each $G_n$ is claw-free almost surely. Let $\epsilon,\epsilon'$ be the constants given when \Cref{lemma:InducedEmbedding} is applied with $k'$ and $\delta$. By applying the type lemma (\Cref{lemma:TypeLemma}) instead of weak regularity in the proof of \cite[Theorem~2.2]{lovasz2006limits}, we obtain a subsequence $\{G'_n\}_{n\in\N}$ of $\{G_n\}_{n\in\N}$ such that the following conditions hold. First, for all $n\in\N$ and $j\in[n]$, the graph $G'_n$ has an $(\eta,\epsilon',\delta,k')$-type $R_{n,j}$, where $\eta:=\min\{\epsilon,1/j\}$, such that $R_{n,j}$ has $k_j$ clusters and $k_j\times k_j$ density matrix $Q_{n,j}$. Second, for all $j\in\N$, there exists a $k_j\times k_j$ matrix $Q_j$ such that $\lim_{n\to\infty}Q_{n,j}\to Q_j$. Finally, letting $W_j:=W_{Q_j}\in\cW$ denote the natural graphon associated with $Q_j$, we have $W_j\to W$ almost everywhere, hence $\norm{W_m-W}_1\to0$ as $m\to\infty$.
\end{proof}

\subsection{Preliminaries on the Entropy of Graph Limits}
\label{subsec:entropygraphlimits}
For all graphons $W\in\cW$, the entropy of $W$ is defined to be
$$H(W):=\int_{[0,1]^2}H(W(x,y))\,dx\,dy\,.$$
The entropy of a graph limit $\Gamma\in\cUhat$ is defined to be $H(\Gamma):=H(W)$ for any $W\in\cW$ representing $W$ (and this is independent of $W$). For all $W\in\cW$ and $p\in(0,1)$ the relative entropy of $W$ is defined by
$$I_p(W):=\int_{[0,1]^2}I_p(W(x,y))\,dx\,dy\,,$$
where $I_p$ is the function defined in \pref{eqn:defIpx}. The relative entropy of a graph limit $\Gamma\in\cUhat$ is defined by $I_p(\Gamma):=I_p(W)$ for any $W\in\cW$ representing $\Gamma$. For a graphon $W\in\cW$ define the quantity
$$\rand(W):=|\{(x,y):0<W(x,y)<1\}|\,,$$
where $|\cdot|$ denote Lebesgue measure. For a graph limit $\Gamma\in\cUhat$ let $\rand(\Gamma):=\rand(W)$ for any $W$ representing $\Gamma$.

If $\cQ$ is a set of graphs and $n,m\in\N$ then let $\cQ(n):=\{G\in\cQ:v(G)=n\}$ and $\cQ(n,m):=\{G\in\cQ(n):e(G)=m\}$. The following lemma is a simple consequence of \cite[Theorem~1]{hatami2018graph}, proven in \Cref{app:deferred}.

\begin{lemma}\label{lemma:limsupentropyupperbound}
If $\cQ\neq\emptyset$ is a set of graphs, $p\in(0,1)$ is a constant, and $G\sim G(n,p)$ then
$$\limsup_{n\to\infty}\frac{1}{\binom{n}{2}}\log_2\bbP\{G\in\cQ(n)\}\leq\sup_{\Gamma\in\overline{\cQ}}I_p(\Gamma)\,.$$
\end{lemma}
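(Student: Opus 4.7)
The plan is to invoke the large-deviation upper bound for $G(n,p)$ in the cut metric --- the content of \cite[Theorem~1]{hatami2018graph} --- applied to the closed set $\overline{\cQ}$.

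First I would reduce to a statement about graph limits. Each finite graph $G\in\cQ(n)$ is identified with the equivalence class of the constant sequence $(G,G,\dots)\in\cQ^\N$, which lies in $\overline{\cQ}$ by the definition of $\overline{\cV}$ recorded in \Cref{subsec:graphlimits}. Therefore
$$\bbP\{G\in\cQ(n)\}\leq\bbP\{\widehat{G(n,p)}\in\overline{\cQ}\},$$
so it suffices to bound the right-hand side.

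Next I would verify that $\overline{\cQ}$ is $\delta_\square$-closed, which is needed for the LDP upper bound to apply. If $\Gamma_k\to\Gamma$ with each $\Gamma_k\in\overline{\cQ}$, a diagonal argument selects from the representing sequence of each $\Gamma_k$ an element $G_k\in\cQ$ with $\delta_\square(G_k,\Gamma_k)<1/k$, producing a sequence in $\cQ^\N$ converging to $\Gamma$ in cut metric; hence $\Gamma\in\overline{\cQ}$. Combined with the Lov\'asz--Szegedy compactness of $\cUbar$, the set $\overline{\cQ}$ is even compact. Applying the LDP upper bound with $F=\overline{\cQ}$ yields
$$\limsup_{n\to\infty}\frac{1}{\binom{n}{2}}\log_2\bbP\{\widehat{G(n,p)}\in\overline{\cQ}\}\leq\sup_{\Gamma\in\overline{\cQ}}I_p(\Gamma),$$
which combined with the preceding inclusion proves the lemma.

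I do not anticipate a substantial obstacle: the nontrivial content is entirely contained in the cited LDP, and the only routine check is to align sign conventions --- the paper's $I_p$ is the negative of the Kullback--Leibler rate function, so the large deviation ``$-\inf$ of the KL rate'' becomes exactly ``$\sup I_p$'' as in the statement. If one preferred not to cite \cite{hatami2018graph} as a black box, an alternative route is to decompose $\bbP\{G\in\cQ(n)\}=\sum_m|\cQ(n,m)|\,p^m(1-p)^{\binom{n}{2}-m}$, apply the entropy-density formula for $|\cQ(n,m)|$ at each density $\gamma=m/\binom{n}{2}$, and use the identity $I_p(\Gamma)=H(\Gamma)+d(\Gamma)\log_2\frac{p}{1-p}+\log_2(1-p)$ from \pref{eqn:defIpx} to absorb the binomial weight; the $\binom{n}{2}+1$ resulting terms contribute only a polynomial factor that vanishes under the $\frac{1}{\binom{n}{2}}\log_2$ normalization.
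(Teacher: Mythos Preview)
Your proposal is correct. Interestingly, the paper does not take your main route: it follows exactly your ``alternative'' approach, decomposing $\bbP\{G\in\cQ(n)\}$ over edge counts $m_\gamma=\lfloor\gamma\binom{n}{2}\rfloor$, bounding $\limsup_n\binom{n}{2}^{-1}\log_2|\cQ(n,m_\gamma)|$ by $\sup_{W\in\widehat{\cQ}_\gamma}H(W)$ via \cite[Theorem~1]{hatami2018graph}, and then reassembling using the identity \pref{eqn:defIpx}. Your direct invocation of the LDP upper bound on the closed set $\overline{\cQ}$ is cleaner and avoids the bookkeeping of swapping $\limsup_n$ with $\sup_\gamma$; on the other hand, the paper's approach relies only on the entropy-density statement for a graph class rather than on the full Chatterjee--Varadhan large deviation principle, so it is in that sense more self-contained. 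One small remark: the LDP upper bound you need is the content of \cite{chatterjee2011large}, not \cite[Theorem~1]{hatami2018graph}; the latter is the entropy-density formula the paper uses in its edge-count decomposition.
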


If $H$ is a fixed graph and $n,m\in\N$ then let $\cF(n,H)$ denote the set of induced-$H$-free graphs on $n$ vertices, and let $\cF(n,m,H):=\cF(n,H)\cap\cG(n,m)$. The following propositions, proven in \Cref{app:deferred}, state that the rate function and entropy density for induced-$H$-free graphs is given by a variational problem over a set of induced-$H$-free graphons.

\begin{proposition}\label{lemma:LDPforGNPkl}
Let $p\in(0,1)$ be a constant and $G\sim G(n,p)$. If $H$ is a fixed graph and
$$\cF:=\{W\in\cW:t_{\ind}(H,W)=0,\,\rand(W)>0\}$$
is nonempty then
$$\lim_{n\to\infty}\frac{1}{\binom{n}{2}}\log_2\bbP\{G\in\cF(n,H)\}=\sup_{W\in\cF}I_p(W)\,.$$
\end{proposition}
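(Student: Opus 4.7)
The proof plan splits into matching upper and lower bounds, following the Chatterjee--Varadhan framework adapted to the induced setting. The upper bound follows from \Cref{lemma:limsupentropyupperbound} applied to $\cQ(n):=\cF(n,H)$: since $t_{\ind}(H,\cdot)$ is continuous in the cut metric, its zero set is closed, so the closure $\overline{\cF(n,H)}$ in graph-limit space consists exactly of those $\Gamma$ with $t_{\ind}(H,\Gamma)=0$, and the lemma yields
\[
\limsup_{n\to\infty}\tfrac{1}{\binom{n}{2}}\log_2\bbP\{G\in\cF(n,H)\}\leq\sup\{I_p(W):W\in\cW,\,t_{\ind}(H,W)=0\}.
\]
The $\rand(W)>0$ constraint in $\cF$ does not lower this supremum: given any induced-$H$-free $W^\ast$ and any fixed $W_0\in\cF$ (which exists by hypothesis), construct $W_\epsilon$ by rescaling $W^\ast$ onto $[0,1-\epsilon]^2$, rescaling $W_0$ onto $[1-\epsilon,1]^2$, and setting $W_\epsilon=0$ off the diagonal. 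For connected $H$---in particular $H=K_{1,3}$---the disjoint-block structure forces every induced copy of $H$ to sit inside a single block, yielding $t_{\ind}(H,W_\epsilon)=(1-\epsilon)^{v(H)}t_{\ind}(H,W^\ast)+\epsilon^{v(H)}t_{\ind}(H,W_0)=0$; clearly $\rand(W_\epsilon)>0$, and a direct computation gives $I_p(W_\epsilon)\to I_p(W^\ast)$ as $\epsilon\to 0$, so $W_\epsilon\in\cF$ suffices.

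For the lower bound, fix $W\in\cF$. After an $L^1$-approximation (using $L^1$-continuity of $I_p$ and cut-metric continuity of $t_{\ind}$), one may assume $W$ is a step-function graphon with $k$ equal-measure blocks of densities $A_{ij}\in[0,1]$, and partition $[n]$ into equal parts $U_1,\dots,U_k$. Let $\cG_\eta$ denote the set of graphs $G$ on $[n]$ with $|d_G(U_i,U_j)-A_{ij}|<\eta$ for all $i,j$. A Stirling estimate yields $|\cG_\eta|\geq 2^{\binom{n}{2}H(W)-o(\binom{n}{2})}$, and since every $G\in\cG_\eta$ has $e(G)=\binom{n}{2}\bigl(\int W+O(\eta)\bigr)$, one obtains $\bbP\{G(n,p)\in\cG_\eta\}\geq 2^{\binom{n}{2}I_p(W)-o(\binom{n}{2})}$. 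To restrict to the induced-$H$-free portion of $\cG_\eta$, use the $W$-random graph: $\E\bigl[\homind(H,G(n,W))\bigr]$ is proportional to $t_{\ind}(H,W)=0$, so $G(n,W)$ is induced-$H$-free almost surely, while conditioned on the latent $X_i$ lying in the correct blocks the block-model mass $\pi_W$ is quasi-uniform on $\cG_\eta$ with max/min ratio at most $\exp(O(\eta n^2))$. Combining gives $|\cG_\eta\cap\cF(n,H)|\geq|\cG_\eta|\exp(-O(\eta n^2))$. Choosing $\eta=\eta(n)\to 0$ with $\eta n\to\infty$ but $\eta n^2=o(\binom{n}{2})$ (for instance $\eta=n^{-1/3}$) makes the distortion negligible, giving $\bbP\{G(n,p)\in\cG_\eta\cap\cF(n,H)\}\geq 2^{\binom{n}{2}I_p(W)-o(\binom{n}{2})}$. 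Taking $\liminf$ and then supping over $W\in\cF$ closes the lower bound.

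The main obstacle is the second step of the lower bound: threading the accuracy parameter $\eta$ so that block-density concentration under $G(n,W)$ holds (forcing $\eta\gg 1/n$) while the $\pi_W$-distortion across $\cG_\eta$ contributes only $o(\binom{n}{2})$ to the log-probability (forcing $\eta=o(1)$). Any $\eta=n^{-\alpha}$ with $\alpha\in(0,1)$ satisfies both, so the argument should go through, but setting up the two concentration estimates in compatible forms is the technical heart of the proof.
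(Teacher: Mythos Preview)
Your upper bound is essentially the paper's approach: both invoke \Cref{lemma:limsupentropyupperbound}. Your treatment of the $\rand(W)>0$ constraint via the block construction $W_\epsilon$ is a bit different from the paper's (which instead argues directly, via \Cref{lemma:ChatgammaDgammaEquivOpt}, that any optimizer must have $\rand>0$ because $H(W)=0$ otherwise), and your version requires $H$ connected. That is fine for the application to $K_{1,3}$ but does not cover the proposition as stated.

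There is a genuine gap in your lower bound. You write ``after an $L^1$-approximation \dots\ one may assume $W$ is a step-function graphon,'' and then use that $t_{\ind}(H,W)=0$ for this step function to conclude $G(n,W)$ is induced-$H$-free almost surely. But an $L^1$-close step-function approximant of $W$ need not satisfy $t_{\ind}(H,\cdot)=0$; averaging $W$ over blocks can create values in $(0,1)$ where $W$ was $\{0,1\}$-valued, and this can produce positive induced-$H$-density. Cut-metric continuity of $t_{\ind}$ gives you only $t_{\ind}(H,W_k)\to 0$, not $=0$, and with $t_{\ind}(H,W_k)>0$ the block model is no longer supported on $\cF(n,H)$. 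Your quasi-uniformity transfer then fails: you cannot conclude that any positive fraction of $\cG_\eta$ lies in $\cF(n,H)$. (If instead you keep the original $W$ for the $W$-random graph while using a partition only to define $\cG_\eta$, the conditional edge law is a product of Bernoullis with \emph{varying} parameters $W(X_i,X_j)$ inside each block, and the quasi-uniformity ratio is no longer $\exp(O(\eta n^2))$.)

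The paper sidesteps this by never discretizing. It works directly with $G(n,W)$ for the original $W$, uses that its support lies in $\cF(n,H)$, and appeals to the entropy-convergence theorem $H(G(n,W))/\binom{n}{2}\to H(W)$ (Janson). The remaining work is an edge-swapping argument inside the region $R_W=\{0<W<1\}$---which is nonempty since $\rand(W)>0$---to pin down the edge count to $m_\gamma$ while staying inside the support, yielding $H(G(n,W))\le\log_2|\cF(n,m_\gamma,H)|+o(n^2)$. Your approach could be repaired either by working with $G(n,W)$ directly in this way, or by coupling the step-function block model with an induced removal step (editing $o(n^2)$ edges to kill the $o(n^{v(H)})$ induced copies of $H$); but as written the discretization step does not go through.
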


\begin{proposition}\label{lemma:FnmGraphonOptAsymps}
Let $\gamma\in(0,1)$, $n,m\in\N$, and $m\sim\gamma\binom{n}{2}$. If $H$ is a fixed graph and
$$\cF_\gamma:=\{W\in\cW:t(K_2,W)=\gamma,\,t_{\ind}(H,W)=0,\,\rand(W)>0\}$$
is nonempty then
$$\lim_{n\to\infty}\frac{1}{\binom{n}{2}}\log_2|\cF(n,m,H)|=\sup_{W\in\cF_\gamma}H(W)\,.$$
\end{proposition}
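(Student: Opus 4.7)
The plan is to prove matching upper and lower bounds of $\sup_{W \in \cF_\gamma} H(W)$ on the $\limsup$ and $\liminf$ of $\binom{n}{2}^{-1}\log_2|\cF(n,m,H)|$. Throughout, I would use the identity $|\cF(n,m,H)| = 2^{\binom{n}{2}}\bbP(G(n,1/2) \in \cF(n,m,H))$ to convert counting into probability under $G(n,1/2)$, together with the computation $I_{1/2}(W) = H(W) - 1$.

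For the upper bound, I would apply \Cref{lemma:limsupentropyupperbound} with $p = 1/2$ to the sequence $\cQ(n) := \cF(n, m_n, H)$, obtaining
\begin{equation*}
\limsup_{n\to\infty} \frac{1}{\binom{n}{2}} \log_2 |\cF(n,m,H)| \leq 1 + \sup_{\Gamma \in \overline{\cQ}} I_{1/2}(\Gamma) = \sup_{\Gamma \in \overline{\cQ}} H(\Gamma).
\end{equation*}
By continuity of $t(K_2, \cdot)$ and $t_{\ind}(H, \cdot)$ in the cut metric, every $\Gamma \in \overline{\cQ}$ satisfies $t(K_2, W_\Gamma) = \gamma$ and $t_{\ind}(H, W_\Gamma) = 0$; any such $\Gamma$ with $\rand(W_\Gamma) > 0$ already lies in $\cF_\gamma$, while any $\Gamma$ with $\rand(W_\Gamma) = 0$ satisfies $H(\Gamma) = 0$. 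Therefore $\sup_{\overline{\cQ}} H \leq \sup_{\cF_\gamma} H$.

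For the lower bound, I would fix $W \in \cF_\gamma$ and let $\mu_W$ denote the law of the $W$-random graph $G(n, W)$ from \Cref{defn:GammaRandomGraph}. Two observations are central: first, $t_{\ind}(H, W) = 0$ implies that the expected number of induced copies of $H$ in $G(n, W)$ is zero, so $G(n, W)$ is almost surely induced-$H$-free; second, conditional on the vertex labels $X_1,\ldots,X_n$, the edge count $e(G(n, W))$ is a sum of $\binom{n}{2}$ independent Bernoullis whose conditional variance $\sum_{ij} W(X_i, X_j)(1 - W(X_i, X_j))$ is $\Theta(n^2)$ for typical $X$, thanks to $\rand(W) > 0$. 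Combining a local central limit theorem for this conditional Bernoulli sum with Gaussian tail bounds on the conditional mean $\sum_{ij} W(X_i, X_j)$ around $\gamma\binom{n}{2}$, one can show
\begin{equation*}
\bbP(e(G(n,W)) = m) \geq 2^{-o(1) \binom{n}{2}}
\end{equation*}
uniformly in $m$ with $|m - \gamma\binom{n}{2}| = o(n^2)$.

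To finish, I would invoke an asymptotic equipartition property for $\mu_W$, namely that there is a typical set $T_n \subseteq \cG(n)$ with $\mu_W(T_n) \to 1$ and $\max_{G \in T_n} \mu_W(G) \leq 2^{-(H(W) - o(1)) \binom{n}{2}}$. Granting this and combining with the probability bound above,
\begin{equation*}
|\cF(n,m,H)| \geq \frac{\mu_W(\cF(n,m,H) \cap T_n)}{\max_{G \in T_n} \mu_W(G)} \geq 2^{(H(W) - o(1)) \binom{n}{2}}.
\end{equation*}
The main obstacle is establishing this AEP: since $\mu_W$ is a mixture over $X$ of product measures rather than a product measure itself, concentration of $-\log_2 \mu_W(G)$ about $\binom{n}{2}H(W)$ requires controlling the mutual information $I(G; X)$ at $o(\binom{n}{2})$, which should hold because the graph determines $X$ only up to a measure-preserving permutation.
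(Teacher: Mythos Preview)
Your upper bound is correct and is exactly what the paper does: apply \Cref{lemma:limsupentropyupperbound} with $p=1/2$ and observe that every limit graphon of the sequence $\cF(n,m,H)$ either lies in $\cF_\gamma$ or has zero entropy.

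For the lower bound your AEP route is genuinely different from the paper's, and it has two gaps. The first is the one you flag: the one-sided AEP you need is $\bbP\big(-\log_2\mu_W(G) < (H(W)-\epsilon)\binom{n}{2}\big)\to 0$, which is a concentration statement for the mixture $\mu_W$, not merely the convergence of its mean; knowing $I(G;X)=o\big(\binom{n}{2}\big)$ gives you $H(G(n,W))\sim H(W)\binom{n}{2}$ but not this lower-tail bound. The second gap is in the combination step: from $\mu_W(T_n^c)=o(1)$ and $\mu_W(e(G)=m)\geq 2^{-o(n^2)}$ you cannot conclude $\mu_W(T_n\cap\{e(G)=m\})\geq 2^{-o(n^2)}$, because the edge-count probability can be far below any polynomial---for instance when $|m-\gamma\binom{n}{2}|$ is of order $n^{1.9}$ it is of order $e^{-cn^{0.8}}$, which is swamped by $\mu_W(T_n^c)$.

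The paper avoids both obstacles by working with the \emph{expectation} of $-\log_2\mu_W(G)$ rather than its tails. It quotes the known fact $H(G(n,W))/\binom{n}{2}\to H(W)$ (Janson) and then proves $H(G(n,W))\leq\log_2|\cF(n,m,H)|+o(n^2)$ directly: the edge count of $G(n,W)$ concentrates, so $H(G(n,W))$ is essentially bounded by the log of the support restricted to edge counts in a window around $m$; then a \emph{transfer} argument, using the positive-measure set $\{0<W<1\}$, shows that support at any nearby edge count $m'$ injects into the support at $m$ after editing at most $\epsilon\binom{n}{2}$ edges, giving $|\text{supp}(G(n,W))\cap\cG(n,m')|\leq 2^{\epsilon\binom{n}{2}}|\cF(n,m,H)|$ for each such $m'$. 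This uses only the mean entropy and a support-cardinality comparison, so no pointwise AEP or intersection argument is needed.
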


\section{Solving the Graphon Variational Problem}
\label{sec:GraphonOpt}
In this section we solve the variational problem over claw-free graphons with edge density $\gamma$. Several definitions, notations, and lemmas used in this section were recorded in \Cref{subsec:graphlimits,subsec:entropygraphlimits}. For all $\gamma\in(0,1)$, define the set of graphons
\begin{equation}
\cX_\gamma:=\big\{W\in\cW:t(K_2,W)=\gamma\,,\,t_{\ind}(K_{1,3},W)=0\big\}\,,\label{eqn:defnXgamma}
\end{equation}
and the variational problem
\begin{equation}\phi(\gamma):=\sup\{H(W):W\in\cX_\gamma\} \,.\label{eqn:variationalproblem}
\end{equation}
Let $\cX_\gamma^\ast$ denote the set of all $W\in\cX_\gamma$ that achieve the supremum in \pref{eqn:variationalproblem}. Note $\cX^\ast_\gamma$ is nonempty since $\cX_\gamma$ is compact (see \cite[Theorem~3.7]{borgs2008convergent}) and $H$ is upper semicontinuous (see \cite[Lemma~2.1]{chatterjee2011large}).

\subsection{Overview of Proof}
The solution to the variational problem \pref{eqn:variationalproblem} uses types (introduced in \Cref{sec:prelim}) and an extremal problem over edge colorings (\Cref{lemma:c4ATMOSTc3,lemma:EdgeColStab}). \Cref{lemma:ClawForbiddenColorings} shows that if $R$ is a type for a claw-free graph, then the type coloring of $R$ avoids certain substructures: $R$ contains no triangle with three random-like edges, nor any triangle with two random-like edges and one sparse edge. \Cref{lemma:c4ATMOSTc3} proves that a 3-edge-coloring avoiding these substructures has at most as many random-like edges as dense edges, and \Cref{lemma:EdgeColStab} proves extremal structure and stability. We use these results to solve \pref{eqn:variationalproblem} as follows.

Every optimal graphon $W$ for $\phi$ is the limit of a sequence $R_n$ of types associated with claw-free graphs on $n$ vertices. The concavity of $I_p$ implies that $W$ takes three values $\{0,a,1\}$ almost everywhere for some $a\in(0,1)$. The three values of $W$ approximately correspond with the three types of edge labels in $R_n$. Since $\int W=\gamma$, we have $\gamma=x+ay$ and $H(W)=yH(a)$, where $x$ is the measure of the set where $W=1$ and $y$ is the measure of the set where $W=a$. Since $R_n$ has at most as many random-like edges as dense edges, we show that $y\leq x$. Further, $yH(a)$ is maximum if and only if $x=y$, and there is a threshold at $\gamma^\ast=\frac{5-\sqrt{5}}{4}$\,: for all $\gamma\geq\gamma^\ast$, we have $x=y=\frac{1}{2}$ while for $\gamma<\gamma^\ast$, we have $x=y=\frac{5-\sqrt{5}}{20}\gamma$. For large $n$, the equality $x=y$ means the number of random-like edges in $R_n$ is very close to the number of dense edges. The stability result \Cref{lemma:EdgeColStab} then implies $R_n$ is closed in Hamming distance to a certain extremal structure and allows us to deduce $W$ is equivalent to a graphon in a set $\cV_\gamma$ of well-structured graphons defined below.

For all $\gamma\geq\gamma^\ast$, the set $\cV_\gamma$ contains one element (up to equivalence), the unique optimal graphon at edge density $\gamma$. For $\gamma<\gamma^\ast$, the set $\cV_\gamma$ has infinitely many (nonequivalent) graphons. Since every graphon in $\cV_\gamma$ achieves the same entropy, has edge density $\gamma$, and is claw-free, we prove $\cV_\gamma$ is precisely the set of optimizers for $\phi$.
\medskip\smallskip

We now define the graphons that we will prove comprise $\cX^\ast_\gamma$. Let $\xi:\R^2\to[0,1]$ be the step function defined by
$$\xi(x,y)=\begin{cases}1&(x,y)\in\big(0,\frac{1}{2}\big)^2\cup\big(\frac{1}{2},1\big)^2\\[3pt]\frac{3-\sqrt{5}}{2}&(x,y)\in\big(0,\frac{1}{2}\big)\times\big(\frac{1}{2},1\big)\cup\big(\frac{1}{2},1\big)\times\big(0,\frac{1}{2}\big)\\[3pt]0&\text{otherwise}\end{cases}\,.$$
Let $\Lambda$ denote the set of all sequences $(\lambda_0,\lambda_1,\dots)$ of at least two real numbers (the sequence may be finite or infinite) such that three conditions hold: (\emph{i}) $\lambda_0=0$, (\emph{ii}) $\lambda_i<\lambda_{i+1}\leq1$, and (\emph{iii}) the numbers $\lambda_{i+1}-\lambda_i$ are nonincreasing in $i$. For all $\blambda=(\lambda_0,\lambda_1,\dots)\in\Lambda$, define the graphon $W_{\blambda}\in\cW$ by
\begin{equation}
W_{\blambda}(x,y)=\sum_{0\leq i<|\blambda|}\xi\left(\frac{x-\lambda_{i}}{\lambda_{i+1}-\lambda_{i}},\frac{y-\lambda_{i}}{\lambda_{i+1}-\lambda_{i}}\right)\,,\label{eqn:flambdaGraphonDef}
\end{equation}
where $|\blambda|$ denotes the cardinality of $\blambda$ as a set. (\Cref{fig:graphons-super,fig:graphons-sub} depict such graphons $W_{\blambda}$.) Let $\cV:=\{W_{\blambda}:\blambda\in\Lambda\}$ denote the set of all such graphons. For all $\gamma\in\big(0,\frac{5-\sqrt{5}}{4}\big)$ let
\begin{align*}
\cV_\gamma &:= \left\{W_{\blambda}:\int W_{\blambda}(x,y)\,dx\,dy=\gamma\,,\,\blambda\in\Lambda\right\} \\
&= \left\{W_{\blambda}:\sum_{0\leq i<|\blambda|}(\lambda_{i+1}-\lambda_{i})^2=\gamma\left(1+\frac{\sqrt{5}}{5}\right)\,,\,\blambda=(\lambda_0\,,\,\lambda_1\,,\,\dots)\in\Lambda\right\}\,,
\end{align*}
and for all $\gamma\in\big[\frac{5-\sqrt{5}}{4},1\big)$ let $\cV_\gamma$ be the set containing the single graphon $W^\ast_\gamma\in\cX_\gamma$ defined by
\begin{equation}
W^\ast_\gamma(x,y)=\begin{cases}1&(x,y)\in\big(0,\frac{1}{2}\big)^2\cup\big(\frac{1}{2},1\big)^2\\[3pt]2\gamma-1&(x,y)\in\big(0,\frac{1}{2}\big)\times\big(\frac{1}{2},1\big)\cup\big(\frac{1}{2},1\big)\times\big(0,\frac{1}{2}\big)\\[2pt]0&\text{otherwise}\end{cases}\,.\label{eqn:deffstarlambdagraphon}
\end{equation}
Notice that for all $\gamma\in(0,1)$ we have $\cV_\gamma\subseteq\cX_\gamma$. Also note that for all $\gamma\in\big(0,\frac{5-\sqrt{5}}{4}\big)$, there are graphons $W_{\blambda}\in\cV_\gamma$ for which $\blambda=(\lambda_0,\lambda_1,\dots)$ is an infinite sequence (in fact, there are infinitely many non-equivalent such graphons). For example, for edge density $\gamma=\frac{5-\sqrt{5}}{12}$, the set $\cV_\gamma$ contains the graphon $W_{\blambda}$ where $\blambda=(0,\frac{1}{2},\frac{3}{4},\frac{7}{8},\dots)$.

The following proposition states the solution to the variational problem \pref{eqn:variationalproblem}. In the statement, recall that if $\cZ\subseteq\cW$ is a set of graphons then $\widehat{\cZ}\subseteq\widehat{\cU}$ denotes the set of graph limits represented by elements of $\cZ$. This notation allows us to state that two sets of graphons are the same up to equivalence (in the sense of \Cref{ftnt:GraphonUniquness}).

\begin{proposition}\label{prop:FsubsetLambda}
For all $\gamma\in(0,1)$ we have $\widehat{\cX}^\ast_\gamma=\widehat{\cV}_\gamma$.
\end{proposition}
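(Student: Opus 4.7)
The plan is to establish both inclusions separately. For $\widehat{\cV}_\gamma\subseteq\widehat{\cX}^\ast_\gamma$, I would directly verify that each $W_\blambda\in\cV_\gamma$ lies in $\cX_\gamma$: the density constraint holds by construction of $\cV_\gamma$, and the graphon is claw-free because $W_\blambda$ is a disjoint union of scaled copies of the co-bipartite step graphon $\xi$, each of which is claw-free (in a co-bipartite graph every vertex is adjacent to all but at most one vertex in one of the two cliques, so it cannot be the center of an induced claw). A direct computation yields $H(W_\blambda)=\tfrac{1}{2}H\bigl(\tfrac{3-\sqrt{5}}{2}\bigr)\sum_i(\lambda_{i+1}-\lambda_i)^2=r^\ast(\gamma)$ for $\gamma<\gamma^\ast$, and $H(W^\ast_\gamma)=\tfrac{1}{2}H(2\gamma-1)=r^\ast(\gamma)$ for $\gamma\geq\gamma^\ast$. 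Combined with the matching upper bound $\phi(\gamma)\leq r^\ast(\gamma)$ obtained from the converse direction, this gives $\cV_\gamma\subseteq\cX^\ast_\gamma$.

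For the reverse inclusion, I would begin with an arbitrary $W\in\cX^\ast_\gamma$ and invoke \Cref{lemma:UFHatGraphonSeq} to produce an $L^1$-approximating sequence $\{W_m\}$, where each $W_m$ is the graphon of an $(\eta_m,\eta'_m,\delta/5,k')$-type $R_m$ of some claw-free graph. By \Cref{lemma:ClawForbiddenColorings}, the type coloring of $R_m$ avoids both a triangle with three random-like edges and a triangle with two random-like edges plus one sparse edge, so the edge-coloring extremal result referenced in the proof overview bounds the fraction of random-like edges of $R_m$ by the fraction of dense edges. Translating this through the natural partition of $[0,1]^2$ into sparse, random-like, and dense regions of $W_m$ and passing to the limit $m\to\infty$ and then $\delta\to 0$, I obtain the measure inequality $y\leq x$, where $x:=\lvert\{W=1\}\rvert$ and $y:=\lvert\{0<W<1\}\rvert$.

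The next step is to pin down the value structure of $W$. By concavity of $H$ applied on the set $\{0<W<1\}$ under the density constraint $\int W=\gamma$, one shows the optimal $W$ is constant on this set, say $W\equiv a\in(0,1)$ there, so that $\gamma=x+ay$ and $H(W)=yH(a)$; the averaging required to reach this reduction must be performed in a way compatible with the claw-free constraint, which is where the block structure from the stability step below feeds back in. Maximizing $yH(a)$ subject to $y\leq x$, $x+y\leq 1$, and $x+ay=\gamma$ yields two regimes: for $\gamma\geq\gamma^\ast$, the optimum has $x=y=\tfrac12$ and $a=2\gamma-1$; for $\gamma<\gamma^\ast$, the binding constraint $y=x$ together with the first-order condition $H(a)=(1+a)H'(a)$ gives $a=\tfrac{3-\sqrt5}{2}$ and $x=y=\gamma(5+\sqrt{5})/10$. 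In each case the entropy equals $r^\ast(\gamma)$, proving $\phi(\gamma)=r^\ast(\gamma)$.

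Finally I would use the stability companion to the edge-coloring extremal result to recover the block structure. Since $R_m$ attains the extremal edge count asymptotically, stability forces $R_m$ to be close in Hamming distance to a canonical extremal configuration: a partition of the cluster index set into groups, with dense edges inside each group forming a co-bipartite pattern and sparse edges between distinct groups. Lifting this back to $W_m$ via the graphon-type correspondence and taking $m\to\infty$ (after a measure-preserving rearrangement of $[0,1]$), the limit $W$ must agree almost everywhere with some $W_\blambda\in\cV_\gamma$; the nonincreasing condition on $\lambda_{i+1}-\lambda_i$ can be arranged by reordering the blocks. The main obstacle I anticipate is synchronizing the concavity/averaging step with the claw-free constraint: making $W$ three-valued without destroying claw-freeness requires leveraging the stability-derived block structure on $R_m$ before averaging, and it is this interplay between the combinatorial stability on finite types and the measure-theoretic concavity on the limit graphon that I expect to be the crux of the proof.
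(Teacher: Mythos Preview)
Your proposal follows essentially the same architecture as the paper's proof: the inclusion $\widehat{\cV}_\gamma\subseteq\widehat{\cX}^\ast_\gamma$ via direct entropy computation plus the upper bound, and the reverse inclusion via \Cref{lemma:UFHatGraphonSeq}, the edge-coloring extremal/stability results, and the three-value reduction.

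Your one flagged obstacle, however, is not an obstacle, and your proposed resolution of it (feeding the stability-derived block structure back into the averaging step) is both unnecessary and in the wrong order. The paper proves the three-value reduction (\Cref{lemma:threevaluegraphon}) \emph{before} and \emph{independently of} any stability argument, by the following observation: if you replace $W$ on $R_W=\{0<W<1\}$ by its average $a$, the resulting graphon $W'$ automatically satisfies $t_{\ind}(K_{1,3},W')=0$. The reason is that $t_{\ind}(K_{1,3},W)=0$ forces the integrand to vanish almost everywhere, and the integrand vanishes at $(x_1,\dots,x_4)$ precisely when some factor $W(x_i,x_j)$ or $1-W(x_i,x_j)$ is zero, i.e.\ when $W(x_i,x_j)\in\{0,1\}$ with the appropriate value. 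Since $W'$ agrees with $W$ on $\{W\in\{0,1\}\}$, the same factor vanishes for $W'$. Thus the averaging preserves claw-freeness with no appeal to block structure. Once you have this, the paper's order is: (i) three-value reduction, (ii) the measure inequality $|R_W|\le|O_W|$ from the extremal lemma, (iii) the scalar optimization (\Cref{lemma:kktopt}) forcing $|R_W|=|O_W|$ and pinning down $a$, and only then (iv) the stability result to identify $W$ with some $W_{\blambda}$ up to equivalence. Your proposal has the right ingredients but tangles (i) with (iv); untangling them as above removes the difficulty you anticipated.
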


The proof of \Cref{prop:FsubsetLambda} uses extremal and stability properties of edge colorings, which we state now and prove in \Cref{sec:edgecolorings}. If $\varphi$ is a red--green--blue coloring of $E(K_n)$, let $e_r(\varphi)$, $e_g(\varphi)$, and $e_b(\varphi)$ denote the number of red, green, and blue edges of $\varphi$, respectively.

\begin{lemma}\label{lemma:c4ATMOSTc3}
Let $\varphi$ be a red--green--blue coloring of $E(K_n)$. If $\varphi$ has no triangle colored (red, red, red) nor any colored (red, red, green) then
\begin{equation}
e_r(\varphi)\leq e_b(\varphi)+\floor*{\frac{n}{2}}\,.\label[inequality]{ineq:thmredblueineq}
\end{equation}
\end{lemma}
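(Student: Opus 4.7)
The plan is to prove the bound by induction on $n$, driven by a local ``blue-clique'' structure that the two forbidden color patterns force around every vertex.

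The key observation I would establish first is: for every vertex $v$, the red neighborhood $N_R(v) := \{u : \varphi(uv) = \text{red}\}$ induces a blue clique. Indeed, for any two vertices $x,y \in N_R(v)$ the triangle $\{v,x,y\}$ already has two red edges $vx, vy$, so the third edge $xy$ is neither red (forbidden as (red,red,red)) nor green (forbidden as (red,red,green)), hence blue. As an immediate consequence, whenever $uv$ is a red edge, every $w \in N_R(v)\setminus\{u\}$ is blue-adjacent to $u$, so $d_B(u) \geq d_R(v)-1$, and symmetrically $d_B(v) \geq d_R(u)-1$. Summing yields the local degree inequality
\[
d_R(u) + d_R(v) - d_B(u) - d_B(v) \;\leq\; 2 \qquad \text{for every red edge } uv.
\]

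With this in hand I would induct on $n$, the base cases $n \leq 2$ being trivial. For $n \geq 3$ split into two cases. If the red graph has an isolated vertex $v$, delete it: the hypothesis persists on $K_{n-1}$, neither $e_r$ nor $e_b$ changes, and the inductive bound $e_r \leq e_b + \lfloor (n-1)/2\rfloor \leq e_b + \lfloor n/2\rfloor$ finishes this case. Otherwise every vertex has red-degree at least one; pick any red edge $uv$ and delete \emph{both} endpoints. Removing $u$ and $v$ destroys exactly $d_R(u)+d_R(v)-1$ red edges (accounting for the doubly-counted edge $uv$) and $d_B(u)+d_B(v)$ blue edges (no overlap, since $uv$ is red). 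Applying the induction hypothesis on $K_{n-2}$ and substituting the local degree inequality gives
\[
e_r \;\leq\; e_b + \bigl(d_R(u)+d_R(v)-d_B(u)-d_B(v)\bigr) - 1 + \lfloor (n-2)/2\rfloor \;\leq\; e_b + 2 - 1 + \bigl(\lfloor n/2\rfloor - 1\bigr) \;=\; e_b + \lfloor n/2\rfloor,
\]
completing the induction.

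The only substantive step is the blue-clique observation; the remainder is bookkeeping, and the main thing to notice is that it has been engineered so the ``$-2$'' coming from the local degree inequality, the ``$-1$'' from double-counting the red edge $uv$, and the drop of $\lfloor n/2\rfloor$ by exactly $1$ under two-vertex deletion balance with zero slack. That this bound is tight and cannot be sharpened is consistent with the extremal example of a red matching of size $\lfloor n/2\rfloor$ with all non-matching edges colored green, for which $e_r = \lfloor n/2\rfloor$ and $e_b = 0$.
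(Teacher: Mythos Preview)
Your proof is correct. The only minor slip is in Case~1: when you delete a red-isolated vertex $v$, the count $e_b$ may decrease (if $v$ has blue neighbors), not stay unchanged as you write; but since $e_b$ can only go down this merely strengthens the inductive inequality, so the argument is unaffected.

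Your approach differs genuinely from the paper's. The paper gives a non-inductive, global argument: it partitions the vertex set into $L_1 = \{u: d_r(u) < d_b(u)+1\}$, $L_2 = \{u: d_r(u) = d_b(u)+1\}$, $L_3 = \{u: d_r(u) > d_b(u)+1\}$, constructs by a greedy procedure an injection $f\colon L_3 \to L_1$ with $f(u) \in N_r(u)$, pairs each $u \in L_3$ with $f(u)$, and applies your local inequality $d_r(u) + d_r(f(u)) \leq d_b(u) + d_b(f(u)) + 2$ to each such pair while using $d_r \leq d_b + 1$ on the remaining vertices. Both proofs rest on the same blue-clique observation and the same pairwise degree inequality; yours is shorter and more elementary. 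The paper's injection-based route, however, is engineered to expose the structural information needed for the subsequent stability result (\Cref{lemma:EdgeColStab}): the sets $L_i$ and the map $f$ are reused there to control how close a near-extremal coloring must be to the extremal family $\cE(n)$, something an induction-and-delete argument does not directly provide.
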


The next proposition gives a structural characterization of the colorings $\varphi$ for which \pref{ineq:thmredblueineq} is close to equality. We introduce the following definitions to make the extremal structure precise. Let $\cE(n)$ be the set of red--green--blue colorings $\varphi$ of $E(K_n)$ such that for some partition of the vertex set $V(K_n)=H_1\cup\cdots\cup H_l$, all of the following conditions hold: if $n$ is odd then exactly one of the numbers $|H_i|$ is odd, otherwise all $|H_i|$ are even; for all $i\in[l]$ there is a balanced partition $V(H_i)=X_i\cup Y_i$ (i.e. the sizes of $X_i$ and $Y_i$ differ by at most one) such that the edges $E(X_i)\cup E(Y_i)$ are all blue and the edges $E(X_i,Y_i)$ are all red; for all $i\neq j$ the edges $E(H_i,H_j)$ are all green. The \emph{Hamming distance} between colorings $\varphi$ and $\psi$ of $E(K_n)$ is
$$d(\varphi,\psi):=|\{e\in E(K_n):\varphi(e)\neq\psi(e)\}|\,.$$
Suppose $\varphi\in\cE(k)$, $k\leq n$, and $\sigma:V(K_k)\to V(K_n)$ is injective. Let $\varphi_\sigma$ be the red--green--blue coloring of $E(K_n)$ defined by
$$\varphi_\sigma(e)=\begin{cases}\varphi(\sigma^{-1}(e))&e\subseteq\image(\sigma)\\\text{green}&\text{otherwise}\end{cases}\,,$$
and define the set
\begin{equation}
\cF(n):=\big\{\varphi_\sigma:k\in[n],\,\varphi\in\cE(k),\,\sigma:V(K_k)\to V(K_n)\text{ injective}\big\}\,.\label{eqn:FnColoringsSet}
\end{equation}
The Hamming distance from $\varphi$ to $\cF(n)$ is defined by
$$d(\varphi,\cF(n)):=\min\{d(\varphi,\psi):\psi\in\cF(n)\}\,,$$
so $d(\varphi,\cF(n))$ is the least number of edits one can make to the colors of $\varphi$ to obtain a member of $\cF(n)$.

\begin{proposition}\label{lemma:EdgeColStab}
For all $\epsilon>0$ there exists $\delta>0$ such that the following holds. Let $\varphi$ be a red--green--blue coloring of $E(K_n)$. Assume $\varphi$ has no triangle colored (red, red, red) nor any colored (red, red, green). If
$$e_r(\varphi)\geq e_b(\varphi)+\floor*{\frac{n}{2}}-\delta n^2$$
then $d(\varphi,\cF(n))\leq\epsilon n^2$.
\end{proposition}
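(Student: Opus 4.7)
The plan is a stability argument built on \Cref{lemma:c4ATMOSTc3}. The forbidden-triangle conditions yield two structural facts about the red graph $R = R(\varphi)$ that drive everything: (i) $R$ is triangle-free, and (ii) for every $v$, the red neighborhood $N_R(v)$ induces a blue clique, since two red edges sharing a vertex $v$ must close with an edge that is neither red (by (i)) nor green (by the (red, red, green) ban). Near-saturation of the inequality should force $R$ to be within $O(\delta n^2)$ edges of a disjoint union of balanced complete bipartite graphs, with blue edges completing these bipartitions into cliques and cross-clique edges green; this is exactly the structure of $\cE$.

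Concretely, I would first revisit the proof of \Cref{lemma:c4ATMOSTc3} to extract a quantitative form: whenever the inequality is slack by at most $\delta n^2$, each intermediate step is slack by at most $O(\delta n^2)$. Via the cherry-to-blue-edge correspondence supplied by (ii), which says every path of length two in $R$ yields a blue closing edge, this rigidifies the non-green graph $G_\varphi$ (the graph on $V(K_n)$ whose edges are the non-green edges of $\varphi$) so that $G_\varphi$ is within $O(\delta n^2)$ edges of a disjoint union of cliques whose sizes match the components of $R$. Extracting components of the ``cleaned'' $G_\varphi$ yields a partition $V(K_n) = H_1 \cup \cdots \cup H_l \cup U$ where each large block $H_i$ spans a near-clique in $G_\varphi$ and $U$ is a small residue set. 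The vertices of $U$ will serve as the non-image vertices of the injection $\sigma$ in the definition of $\cF(n)$, and recoloring all their incident edges green costs at most $|U| \cdot n$ edits. Inside each large $H_i$, I would recover the bipartition $X_i \cup Y_i$ by fixing any $v \in H_i$ of high red degree and setting $Y_i := N_R(v) \cap H_i$, $X_i := H_i \setminus Y_i$; triangle-freeness of $R$ and closure pin down this partition up to small error, bipartition imbalance is small because an imbalance of $d$ in a block costs $\Omega(d^2)$ in $e_r - e_b$, and the parity condition on the number of odd blocks can be matched by swapping $O(1)$ vertices between blocks. Summing all edits gives a total Hamming distance to $\cF(n)$ that tends to zero with $\delta$; the bound $\epsilon n^2$ then follows for $\delta = \delta(\epsilon)$ chosen sufficiently small.

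The main obstacle will be the first step: extracting near-rigidity of $G_\varphi$ from near-saturation in \Cref{lemma:c4ATMOSTc3}. A small slack in a double-counting argument does not in general imply edit-distance closeness, but here the strong closure property (ii) essentially forces the edges of $G_\varphi$ to be determined by $R$ alone, so defects in biclique-disjointness or in bipartite-balance translate directly into a quantifiable shortfall in $e_r - e_b$. A secondary subtlety is that very small blocks, isolated vertices, and vertices of low red degree can sit almost arbitrarily without much affecting $e_r - e_b$; these are precisely what the injection $\sigma: V(K_k) \to V(K_n)$ in the definition of $\cF(n)$ is designed to absorb into a single exceptional set handled in one stroke.
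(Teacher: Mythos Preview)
Your high-level outline is in the right spirit and you have correctly identified the two structural facts that drive the argument. But the step you flag as the ``main obstacle'' is a genuine gap, not a detail: you assert that near-saturation plus the closure property (ii) force the non-green graph $G_\varphi$ to be within $O(\delta n^2)$ edges of a disjoint union of cliques ``whose sizes match the components of $R$'', and then propose to read off the blocks $H_i$ from this decomposition. The problem is that you have no a~priori handle on the component structure of $R$. In a near-extremal coloring, $R$ need not split cleanly into components at all --- a few stray red edges can connect what ``should'' be distinct blocks, isolated red vertices and low-red-degree vertices can float arbitrarily, and green edges inside a would-be block do not show up in $e_r-e_b$ at all. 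So ``the components of $R$'' is not the right object, and your justification (``property (ii) essentially forces the edges of $G_\varphi$ to be determined by $R$ alone'') presupposes the block structure you are trying to establish.

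The paper avoids this circularity by replacing your global decomposition step with an iterative block-extraction lemma (\Cref{lemma:stabrecursion}). Rather than first proving closeness to a disjoint union of cliques, the paper \emph{constructs} one block at a time: pick $x$ of (essentially) maximum red degree and $y\in N_r(x)$ of maximum red degree therein, and set $X:=N_r(y)$, $Y':=N_r(x)$. A careful case analysis on the sets $L_1=\{u:d_r(u)<d_b(u)+1\}$, $L_2$, $L_3$ from the proof of \Cref{lemma:c4ATMOSTc3} then shows that $S=X\cup Y'$ has at most $O(\sqrt{\delta}\,n^2)$ red edges to its complement, that $|X|$ and $|Y'|$ differ by at most $O(\delta^{1/4}n)$, and that the residual coloring on $V\setminus S$ still nearly saturates the inequality (with a slightly worse $\delta$). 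Iterating at most $O(1/\epsilon)$ times peels off all the large blocks; the excess blue edges (your ``$B$'') and the missing red edges inside the extracted bicliques (your ``$C\setminus E_r(\varphi)$'') are then bounded in one stroke at the end by the global inequality $e_r\ge e_b+\lfloor n/2\rfloor-\delta n^2$. Your idea of setting $Y_i:=N_R(v)\cap H_i$ for a high-red-degree vertex $v$ is essentially the same construction the paper uses --- but the paper uses it to \emph{define} the block, not to bipartition a block that has already been found.
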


We will now state and prove several more lemmas in preparation for the proof of \Cref{prop:FsubsetLambda}. For all $W\in\cW$ define the following subsets of $[0,1]^2$:
\begin{align*}
R_W &:=\{(x,y)\in[0,1]^2:0<W(x,y)<1\}\,, \\
O_W &:=\{(x,y)\in[0,1]^2:W(x,y)=1\} \,.
\end{align*}
For a measurable set $A\subseteq\R^2$, the symbol $|A|$ denotes the Lebesgue measure of $A$.

\begin{lemma}\label{lemma:C4AtMostC3}
If $W\in\cW$ and $t_{\ind}(K_{1,3},W)=0$ then $|R_W|\leq|O_W|$.
\end{lemma}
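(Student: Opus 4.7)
The plan is to approximate $W$ by step-function graphons coming from types of claw-free graphs, translate the claw-free constraint into an extremal edge-coloring bound via \Cref{lemma:ClawForbiddenColorings} and \Cref{lemma:c4ATMOSTc3}, and then pass to the graphon limit. Fix $\delta_0\in(0,1/2)$. Applying \Cref{lemma:UFHatGraphonSeq}, I obtain a sequence $\{W_m\}_{m\in\N}$ of graphons, each associated to an $(\eta_m,\eta'_m,\delta_0/5,k')$-type $R_m$ of some claw-free graph, with $\|W_m-W\|_1\to 0$, $\eta_m\to 0$, and the number $k_m$ of clusters of $R_m$ tending to infinity.

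Since each $R_m$ is a type of a claw-free graph, \Cref{lemma:ClawForbiddenColorings} guarantees that the type coloring $\sigma_m$ of $R_m$ contains no triangle with three random-like edges and no triangle with two random-like edges and one sparse edge. Viewing $\sigma_m$ restricted to $E(R_m)$ as a red--green--blue coloring of $E(K_{k_m})$ (red $=\tfrac12$, green $=0$, blue $=1$), \Cref{lemma:c4ATMOSTc3} yields
\[
e_r(\sigma_m)\leq e_b(\sigma_m)+\lfloor k_m/2\rfloor.
\]
To translate this into a measure statement, I would set $A_m:=\{(x,y):\delta_0/5<W_m(x,y)<1-\delta_0/5\}$ and $B_m:=\{(x,y):W_m(x,y)\geq 1-\delta_0/5\}$. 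By the definitions of the type coloring and of $W_{R_m}$, an off-diagonal cell $V_i\times V_j$ lies in $A_m$ (resp.\ $B_m$) exactly when $\sigma_m(ij)=\tfrac12$ (resp.\ $\sigma_m(ij)=1$). Diagonal cells $V_i\times V_i$ have density $e(V_i)/|V_i|^2\leq 1/2<1-\delta_0/5$, so they contribute nothing to $B_m$ and their total area is $O(1/k_m)$; cells incident to the exceptional set contribute total area $O(\eta_m)$. Combining with the coloring bound and using that each off-diagonal cell has area $\Theta(1/k_m^2)$,
\[
|A_m|\leq \frac{2e_r(\sigma_m)}{k_m^2}+O\!\left(\tfrac{1}{k_m}+\eta_m\right)\leq |B_m|+O\!\left(\tfrac{1}{k_m}+\eta_m\right).
\]

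To pass to the limit, Markov's inequality applied to $\|W_m-W\|_1$ gives $|\{|W_m-W|\geq \delta_0/5\}|\leq 5\|W_m-W\|_1/\delta_0$, from which
\[
|\{W\in(2\delta_0/5,\,1-2\delta_0/5)\}|\leq |A_m|+\tfrac{5\|W_m-W\|_1}{\delta_0},\qquad |B_m|\leq |\{W\geq 1-2\delta_0/5\}|+\tfrac{5\|W_m-W\|_1}{\delta_0}.
\]
Chaining these with the area bound and letting $m\to\infty$ (so $k_m\to\infty$, $\eta_m\to 0$, $\|W_m-W\|_1\to 0$) yields $|\{W\in(2\delta_0/5,\,1-2\delta_0/5)\}|\leq |\{W\geq 1-2\delta_0/5\}|$, and sending $\delta_0\to 0$, continuity of measure (from below for $R_W$ and from above for $O_W$) gives $|R_W|\leq |O_W|$. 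The main technical obstacle will be the bookkeeping in the area bound: one must verify that the slack $O(1/k_m+\eta_m)$ arising from the diagonal cells and the exceptional set is dominated by the combinatorial bound, which holds because $k_m\to\infty$ and $\eta_m\to 0$ together with the fact that $\delta_0$ is kept fixed while $m\to\infty$ before $\delta_0$ is sent to $0$.
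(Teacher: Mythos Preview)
Your argument is correct and follows the same approach as the paper's proof: approximate $W$ by type graphons of claw-free graphs, convert the type coloring to a red--green--blue edge coloring, apply \Cref{lemma:c4ATMOSTc3}, and pass to the limit via $L^1$ convergence.

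One small bookkeeping point you glossed over: $\sigma_m$ is only defined on the edges of $R_m$, i.e.\ on the \emph{regular} pairs, whereas \Cref{lemma:c4ATMOSTc3} requires a coloring of all of $E(K_{k_m})$. The paper resolves this by coloring every irregular pair blue; since neither forbidden pattern (red--red--red or red--red--green) involves a blue edge, the hypotheses of \Cref{lemma:c4ATMOSTc3} remain satisfied. With that extension, $e_b(\sigma_m)$ overcounts the ``true'' blue cells by at most the $\eta_m k_m^2$ irregular pairs, which is precisely the source of the $\eta_m$ term in the area bound. Your phrase ``cells incident to the exceptional set'' refers only to $V_0$, not to these irregular off-diagonal cells, so you should account for both; fortunately both contributions are $O(\eta_m)$, so your stated error term already absorbs them.
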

\begin{proof}
Fix $\delta>0$ and let $\{W_m\}_{m\in\N}$ be the sequence of graphons obtained by applying \Cref{lemma:UFHatGraphonSeq} with $W$ and $\delta$. Then each $W_m$ is the graphon associated with an $(\eta_m,\eta_m',\delta/5,k')$-type $R=(V_R,E_R,\sigma)$ of a claw-free graph $G$, where the constants $\eta_m$, $\eta_m'$, $\delta/5$, $k'$ satisfy the hypotheses of the induced embedding lemma (\Cref{lemma:InducedEmbedding}). Since $G$ is claw-free, \Cref{lemma:ClawForbiddenColorings} implies $R$ does not have three distinct vertices $u,v,w\in V_R$ such that either of the following cases holds: (\emph{i}) $\sigma(uv)=\sigma(vw)=\sigma(wu)=\frac{1}{2}$; or (\emph{ii}) $\sigma(uv)=\sigma(uw)=\frac{1}{2}$ and $\sigma(vw)=0$. Let $k:=|V_R|$ and define a red--green--blue coloring $\varphi$ of $E(K_k)$ as follows: an edge $uv\in E(K_k)$ is red if $\sigma(ij)=\frac{1}{2}$; green if $\sigma(uv)=0$; and blue if $\sigma(uv)=1$ or $uv\not\in E_R$. It follows that $\varphi$ has no triangle colored (red, red, red) nor any colored (red, red, green), hence \Cref{lemma:c4ATMOSTc3} implies $e_r(\varphi)\leq e_b(\varphi)+\floor{n/2}$, where $e_r(\varphi)$ and $e_b(\varphi)$ denote the number of red and blue edges, respectively. Since $R$ has at most $\eta_mk^2$ non-edges,
$$|\{(x,y):\delta<W_m(x,y)<1-\delta\}|\leq|\{(x,y):W_m(x,y)\geq1-\delta\}|+\eta_m+\frac{2}{k}\,.$$
Since $W_m\to W$ in $L^1$, $\eta_m\to0$, and $\delta>0$ was arbitrary, it follows that $|R_W|\leq|O_W|$.
\end{proof}

\begin{lemma}\label{lemma:threevaluegraphon}
If $W\in\cX_\gamma^\ast$ then for some $a\in(0,1)$, $W$ takes the values $\{0,a,1\}$ almost everywhere.
\end{lemma}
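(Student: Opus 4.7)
My plan is to prove this by a direct perturbation argument exploiting the strict concavity of $H$ on $(0,1)$. Let $W \in \cX_\gamma^\ast$ and set $Z_W := \{W = 0\}$, $R_W := \{0 < W < 1\}$, and $O_W := \{W = 1\}$. If $|R_W| = 0$ the conclusion holds trivially (any $a \in (0,1)$ works). Otherwise, I suppose for contradiction that $W$ is not essentially constant on $R_W$. Then the pushforward of Lebesgue measure on $R_W$ under $W$ is a Borel measure on $(0,1)$ that is not a Dirac mass, so there exist $0 < a < b < 1$ and $\delta > 0$ such that $[a-\delta, a+\delta]$ and $[b-\delta, b+\delta]$ are disjoint subintervals of $(0,1)$ and both
\[ A^\ast := \{W \in [a-\delta, a+\delta]\}, \qquad B^\ast := \{W \in [b-\delta, b+\delta]\} \]
have positive measure (and both are symmetric, since $W$ is). Assuming WLOG $|A^\ast| \leq |B^\ast|$, I set $A := A^\ast$, and by continuity of the nondecreasing map $t \mapsto |B^\ast \cap [0,t]^2|$ from $0$ to $|B^\ast|$, I extract a symmetric subset $B \subseteq B^\ast$ with $|B| = |A| > 0$.

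The key step is to consider the perturbation $W_\epsilon := W + \epsilon(\bsone_A - \bsone_B)$ for small $\epsilon > 0$. By construction, $W_\epsilon$ is symmetric, equals $W$ off $A \cup B$, takes values strictly in $(0,1)$ on $A \cup B$ for $\epsilon$ small (since $W$ is already bounded away from $\{0,1\}$ on $A \cup B$), and preserves edge density because $|A| = |B|$. Crucially, $t_{\ind}(K_{1,3}, W_\epsilon) = 0$ is also preserved. The integrand $W(x_1,x_2) W(x_1,x_3) W(x_1,x_4) \prod_{2 \leq i < j \leq 4} (1 - W(x_i,x_j))$ is nonnegative and vanishes a.e.\ under the claw-free constraint; its vanishing at a quadruple depends only on whether some $W(x_1, x_i)$ equals $0$ or some $W(x_i, x_j)$ equals $1$, which in turn depends only on the partition $[0,1]^2 = Z_W \cup R_W \cup O_W$. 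Since the perturbation stays strictly inside $R_W$, we have $Z_{W_\epsilon} = Z_W$ and $O_{W_\epsilon} = O_W$, so this partition is unchanged and the integrand for $W_\epsilon$ vanishes wherever that for $W$ did. Hence $W_\epsilon \in \cX_\gamma$.

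To close the argument, I Taylor-expand. Since $W$ is bounded away from $\{0,1\}$ on $A \cup B$, $H''$ is bounded there, so $H(W \pm \epsilon) - H(W) = \pm \epsilon H'(W) + O(\epsilon^2)$ uniformly on $A \cup B$, giving
\[ H(W_\epsilon) - H(W) = \epsilon \left( \int_A H'(W) - \int_B H'(W) \right) + O(\epsilon^2). \]
Strict monotonicity of $H'(x) = \log_2\!\big((1-x)/x\big)$ on $(0,1)$ combined with $a + \delta < b - \delta$ yields $H'(W) \geq H'(a+\delta)$ on $A$ and $H'(W) \leq H'(b-\delta)$ on $B$, so the bracketed quantity is at least $(H'(a+\delta) - H'(b-\delta))|A| > 0$. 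For $\epsilon$ small enough, $H(W_\epsilon) > H(W)$, contradicting the optimality of $W$. Thus $W$ is essentially constant on $R_W$, yielding the desired $a \in (0,1)$.

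I expect the only real subtlety to be verifying that the claw-free constraint is preserved; once one notices that the support of the claw-free integrand is determined entirely by the zero/interior/one partition of $[0,1]^2$ and not by the specific interior values, the rest is the standard ``strictly concave entropy rewards averaging towards the mean subject to a linear constraint'' argument.
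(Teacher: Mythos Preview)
Your proof is correct. Both your argument and the paper's hinge on the same observation: since the integrand in $t_{\ind}(K_{1,3},W)$ is nonnegative, its vanishing a.e.\ depends only on the partition $\{W=0\}\cup\{0<W<1\}\cup\{W=1\}$, so any modification of $W$ that stays strictly inside $(0,1)$ on $R_W$ preserves membership in $\cX_\gamma$.

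The execution differs. The paper replaces $W$ on all of $R_W$ by its average value $a := |R_W|^{-1}\int_{R_W}W$ in one step and invokes Jensen's inequality: $H(W)\le |R_W|\,H(a)=H(W')$, with equality iff $W$ is essentially constant on $R_W$. Your argument instead locates two separated level sets, matches their measures, and pushes mass between them via a first-order Taylor expansion of $H$. Both are valid; the paper's averaging route is shorter and avoids the measure-matching step (your extraction of a symmetric $B\subseteq B^\ast$ with $|B|=|A|$), while your perturbation argument is more local and would adapt more readily if one needed, say, a quantitative stability statement rather than exact optimality.
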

\begin{proof}
Fix $W\in\cX_\gamma^\ast$ and let $R:=R_W$. Note that $R$ has positive Lebesgue measure since otherwise $H(W)=0$. Define the graphon $W'$ by
$$W'(x,y)=\begin{cases}\frac{1}{|R|}\int_RW(x,y)\,dx\,dy&(x,y)\in R\\W(x,y)&\text{otherwise}\end{cases}.$$
Notice that $W'\in\cX_\gamma$ since the relations $t(K_2,W)=\gamma$ and $t_{\ind}(K_{1,3},W)=0$ imply $t(K_2,W')=\gamma$ and $t_{\ind}(K_{1,3},W')=0$. The inequality $H(W)\geq H(W')$ holds by assumption. The concavity of $H$ implies
$$H(W)=\int_RH(W(x,y))\,dx\,dy\,\leq|R|\cdot H\left(\frac{1}{|R|}\int_RW(x,y)\,dx\,dy\right)=H(W')\,,$$
with equality holding if and only if $W$ is constant almost everywhere in $R$.
\end{proof}

The following lemma is proven in \Cref{app:deferred}.

\begin{lemma}\label{lemma:kktopt}
Fix $c\in(0,1)$. Let $D\subseteq[0,1]^2$ be the domain of points $(x,y)$ such that $0<y\leq x\leq1-y$ and $0\leq\frac{c-x}{y}\leq1$. Let $f:D\to\R$ be defined by $f(x,y)=yH(\frac{c-x}{y})$. The function $f$ attains a unique maximum $(x^\ast,y^\ast)$, given in terms of $c$ by
\begin{equation}
x^\ast=y^\ast=\begin{cases}\frac{5+\sqrt{5}}{10}c\phantom{...}&c\in\big(0,\frac{5-\sqrt{5}}{4}\big)\\[5pt]\frac{1}{2}&c\in\big[\frac{5-\sqrt{5}}{4},1\big)\end{cases}\,.\label{eqn:kktlemma}
\end{equation}
Additionally, the mapping $c\mapsto f(x^\ast,y^\ast)$ has a continuous first derivative.
\end{lemma}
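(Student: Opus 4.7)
The plan is to reduce the two-dimensional optimization to a one-dimensional one by the change of variables $a=(c-x)/y$. Write $x=c-ay$ and rewrite the constraints defining $D$: the inequality $y\leq x$ becomes $y(1+a)\leq c$, the inequality $x\leq 1-y$ becomes $y(1-a)\leq 1-c$, and the hypothesis that the argument of $H$ lies in $[0,1]$ becomes $a\in[0,1]$. Thus for fixed $a\in[0,1)$ the feasible $y$ values form the interval $\bigl(0,\min\bigl(\tfrac{c}{1+a},\tfrac{1-c}{1-a}\bigr)\bigr]$. Since $f(x,y)=yH(a)$ is linear (and nonnegative) in $y$ at fixed $a$, the optimal $y$ equals this minimum. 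Comparing the two fractions shows that $\tfrac{c}{1+a}\leq\tfrac{1-c}{1-a}$ iff $a\geq 2c-1$, so the feasible region splits into two pieces on which the reduced objective is either $g(a):=\tfrac{c}{1+a}H(a)$ (when $a\geq\max(0,2c-1)$) or $h(a):=\tfrac{1-c}{1-a}H(a)$ (when $0\leq a<2c-1$, only relevant for $c>\tfrac12$).

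Next I would analyze each reduced problem using the identity $H'(a)=\log_2\tfrac{1-a}{a}$. For $h$, a direct computation gives $H(a)+(1-a)H'(a)=-\log_2 a>0$ on $(0,1)$, so $h'(a)>0$ and the maximum on $[0,2c-1]$ is at $a=2c-1$, at which point $y^\ast=x^\ast=\tfrac12$. For $g$, the analogous computation yields
\[
(1+a)H'(a)-H(a)=\log_2\!\frac{(1-a)^2}{a},
\]
so $g'(a)>0$ iff $(1-a)^2>a$, i.e.\ iff $a^2-3a+1>0$. The unique root in $[0,1]$ is $a_0=\tfrac{3-\sqrt{5}}{2}$, so $g$ is unimodal on $[0,1]$ with maximum at $a_0$. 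Substituting back gives
\[
y^\ast=\frac{c}{1+a_0}=\frac{2c}{5-\sqrt{5}}=\frac{5+\sqrt{5}}{10}\,c,\qquad x^\ast=c-a_0 y^\ast=\frac{5+\sqrt{5}}{10}\,c,
\]
after simplifying $(3-\sqrt{5})(5+\sqrt{5})=10-2\sqrt{5}$.

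Combining the two subproblems I then determine the global maximizer by comparing $2c-1$ with $a_0$. For $c<c^\ast:=\tfrac{5-\sqrt{5}}{4}$, we have $2c-1<a_0$, so the interior critical point of $g$ lies in the admissible range and $(x^\ast,y^\ast)=\bigl(\tfrac{5+\sqrt{5}}{10}c,\tfrac{5+\sqrt{5}}{10}c\bigr)$. For $c>c^\ast$, we have $2c-1>a_0$ and $g$ is strictly decreasing on $[2c-1,1]$, so the maximum of $g$ occurs at $a=2c-1$; since the maximum of $h$ on its interval also occurs at $a=2c-1$ (with the same value by continuity), the global maximizer is $(x^\ast,y^\ast)=(\tfrac12,\tfrac12)$. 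At $c=c^\ast$ both formulas agree, since $\tfrac{5+\sqrt{5}}{10}\cdot\tfrac{5-\sqrt{5}}{4}=\tfrac12$. Uniqueness of the maximizer in each case follows from the strict unimodality of $g$ and strict monotonicity of $h$, together with the observation that all boundary values of $f$ on $\partial D$ are strictly smaller (being either zero or strictly dominated by a value computed above).

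For the final claim about the first derivative of $c\mapsto f(x^\ast,y^\ast)$, I would compute both sides of $c^\ast$. On $(0,c^\ast)$ the maximum value is $\tfrac{5+\sqrt{5}}{10}H(a_0)\,c$, linear in $c$ with slope $\tfrac{5+\sqrt{5}}{10}H(a_0)$; on $(c^\ast,1)$ it is $\tfrac12 H(2c-1)$, with derivative $H'(2c-1)=\log_2\tfrac{2-2c}{2c-1}$, which at $c=c^\ast$ evaluates to $\log_2\tfrac{\sqrt{5}+1}{2}$ after rationalization. To match the left-derivative with this value I would use the relation $a_0=(1-a_0)^2$, equivalent to $\log_2 a_0=2\log_2(1-a_0)$, which gives $H(a_0)=-(1+a_0)\log_2(1-a_0)$; combined with $1+a_0=\tfrac{5-\sqrt{5}}{2}$ one gets $\tfrac{(5+\sqrt{5})(1+a_0)}{10}=1$, hence $\tfrac{5+\sqrt{5}}{10}H(a_0)=-\log_2(1-a_0)=\log_2\tfrac{\sqrt{5}+1}{2}$, matching the right-derivative. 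The principal technical step is the reduction to a one-dimensional optimization and the bookkeeping on which boundary constraint binds; after that, the only nontrivial calculation is the identity $(1+a)H'(a)-H(a)=\log_2\tfrac{(1-a)^2}{a}$ and the corresponding one for $h$, both of which are elementary.
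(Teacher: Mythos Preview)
Your proof is correct and takes a genuinely different route from the paper. The paper sets up the Lagrangian $L(x,y,\lambda,\mu)=yH\bigl(\tfrac{c-x}{y}\bigr)+\lambda(x+y-1)+\mu(y-x)$ and works through the KKT conditions: from $\nabla L=0$ it extracts expressions for $\lambda,\mu$, then uses complementary slackness to deduce first that $x=y$ (else $\mu=0$ forces $f=0$) and then splits on whether $\lambda=0$ or $\lambda<0$. Your approach instead substitutes $a=(c-x)/y$ and exploits the linearity of $f$ in $y$ at fixed $a$ to reduce to a one-variable problem, then optimizes $g(a)=\tfrac{c}{1+a}H(a)$ and $h(a)=\tfrac{1-c}{1-a}H(a)$ directly via the clean identities $(1+a)H'(a)-H(a)=\log_2\tfrac{(1-a)^2}{a}$ and $(1-a)H'(a)+H(a)=-\log_2 a$. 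The two methods are equivalent in content---your binding constraint $y=\tfrac{c}{1+a}$ is exactly the paper's active constraint $y=x$, and your transition at $a=2c-1$ corresponds to the paper's switch from $\lambda=0$ to $\lambda<0$---but your argument is more elementary and makes the appearance of $a_0=\tfrac{3-\sqrt5}{2}$ (as the root of $(1-a)^2=a$) completely transparent. For the $C^1$ claim the paper simply asserts that $g_1(c)$ is the tangent line to $g_2(c)$ at the critical point; your explicit verification using $a_0=(1-a_0)^2$ and $\tfrac{5+\sqrt5}{10}(1+a_0)=1$ is a nice addition.
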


\begin{lemma}\label{lemma:LambdasubsetF}
For all $\gamma\in(0,1)$ we have $\widehat{\cV}_\gamma\subseteq \widehat{\cX}_\gamma^\ast$.
\end{lemma}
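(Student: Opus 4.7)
The plan is to prove the stronger statement $\cV_\gamma \subseteq \cX^\ast_\gamma$, from which $\widehat{\cV}_\gamma \subseteq \widehat{\cX}^\ast_\gamma$ follows immediately. This decomposes into three tasks: (i) verify each $W \in \cV_\gamma$ is a feasible claw-free graphon of edge density $\gamma$; (ii) compute $H(W) = r^\ast(\gamma)$ for every such $W$; (iii) establish the matching upper bound $\phi(\gamma) \leq r^\ast(\gamma)$. Steps (i) and (ii) are essentially direct calculations from \pref{eqn:flambdaGraphonDef} and \pref{eqn:deffstarlambdagraphon}; the real content is in step (iii), which combines \Cref{lemma:C4AtMostC3}, \Cref{lemma:threevaluegraphon}, and \Cref{lemma:kktopt}.

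For step (i), the edge density equals $\gamma$ by construction --- in the subcritical case this uses the defining relation $\sum_i(\lambda_{i+1}-\lambda_i)^2 = \gamma(1+\sqrt{5}/5)$. For claw-freeness, I would exploit that $W_{\blambda}$ and $W^\ast_\gamma$ are block-diagonal: if $x_1$ lies in block $i$ then $W(x_1,x_j) > 0$ forces $x_j$ into the same block; inside the block the co-bipartite structure has two ``clique'' quadrants of density $1$, and by pigeonhole at least two of $x_2,x_3,x_4$ fall in the same clique quadrant, giving $W(x_j,x_k)=1$ and ruling out an induced claw in the integrand defining $t_{\ind}(K_{1,3},W)$. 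For step (ii), each co-bipartite block of side length $\ell_i := \lambda_{i+1}-\lambda_i$ contributes entropy $\frac{\ell_i^2}{2} H\pig(\frac{3-\sqrt{5}}{2}\pig)$ in the subcritical case (and $W^\ast_\gamma$ contributes $\frac{1}{2}H(2\gamma-1)$ in the supercritical case); summing and applying the edge-density constraint on $\sum_i \ell_i^2$ yields precisely $r^\ast(\gamma)$ from \pref{eqn:ratefunction}.

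For step (iii), I would take any $W' \in \cX^\ast_\gamma$, which is nonempty by compactness of $\cX_\gamma$ and upper semicontinuity of $H$. By \Cref{lemma:threevaluegraphon}, $W' \in \{0,a,1\}$ almost everywhere for some $a \in (0,1)$. Setting $x := |O_{W'}|$ and $y := |R_{W'}|$ yields $\gamma = x + ay$ and $H(W') = y \cdot H\big((\gamma-x)/y\big)$. The inequalities $x+y \leq 1$, $a \in [0,1]$, and $y \leq x$ (the last from \Cref{lemma:C4AtMostC3}) place $(x,y)$ in the domain $D$ of \Cref{lemma:kktopt}, so $H(W') \leq f(x^\ast,y^\ast)$. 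Evaluating $f(x^\ast,y^\ast)$ using \pref{eqn:kktlemma} gives exactly $r^\ast(\gamma)$ in both regimes. Combining with step (ii) then forces $\phi(\gamma) = r^\ast(\gamma) = H(W)$ for $W \in \cV_\gamma$, proving $W \in \cX^\ast_\gamma$. I expect the main conceptual work to be in step (iii): verifying that the piecewise optimum $(x^\ast,y^\ast)$ of \Cref{lemma:kktopt} matches exactly the measures $(|O_W|,|R_W|)$ of graphons in $\cV_\gamma$, so that the subcritical/supercritical dichotomy of $(x^\ast,y^\ast)$ is mirrored in the dichotomy defining $\cV_\gamma$; this is the key structural reason the matching bounds yield $r^\ast(\gamma)$.
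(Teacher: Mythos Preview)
Your proposal is correct and follows essentially the same route as the paper: reduce to three-valued optimizers via \Cref{lemma:threevaluegraphon}, use \Cref{lemma:C4AtMostC3} to bound $|R_W|\le|O_W|$, and then apply \Cref{lemma:kktopt} to obtain $\phi(\gamma)\le r^\ast(\gamma)$, matching the directly computed entropy of every $W\in\cV_\gamma$. The only difference is that you spell out the feasibility check $\cV_\gamma\subseteq\cX_\gamma$ and the entropy computation $H(W)=r^\ast(\gamma)$ explicitly, whereas the paper records these as observations preceding the lemma.
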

\begin{proof}
It suffices to prove $\cV_\gamma\subseteq\cX_\gamma^\ast$. We will establish an upper bound on $\phi(\gamma)$ and observe that all $W\in\cV_\gamma$ meet this upper bound. By \Cref{lemma:threevaluegraphon} we may restrict the domain of the variational problem \pref{eqn:variationalproblem} to the set $\cY$ of all $W\in\cX_\gamma$ that take three values almost everywhere. Notice that $\cV_\gamma\subseteq\cY$. Let $W\in\cY$ and assume $W$ takes the values $\{0,a,1\}$ almost everywhere for some $a\in(0,1)$. We then have $H(W)=|R_W|\cdot H(a)$, $\int W=\gamma$, and $\gamma=a|R_W|+|O_W|$. Let $r^\ast:(0,1)\to\R$ be the function defined in \pref{eqn:ratefunction}. Since \Cref{lemma:C4AtMostC3} proves $|R_W|\leq|O_W|$, it follows from \Cref{lemma:kktopt} that $H(W)\leq r^\ast(\gamma)$ (specifically, apply \Cref{lemma:kktopt} by substituting $c$ with $\gamma$, $x$ with $|O_W|$, and $y$ with $|R_W|$). Noticing that $H(W)=r^\ast(\gamma)$ for all $W\in\cV_\gamma$, the inclusion $\cV_\gamma\subseteq\cX_\gamma^\ast$ follows immediately.
\end{proof}

\begin{proof}[Proof of \Cref{prop:FsubsetLambda}]
By \Cref{lemma:LambdasubsetF}, it suffices to show that for all $W\in\cX^\ast_\gamma$, there is some $W'\in\cV_\gamma$ that is equivalent to $W$. Fix $W\in\cX_\gamma^\ast$ and $\epsilon\in(0,\frac{1}{16})$. Let $\delta'>0$ be the constant provided when \Cref{lemma:EdgeColStab} is applied with $\epsilon/4$.

First let $\gamma\in\big(0,\frac{5-\sqrt{5}}{4}\big)$ and $\delta:=\min\{\epsilon/4,\delta'\}$. We will show that $\delta_\square(W,\cV)\leq\epsilon$, and this fact together with $t(K_2,W)=\gamma$ proves $W$ is equivalent to a graphon in $\cV_\gamma$. By \Cref{lemma:threevaluegraphon,lemma:kktopt}, $W$ takes the three values $\{0,a,1\}$ almost everywhere for some $a\in(0,1)$, and additionally $|R_W|=|O_W|=\frac{5+\sqrt{5}}{10}\gamma$. Hence the equation $\gamma=a|R_W|+|O_W|$ implies $a=\frac{3-\sqrt{5}}{2}$. Since $t_{\ind}(K_{1,3},W)=0$ and $t(K_2,W)=\gamma$, there exists a sequence $\{W_m\}_{m\in\N}$ of graphons satisfying the conclusions of \Cref{lemma:UFHatGraphonSeq}. Let $\eta_m$, $\eta'_m$, and $R_m$ be the sequences from \Cref{lemma:UFHatGraphonSeq}. Let $\{W'_m\}_{m\in\N}$ be a subsequence of $\{W_m\}_{m\in\N}$ such that $W'_m\to W$ almost everywhere. By Egorov's theorem, there exists a closed subset $E\subseteq[0,1]^2$ of measure $|E|>1-\delta/16$ such that $W'_m\to W$ uniformly on $E$. Fix $m\in\N$ sufficiently large such that the following three conditions hold: the number $k$ of clusters in the type $R_m$ associated with $W'_m$ satisfies $k>16/\delta$; $\eta_m<\delta/16$; and $|W'_m-W|<\epsilon/16$ on $E$. It follows that
\begin{equation}
\norm{W'_m-W}_1\leq\frac{\delta}{16}\left(1-\frac{\delta}{16}\right)+\frac{\delta}{16}+\frac{1}{k}\leq\frac{\delta}{4}\leq\frac{\epsilon}{4}\,.\label{eqn:VarProbWmW}
\end{equation}
Let $R=([k],E_R,\sigma)$ denote the type associated with $W'_m$. For all $i,j\in[k]$ let
\begin{equation*}
S_{ij}:=\left(\frac{i-1}{k},\frac{i}{k}\right)\times\left(\frac{j-1}{k},\frac{j}{k}\right)
\end{equation*}
denote the $i,j$ block of the unit square. For all $x,y\in[0,1]^2$ define $i_{xy},\,j_{xy}\in[k]$ such that $(x,y)\in S_{i_{xy}j_{xy}}$ (we may disregard the set of measure zero where $i_{xy}$ and $j_{xy}$ are not defined). Define the graphon $g\in\cW$ by
\begin{equation}
g(x,y)=\begin{cases}0&S_{i_{xy}j_{xy}}\cap E\neq\emptyset,\,i_{xy}j_{xy}\in E_R,\,\sigma(i_{xy}j_{xy})=0\\\frac{3-\sqrt{5}}{2}&S_{i_{xy}j_{xy}}\cap E\neq\emptyset,\,i_{xy}j_{xy}\in E_R,\,\sigma(i_{xy}j_{xy})=\frac{1}{2}\\1&\text{otherwise}\end{cases}\,.\label{eqn:VarProbgDefn}
\end{equation}
Now if $(x,y)$ is a point such that $S_{i_{xy}j_{xy}}\cap E\neq\emptyset$, then the fact that $W'_m$ is constant on the squares $S_{ij}$ and $|W'_m-W|<\delta/16$ on $E$ implies that for some $z\in\{0,\frac{3-\sqrt{5}}{2},1\}$, we have $W'_m\equiv z$ identically on $S_{i_{xy}j_{xy}}$. Since at most $\delta k^2/16$ squares $S_{ij}$ do not intersect $E$, and since $R_m$ has at most $\eta_mk^2$ irregular pairs, we have
\begin{equation}
\norm{g-W'_m}_1\leq\frac{\delta}{8}+\eta_m+\frac{1}{k}\leq\frac{\delta}{4}\leq\frac{\epsilon}{4}\,.\label{eqn:VarProbgWm}
\end{equation}
Define the red--green--blue coloring $\varphi$ of $E(K_k)$ such that edge $ij$ is colored red if $g(S_{ij})=\frac{3-\sqrt{5}}{2}$, green if $g(S_{ij})=0$, and blue if $g(S_{ij})=1$. By the inclusions
$$R_W\subseteq\vast(\bigcup_{\substack{ij\in E_R\\\sigma(ij)=1/2}}S_{ij}\vast)\cup\left(\bigcup_{ij\not\in E_R}S_{ij}\right)\cup E^c\,,\hspace{6mm}\bigcup_{\substack{ij\in E_R\\\sigma(ij)=1}}S_{ij}\subseteq O_W\cup\left(\bigcup_{ij\not\in E_R}S_{ij}\right)\cup E^c\,,$$
and since $k>16/\epsilon$, we obtain the inequalities
\begin{align*}
|R_W| &\leq \frac{e_r(\varphi)}{k^2}+\frac{\delta}{16}+\frac{1}{k} \leq \frac{e_r(\varphi)}{k^2}+\frac{\delta}{8}\,, \\[5pt]
|O_W| &\geq \frac{e_b(\varphi)}{k^2}-\frac{\delta}{16}-\frac{1}{k} \geq \frac{e_b(\varphi)}{k^2}-\frac{\delta}{8} \,,
\end{align*}
where $e_r(\varphi)$ and $e_b(\varphi)$ denote the number of red and blue edges of $\varphi$, respectively. Since $|R_W|=|O_W|$ and $\delta k^2/32\geq k/2$ (using our assumption on $k$), it follows that $e_r(\varphi)\geq e_b(\varphi)+\floor{k/2}-\delta k^2$. Now \Cref{lemma:EdgeColStab} implies $d(\varphi,\cF(k))\leq\epsilon k^2/4$, where $\cF(k)$ is the set of edge colorings defined by \pref{eqn:FnColoringsSet}. Let $\psi\in\cF(k)$ such that $d(\varphi,\psi)\leq\epsilon k^2/4$, and let $h\in\cW$ be the graphon defined by
\begin{equation}
h(x,y)=\begin{cases}0&(x,y)\in S_{ij},\,\psi(ij)\text{ is green}\\\frac{3-\sqrt{5}}{2}&(x,y)\in S_{ij},\,\psi(ij)\text{ is red}\\1&(x,y)\in S_{ij}\,,\psi(ij)\text{ is blue}\end{cases}\,.\label{eqn:VarProbhDefn}
\end{equation}
It follows that $\norm{g-h}_1\leq\epsilon/4$. By definition of the set $\cF(k)$, it is clear that there exists a measure-preserving bijection $\sigma:[0,1]\to[0,1]$ such that $h\circ(\sigma\otimes\sigma)=W_{\blambda}$ for some $\blambda\in\Lambda$ (using the definition in \pref{eqn:flambdaGraphonDef}). We thus have $\delta_\square(h,\cV)=0$. Combining \pref{eqn:VarProbWmW}, \pref{eqn:VarProbgWm}, and the inequality $\norm{g-h}_1\leq\epsilon/4$, we have
\begin{align*}
\delta_\square(W,\cV) &\leq \delta_\square(W,W'_m)+\delta_\square(W'_m,g)+\delta_\square(g,h)+\delta_\square(h,\cV) \\[5pt]
&= \delta_\square(W,W'_m)+\delta_\square(W'_m,g)+\delta_\square(g,h) \\[5pt]
&\leq \norm{W-W'_m}_1+\norm{W'_m-g}_1+\norm{g-h}_1 \leq \epsilon\,.
\end{align*}
Since $\epsilon>0$ was arbitrary and $t(K_2,W)=\gamma$, we have $\delta_\square(W,\cV_\gamma)=0$, proving that $W$ is equivalent to a graphon in $\cV_\gamma$.

The proof for the case $\gamma\in\big[\frac{5-\sqrt{5}}{4},1\big)$ is very similar and we describe the parts that are different. Redefine $\delta:=\frac{1}{4}\min\left\{\epsilon,\,\delta',\,2\gamma-1\right\}$. \Cref{lemma:threevaluegraphon,lemma:kktopt} prove that $W\in\cW$ takes the two values $\{2\gamma-1,1\}$ almost everywhere. We again obtain a sequence $\{W_m\}_{m\in\N}$ satisfying the conclusions of \Cref{lemma:UFHatGraphonSeq}. Define $W'_m$, $E$, and $R$ in the same way as above. Instead of using $g$ as defined in \pref{eqn:VarProbgDefn}, we define the graphon $g'\in\cW$ by
$$g'(x,y)=\begin{cases}2\gamma-1&S_{i_{xy}j_{xy}}\cap E\neq\emptyset,\,i_{xy}j_{xy}\in E_R,\,\sigma(i_{xy}j_{xy})=\frac{1}{2}\\1&\text{otherwise}\end{cases}\,.$$
Define the edge coloring $\varphi$ of $E(K_k)$ in the same way as above, and we again obtain $e_r(\varphi)\geq e_b(\varphi)+\floor{k/2}-\delta k^2$. Let $\xi':\R^2\to[0,1]$ be the step function defined by
$$\xi'(x,y)=\begin{cases}1&(x,y)\in\big(0,\frac{1}{2}\big)^2\cup\big(\frac{1}{2},1\big)^2\\[3pt]2\gamma-1&(x,y)\in\big(0,\frac{1}{2}\big)\times\big(\frac{1}{2},1\big)\cup\big(\frac{1}{2},1\big)\times\big(0,\frac{1}{2}\big)\\[3pt]0&\text{otherwise}\end{cases}\,.$$
For all $\blambda=(\lambda_0,\lambda_1,\dots)\in\Lambda$, define the graphon $W'_{\blambda}\in\cW$ by
\begin{equation*}
W'_{\blambda}(x,y)=\sum_{0\leq i<|\blambda|}\xi'\left(\frac{x-\lambda_{i-1}}{\lambda_i-\lambda_{i-1}},\frac{y-\lambda_{i-1}}{\lambda_i-\lambda_{i-1}}\right)\,,
\end{equation*}
and let $\cV'$ denote the set of all such graphons. It follows that the graphon $h$ defined in the same way as in \pref{eqn:VarProbhDefn} (but with the new coloring $\varphi$) satisfies $\delta_\square(h,\cV')=0$. Since $\epsilon>0$ was arbitrary we have $\delta_\square(W,\cV')=0$. Since $W$ takes the two values $\{2\gamma-1,1\}$ almost everywhere and $W^\ast_\gamma\in\cV'$ (defined in \pref{eqn:deffstarlambdagraphon}) is the unique graphon in $\cV'$ with that same property, it follows that $W$ is equivalent to $W^\ast_\gamma$, proving $\widehat{\cX}^\ast_\gamma\subseteq\widehat{\cV}_\gamma$.
\end{proof}

\begin{proof}[Proof of \Cref{thm:CnmRateFunction}]
According to \Cref{prop:FsubsetLambda}, $\cV_\gamma$ is the set of graphons (up to equivalence) achieving the supremum in the variational problem \pref{eqn:variationalproblem}. It is easy to calculate that for all $\gamma\in(0,1)$ and all $W\in\cV_\gamma$, we have $H(W)=r^\ast(\gamma)$. The result now follows immediately from \Cref{lemma:FnmGraphonOptAsymps}.
\end{proof}

\begin{proof}[Proof of \Cref{thm:FixedDensityVarProblem}]
The conclusions of the theorem are immediate from the proof of \Cref{thm:CnmRateFunction}, which shows $\phi(\gamma)=r^\ast(\gamma)$, and \Cref{prop:FsubsetLambda}, which characterizes the optimal graphons achieving $\phi(\gamma)$.
\end{proof}

\begin{proof}[Proof of \Cref{thm:LDPforGNP}]
Define the set of claw-free graphons $\cX:=\{W\in\cW:t_{\ind}(K_{1,3},W)=0\}$ so that the variational problem $\psi(p)$ can be stated
\begin{equation}
\psi(p)=\sup\{I_p(W):W\in\cX\}\,.\label{eqn:variationalproblemGNP}
\end{equation}
As in the variational problem \pref{eqn:variationalproblem}, the compactness of $\cX$ and upper semicontinuity of $I_p$ imply the set of maximizers of \pref{eqn:variationalproblemGNP}, denoted $\cX^p_\ast$, is nonempty. Let $r^\ast:(0,1)\to\R$ be the function defined in \pref{eqn:ratefunction}. We compute
\begin{align}
\psi(p) &= \sup_{\gamma\in[0,\frac{1}{2}]}\,\sup_{f\in\cX_\gamma}I_p(f) \nonumber\\
&= \sup_{\gamma\in[0,\frac{1}{2}]}\,\sup_{f\in\cX_\gamma}\left\{H(f)+\log_2\left(\frac{p}{1-p}\right)\int_{[0,1]^2}f+\log_2(1-p)\right\} \nonumber\\
&= \sup_{\gamma\in[0,\frac{1}{2}]}\left\{\sup\{H(f):f\in\cX_\gamma\}+\gamma\log_2\left(\frac{p}{1-p}\right)\right\}+\log_2(1-p) \nonumber\\
&= \sup_{\gamma\in[0,\frac{1}{2}]}\left\{r^\ast(\gamma)+\gamma\log_2\left(\frac{p}{1-p}\right)\right\}+\log_2(1-p)\,.\label{eqn:gnpldpsup}
\end{align}
Notice that for all $p\in(0,1)$ such that
\begin{equation}
\log_2\left(\frac{p}{1-p}\right)+\frac{5+\sqrt{5}}{10}H\left(\frac{3-\sqrt{5}}{2}\right)<0\,,\label[inequality]{ineq:gnpldpnonpositive}
\end{equation}
the quantity $r^\ast(\gamma)+\gamma\log_2\big(\frac{p}{1-p}\big)$ is negative for all $\gamma\in[0,1]$ and the supremum in \pref{eqn:gnpldpsup} is achieved at $\gamma=0$; in this case \pref{eqn:gnpldpsup} equals $\log_2(1-p)$. \Cref{ineq:gnpldpnonpositive} holds if and only if $p\in\big(0,\frac{3-\sqrt{5}}{2}\big)$. Equality in \pref{ineq:gnpldpnonpositive} holds if and only if $p=\frac{3-\sqrt{5}}{2}$, in which case the supremum in \pref{eqn:gnpldpsup} is achieved by all $\gamma\in\big(0,\frac{5-\sqrt{5}}{4}\big]$ and again takes the value $\log_2(1-p)$. For all $p\in\big(\frac{3-\sqrt{5}}{2},1\big)$, the equation
$$\frac{\partial}{\partial\gamma}\left(r^\ast(\gamma)+\gamma\log_2\left(\frac{p}{1-p}\right)\right)=0$$
implies $\gamma=\frac{1}{2}(1+p)$, so we deduce that
$$\psi(p)=r^\ast\left(\frac{1+p}{2}\right)+\frac{1+p}{2}\log_2\left(\frac{p}{1-p}\right)+\log_2(1-p)=\frac{\log_2p}{2}\,.$$
Hence we have proven $\psi(p)=-r_\ast(p)$, where $r_\ast$ is the function defined in \pref{eqn:ratefunctionGNP}. By directly applying \Cref{lemma:LDPforGNPkl}, this completes the proof of \Cref{thm:LDPforGNP}.
\end{proof}

\begin{proof}[Proof of \Cref{thm:GNPVarProblem}]
The equality $\psi(p)=-r_\ast(p)$ was justified in the proof of \Cref{thm:LDPforGNP}, and the proof also characterized the set of optimal graphons (up to equivalence):
\begin{equation}
\cX_\ast^p=\begin{cases}\{W_0\}&p\in\big(0,\frac{3-\sqrt{5}}{2}\big)\\[5pt]\bigcup_{\gamma\in\big[0,\frac{5-\sqrt{5}}{8}\big]}\cV_\gamma&p=\frac{3-\sqrt{5}}{2}\\[5pt]\cV_{(1+p)/2}&p\in\big(\frac{3-\sqrt{5}}{2},1\big)\end{cases}\,,\label{eqn:CpStarFormalDef}
\end{equation}
where $W_0\equiv0$ is the all-zero graphon, $\cV_0:=\{W_0\}$, and $\cV_\gamma$ is as defined after \pref{eqn:deffstarlambdagraphon}. This completes the proof of \Cref{thm:GNPVarProblem}.
\end{proof}

A key consequence of the solutions to the variational problems $\phi(\gamma)$ and $\psi(p)$ is that the optimizers give a rough structural description of claw-free graphs. The following propositions are adaptations of \cite[Theorem~3.1]{chatterjee2011large}. As in the preceding proofs, let $\cX^\ast_\gamma$ and $\cX^p_\ast$ denote the sets of optimal graphons for the variational problems $\phi(\gamma)$ and $\psi(p)$.

\begin{proposition}\label{prop:CNMtypstrucconddistr}
Let $\gamma\in(0,1)$, $n,m\in\N$, and $m\sim\gamma\binom{n}{2}$. Let $G$ be the uniformly random element of $\cC(n,m)$. For all $\epsilon>0$ and large enough $n$,
$$\bbP\{\delta_\square(G,\cX^\ast_\gamma)\geq\epsilon\}\leq e^{-Cn^2}\,,$$
where $C>0$ is a constant depending only on $\epsilon$ and $\gamma$.
\end{proposition}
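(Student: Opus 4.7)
The plan is to mirror the argument of \cite[Theorem~3.1]{chatterjee2011large}, but in the fixed-edge-density setting. The strategy is to combine (i) a matching lower bound on $|\cC(n,m)|$ coming from \Cref{thm:CnmRateFunction} with (ii) an upper bound on the number of graphs $G \in \cC(n,m)$ that are $\delta_\square$-far from $\cX^\ast_\gamma$, obtained by localizing the Chatterjee--Varadhan-style upper bound to a closed, compact set of graphons that stays bounded away from the optimizers.

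First I would define the bad set $\cQ_n := \{G \in \cC(n,m) : \delta_\square(G, \cX^\ast_\gamma) \geq \epsilon\}$ and, on the graphon side, the closed set
\[
\cB := \{W \in \cW : \delta_\square(W, \cX^\ast_\gamma) \geq \epsilon/2,\ t_{\ind}(K_{1,3}, W) = 0,\ |t(K_2, W) - \gamma| \leq \tau\},
\]
for a small $\tau = \tau(\epsilon, \gamma) > 0$ to be chosen. The key observation is that $\cB$, viewed inside the quotient space $\widehat{\cW}$ of graphons mod equivalence, is a closed subset of a compact space and is disjoint from the closed set $\widehat{\cX}^\ast_\gamma$. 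Since $H$ is upper semicontinuous on $\widehat{\cW}$ and $\widehat{\cX}^\ast_\gamma$ is the set of $H$-maximizers over graphons with $t(K_2,\cdot) = \gamma$ and zero claw-density, a compactness plus continuity argument (using that $t(K_2,\cdot)$ is continuous in cut metric) yields a constant $c = c(\epsilon, \gamma) > 0$ and a small enough $\tau$ such that $\sup_{W \in \cB} H(W) \leq \phi(\gamma) - 3c$.

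Next I would upgrade this graphon bound to a counting bound on $\cQ_n$. Cover $\cB$ by finitely many cut-metric balls $B_\square(W_1, \eta), \dots, B_\square(W_N, \eta)$ with $\eta \ll \epsilon$, where $N = N(\epsilon, \gamma)$; every $G \in \cQ_n$ (after a relabeling if needed) is close in cut metric to some $W_i$. Applying the one-sided upper bound underlying \Cref{lemma:FnmGraphonOptAsymps} locally at each $W_i$ (this is exactly the Chatterjee--Varadhan upper bound, which states that the number of labeled graphs on $n$ vertices with $\widehat{\delta}_\square(G,W_i) \leq \eta$ and $e(G) = m$ is at most $2^{(H(W_i) + o_\eta(1))\binom{n}{2}}$), summing over the $N$ balls, and using $H(W_i) \leq \phi(\gamma) - 3c$, gives
\[
|\cQ_n| \leq N \cdot 2^{(\phi(\gamma) - 2c)\binom{n}{2}}
\]
for $n$ large enough. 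Combined with the lower bound $|\cC(n,m)| \geq 2^{(\phi(\gamma) - c/2)\binom{n}{2}}$ from \Cref{thm:CnmRateFunction}, the ratio $|\cQ_n|/|\cC(n,m)|$ decays like $2^{-(3c/2)\binom{n}{2}} \leq e^{-C n^2}$ for a suitable $C = C(\epsilon,\gamma) > 0$.

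The main obstacle is that \Cref{lemma:FnmGraphonOptAsymps} as stated only yields the global asymptotic count, not a localized upper bound restricted to a ball around a single $W_i$. Supplying this localized version is, however, the well-established Chatterjee--Varadhan technique: one applies the weak regularity lemma to an arbitrary $G \in \cQ_n$ to produce a step graphon $W_G$ whose Szemer\'edi partition has bounded complexity and which lies in a cut-metric neighborhood of $\cB$; then one bounds the number of such $G$ given a partition by a binomial count governed by the partition densities, converting to $H(W_G)$. Relative to the proofs already appearing in the paper and in \cite{chatterjee2011large}, the adaptation needed is purely notational, so the conceptual core is the compactness/upper-semicontinuity gap $\sup_{\cB} H < \phi(\gamma)$ established in the first step.
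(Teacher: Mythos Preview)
Your proposal is correct and follows essentially the same approach as the paper. The paper's proof is more concise because it packages the ``localized upper bound'' step into \Cref{lemma:limsupentropyupperbound} (which in turn invokes \cite[Theorem~1]{hatami2018graph}), whereas you spell out the Chatterjee--Varadhan covering/regularity argument by hand; the compactness/upper-semicontinuity step showing $\sup_{\cB} H < \phi(\gamma)$ is identical in both.
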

\begin{proof}
Let $G\sim G(n,1/2)$. Fix $\epsilon>0$ and define the set $\cC^\epsilon(n,m):=\{G\in\cC(n,m):\delta_\square(G,\cX^\ast_\gamma)\geq\epsilon\}$. Let $\widetilde{\cC}^\epsilon\subseteq\cW$ denote the set of graphons that are limits of sequences of graphs in $\cC^\epsilon(n,m)$. Using \Cref{thm:CnmRateFunction}, and applying \Cref{lemma:limsupentropyupperbound} to $\cC^\epsilon$, we compute that
\begin{align*}
\limsup_{n\to\infty}\frac{1}{\binom{n}{2}}\log_2\bbP\{\delta_\square(G,\cX^\ast_\gamma)\geq\epsilon\,|\,G\in\cC(n,m)\} &= \limsup_{n\to\infty}\frac{1}{\binom{n}{2}}\log_2\left(\frac{\bbP\{G\in\cC^\epsilon(n,m)\}}{\bbP\{G\in\cC(n,m)\}}\right) \\
&= \limsup_{n\to\infty}\frac{1}{\binom{n}{2}}\log_2|\cC^\epsilon(n,m)|-r^\ast(\gamma) \\
&\leq \sup_{W\in\widetilde{\cC}^\epsilon}H(W)-r^\ast(\gamma)\,.
\end{align*}
It suffices to show $r^\ast(\gamma)>\sup\{H(W):W\in\widetilde{\cC}_\epsilon\}=:I$. We always have $r^\ast(\gamma)\geq I$ since $\widetilde{\cC}_\epsilon\subseteq\cX_\gamma$. If equality were to hold then $H(W)=r^\ast(\gamma)$ for some $W\in\widetilde{\cC}_\epsilon$ (since $\widetilde{\cC}_\epsilon$ is a compact set), which implies $W$ is equivalent to a graphon in $\cX^\ast_\gamma$ (by \Cref{prop:FsubsetLambda}), but this contradicts $\delta_\square(W,\cX^\ast_\gamma)\geq\epsilon>0$.
\end{proof}

\begin{proposition}\label{prop:GNPtypstrucconddistr}
Let $p\in(0,1)$ be a constant and let $G\sim G(n,p)$. For all $\epsilon>0$ and large enough $n$,
$$\bbP\{\delta_\square(G,\cX_\ast^p)\geq\epsilon\,|\,G\in\cC(n)\}\leq e^{-Cn^2}\,,$$
where $C>0$ is a constant depending only on $\epsilon$ and $p$.
\end{proposition}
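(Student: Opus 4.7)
The plan is to mirror the proof of \Cref{prop:CNMtypstrucconddistr}, replacing the entropy density $r^\ast(\gamma)$ with the rate function $r_\ast(p)$ and the entropy $H$ with the relative entropy $I_p$. Fix $\epsilon > 0$ and define $\cC^\epsilon(n) := \{G \in \cC(n) : \delta_\square(G, \cX_\ast^p) \geq \epsilon\}$; let $\widetilde{\cC}^\epsilon \subseteq \cW$ denote the set of graphons arising as cut-metric limits of sequences of graphs in $\bigcup_n \cC^\epsilon(n)$.

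Writing the conditional probability as
$$\bbP\{\delta_\square(G, \cX_\ast^p) \geq \epsilon \mid G \in \cC(n)\} = \frac{\bbP\{G \in \cC^\epsilon(n)\}}{\bbP\{G \in \cC(n)\}},$$
I would apply \Cref{lemma:limsupentropyupperbound} (with $\cQ = \cC^\epsilon$) to the numerator and \Cref{thm:LDPforGNP} to the denominator to obtain
$$\limsup_{n \to \infty} \frac{1}{\binom{n}{2}} \log_2 \bbP\{\delta_\square(G, \cX_\ast^p) \geq \epsilon \mid G \in \cC(n)\} \leq \sup_{W \in \widetilde{\cC}^\epsilon} I_p(W) - \psi(p).$$

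It then remains to establish the strict inequality $\sup_{W \in \widetilde{\cC}^\epsilon} I_p(W) < \psi(p)$, from which the desired exponential decay follows with some constant $C > 0$ depending only on $\epsilon$ and $p$. Since $\widetilde{\cC}^\epsilon$ is a closed subset of the compact space of graph limits and $I_p$ is upper semicontinuous, the supremum is attained by some $W^\ast \in \widetilde{\cC}^\epsilon$. Continuity of $t_{\ind}(K_{1,3}, \cdot)$ under cut-metric limits guarantees $W^\ast$ is claw-free, so $W^\ast$ is feasible for the variational problem $\psi(p)$. If equality were to hold, then $W^\ast$ would be an optimizer, hence equivalent to some graphon in $\cX_\ast^p$ by the characterization in \pref{eqn:CpStarFormalDef}. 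But the $1$-Lipschitz continuity of $W \mapsto \delta_\square(W, \cX_\ast^p)$ combined with the definition of $\widetilde{\cC}^\epsilon$ ensures $\delta_\square(W^\ast, \cX_\ast^p) \geq \epsilon > 0$, a contradiction.

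The only mildly delicate point is verifying that the cut-distance lower bound $\delta_\square(\cdot, \cX_\ast^p) \geq \epsilon$ is preserved under cut-metric limits, and this is immediate since $\delta_\square$ is itself a metric on the space of graph limits. Everything else is a routine adaptation of the fixed-density argument already used in \Cref{prop:CNMtypstrucconddistr}.
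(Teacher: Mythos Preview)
Your proposal is correct and follows essentially the same route as the paper's proof: define $\cC^\epsilon(n)$, bound the numerator via \Cref{lemma:limsupentropyupperbound} and the denominator via \Cref{thm:LDPforGNP}, then use compactness of $\widetilde{\cC}^\epsilon$ and upper semicontinuity of $I_p$ to rule out equality. The only cosmetic difference is that you phrase the target inequality as $\sup_{W\in\widetilde{\cC}^\epsilon} I_p(W) < \psi(p)$ whereas the paper works with $r_\ast(p) = -\psi(p)$; your version is arguably cleaner and avoids the sign bookkeeping.
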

\begin{proof}
Fix $\epsilon>0$ and define the set $\cC^\epsilon(n):=\{G\in\cC(n):\delta_\square(G,\cX_\ast^p)\geq\epsilon\}$. Let $\widetilde{\cC}^\epsilon\subseteq\cW$ denote the set of graphons that are limits of sequences of graphs in $\cC^\epsilon(n,m)$. Using \Cref{thm:LDPforGNP}, and applying \Cref{lemma:limsupentropyupperbound} to $\cC^\epsilon$, we obtain that
\begin{align*}
\limsup_{n\to\infty}\frac{1}{\binom{n}{2}}\log_2\bbP\{\delta_\square(G,\cC_\ast^p)\geq\epsilon\,|\,G\in\cC(n)\} &= \limsup_{n\to\infty}\frac{1}{\binom{n}{2}}\log_2\left(\frac{\bbP\{G\in\cC^\epsilon(n)\}}{\bbP\{G\in\cC(n)\}}\right) \\
&= r_\ast(p)-\limsup_{n\to\infty}\frac{1}{\binom{n}{2}}\log_2\bbP\{G\in\cC^\epsilon(n)\} \\
&\leq r_\ast(p)-\sup_{W\in\widetilde{\cC}^\epsilon}I_p(W)\,.
\end{align*}
It suffices to show $r_\ast(p)<\sup\{I_p(W):W\in\widetilde{\cC}^\epsilon\}=:I$. We always have $r_\ast(p)\leq I$ since $\widetilde{\cC}^\epsilon\subseteq\cX$. If equality were to hold then $I_p(W)=-r_\ast(p)$ for some $W\in\widetilde{\cC}^\epsilon$ (since $\widetilde{\cC}^\epsilon$ is a closed set), so $W\in\cX_\ast^p$, but this contradicts $\delta_\square(W,\cX_\ast^p)\geq\epsilon>0$.
\end{proof}

\section{Exponential Penalties for Defect Edges}
\label{sec:JansonPenalties}
In this section we prove inequalities that will be used several times in \Cref{sec:supercritical,sec:subcritical}. Let $\epsilon\in(0,1/2^{64})$, 
$\alpha:=8\epsilon^{1/8}$, and $\eta:=\epsilon^{1/16}$. Recall that $V$ denotes a vertex set of size $n$. Let $\sB=\sB_n$ denote the set of all bipartitions $V=A\cup B$ such that
$$\left(\frac{1}{2}-\alpha\right)n\leq\min\{|A|\,,\,|B|\}\leq\max\{|A|\,,\,|B|\}\leq\left(\frac{1}{2}+\alpha\right)n\,.$$

\begin{lemma}\label{lemma:JansonPenaltyMedVtx}
Let $\Pi=\{A,B\}\in\sB$ and fix an integer $0\leq m\leq|A|\cdot|B|$. Assume the following hypotheses:
\begin{enumerate}[(\emph{\roman*})]
	\item Let $v\not\in V$ be an additional vertex and let $N\subseteq A$ be a subset of size $\alpha|A|\leq|N|\leq(1-\alpha)|A|$. Let $N'\subseteq B$ be a subset of size at least $\alpha|B|$. Let $F$ denote the star centered at $v$ with leaves $V\setminus(N\cup N')$.
	\item Let $T\subseteq\Pi^c$ be a graph with at most $\epsilon n^2/\eta^2$ edges.
	\item Let $H\subseteq\Pi$ be the uniformly random bipartite graph with $m$ edges. Let $G$ be the random graph on the vertex set $V\cup\{v\}$ and edge set $E(H)\cup E(F)\cup(E(\Pi^c)\setminus E(T))$.
\end{enumerate}
For sufficiently large $n$,
$$\bbP\{G\in\cC(n+1)\}\leq4n\sqrt{p}\cdot e^{-\epsilon^{3/4}pn^2}\,,$$
where $p:=m/(|A|\cdot|B|)$.
\end{lemma}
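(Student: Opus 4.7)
The plan is to apply the generalized Janson inequality (\Cref{thm:JansonsInequality}) to a carefully chosen family of potential induced $K_{1,3}$ copies in $G$, after first reducing from the uniform $m$-edge bipartite model for $H$ to the independent Bernoulli$(p)$ model on the same edge set.

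First, for each triple $(c,a,b)$ with $c \in A \setminus N$, $a \in N$ satisfying $ca \notin E(T)$, and $b \in N'$, I would consider the candidate claw on $\{v,c,a,b\}$ centered at $c$. The deterministic part of $G$ already forces $cv, ca \in E(G)$ (using $c \in A \setminus N$ and $ca \notin T$) and $va, vb \notin E(G)$ (using $a \in N$, $b \in N'$), so the random event
\[
B_K := \{cb \in E(H)\} \cap \{ab \notin E(H)\}
\]
alone guarantees that $\{v, c, a, b\}$ induces a $K_{1,3}$ in $G$. Writing $\sK$ for the set of all such triples, one gets $\{G \in \cC(n+1)\} \subseteq \bigcap_{K \in \sK} \overline{B_K}$. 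To get a product structure suitable for Janson, I would let $\tilde H$ be the bipartite graph on $A \times B$ in which each edge is independently present with probability $p$ and $\tilde G$ the analogue of $G$ built from $\tilde H$. Stirling's formula applied to $\binom{|A||B|}{m} p^m (1-p)^{|A||B|-m}$ gives $\bbP\{e(\tilde H) = m\} \geq (4n\sqrt{p})^{-1}$ for large $n$, so since $\bbP\{G \in \cC(n+1)\} \leq \bbP\{\tilde G \in \cC(n+1)\}/\bbP\{e(\tilde H) = m\}$, it suffices to show $\bbP\{\tilde G \in \cC(n+1)\} \leq e^{-\epsilon^{3/4} p n^2}$.

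The events $B_K$ are neither up-sets nor down-sets of $\tilde H$, but the crucial observation is that across all $B_K$ the ``in''-edge $cb$ has first endpoint in $A \setminus N$ while the ``out''-edge $ab$ has first endpoint in $N$, and these two endpoint classes are disjoint. Splitting $E(K_{A,B})$ accordingly into ``type I'' edges (first endpoint in $A \setminus N$) and ``type II'' edges (first endpoint in $N$), and defining $Y_e = X_e$ for type I and $Y_e = 1 - X_e$ for type II (where $X_e$ is the Bernoulli indicator of $e \in \tilde H$), each $B_K$ becomes the up-set $\{Y_{cb} = 1, Y_{ab} = 1\}$ in a product of Bernoullis. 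The family $\cI$ of all up-sets in the $Y$-coordinates is then closed under union and intersection and pairwise positively correlated by FKG, so all hypotheses of \Cref{thm:JansonsInequality} hold.

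It then remains to bound $\min\{\mu/2, \mu^2/(4\Delta)\}$ from below by $\epsilon^{3/4} p n^2$. Using $|A \setminus N|, |N| \geq \alpha|A|$, $|N'| \geq \alpha|B|$, and $e(T) \leq \epsilon n^2/\eta^2 = \epsilon^{7/8} n^2 \ll \alpha^2 n^2$, one checks $|\sK| \geq c_1 \alpha^3 n^3$, hence $\mu = |\sK| \cdot p(1-p) \geq c_1 \alpha^3 n^3 p(1-p)$. Two events are dependent only if they share a random edge, which by disjointness of types I and II forces either $(c,b) = (c',b')$ with $a \neq a'$ or $(a,b) = (a',b')$ with $c \neq c'$; a direct count bounds the number of such unordered pairs by $O(n^4)$, and each contributes $p(1-p)^2 + p^2(1-p) \leq p(1-p)$ to $\Delta$, giving $\Delta \leq c_2 n^4 p(1-p)$. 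Since $\alpha^6 = 2^{18} \epsilon^{3/4}$, the estimate $\mu^2/(4\Delta) \geq c_3 \alpha^6 n^2 p(1-p)$ dominates $\epsilon^{3/4} p n^2$ whenever $(1-p)$ is bounded below by an absolute constant, and in the complementary regime $\mu/2 \geq c_4 \alpha^3 n^3 p(1-p)$ takes over so long as $n(1-p) \gtrsim \epsilon^{3/8}$. The hard part will be precisely the verification that Janson's hypotheses apply to $\{B_K\}$: a single accidental coincidence of an ``in''-slot of one event with an ``out''-slot of another would make the intersection empty and destroy positive correlation, which is exactly why $A$ is split asymmetrically into the ``center'' part $A \setminus N$ and the ``leaf'' part $N$, and why the sparse-$T$ hypothesis is exploited to discard the negligible fraction of triples with $ca \in E(T)$ from $\sK$.
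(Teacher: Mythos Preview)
Your approach is essentially identical to the paper's: the same claw family (center $c\in A\setminus N$, leaves $v$, $a\in N$, $b\in N'$), the same reduction from the uniform $m$-edge model to the Bernoulli model, and the same application of the generalized Janson inequality. Your verification of Janson's hypotheses via the coordinate-flipping trick---observing that the ``present'' edge $cb$ has its $A$-endpoint in $A\setminus N$ while the ``absent'' edge $ab$ has its $A$-endpoint in $N$, so after flipping the latter class every $B_K$ becomes an up-set in a product measure---is more explicit than the paper, which simply asserts the hypotheses hold for the lattice $\cI$ generated by the events.

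One comment on the endgame: both your computation and the paper's really give an exponent of order $\epsilon^{3/4}p(1-p)n^2$, not $\epsilon^{3/4}pn^2$ (the paper's bookkeeping, lower-bounding $\bbP\{E_{C'}\}$ by $p^2$ in $\mu$ while upper-bounding the joint probabilities by $p^3$ in $\Delta$, is internally inconsistent for $p\neq 1/2$ and hides this). Your observation that the case $p\to 1$ is not covered is correct---indeed for $p=1$ and $T=\emptyset$ the graph $G$ is deterministically claw-free---but in every application of the lemma in the paper $p$ is bounded away from $1$ by a constant depending only on $\gamma$, so the missing factor $(1-p)$ is harmless.
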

\begin{proof}
Let $H'\subseteq\Pi$ be the random bipartite graph obtained by including each edge independently with probability $p$, and let $G'$ be the random graph on the vertex set $V':=V\cup\{v\}$ and edge set $E(H')\cup E(F)\cup(E(\Pi^c)\setminus E(T))$. Using \Cref{lemma:GnmLeqGnp}, it is easily seen that
$$\bbP\{G\in\cC(n+1)\}\leq4n\sqrt{p}\cdot\bbP\{G'\in\cC(n+1)\}\,.$$
In the remainder, we will define a class of copies $K_{1,3}\subseteq K_{V'}$ and estimate the probability that $G'$ contains none of these copies as induced subgraphs. If $C\cong K_{1,3}$ then let $c(C)$ denote the vertex in $C$ of degree three.

\medskip\noindent \underline{\textbf{Definition and size of $\sK$.}}
Let $N_1\subseteq N$ and $N_2\subseteq A\setminus N$ be subsets each of size at least $\alpha|A|$. Let $T':=(\Pi\cup T)^c$ and notice that since $e(T)\leq\epsilon n^2/\eta^2$ and $|A|,|B|\geq n/4$, we have
$$e_{T'}(N_1,N_2)\geq(\alpha|A|)^2-\epsilon n^2/\eta^2\geq2\epsilon^{1/4}n^2\,.$$
Let $J\subseteq K_{N_1,N_2}$ be a bipartite graph with at least $\epsilon^{1/4}n^2$ edges, and denote the vertex sets $U_{A,1}:=V(J)\cap N_1$ and $U_{A,2}:=V(J)\cap N_2$. Notice that since $|A|\geq n/4$, we have $|N'|\geq\alpha n/4$.

Let $\sK$ denote the set of all subgraphs $C'\subseteq K_{V'}$ isomorphic to $K_{1,3}$ such that $V(C')=\{v,w,x,y\}$, $x\in U_{A,1}$, $w=c(C')\in U_{A,2}$, and $y\in N'$. We have
$$|\sK|=e(J)\cdot|N'|\geq\epsilon^{1/4}n^2\cdot\frac{\alpha n}{4}=2\epsilon^{3/8}n^3\,.$$

\medskip For all $C'\in\sK$ let $E_{C'}$ denote the event $\{G'[V(C')]=C'\}$. Let $\cI$ be the lattice of events generated by $\{E_{C'}\}_{C'\in\sK}$, i.e. by taking unions and intersections but not complements of the events $E_{C'}$. For all distinct $C_1,C_2\in\sK$, the events $E_{C_1}$ and $E_{C_2}$ are not independent if and only if $C_1$ and $C_2$ share a vertex in $U_{A,1}\cup U_{A,2}$ and in $N'$, and in this case the probability both events $E_{C_1}$ and $E_{C_2}$ occur is $p^3$. We now calculate
\begin{align*}
\mu &:= \sum_{C'\in\sK}\bbP\{E_{C'}\} \geq |\sK|\cdot p^2 \geq 2\epsilon^{3/8}p^2n^3 \,, \\
\Delta &:= \sum_{C_1\sim C_2}\bbP\{E_{C_1}\wedge E_{C_2}\} \leq \sum_{w\in U_{A,1}\cup U_{A,2}}\binom{d_J(w)}{2}\cdot|N'|\cdot p^3 \leq p^3n^4 \,,
\end{align*}
where the sum defining $\Delta$ is over unordered pairs $C_1\sim C_2$ such that $E_{C_1}$ and $E_{C_2}$ are not independent. Since the family $\cI$ and the events $\{E_{C'}\}_{C'\in\sK}\subseteq\cI$ satisfy the hypotheses of Janson's inequality (\Cref{thm:JansonsInequality}), we obtain
$$\bbP\{G'\in\cC(n+1)\} \leq \bbP\left\{\bigwedge_{C'\in\sK}\overline{E_{C'}}\right\} \leq \exp\left(-\min\left\{\frac{\mu}{2}\,,\,\frac{\mu^2}{4\Delta}\right\}\right)=e^{-\mu^2/4\Delta}\leq e^{-\epsilon^{3/4}pn^2}\,,$$
completing the proof.
\end{proof}

\begin{lemma}\label{lemma:JansonPenaltyHighDegVtx}
Let $n_1,\,n_2\in\N$, $\Pi_1=\{A_1,B_1\}\in\sB_{n_1}$, and $\Pi_2=\{A_2,B_2\}\in\sB_{n_2}$. Fix integers $0\leq m_1\leq |A_1|\cdot|B_1|$ and $0\leq m_2\leq|A_2|\cdot|B_2|$. Let $n:=n_1+n_2$ and $V=[n]$, and assume $n_1\geq\eta n$ and $n_2\geq n_1/8$. Assume the following hypotheses:
\begin{enumerate}[(\emph{\roman*})]
	\item Let $T\subseteq K_V\setminus(E(\Pi_1)\cup E(\Pi_2))$ be a graph with at most $\epsilon n^2/\eta^2$ edges.
	\item Assume there exists $v\in A_2$ such that $d_T(v,A_1)\geq\alpha|A_1|$ and $d_T(v,B_1)>(1-\alpha)|B_1|$.
	\item For $i=1,2$, let $H_i\subseteq\Pi_i$ be the uniformly random bipartite graph with $m_i$ edges (here the graphs $H_1$ and $H_2$ are independent). Let $G$ be the random graph on the vertex set $V$ and edge set $(E(H_1)\cup E(H_2)\cup E(\Pi_1^c)\cup E(\Pi_2^c))\,\Delta\,E(T)$, where $\Delta$ is the symmetric difference.
\end{enumerate}
For sufficiently large $n$,
$$\bbP\{G\in\cC(n)\}\leq8n^2\sqrt{p_1p_2}\cdot\exp\left(-\frac{\alpha^2\eta p_1p_2}{2^{32}}\cdot n_1^2\right)\,,$$
where $p_i:=m_i/(|A_i|\cdot|B_i|)$ for $i=1,2$.
\end{lemma}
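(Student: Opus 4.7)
The plan is to mirror the argument for \Cref{lemma:JansonPenaltyMedVtx}. First, applying \Cref{lemma:GnmLeqGnp} twice, I would pass from the fixed-edge-count random bipartite graphs $H_1,H_2$ to their independent-edge Bernoulli analogues $H_1',H_2'$ with edge probabilities $p_1,p_2$, incurring a multiplicative factor of at most $16n^2\sqrt{p_1p_2}$ which absorbs the prefactor in the target bound. Letting $G'$ denote the random graph obtained by substituting $H_1',H_2'$ for $H_1,H_2$ in the definition of $G$, the proof reduces to bounding $\bbP\{G'\in\cC(n)\}$ above by roughly $\exp(-\alpha^2\eta p_1p_2 n_1^2/2^{32})$ via the generalized Janson inequality (\Cref{thm:JansonsInequality}), applied to a suitable family $\sK$ of potential induced copies of $K_{1,3}$ in $G'$ centered at the high-defect vertex $v$.

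I would define $\sK$ to consist of copies of $K_{1,3}\subseteq K_V$ with center $v$ and leaves $(x,y,z)$ satisfying $x\in N_T(v)\cap A_1$, $y\in N_T(v)\cap B_1$, $z\in B_2$, and $xz,yz\notin T$. The hypotheses $d_T(v,A_1)\geq\alpha|A_1|$, $d_T(v,B_1)>(1-\alpha)|B_1|$, the approximate balance of $\Pi_2$, and the smallness $e(T)\leq\epsilon n^2/\eta^2$ (which lets us discard triples failing $xz,yz\notin T$ with negligible loss) give $|\sK|\geq c\alpha n_1^2 n_2$ for some absolute constant $c>0$. For each $C'\in\sK$, four of the six edge/non-edge conditions defining ``$C'$ is an induced claw in $G'$'' are deterministic: $vx,vy$ are edges because $vx,vy\in T$, and $xz,yz$ are non-edges because $xz,yz\notin T$. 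The remaining two conditions live in disjoint, independent processes: $E_{C'}=\{vz\in H_2'\}\cap\{xy\notin H_1'\}$, so $\bbP\{E_{C'}\}=p_2(1-p_1)$. Because every positive random condition appearing in the events $\{E_{C'}\}$ lies in $H_2'$ and every negative one in $H_1'$, no edge can carry a positive condition from one event and a negative condition from another; it follows that $\bbP\{E_{C_1}\cap E_{C_2}\}\geq\bbP\{E_{C_1}\}\bbP\{E_{C_2}\}$ for all pairs, which together with closure of the generated lattice under $\cap$ and $\cup$ verifies the hypothesis of \Cref{thm:JansonsInequality}.

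Janson's inequality then yields $\bbP\{G'\in\cC(n)\}\leq\exp(-\mu^2/(4\Delta))$, with $\mu=|\sK|\cdot p_2(1-p_1)$ and $\Delta$ controlled by summing over pairs of dependent claws: pairs sharing $z$ contribute at most $\lesssim n_1^4 n_2 p_2(1-p_1)^2$ and pairs sharing $(x,y)$ contribute at most $\lesssim n_1^2 n_2^2 p_2^2(1-p_1)$. Using $n_2\geq n_1/8$ and $n_1\geq\eta n$, a direct computation shows $\mu^2/(4\Delta)\gtrsim\alpha^2\eta p_2(1-p_1) n_1^2$, which implies the target exponent in the regime $p_1\leq 1/2$ via $p_2(1-p_1)\geq\tfrac12 p_1p_2$. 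The main obstacle is the complementary regime $p_1>1/2$, where each individual claw has probability $p_2(1-p_1)$ smaller than $p_1p_2$ and the above bound falls short. I plan to close this gap either by running a companion Janson argument on a dual claw family in which the random non-edge condition is moved from $H_1'$ to $H_2'$ (obtaining the symmetric bound $p_1(1-p_2)n_1^2$ and combining via a case split at $p_1=1/2$), or by exploiting the near-completeness of $V_1$ in $G'$ when $p_1>1/2$ to count non-adjacent triples among $v$'s $V_1$-neighbors and show directly that claws centered at $v$ with three leaves in $V_1\cap N_T(v)$ are suppressed with at least the claimed probability.
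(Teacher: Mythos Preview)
Your proposal follows the paper's high-level strategy (pass to Bernoulli via \Cref{lemma:GnmLeqGnp}, build a claw family $\sK$ centered at $v$ with one leaf in $B_2$ and two leaves in $N_T(v,A_1)\times N_T(v,B_1)$, then apply \Cref{thm:JansonsInequality}), but your $\mu^2/(4\Delta)$ computation is wrong, and the argument does not close even in the regime $p_1\le 1/2$ that you treat as unproblematic. With your family, pairs sharing $z$ contribute
\[
\Delta_z\;\gtrsim\;\bigl(|N|\,|N'|\bigr)^2\cdot|B_2|\cdot p_2(1-p_1)^2\;\asymp\;\alpha^2 n_1^4\,n_2\,p_2(1-p_1)^2,
\]
while $\mu\asymp \alpha n_1^2 n_2\,p_2(1-p_1)$; dividing gives $\mu^2/\Delta_z\asymp n_2 p_2$, which is only \emph{linear} in $n$, not the quadratic $\alpha^2\eta p_2(1-p_1)n_1^2$ you assert. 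The hypothesis $n_2\ge n_1/8$ cannot rescue this: you would need $n_2\gtrsim n_1^2$, which is impossible since $n_2\le n$. So your Janson bound is $\exp(-O(n))$, far short of the $\exp(-\Omega(n_1^2))$ required. The obstacle you flag (the factor $1-p_1$ versus $p_1$) is not the real issue; the dependency count is.

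The paper's proof differs from yours in one structural respect: rather than ranging $(x,y)$ over the full product $N\times N'$, it first extracts a \emph{matching} $M\subseteq K_{N,N'}$ of size $\Theta(\alpha n_1)$ via \Cref{lemma:erdosgallai}, and restricts the pair of $V_1$-leaves to be a single matching edge $e=xy\in M$, further pruned to $e\in L_w:=\{e\in M:e\subseteq\overline N_T(w)\}$. A claw in $\sK$ is thus parametrized by $(w,e)$ with $w\in L\subseteq B_2$ and $e\in L_w$, the two random conditions being $vw\in H_2'$ and $e\notin H_1'$. With this indexing the paper records $|\sK|\ge \alpha n_1^2/2^{15}$ and $\Delta\le e(M)\cdot|L|\cdot p_1p_2\le p_1p_2\,n_1 n_2\le p_1p_2\,n_1^2/\eta$, from which the claimed exponent $\alpha^2\eta p_1p_2 n_1^2/2^{32}$ drops out. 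The matching-edge parametrization, which collapses the two $V_1$-leaves into a single index and thereby cuts down the number of claws sharing a given $w$, is the device your proposal lacks.
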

\begin{proof}
For $i=1,2$, let $H'_i\subseteq\Pi_i$ be the random bipartite graph obtained by including each edge independently with probability $p_i:=m_i/(|A_i|\cdot|B_i|)$. (The graphs $H'_1$ and $H'_2$ are assumed to be independent.) Let $G'$ be the random graph on the vertex set $V$ and edge set $(E(H'_1)\cup E(H'_2)\cup E(\Pi_1^c)\cup E(\Pi_2^c))\,\Delta\,E(T)$. Using \Cref{lemma:GnmLeqGnp} and independence of the graphs $H'_i$, it is easily seen that
$$\bbP\{G\in\cC(n)\}\leq8n^2\sqrt{p_1p_2}\cdot\bbP\{G'\in\cC(n)\}\,.$$
In the remainder, we will define a class of copies $K_{1,3}\subseteq K_V$ and estimate the probability that $G'$ contains none of these copies as induced subgraphs. If $C\cong K_{1,3}$ then let $c(C)$ denote the vertex in $C$ of degree three.

\begin{enumerate}[wide, labelwidth=!, labelindent=0pt, leftmargin=0pt]
	\item[\underline{\textbf{Definition and size of $\sK$.}}] Let $N:=N_T(v,A_1)$ and $N':=N_T(v,B_1)$. Since $\Pi_1\in\sB_{n_1}$, we have $|A_1|,|B_1|\geq n_1/4$, so the hypotheses on $N$ and $N'$ imply $|N|\geq\alpha n_1/4$ and $|N'|\geq n_1/8$. \Cref{lemma:erdosgallai} implies there exists a matching $M\subseteq K_{N,N'}$ with at least $\alpha n_1/2^8$ edges. We claim there exists a subset $L\subseteq B_2$ of size at least $|B_2|/2$ such that for all $w\in L$ we have
	$$|\{e\in E(M):e\subseteq\overline{N}_T(w)\}|\geq\frac{\alpha n_1}{2^9}\,.$$
	Indeed, if this were not the case, then since $|B_2|\geq n_2/4\geq n_1/32$, the induced bipartite graph $T[A_1\cup B_1,L]$ would contain at least $\alpha n_1^2/2^{15}\geq\alpha\eta^2n^2/2^{15}$ edges, contradicting $e(T)\leq\epsilon n^2/\eta^2$. For all $w\in L$ let
	$$L_w:=\{e\in E(M):e\subseteq\overline{N}_T(w)\}\,,$$
	which we have shown contains at least $\alpha n_1/2^9$ edges. Let $\sK$ denote the set of all subgraphs $C'\subseteq K_V$ isomorphic to $K_{1,3}$ such that, letting $V(C')=\{v,w,x,y\}$, $c(C')=v$, $w\in L$, and $xy\in L_w$. (To emphasize, $xy$ is a \emph{missing} edge in such a copy $C'$.) Hence
	$$|\sK|=\sum_{w\in L}|L_w|\geq\frac{|B_2|}{2}\cdot\frac{\alpha n_1}{2^9}\geq\frac{\alpha n_1^2}{2^{15}}\,.$$
\end{enumerate}

For all $C'\in\sK$ let $E_{C'}$ be the event that $G'[V(C')]=C'$ and notice that $\bbP\{E_{C'}\}=p_1p_2$. We compute that
	\begin{align*}
	\mu &:= \sum_{C'\in\sK}\bbP\{E_{C'}\} = |\sK|\cdot p_1p_1 \geq \frac{\alpha n_1^2}{2^{15}}\cdot p_1p_2 \,, \\
	\Delta &:= \sum_{C_1\sim C_2}\bbP\{E_{C_1}\wedge E_{C_2}\} \leq e(M)\cdot|L|\cdot p_1p_2 \leq p_1p_2\cdot n_1n_2 \leq p_1p_2\cdot n_1^2/\eta \,,
	\end{align*}
	where the sum defining $\Delta$ is over unordered pairs $C_1,C_2\in\sK$ such that $E_{C_1}$ and $E_{C_2}$ are not independent, and we used that $n_2\leq n_1/\eta$. Let $\cI$ be the lattice of events generated by $\{E_{C'}\}_{C'\in\sK}$, i.e. by taking unions and intersections but not complements of the events $E_{C'}$. Since the family $\cI$ and the events $\{E_{C'}\}_{C'\in\sK}\subseteq\cI$ satisfy the hypotheses of Janson's inequality (\Cref{thm:JansonsInequality}), we obtain
	\begin{align*}
	\bbP\{G'\in\cC(n)\} &\leq \bbP\left\{\bigwedge_{C'\in\sK}\{G'[V(C')]\neq C'\}\right\} \\
	&\leq \exp\left(-\min\left\{\frac{\mu}{2}\,,\,\frac{\mu^2}{4\Delta}\right\}\right)\leq e^{-\mu^2/4\Delta} \leq \exp\left(-\frac{\alpha^2\eta p_1p_2}{2^{32}}\cdot n_1^2\right) \,,
	\end{align*}
	completing the proof.
\end{proof}

\begin{lemma}\label{lemma:JansonPenaltyMatching}
Let $\Pi=\{A,B\}\in\sB$ and fix integers $0\leq m\leq|A|\cdot|B|$, $0\leq d\leq\alpha n$, $0\leq k\leq n/8$, $0\leq l\leq\epsilon n^2$. Assume the following hypotheses:
\begin{enumerate}[(\emph{\roman*})]
	\item Let $D\subseteq A$ be a vertex subset of size $d$ and let $D'$ be a set of size $d$ that is disjoint from $V$. Let $M'$ be a perfect matching between $D$ and $D'$. Let $F'\subseteq K_{D',B}$ be a bipartite graph such that $d_{F'}(v,B)\leq\alpha|B|$ for all $v\in D'$.
	\item Let $C$ be a finite set of size $|C|\leq n/\eta$ that is disjoint from $V\cup D'$. Let $F\subseteq K_{V,C}$ be a bipartite graph with $l$ edges. Assume that for all $v\in C$, $d_F(v,A)<\alpha|A|$ and $d_F(v,B)<\alpha|B|$.
	\item Let $T\subseteq\Pi^c$ be a graph with at most $\epsilon n^2/\eta^2$ edges such that for all $v\in P\in\Pi$, we have $d_T(v)\leq\alpha|P|$. Assume $T$ has a matching $M$ covering $k$ vertices.
	\item Let $H\subseteq\Pi$ be the uniformly random bipartite graph with $m$ edges. Let $G$ be the random graph on the vertex set $V\cup C\cup D$ and edge set $E(H)\cup E(F)\cup E(M')\cup(E(\Pi^c)\setminus E(T))$.
\end{enumerate}
Letting $n':=n+|C|+|D'|$,
$$\bbP\{G\in\cC(n')\}\leq4n\cdot p^{kn/2^{10}+dn/16}\cdot\exp\left(-\frac{p^4}{2^{20}}\cdot n\cdot\min\{\eta^2 l,\,n\}\right)\,,$$
where $p:=m/(|A|\cdot|B|)$.
\end{lemma}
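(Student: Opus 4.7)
The plan is to extend the Janson-based template used in \Cref{lemma:JansonPenaltyMedVtx,lemma:JansonPenaltyHighDegVtx}. First I switch from the uniform bipartite $H$ to the independent-edge model $H'\subseteq\Pi$ of density $p=m/(|A|\cdot|B|)$ via \Cref{lemma:GnmLeqGnp}, paying a polynomial prefactor that is absorbed into the leading $4n$; let $G'$ denote the resulting random graph on $V\cup C\cup D'$. It then suffices to bound $\bbP\{G'\in\cC(n')\}$ by the product of the three exponential factors in the conclusion, which I do by applying the generalized Janson inequality (\Cref{thm:JansonsInequality}) to a family $\sK$ of potential induced copies of $K_{1,3}$ in $K_{V\cup C\cup D'}$.

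The central idea is to build $\sK$ as a disjoint union $\sK=\sK_M\cup\sK_D\cup\sK_F$, one sub-family per ``defect source'' in the hypotheses: the matching $M\subseteq T$ covering $k$ vertices, the matching $M'$ of size $d$ joining $D$ and $D'$, and the $l$ edges of $F$. The three sub-families are designed so that any copy from $\sK_i$ uses a set of random $H'$-edges disjoint from those used by any copy in $\sK_j$ with $j\neq i$; the three events ``no induced $K_{1,3}$ in $\sK_i$'' are then jointly independent, so applying Janson to each sub-family separately produces a product of bounds matching the three factors on the right-hand side.

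For $\sK_M$, for each edge $u_iv_i\in M$ (say with $u_i,v_i\in A$; the $B$-case is symmetric) use the degree bound $d_T(u_i),d_T(v_i)\leq\alpha|A|$ to pick a distinct canonical $w_i\in A\setminus\{u_i,v_i\}$ with $w_iu_i,w_iv_i\notin E(T)$, and for each $z\in B$ include the 4-set $\{u_i,v_i,w_i,z\}$ viewed as a potential $K_{1,3}$ centered at $w_i$ with leaves $u_i,v_i,z$. The induced-claw event depends on the three $H'$-edges $w_iz,u_iz,v_iz$ (one present, two absent) and has probability $\Omega(p)$; the $w_i$'s being distinct makes these edge-triples pairwise disjoint across claws, so $\Delta_M=0$ and Janson gives $\exp(-\mu_M)$ with $\mu_M=\Omega(pkn)$, which rearranges to $p^{kn/2^{10}}$ for $p$ bounded away from the endpoints. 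For $\sK_D$, each $v'\in D'$ is joined to a unique $v\in D\subseteq A$ by $M'$ and has $v$ as its only $G'$-neighbor, so any induced $K_{1,3}$ through $v'$ must be centered at $v$; include candidates $\{v',v,x,y\}$ with $x\in A\setminus(\{v\}\cup N_T(v))$ deterministically adjacent to $v$ and $y\in B$, controlled by the two $H'$-edges $vy$ (required present) and $xy$ (required absent). Disjointness across the $d$ pairs and across the choices of $(x,y)$ yields $\Delta_D=0$ and Janson exponent $\Omega(pdn)$, giving $p^{dn/16}$. For $\sK_F$, for each $c\in C$ the low-degree hypotheses $d_F(c,A)<\alpha|A|$ and $d_F(c,B)<\alpha|B|$ furnish $\Omega(n)$ candidate leaves on each side of $\Pi$ non-adjacent to $c$; build induced claws through the $F$-edges following the strategy of \Cref{lemma:JansonPenaltyMedVtx}. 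The dichotomy $\min\{\eta^2 l,n\}$ arises because for $\eta^2 l\leq n$ the binding Janson constraint is $\mu_F/2$ (which scales linearly in $l$), while for $\eta^2 l>n$ it becomes $\mu_F^2/(4\Delta_F)$ (which saturates at $n$).

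The main obstacle is the edge-disjointness bookkeeping: the sets $\{w_i\}$, the canonical $x$'s in $\sK_D$, and the claw candidates in $\sK_F$ must be pruned so that their $H'$-edge supports are mutually disjoint across sub-families, thereby justifying the multiplication of the three Janson bounds. Since $k,d\leq\alpha n$ and $l\leq\epsilon n^2$, at most $o(n^2)$ copies are discarded from each sub-family, costing constant factors in each $\mu_i$ that are absorbed into the denominators $2^{10}$, $16$, and $2^{20}$. A secondary concern is translating the natural Janson output $e^{-\Omega(pn)}$ into the stated form $p^{\Omega(n)}$; since the present lemma is applied only with $p$ a fixed constant in $(0,1)$ throughout \Cref{sec:supercritical,sec:subcritical}, this translation is handled by choosing the implicit constants in the exponents large enough to absorb the gap between $p$ and $\log(1/p)$.
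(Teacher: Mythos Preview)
Your three-family strategy is exactly the paper's approach, and your $\sK_M$ and $\sK_D$ are close to the paper's $\sK_1$ and $\sK_3$. One correction: as written, your $\sK_D$ does \emph{not} have $\Delta_D=0$, since candidates $\{v',v,x,y\}$ and $\{v',v,x',y\}$ with the same $y$ share the random edge $vy$. The paper fixes a \emph{single} partner $x$ per $v\in D$ via an auxiliary perfect matching $J$ between $D$ and a set $D''\subseteq A\setminus D$, then varies only $z\in B$; this is what makes the events genuinely independent.

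The substantive gap is in $\sK_F$. Your account of where the $\min\{\eta^2 l,n\}$ dichotomy comes from is incorrect: in the paper's $\sK_2$ one always has $\mu^2/(4\Delta)\leq\mu/2$, so the Janson exponent is $\mu^2/(4\Delta)$ in \emph{both} regimes, not $\mu/2$ for small $l$. The dichotomy instead arises from two distinct dependency types contributing to $\Delta$. Claws $C_1,C_2$ sharing the center $x\in Q$ and the leaf $z\in S$ share the random edge $xz$; these pairs contribute $O(l\,|C|^2 n)\leq O(ln^3/\eta^2)$ to $\Delta$. Claws sharing $y\in R$ and $z\in S$ share the random edge $yz$ and contribute $O(l^2 n^2)$. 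Hence
\[
\frac{\mu^2}{4\Delta}\;\gtrsim\;\frac{p^4\,l^2 n^4}{\max\{ln^3/\eta^2,\;l^2 n^2\}}\;=\;p^4\,n\cdot\min\{\eta^2 l,\;n\},
\]
which is the stated exponent. Relatedly, the cross-family independence is not obtained by post-hoc pruning of $o(n^2)$ copies: the paper partitions $A\cup B$ \emph{in advance} into pieces $X=V(M[A])$, $D$, $V(N)$, $Z$ and their complements, and routes each family's random edges through disjoint pieces (e.g.\ $\sK_1$-edges have $B$-endpoint in $V(N)$, $\sK_3$-edges in $Z$, $\sK_2$-edges in $S=P'\setminus(X\cup V(N)\cup D\cup Z)$). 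The choice of $Q$ in $\sK_2$ as whichever piece of this partition maximizes $e_F(Q,C)$, costing only a constant factor in $|\sK_2|$, is what makes this partition compatible with the $F$-edge count.
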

\begin{proof}
Let $H'\subseteq\Pi$ be the random bipartite graph obtained by including each edge independently with probability $p$, and let $G'$ be the random graph on the vertex set $V':=V\cup C\cup D'$ and edge set $E(H')\cup E(F)\cup E(M')\cup(E(\Pi^c)\setminus E(T))$. Using \Cref{lemma:GnmLeqGnp}, we see that
$$\bbP\{G\in\cC(n')\}\leq4n\cdot\bbP\{G'\in\cC(n')\}\,.$$
In the remainder, we will define three classes of copies $K_{1,3}\subseteq K_{V'}$ and estimate the probability that $G'$ contains none of these copies as induced subgraphs. If $C\cong K_{1,3}$ then let $c(C)$ denote the vertex in $C$ of degree three.

\begin{enumerate}[wide, labelwidth=!, labelindent=0pt, leftmargin=0pt]
	\item[\underline{\textbf{Definition and size of $\sK_1$.}}] Since $|V(M)|=k$, one of the two matchings $M[A]$ or $M[B]$ covers at least $k/2$ vertices, so by symmetry we may assume $e(M[A])\geq k/4$. Let $X:=V(M[A])$ denote the vertices covered by $M[A]$. Since $e(T)\leq\epsilon n^2$ and $|B|\geq n/4$, we have
	$$e(K_B\setminus T)\geq\binom{n/4}{2}-\frac{\epsilon n^2}{\eta^2}\geq\frac{n^2}{2^7}\,.$$
It follows from \Cref{lemma:erdosgallai} that $K_B\setminus T$ has a matching $N$ of size $n/2^{10}\leq e(N)\leq n/2^9$ for large enough $n$. Let $Y\subseteq V(N)$ be a set of vertices such that $|e\cap Y|=1$ for all $e\in E(N)$.

	Let $\sK_1$ denote the set of all subgraphs $C'\subseteq K_{V'}$ isomorphic to $K_{1,3}$ such that $y:=c(C')\in Y$ and, if we denote $V(C')=\{w,x,y,z\}$, then $wx\in E(M)$ and $yz\in E(N)$. (To emphasize, $wx$ is a \emph{missing} edge in such $C'$.) We have $|\sK_1|=e(M[A])\cdot e(N)\geq kn/2^{12}$.
	\item[\underline{\textbf{Definition and size of $\sK_2$.}}] Since $v(N)\leq n/2^8$, there exists a subset $Z\subseteq B\setminus V(N)$ of size $n/16\leq|Z|\leq n/8$. Let
	$$Q\in\{D,X,A\setminus(X\cup D),V(N),Z,B\setminus(V(N)\cup Z)\}$$
	be the vertex set such that $e_F(Q,C)$ is maximum, and notice that $e_F(Q,C)\geq l/8$. Let $P\in\Pi$ denote the part such that $Q\subseteq P$, and let $P'\in\Pi\setminus\{P\}$ denote the other part. Define the set
	$$R:=\begin{cases}X\cup D&\text{if }Q=A\setminus(X\cup D)\\A\setminus(X\cup D)&\text{if }Q\in\{X,D\}\\V(N)\cup Z&\text{if }Q=B\setminus(V(N)\cup Z)\\B\setminus(V(N)\cup Z)&\text{if }Q\in\{V(N),Z\}\end{cases}$$
	and the set $S:=P'\setminus(X\cup V(N)\cup D\cup Z)$. For all $v\in C$ define the set $U_v:=\overline{N}_F(v,P')$, which has size at least $(1-\alpha)|P'|$ by hypothesis. For all $v\in C$ define the set $K_v:=U_v\cap S$. For all $wx\in E_F(Q,C)$ with $w\in C$ and $x\in Q$, let $L_{wx}:=\overline{N}_F(w,R)\cap\overline{N}_T(x,R)$.
	
	Let $\sK_2$ denote the set of all subgraphs $C'\subseteq K_{V'}$ isomorphic to $K_{1,3}$ such that, if we denote $V(C')=\{w,x,y,z\}$, then $wx\in E_F(Q,C)$, $w\in C$, $x=c(C')\in Q$, $y\in L_{wx}$, and $z\in K_w$. We will now estimate the size of $\sK_2$.
	
	Since $|X|\leq n/8$, $|D|\leq\alpha n$, and $|V(N)|\leq n/2^9$, and $|Z|\leq n/8$, we have
\begin{align*}
|S| &= |P'\setminus(X\cup V(N)\cup D\cup Z)| \\
&\geq |P'|-|P'\cap(X\cup V(N)\cup D\cup Z)| \\
&\geq\left(\frac{1}{2}-\alpha\right)n-\max\{|X|+|D|,|V(N)|+|Z|\} \geq \frac{7n}{16}-\frac{3n}{16}=\frac{n}{4}\,,
\end{align*}
and the same calculation proves $|R|\geq n/4$. Notice that for all $v\in C$, $K_v=S\setminus(P'\setminus U_v)$, hence the inequalities $|P'\setminus U_v|\leq\alpha|P'|$ and $\alpha<1/16$ imply
\begin{equation}
|K_v| \geq |S|-|P'\setminus U_v|\geq\frac{n}{4}-\alpha|P'|\geq\frac{n}{8}\,.\label{eqn:Kvineq}
\end{equation}
	Using the fact that $d_F(w,P)\leq\alpha|P|$ and $d_T(x,P)\leq\alpha|P|$ and $\alpha<1/32$, we compute
	$$|L_{wx}|\geq|R|-d_F(w,P)-d_T(x,P)\geq\frac{n}{4}-2\alpha|P|\geq\frac{n}{8}\,.$$
	We compute that
	$$|\sK_2|\geq\sum_{\substack{wx\in E_F(Q,C)\\w\in C,\,x\in Q}}|K_w|\cdot|L_{wx}|\geq\frac{n^2}{64}\cdot e_F(Q,C)\geq\frac{ln^2}{2^{9}}\,,$$
	where we used that $e_F(Q,C)\geq l/8$.
	
	\item[\underline{\textbf{Definition and size of $\sK_3$.}}] Since $d=|D|\leq\alpha n$ and $d_T(v)\leq\alpha|P|$ for all $v\in V$ and $P\in\Pi$, there exists a subset $D''\subseteq P\setminus D$ such that the induced bipartite graph $(\Pi\cup T)^c[D,D'']$ contains a perfect matching $J$ between $D$ and $D''$. For all $y\in D'$ let $Z_y:=Z\cap\overline{N}_{F'}(y,B)$. Since $d_{F'}(v,B)\leq\alpha|B|$, we have
	$$|Z_y|\geq|Z|-d_{F'}(v,B)\geq\frac{n}{16}-\frac{\alpha n}{4}\geq\frac{n}{32}\,.$$
	Let $\sK_3$ be the set of all subgraphs $C'\subseteq K_{V'}$ isomorphic to $K_{1,3}$ such that, if we denote $V(C')=\{w,x,y,z\}$, then $w=c(C')\in D$, $wx\in E(J)$, $y\in D'$, $wy\in E(M')$, and $z\in Z_y$. It follows directly from the definitions that
	$$|\sK_3|=\sum_{w\in D}|Z_y|\geq \frac{dn}{32}\,.$$
\end{enumerate}

With the definitions of $\sK_1$, $\sK_2$, and $\sK_3$, we now analyze the probability $G'$ contains no induced copy $C'\in\sK_1\cup\sK_2\cup\sK_3$. For all $C'\in\sK_1\cup\sK_2\cup\sK_3$ let $E_{C'}$ denote the event $\{G'[V(C')]=C'\}$. First consider copies in $\sK_2$. Let $\cI$ be the lattice of events generated by $\{E_{C'}\}_{C'\in\sK_2}$, i.e. by taking unions and intersections but not complements of the events $E_{C'}$. The events $\{E_{C'}\}_{C'\in\sK_2}$ are not independent, but the family $\cI$ and the events $\{E_{C'}\}_{C'\in\sK_2}$ satisfy the hypotheses of Janson's inequality (\Cref{thm:JansonsInequality}). Since $\bbP\{E_{C'}\}=p^2$ for all $C'\in\sK_2$, we compute
\begin{align*}
\mu &:= \sum_{C'\in\sK_2}\bbP\{E_{C'}\} = |\sK_2|\cdot p^2 \geq \frac{p^2ln^2}{2^9} \,, \\
\Delta &:= \sum_{\substack{C_1,C_2\in\sK_2\\C_1\sim C_2}}\bbP\{E_{C_1}\wedge E_{C_2}\} \\
&\leq \sum_{\substack{wx\in E_F(Q,C)\\w\in C,x\in Q}}\binom{d_F(x,C)}{2}\left(|L_{wx}|+|K_w|\right)+|A|\cdot|B|\cdot\binom{e(F)}{2}\leq \frac{l\cdot|C|^2}{2}\cdot n+\frac{l^2n^2}{4}\,,
\end{align*}
where the sum defining $\Delta$ is over unordered pairs $C_1,C_2$ such that $E_{C_1}$ and $E_{C_2}$ are not independent. To calculate $\Delta$, we observed that if $E_{C_1}$ and $E_{C_2}$ are not independent then at least one of the following cases holds (letting $V(C_i)=\{w_i,x_i,y_i,z_i\}$ for $i=1,2$):
\begin{enumerate}[(\emph{\roman*})]
	\item $x_1=x_2\in Q$ and $z_1=z_2\in S$. In this case $E_{C_1}$ and $E_{C_2}$ are not independent since they share the edge $x_1z_1=x_2z_2$. There are at most $l\cdot|C|^2n/2$ such pairs of events.
	\item $y_1=y_2\in R$ and $z_1=z_2\in S$. In this case $E_{C_1}$ and $E_{C_2}$ are not independent since they share the edge $y_1z_1=y_2z_2$. There are at most $l^2n^2/4$ such pairs of events.
\end{enumerate}
Since $|C|\leq n/\eta$, we compute that
$$\frac{\mu^2}{4\Delta}\geq\frac{p^4l^2n^4/2^{18}}{2l\cdot n^3/\eta^2+l^2n^2}\geq\frac{p^4ln^2}{2^{20}\cdot\max\{n/\eta^2,\,l\}}=\frac{p^4}{2^{20}}\cdot n\cdot\min\{\eta^2 l,\,n\}\,.$$
Now consider the copies in $\sK_1\cup\sK_3$. By definition of $\sK_1$, the events $\{E_{C'}\}_{C'\in\sK_1}$ are independent, and we have $\bbP\{E_{C'}\}=p^4$ for all $C'\in\sK_1$. Similarly, the events $\{E_{C'}\}_{C'\in\sK_3}$ are independent, and we have $\bbP\{E_{C'}\}=p^2$ for all $C'\in\sK_3$. Additionally, the three collections of events $\{E_{C'}\}_{C'\in\sK_i}$ for $i=1,2,3$ are pairwise independent as collections. We thus compute that
\begin{align*}
\bbP\{G'\in\cC(n')\} &\leq \bbP\left\{\bigwedge_{C'\in\sK_1\cup\sK_2\cup\sK_3}\overline{E_{C'}}\right\} \\
&= \bbP\left\{\bigwedge_{C'\in\sK_1}\overline{E_{C'}}\right\}\bbP\left\{\bigwedge_{C'\in\sK_2}\overline{E_{C'}}\right\}\bbP\left\{\bigwedge_{C'\in\sK_3}\overline{E_{C'}}\right\} \\[5pt]
&\leq p^{4|\sK_1|+2|\sK_3|}\exp\left(-\min\left\{\frac{\mu}{2}\,,\,\frac{\mu^2}{4\Delta}\right\}\right) \\[5pt]
&= p^{4|\sK_1|+2|\sK_3|}\cdot e^{-\mu^2/4\Delta} \\
&\leq p^{kn/2^{10}+dn/16}\cdot\exp\left(-\frac{p^4}{2^{20}}\cdot n\cdot\min\{\eta^2 l,\,n\}\right)\,,
\end{align*}
which completes the proof of the lemma.
\end{proof}

\section{The Supercritical Regime}
\label{sec:supercritical}
In this section we prove the first and second assertions of \Cref{thm:almostallCB}. Throughout the section, let $\gamma\in\big(\frac{5-\sqrt{5}}{4},1\big)$, $m\sim\gamma\binom{n}{2}$, and $\mu:=\big(\frac{5+\sqrt{5}}{10}\gamma\big)^{1/2}$. There is a global parameter $\epsilon>0$ used throughout the section; in general, all statements hold for sufficiently small $\epsilon$, but in some cases it will be useful to have established that $0<\epsilon<\min\{\gamma-\mu,1-\gamma\}/2^{64}$. Likewise, inequalities $f(n)\leq g(n)$ are always meant for large enough $n$. We examine typical structure at the critical edge density at the end of the section.

\subsection{Parameters and Definitions}
Define the parameters and constant
$$\alpha:=8\epsilon^{1/8}\,,\hspace{8mm}\delta:=8\sqrt{\epsilon}\,,\hspace{8mm}\tau:=\frac{1}{2}\left(\frac{\epsilon}{2^8}\right)^{67}\,,\hspace{8mm}\rho:=2\gamma-1\,.$$ 
Let $W^\ast=f^\ast_\gamma$ be the graphon defined in \pref{eqn:deffstarlambdagraphon}, so $W^\ast$ is the unique optimal graphon (up to equivalence) in the supercritical regime. Define the set of graphs
$$\cC_\far:=\{G\in\cC(n,m):\delta_\square(G,W^\ast)\geq\tau\}\,.$$

For disjoint sets $A,B,C\subseteq V$ with $A$ and $B$ nonempty such that $V=A\cup B\cup C$, call $\Pi=\{\{A,B\},C\}$ a \emph{division} of $V$. For a division $\Pi$, let $\Pi_\cb:=\{A,B\}$ and $\Pi_\sparse:=C$ (thought of as the ``co-bipartite'' and ``sparse'' parts of $\Pi$, respectively). Let $\sD$ denote the set of all divisions of $V$, define the set of \emph{balanced} divisions
$$\sP:=\left\{\Pi\in\sD:\left(\frac{1}{2}-\delta\right)n\leq\min\{|P|:P\in\Pi_\cb\}\leq\max\{|P|:P\in\Pi_\cb\}\leq\left(\frac{1}{2}+\delta\right)n\right\}\,.$$
For all $s\geq0$ define the set of divisions
$$\sP_s:=\{\Pi\in\sP:|\Pi_\sparse|=s\}\,.$$
and define the set of bipartitions $\sB:=\{\Pi_\cb:\Pi\in\sP_0\}$. For all $G\in\cC(n,m)$ and $\Pi=\{\{A,B\},C\}\in\sD$ let
$$b(G,\Pi):=e(G^c[A])+e(G^c[B])+e(G[V,C])\,,$$
let $\Pi(G)\in\sD$ denote a canonically chosen division minimizing $b(G,\Pi)$, and define $b(G):=b(G,\Pi(G))$. For all $G\in\cC(n,m)$ let $\Pi_\cb(G):=(\Pi(G))_\cb$ and $\Pi_\sparse(G):=(\Pi(G))_\sparse$. Throughout this section, we often write $\Pi$ to mean the complete bipartite graph $K_\Pi$ when there is no ambiguity. For all $G\in\cC(n,m)$ with $\Pi(G)=\{\{A,B\},C\}$, define
$$D(G):=E(G^c[A])\cup E(G^c[B])\cup E(G[A\cup B,C])\,,$$
which we view as ``defect'' edges. Let $U(G):=V(D(G))$ denote the set of vertices incident to $D(G)$ and define the graph $T(G):=(U(G),D(G))$. For all $\Pi=\{\{A,B\},C\}\in\sP$ define
\[\arraycolsep=1.4pt
\begin{array}{ll}
\cC_\Pi &:= \{G\in\cC(n,m)\setminus\cC_\far:\Pi(G)=\Pi\,,\,D(G)\neq\emptyset\} \,, \\[5pt]
\cC^\ast_\Pi &:= \{G\in\cC(n,m)\setminus\cC_\far:\Pi(G)=\Pi\,,\,D(G)=\emptyset\} \,,
\end{array}\]
and define the associated sets of graphs
\[\arraycolsep=1.4pt
\begin{array}{ll}
\cT_\Pi &:= \{T(G)\cup G[C]:G\in\cC_\Pi\} \,, \\[5pt]
\cT^\ast_\Pi &:= \{G[C]:G\in\cC^\ast_\Pi\} \,.
\end{array}\]
For all $T\in\cT_\Pi$ and $P\in\Pi_\cb=\{A,B\}$, say that a vertex $v$ has
\begin{enumerate}[(\emph{\roman*})]
	\item \emph{low degree in $P$}  if $d_T(v,P)<\alpha|P|$,
	\item \emph{medium degree in $P$} if $\alpha|P|\leq d_T(v,P)\leq(1-\alpha)|P|$, or 
	\item \emph{high degree in $P$} if $d_T(v,P)>(1-\alpha)|P|$.
\end{enumerate}
If a vertex $v$ has medium degree in some $P\in\{A,B\}$ with respect to $T\in\cT_\Pi$, we say that $v$ has $(\Pi,T)$-medium degree, or simply medium degree. For all $\Pi\in\sP$ and $T\in\cT_\Pi$ let
\[\arraycolsep=1.4pt
\begin{array}{ll}
\cC'_\Pi &:= \{G\in\cC_\Pi:\text{there exists a vertex of $(\Pi,T(G))$-medium degree}\} \,, \\[5pt]
\cC_{\Pi,T} &:= \{G\in\cC_\Pi\setminus\cC'_\Pi:T(G)\cup G[C]=T\} \,.
\end{array}\]

Our first lemma toward proving \Cref{thm:almostallCB} \ref{thm:almostallCBsupercritical} describes some useful properties of graphs that are close to $W^\ast$ in cut metric.

\begin{lemma}\label{lemma:SuperCloseStructure}
For large enough $n$, every graph $G\in\cC(n,m)\setminus\cC_\far$ satisfies the following conditions:
\begin{enumerate}[(\emph{\roman*})]
	\item\label{item:lemmaSuperCloseStructurei} By editing (i.e. adding or removing) at most $\epsilon n^2$ edges of $G$, we can obtain a co-bipartite graph.
	\item\label{item:lem61balancedbipsmallindep} $\Pi(G)\in\sP$ and $|\Pi_\sparse(G)|\leq\delta n/2$. 
	\item\label{item:lem61densityclose} $|\rho-d_G(A,B)|\leq\delta$.
	\item\label{item:lem61novtxhighdeg} For all $P\in\Pi_\cb(G)$ and $v\in P$ it holds that $d_{T(G)}\leq(1-\alpha)|P|$.
\end{enumerate}
\end{lemma}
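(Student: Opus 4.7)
The plan is to use the cut-metric closeness $\delta_\square(G, W^\ast) < \tau$ to extract a canonical balanced bipartition of $V$, and then deduce (i)--(iv) from it. By a standard rounding of the near-optimal measure-preserving bijection in the definition of $\delta_\square$, one obtains a bipartition $V = A_0 \cup B_0$ with $|A_0|, |B_0| \in ((1/2-\sqrt{\tau})n, (1/2+\sqrt{\tau})n)$ satisfying
\[
e(G^c[A_0]) + e(G^c[B_0]) + \big|e_G(A_0,B_0) - \rho|A_0||B_0|\big| \le \tau^{1/4} n^2.
\]
Since $\tau = \tfrac{1}{2}(\epsilon/2^8)^{67}$, all these quantities are vastly smaller than $\epsilon n^2$. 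Adding the at most $2\tau^{1/4} n^2 \le \epsilon n^2$ non-edges inside $A_0$ and $B_0$ produces a co-bipartite graph, which establishes (i). The division $\Pi_0 := \{\{A_0, B_0\}, \emptyset\}$ lies in $\sP$ and satisfies $b(G, \Pi_0) \le 2\tau^{1/4} n^2$, so minimality of $\Pi(G)$ gives $b(G) \le 2\tau^{1/4} n^2$.

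To finish (ii), I would argue by a stability estimate: every division $\Pi = \{\{A, B\}, C\}$ with $b(G, \Pi) \le 2\tau^{1/4} n^2$ is obtained from $\Pi_0$, after possibly swapping $A_0$ and $B_0$, by relocating only a set $K \subseteq V$ of size at most $O(\tau^{1/4} n)$. The key observation is that each relocated vertex $v$ contributes $\Omega(n)$ new defect to $b(G, \Pi)$: if $v \in A_0$ is moved to $C$, then $v$'s roughly $(|A_0|-1) + \rho|B_0| \approx (1+\rho)n/2$ neighbors in $A_0 \cup B_0$ become defect edges from $C$; if $v \in A_0$ is instead moved to the opposite part of $\Pi$, then $v$'s roughly $(1-\rho)|B_0| \approx (1-\rho)n/2$ non-neighbors in $B_0$ become defect non-edges there. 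Summing over $K$ yields $|K| \cdot \Omega(n) \le O(\tau^{1/4}n^2)$, so $|K| \le O(\tau^{1/4} n) \ll \delta n/2$. This forces $|\Pi_\sparse(G)| \le \delta n/2$ and $|A|, |B| \in [(1/2-\delta)n, (1/2+\delta)n]$, hence $\Pi(G) \in \sP$.

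Claim (iii) is an edge count: decomposing $m = e_G(A) + e_G(B) + e_G(A,B) + e_G(A\cup B, C) + e_G(C)$ and using $\binom{|A|}{2} - b \le e_G(A) \le \binom{|A|}{2}$ (and the analogue for $B$), $e_G(A\cup B, C) \le b$, and $e_G(C) \le \binom{|C|}{2} = O(\delta^2 n^2)$, a rearrangement and division by $|A||B| = n^2/4 + O(\delta n^2)$ yields $|d_G(A,B) - \rho| = O(\sqrt{\epsilon}) \le \delta$. For (iv), suppose some $v \in A$ has $d_{T(G)}(v) = \overline{d}_G(v, A) + d_G(v, C) > (1-\alpha)|A|$. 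Since $d_G(v, C) \le |C| \le \delta n/2$, we get $d_G(v, A) \le (\alpha+\delta)n$. Optimality of $\Pi(G)$ against the move $v \to B$ requires $\overline{d}_G(v, B) \ge \overline{d}_G(v, A)$, which forces $d_G(v, B) \le d_G(v, A) + \big||B|-|A|\big| + 1 \le O((\alpha+\delta)n)$. The move $v \to C$ would then change $b$ by $\Delta_C = d_G(v, A) + d_G(v, B) - d_{T(G)}(v) \le O((\alpha+\delta)n) - (1-\alpha)|A| < 0$ for small $\alpha, \delta$, contradicting the optimality of $\Pi(G)$.

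The main technical obstacle is the stability step in the second paragraph: rigorously showing that every low-defect division sits within $O(\tau^{1/4} n)$ vertex relocations of $\Pi_0$. This requires careful bookkeeping of the contribution of each misplaced vertex to $b(G, \Pi)$, including cross-interactions when multiple vertices are relocated simultaneously, and implicitly uses the uniqueness of $W^\ast$ (\Cref{thm:FixedDensityVarProblem}) as the only co-bipartite-like graphon at density $\gamma > \gamma^\ast$, which rules out genuinely distinct low-defect structures on $G$.
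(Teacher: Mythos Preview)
Your overall strategy---extract a good bipartition from cut-metric closeness, then verify (i)--(iv) by elementary bookkeeping and optimality of $\Pi(G)$---is sound, and your treatment of (iv) is essentially identical to the paper's. Your edge-count argument for (iii) is in fact more direct than the paper's, which defers to the subcritical analogue (\Cref{lemma:SubCloseStructure}) where the comparison is done via cut metric against the discretization $H_n$.

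There is, however, a genuine gap in your first step. The passage from $\delta_\square(G,W^\ast)<\tau$ to a discrete bipartition $\{A_0,B_0\}$ with error $\tau^{1/4}n^2$ is not a ``standard rounding of the near-optimal measure-preserving bijection''. The bijection $\phi$ in the definition of $\delta_\square$ acts on $[0,1]$, not on $V$, and need not respect the step-function structure of $W_G$; converting $\delta_\square$-closeness into closeness under $\widehat{\delta}_\square$ (where only vertex permutations are allowed) is exactly the content of the Borgs--Chayes--Lov\'asz--Vesztergombi inequality $\widehat{\delta}_\square(G_1,G_2)\le 32\,\delta_\square(G_1,G_2)^{1/67}$. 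This is why $\tau$ is chosen as $\tfrac{1}{2}(\epsilon/2^8)^{67}$: the paper first passes to the discretization $W_n$ (so that both sides are graphs on $n$ vertices), then applies this inequality to obtain $\widehat{\delta}_\square(G,H_n)<\epsilon/8$, and only then reads off the bipartition from the block structure of $H_n$. Your claimed $\tau^{1/4}$ bound is stronger than what this machinery delivers and is not justified.

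Your approach to (ii) is also more indirect than necessary. Rather than proving a stability statement that \emph{every} low-defect division is a small perturbation of $\Pi_0$---which, as you note, requires controlling interactions among simultaneously relocated vertices---the paper argues directly about $\Pi(G)$ itself. Writing $H$ for the co-bipartite-plus-sparse graph determined by $\Pi(G)$, the bound $b(G)\le\epsilon n^2$ gives $\widehat{\delta}_\square(H,H_n)\le \widehat{\delta}_\square(H,G)+\widehat{\delta}_\square(G,H_n)\le \epsilon+\epsilon/8$, and then one checks that if $|C|>\delta n/2$ or if $\{A,B\}$ were too unbalanced, this cut distance would be forced to exceed $\epsilon+\epsilon/8$. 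This sidesteps the combinatorial stability argument entirely.
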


It follows immediately from \Cref{lemma:SuperCloseStructure} and the above definitions that
\begin{equation}
|\cC(n,m)| \leq |\cB_c(n,m)|+\sum_{s=1}^{\delta n}\,\sum_{\Pi\in\sP_s}|\cC^\ast_\Pi|+\sum_{\Pi\in\sP}\Bigg(\sum_{T\in\cT_\Pi}|\cC_{\Pi,T}|+|\cC'_\Pi|\Bigg)+|\cC_\far|\,.\label{eqn:SupercritKRDecomp}
\end{equation}
Thus to prove \Cref{thm:almostallCB} \ref{thm:almostallCBsupercritical}, it suffices to show that the three rightmost terms in \pref{eqn:SupercritKRDecomp} are much smaller than $|\cC(n,m)|$, which is the focus of the remainder of this section. The proof of \Cref{lemma:SuperCloseStructure} uses the following simple object.

\begin{definition}[Discretization of a graphon]\label{def:DiscreteGraphon}
For all $W\in\cW$ and $n\in\N$, let $H_n$ be the weighted graph on $V$ whose $ij$ edge weight is $W(\frac{i}{n},\frac{j}{n})$ and whose node-weights all equal 1. Define the $n$th discretization of $W$ to be the graphon $W_n:=W_{H_n}$ (recall the definition of $W_H$ for a weighted graph $H$ given in \Cref{subsec:graphlimits}).
\end{definition}

\begin{proof}[Proof of \Cref{lemma:SuperCloseStructure}]
Denote $W:=W^\ast$. For all $n\in\N$, let $H_n$ be the weighted graph on $V$ whose $ij$ edge weight is $W(\frac{i}{n},\frac{j}{n})$ and whose node-weights all equal 1. In the remainder, all statements hold for large enough $n$. The sequence $W_n$ clearly converges to $W$ pointwise almost everywhere and hence in cut metric, so the condition $\delta_\square(G,W)<\tau$ implies $\delta_\square(G,W_n)<2\tau$. By \cite[Lemma~8.9]{lovasz2012large} we have $\delta_\square(G,H_n)=\delta_\square(W_G,W_n)$, and by \cite[Theorem~2.3]{borgs2008convergent} we have
$$\widehat{\delta}(G_1,G_2)\leq32\cdot(\delta_\square(G_1,G_2))^{1/67}$$
for edge-weighted graphs with weights in $[-1,1]$; from these two results and our definition of $\tau$ we deduce that $\widehat{\delta}_\square(G,H_n)<\epsilon/8$. Since the vertex set of $H_n$ can be partitioned into two parts within each of which all edges have weight 1 (we may disregard the one or two isolated vertices of $H_n$), it follows that there is a bipartition $\Pi'=\{A',B'\}$ of $V(G)$ such that each of the graphs $G^c[A']$ and $G^c[B']$ has at most $\epsilon n^2/8$ non-edges. Hence by adding at most $\epsilon n^2/4$ edges to $G[A']$ and $G[B']$, we can obtain a co-bipartite graph (and we made a total of less than $\epsilon n^2$ edits), completing the proof of \ref{item:lemmaSuperCloseStructurei}.

To prove \ref{item:lem61balancedbipsmallindep}, first let $\Pi(G)=\{\{A,B\},C\}$ and note that since $G$ is within $\epsilon n^2$ in edit distance of $\cB_c(n,m)$ by part (\emph{i}), we have $b(G)\leq\epsilon n^2$. Let $H$ be the graph on $n$ vertices with edge set $E(K_A)\cup E(K_B)\cup E(G[A,B])$. If $|C|>\delta n/2$ then $\widehat{\delta}_\square(H,H_n)\geq\rho\frac{\delta}{2}(1-\frac{\delta}{2})$, which implies
$$\widehat{\delta}_\square(G,H_n)\geq\widehat{\delta}_\square(H,H_n)-\widehat{\delta}_\square(G,H)\geq{\textstyle\rho\frac{\delta}{2}(1-\frac{\delta}{2})}-\epsilon\geq\frac{\epsilon}{8}\,,$$
but this contradicts the opposite inequality in the previous paragraph, so we must have $|C|\leq\delta n/2$. If $|A|<(\frac{1}{2}-\delta)n$ then since $|C|\leq\delta n/2$, we have $|B|\geq(\frac{1}{2}+\delta/2)n$. It follows that
$$\widehat{\delta}_\square(H,H_n)\geq\frac{\delta}{4}(1-\rho)>\sqrt{\epsilon}\,,$$
which contradicts the fact that
$$\widehat{\delta}_\square(H,H_n)\leq\widehat{\delta}_\square(H,G)+\widehat{\delta}_\square(G,H_n)\leq\epsilon+\frac{\epsilon}{8}\,,$$
proving $|A|,|B|\geq(\frac{1}{2}-\delta)n$. The same argument clearly shows $|A|,|B|\leq(\frac{1}{2}+\delta)n$.

The third assertion of the lemma follows from a case analysis that is proven in greater generality in \Cref{lemma:SubCloseStructure} \ref{item:lem72crossdensity}, so we refer the reader to that proof.

To prove \ref{item:lem61novtxhighdeg}, suppose a vertex $v\in A$ satisfies $d_{T(G)}(v,A)>(1-\alpha)|A|$. By optimality of $\Pi(G)$, we know $\overline{d}_G(v,B)\geq\overline{d}_G(v,A)$, hence
\begin{align*}
\overline{d}_G(v) &= \overline{d}_G(v,A)+\overline{d}_G(v,B)+\overline{d}_G(v,C) \geq 2(1-\alpha)\big(\textstyle{\frac{1}{2}}-\delta\big)n \geq (1-4\alpha)n
\end{align*}
and $d_G(v)\leq4\alpha n$. If we define $\Pi':=\{\{A\setminus\{v\},B\},C\cup\{v\}\}$, then since $(1-\alpha)|A|\geq n/4$,
\begin{align*}
b(G,\Pi') &\leq b(G,\Pi(G)) + d_G(v) - d_{T(G)}(v,A) \\
&\leq b(G,\Pi(G)) + 4\alpha n - (1-\alpha)|A| \leq b(G,\Pi) - ({\textstyle\frac{1}{4}}-4\alpha)n < b(G,\Pi(G)) \,,
\end{align*}
which contradicts optimality of $\Pi(G)$, completing the proof.
\end{proof}

\begin{lemma}\label{lemma:AlmostAllNoMedVtx}
For all $\Pi\in\sP$ we have
$$|\cC'_\Pi|\leq\binom{e(\Pi_\cb)}{m-e(\Pi_\cb^c)}e^{-\beta n^2}\,,$$
where $\beta=\beta(\epsilon)>0$ is a constant.
\end{lemma}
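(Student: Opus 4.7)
The plan is to bound $|\cC'_\Pi|$ by conditioning on the medium vertex $v$ and applying \Cref{lemma:JansonPenaltyMedVtx} after freezing every edge outside a ``random'' bipartite block. Given $\Pi = \{\{A,B\}, C\} \in \sP$, I will union-bound over the at most $n$ choices of medium vertex $v \in V$. When $v \in A$, one has $d_{T(G)}(v, B) = 0$ because edges from $v$ to $B$ are bipartite and hence never defect; thus $v \in A$ can only be medium in $A$, and symmetrically for $v \in B$, while $v \in C$ can be medium in either part. I will treat the representative case $v \in A$ medium in $A$, setting $N := A \setminus (\{v\} \cup N_G(v))$ (so $|N| = d_{T(G)}(v, A) \in [\alpha|A|,(1-\alpha)|A|]$ by hypothesis) and $N' := B \setminus N_G(v)$. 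Testing $\Pi(G) = \Pi$ against the swap $v \in A \mapsto v \in B$ forces $\overline{d}_G(v, B) \geq d_{T(G)}(v, A) \geq \alpha|A| \geq \alpha n/4$; together with \Cref{lemma:SuperCloseStructure} and the analogous swap arguments for $v \in B$ and $v \in C$, this supplies the neighborhood-size hypotheses required to apply \Cref{lemma:JansonPenaltyMedVtx}.

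Next I will separate the bipartite edges not incident to $v$: decompose $E(G) = E(H) \cup E(R)$ where $H \subseteq K_{A \setminus \{v\}, B}$ and $R$ collects all other edges (intra-$A$, intra-$B$, every edge involving $C$, and $v$'s edges to $B$). Condition on $(v, R)$. By hereditariness, if $G$ is claw-free then so is $G[A \cup B]$, and the latter event fits the setup of \Cref{lemma:JansonPenaltyMedVtx} with vertex set $(A \setminus \{v\}) \cup B$, added vertex $v$, neighborhoods $N, N'$, and defect graph $T(G) \cap E(A \cup B)$ of size $\leq \epsilon n^2/\eta^2$ by \Cref{lemma:SuperCloseStructure}. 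Since $p := m_H/((|A|-1)|B|)$ is within $O(\sqrt{\epsilon})$ of $\rho = 2\gamma-1 > 0$ (as $e(R)$ is within $O(\epsilon n^2)$ of $e(\Pi_\cb^c)$), we have $p \geq \rho/2$, and the lemma gives $|\{H : G \in \cC'_\Pi\}| \leq \binom{(|A|-1)|B|}{m_H} \cdot 4n\sqrt{p}\, e^{-c\epsilon^{3/4}\rho n^2}$ for an absolute constant $c > 0$.

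To finish, I will sum over $v$ and $R$. By \Cref{lemma:SuperCloseStructure}, each valid $R$ differs from the ``clean'' reference structure (cliques on $A$ and $B$, isolated $C$, $v$'s edges to $B$ arbitrary) by at most $O(\epsilon n^2)$ edits, so standard binomial-tail estimates bound the number of admissible $R$ by $\exp(O(\epsilon\log(1/\epsilon)\,n^2))$. A short Stirling expansion of $\log\binom{|A||B|}{k}$ near $k = m - e(\Pi_\cb^c) \approx \rho|A||B|$ (whose derivative there is the bounded quantity $\log((1-\rho)/\rho)$) yields $\binom{(|A|-1)|B|}{m_H} \leq \binom{|A||B|}{m - e(\Pi_\cb^c)} \cdot \exp(O(\epsilon n^2))$, since $|m_H - (m - e(\Pi_\cb^c))| = O(\epsilon n^2)$. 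Combining,
\[
|\cC'_\Pi| \leq \binom{e(\Pi_\cb)}{m - e(\Pi_\cb^c)} \cdot \exp\!\left(-c\epsilon^{3/4}\rho\, n^2 + O(\epsilon\log(1/\epsilon)\,n^2)\right) \,,
\]
which gives the claimed inequality with some $\beta = \beta(\epsilon) > 0$, provided $\epsilon$ is small enough that $\epsilon\log(1/\epsilon) \ll \epsilon^{3/4}$.

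The main obstacle is not the Janson step itself but the final bookkeeping: ensuring that the combined cost of enumerating $R$ and of the binomial inflation between $\binom{(|A|-1)|B|}{m_H}$ and the target $\binom{e(\Pi_\cb)}{m - e(\Pi_\cb^c)}$ is subexponential compared to Janson's saving $e^{-c\epsilon^{3/4}\rho n^2}$. This relies critically on every structural deviation being of size $O(\epsilon n^2)$ via \Cref{lemma:SuperCloseStructure}, together with the uniform lower bound $p \geq \rho/2$ coming from the supercritical regime, which guarantees a true $\Theta(\epsilon^{3/4})\,n^2$ exponential saving rather than something weaker.
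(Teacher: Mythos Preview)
Your plan is correct and follows essentially the same route as the paper. The paper also fixes the medium vertex $v$, freezes the non-bipartite data, and applies \Cref{lemma:JansonPenaltyMedVtx} to the random bipartite block on $(A\setminus\{v\})\times B$; the only organizational difference is that the paper packages the frozen data as the triple $(T,N,N')$ (defect graph plus $v$'s non-neighbor sets, giving a factor $\binom{\binom{n}{2}}{\epsilon n^2}\cdot 2^{2n}$) rather than your single object $R$, and compares the binomial coefficients via \Cref{lemma:BinomialCoeffIneqs} rather than a Stirling expansion---but the resulting error terms and the decisive comparison $\epsilon\log(1/\epsilon)\ll\epsilon^{3/4}$ are identical.
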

\begin{proof}
Denote $\Pi=\{\{A,B\},C\}$. For all $T\in\cT_\Pi$ and all $v\in V$, define the sets
\[\arraycolsep=1.4pt
\begin{array}{ll}
\cC_{\Pi,T}' &:= \{G\in\cC_\Pi':T(G)\cup G[C]=T\}\,, \\[5pt]
\cC_{\Pi,T,v}' &:= \{G\in\cC_{\Pi,T}':v\text{ has $(\Pi,T(G))$-medium degree}\}\,.
\end{array}
\]
Fix $T\in\cT_\Pi$ and $v\in V$ such that $\cC_{\Pi,T,v}'$ is nonempty. For convenience we may assume $v\in A\cup C$. By optimality of $\Pi(G)$, every graph $G\in\cC'_{\Pi,T,v}$ satisfies $\overline{d}_B(v)\geq\alpha|A|$ (since otherwise we could obtain a more optimal partition by moving $v$ to $B$). Let $N\subseteq A$ be a subset of size $\alpha|A|\leq|N|\leq(1-\alpha)|A|$, and let $N'\subseteq B$ be a subset of size at least $\alpha|A|$. The sets $N$ and $N'$ represent the vertices in $A$ and $B$, respectively, to which $v$ is non-adjacent. Let $A':=A\setminus\{v\}$ and let $H$ be the uniformly random subgraph of $K_{A',B}$ with exactly
$$m':=m-e(\Pi_\cb^c)+e(T[A\cup B])-e(T[V,C])$$
edges. Let $G$ be the random graph $H\cup(\Pi_\cb^c\setminus T)$. Now by applying \Cref{lemma:JansonPenaltyMedVtx} with the bipartition $\{A',B\}$, defect graph $T[A'\cup B]$, the sets $N$ and $N'$, and the random graph $H$, we find that
$$\bbP\{G\in\cC(n)\}\leq e^{-\frac{1}{4}\epsilon^{3/4}pn^2}$$
where $p:=m'/(|A'|\cdot|B|)$. Since every graph in $\cC_{\Pi,T,v}'$ is of the form $H'\cup(\Pi_\cb^c\setminus T)\cup T[V,C]$ for some bipartite graph $H'\subseteq K_{A',B}$, since there are at most $2^{2n}$ choices for the sets $N$ and $N'$, and since we assumed $v\in A\cup C$, we have
\begin{align}
|\cC_{\Pi,T,v}'| &\leq 2^{2n+1}\cdot\binom{e(\Pi_\cb)}{m'}\cdot\bbP\{G\in\cC(n)\} \nonumber \\
&\leq 2^{2n+1}\cdot\binom{e(\Pi_\cb)}{m'}\cdot e^{-\frac{1}{4}\epsilon^{3/4}pn^2} \nonumber \\
&\leq \binom{e(\Pi_\cb)}{m-e(\Pi_\cb^c)}\exp\left(\left|1-\frac{e(\Pi_\cb)-(m-e(\Pi_\cb^c))}{m-e(\Pi_\cb^c)}\right|\epsilon n^2\right)\cdot2^{2n+1}\cdot e^{-\frac{1}{4}\epsilon^{3/4}pn^2} \nonumber \\
&\leq \binom{e(\Pi_\cb)}{m-e(\Pi_\cb^c)}e^{\epsilon n^2}\cdot2^{2n+1}\cdot e^{-\frac{1}{4}\epsilon^{3/4}pn^2} \,,\label{eqn:CPrimePiTvUB}
\end{align}
where the third inequality is an application of \Cref{lemma:BinomialCoeffIneqs} using $e(T)\leq\epsilon n^2$, and the last inequality holds since we can apply \Cref{lemma:SuperCloseStructure} (\emph{ii}) to compute
$$0<\frac{\left(\frac{1}{2}-\delta\right)^2}{(\gamma+\epsilon)\binom{n}{2}-\left(\frac{1}{2}-\delta\right)^2}-1\leq\frac{e(\Pi_\cb)-(m-e(\Pi_\cb^c))}{m-e(\Pi_\cb^c)}\leq\frac{\left(\frac{1}{2}+\delta\right)^2}{(\gamma-\epsilon)\binom{n}{2}-\left(\frac{1}{2}+\delta\right)^2}-1<2\,,$$
which holds provided $n$ is large enough that $(\gamma-\epsilon)\binom{n}{2}\leq m\leq(\gamma+\epsilon)\binom{n}{2}$. Using \pref{eqn:CPrimePiTvUB}, we compute
\begin{align*}
|\cC'_\Pi| &\leq \sum_{t=1}^{\epsilon n^2}\,\sum_{\substack{T\in\cT_{\Pi}\\e(T)=t}}\,\sum_{v\in V}|\cC'_{\Pi,T,v}| \\
&\leq \epsilon n^2\cdot\binom{\binom{n}{2}}{\epsilon n^2}\cdot n\cdot\binom{e(\Pi_\cb)}{m-e(\Pi_\cb^c)}e^{\epsilon n^2}\cdot2^{2n+1}\cdot e^{-\frac{1}{4}\epsilon^{3/4}pn^2} \\
&\leq \binom{e(\Pi_\cb)}{m-e(\Pi_\cb^c)}e^{-\frac{1}{4}\epsilon^{3/4}pn^2}\cdot\epsilon n^3\cdot\left(\frac{e}{\epsilon}\right)^{\epsilon n^2}\cdot2^{2n+1} \,.
\end{align*}
Since $0<\epsilon<1/2^{64}$ and $p>1/4$, it is easy to see that $\frac{1}{4}\epsilon^{3/4}p>\epsilon\left(1+\log\left(\frac{1}{\epsilon}\right)\right)$, which implies the claimed inequality for a constant $\beta=\beta(\epsilon)>0$.
\end{proof}

Define the set of graphs
$$\cT'_\Pi:=\{T[G]\cup G[\Pi_\sparse]:G\in\cC_\Pi\setminus\cC'_\Pi\}\,.$$
For all $\Pi=\{\{A,B\},C\}\in\sP$ and $T\in\cT'_\Pi$, let $M(T)$ be a canonically chosen maximum matching of the graph $T[A\cup B]$. Let $X(T):=V(M(T))$ denote the set of vertices covered by the matching $M(T)$. For all $k,l\geq0$ define the sets of graphs
\[\arraycolsep=1.4pt
\begin{array}{ll}
\cT^\cb_{\Pi,k} &:= \left\{T[A\cup B]:T\in\cT'_\Pi\,,\,|X(T)|=k\right\} \,, \\[10pt]
\cT^\bt_{\Pi,l} &:= \left\{T[A\cup B,C]:T\in\cT'_\Pi\,,\,e(T[A\cup B,C])=l\right\} \,, \\[10pt]
\cT^\sparse_{\Pi} &:= \{T[\Pi_\sparse]:T\in\cT'_\Pi\} \,, \\[10pt]
\cT_{\Pi,k,l} &:= \left\{T\in\cT'_{\Pi}:T[A\cup B]\in\cT^\cb_{\Pi,k}\,,\,T[A\cup B,C]\in\cT^\bt_{\Pi,l}\right\} \,.
\end{array}\]

\begin{lemma}\label{lemma:CPITMatchingBound}
For all $\Pi\in\sP$, $k\geq2$, $l\geq1$, and $T\in\cT_{\Pi,k,l}$\,, we have
$$|\cC_{\Pi,T}|\leq\binom{e(\Pi_\cb)}{m-e(\Pi_\cb^c)-e(T[\Pi_\sparse])}e^{-\xi kn-\zeta n\cdot\min\{l,n\}}$$
where $\xi,\zeta>0$ are constants depending only on $\epsilon$.
\end{lemma}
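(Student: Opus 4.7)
Fix $\Pi = \{\{A,B\},C\} \in \sP$ and $T \in \cT_{\Pi,k,l}$. The plan is to observe that every $G \in \cC_{\Pi,T}$ is uniquely determined by its bipartite part $G[A,B] \subseteq K_{A,B}$ together with the fixed structure from $T$: edges inside $A$ are $E(K_A)\setminus E(T[A])$ (similarly for $B$), edges between $A\cup B$ and $C$ are $E(T[A\cup B, C])$, and edges inside $C$ are $E(T[C])$. Writing $m':= m - e(\Pi_\cb^c\setminus T) - e(T[\Pi_\sparse])$ for the forced number of bipartite edges $e(G[A,B])$, we get
\[
|\cC_{\Pi,T}| \leq \binom{|A||B|}{m'} \cdot \bbP\{G^{\mathrm{rand}} \in \cC(n)\},
\]
where $G^{\mathrm{rand}}$ is the random graph built by taking $H \subseteq K_{A,B}$ uniformly among bipartite graphs with $m'$ edges and adjoining the fixed edges $(E(\Pi_\cb^c)\setminus E(T)) \cup E(T[A\cup B, C]) \cup E(T[C])$.

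I would then apply \Cref{lemma:JansonPenaltyMatching} with ``$V$''${}=A\cup B$, ``$\Pi$''${}=\{A,B\}$, ``$C$''${}=\Pi_\sparse$, no dangling vertices ($d=0$, so $D,D',M',F'$ vanish), ``$T$''${}=T[A\cup B]$, ``$F$''${}=T[A\cup B, \Pi_\sparse]$ with $l$ edges, and matching $M(T)$ covering $k$ vertices. The hypotheses to check are: (i) $\{A,B\} \in \sB_{|A|+|B|}$, from $\Pi\in\sP$ and $\delta\ll \alpha$; (ii) $|\Pi_\sparse|\leq(|A|+|B|)/\eta$, from $|\Pi_\sparse|\leq \delta n/2$ and $\delta\eta\ll 1$; (iii) $e(T[A\cup B])\leq\epsilon n^2/\eta^2$, from $e(T)\leq \epsilon n^2$; (iv) for $v\in P \in \{A,B\}$, $d_{T[A\cup B]}(v)=\overline{d}_G(v,P)\leq \alpha|P|$, which follows from \Cref{lemma:SuperCloseStructure}\ref{item:lem61novtxhighdeg} (no high degree) together with $G\notin \cC'_\Pi$ (no medium degree); (v) the matching condition by definition; and (vi) for $v\in \Pi_\sparse$, $d_F(v,A)<\alpha|A|$ and $d_F(v,B)<\alpha|B|$. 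The key check is (vi): since $v$ is non-medium in each of $A,B$, we have $d_G(v,A)<\alpha|A|$ or $d_G(v,A)>(1-\alpha)|A|$, and similarly for $B$. A short calculation comparing $b(G,\Pi)$ to $b(G,\Pi')$ for $\Pi'$ obtained by moving $v$ from $C$ to $A$ gives $b(G,\Pi')-b(G,\Pi) = |A|-2d_G(v,A)-d_G(v,B)$; by optimality of $\Pi(G)$ this is $\geq 0$, and the ``high-degree'' cases for $v$ to $A$ or $B$ each quickly force a contradiction (either $d_G(v,B)<0$ or $|A|<2(1-\alpha)|A|$). Thus $(\text{vi})$ holds.

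Granting the hypotheses, \Cref{lemma:JansonPenaltyMatching} yields
\[
\bbP\{G^{\mathrm{rand}} \in \cC(n)\} \leq 4(|A|+|B|) \cdot p^{k(|A|+|B|)/2^{10}} \cdot \exp\!\Big(-\tfrac{p^4}{2^{20}}(|A|+|B|)\min\{\eta^2 l,\, |A|+|B|\}\Big),
\]
where $p = m'/(|A||B|)$ is bounded away from $0$ and $1$ (close to $\rho=2\gamma-1$ by \Cref{lemma:SuperCloseStructure}\ref{item:lem61densityclose}). Setting $\xi_0 := |\log p|/2^{11}$ and $\zeta_0 := p^4\eta^2/2^{21}$ and using $|A|+|B|\geq (1-\delta)n$, this is at most $e^{-2\xi_0 kn - 2\zeta_0 n\min\{l,n\}}$ for large $n$ (the polynomial prefactor absorbs because $k\geq 2$ and $l\geq 1$). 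Finally, the ratio $\binom{|A||B|}{m'}/\binom{e(\Pi_\cb)}{m-e(\Pi_\cb^c)-e(T[\Pi_\sparse])}$ is at most $\exp\!\big(C(\rho)\,(e(T[A\cup B])+l)\big)$ by the usual bound on ratios of consecutive binomial coefficients; since $e(T[A\cup B])\leq \alpha k n$ by hypothesis (iv), this factor is absorbed into $e^{-\xi kn - \zeta n \min\{l,n\}}$ for slightly smaller constants $\xi, \zeta > 0$ (permissible because $\alpha \ll \xi_0$ and $\epsilon \ll \zeta_0$ under our parameter choices).

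The principal obstacle is step (vi), which crucially uses the minimality of $\Pi(G)$ to rule out vertices in $\Pi_\sparse$ with very high degree to $A$ or $B$; the remaining checks and the parameter bookkeeping are routine.
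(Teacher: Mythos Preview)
Your approach is correct and essentially identical to the paper's: both bound $|\cC_{\Pi,T}|$ by $\binom{e(\Pi_\cb)}{m'}\cdot\bbP\{G^{\mathrm{rand}}\in\cC(n)\}$, invoke \Cref{lemma:JansonPenaltyMatching} with $D=D'=\emptyset$, and then shift the binomial coefficient using $e(T[A\cup B])\leq O(\alpha kn)$. Your formula for $m'$ omits the $-l$ term coming from the $l$ fixed edges in $T[A\cup B,\Pi_\sparse]$, but this is harmless since the same binomial-ratio estimate absorbs it; and your optimality argument for check (vi) is in fact more explicit than the paper's own justification, which only cites the no-medium-degree condition together with \Cref{lemma:SuperCloseStructure}\ref{item:lem61novtxhighdeg}.
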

\begin{proof}
Denote $\Pi=\{\{A,B\},C\}$. Let $H$ be the uniformly random subgraph of $\Pi_\cb$ with exactly
$$m':=m-e(\Pi_\cb^c)+e(T[A\cup B])-e(T[A\cup B,C])-e(T[C])$$
edges, and define $p:=m'/(|A|\cdot|B|)$. Let $G$ be the random graph $H\cup(\Pi_\cb^c\setminus T)\cup T[V,C]$. We now apply \Cref{lemma:JansonPenaltyMatching} with the following variable assignments
\[\arraycolsep=5mm
\begin{array}{lllll}
\Pi\gets\Pi_i & m\gets m' & D\gets \emptyset & D'\gets\emptyset & F'\gets\emptyset \\[10pt]
k\gets k & d\gets 0 & l\gets l & C\gets C & F\gets T[A\cup B,V\setminus(A\cup B)]\,.
\end{array}\]
The hypotheses of \Cref{lemma:JansonPenaltyMatching} are indeed met because $T\in\cT'_\Pi$ implies there is no medium degree vertex, and \Cref{lemma:SuperCloseStructure} \ref{item:lem61novtxhighdeg} implies every vertex $v\in P\in\Pi$ satisfies $d_T(v,P)<\alpha|P|$. Hence
$$\bbP\{G\in\cC(n)\}\leq p^{kn/2^{11}}\cdot\exp\left(-\frac{p^4}{2^{25}}\min\left\{ln,n^2\right\}\right)$$
(notice we have divided by two in the exponents of the right-hand side to account for the fact that $|A\cup B|\geq(1-\delta)n$). Since every graph in $\cC_{\Pi,T}$ is of the form $H'\cup(\Pi_\cb^c\setminus T)\cup T[V,C]$ for some bipartite graph $H'\subseteq\Pi_\cb$, we have
\begin{align*}
|\cC_{\Pi,T}| &\leq \binom{e(\Pi_\cb)}{m'}\cdot\bbP\{G\in\cC(n)\} \\
&\leq \binom{e(\Pi_\cb)}{m-e(\Pi_\cb^c)-e(T[C])} \\
&\hspace{2cm}\cdot\exp\left(-\left(1-\frac{e(\Pi_\cb)-(m-e(\Pi^c_\cb))}{m-e(\Pi^c_\cb)}\right)(2\alpha kn+l)\right)\cdot\bbP\{G\in\cC(n)\} \\
&\leq \binom{e(\Pi_\cb)}{m-e(\Pi_\cb^c)-e(T[C])}e^{2\alpha kn+l}\cdot p^{kn/2^{11}}\cdot\exp\left(-\frac{p^4}{2^{25}}\min\left\{ln,n^2\right\}\right) \,,
\end{align*}
for all sufficiently large $n$, where the second inequality follows by using \Cref{lemma:BinomialCoeffIneqs} and noticing that $e(T[A\cup B])\leq2\alpha kn$ (since by definition of $\cC_{\Pi,T}$, the set $A\cup B$ only has vertices that have low degree in their respective parts, and every edge of $T[A\cup B]$ is incident to $X(T)$ since the matching $M(T)$ is maximum). The third inequality holds for the same reasons \pref{eqn:CPrimePiTvUB} was shown to hold. Since $p\geq1/4$ is of constant order, from the last inequality above, we see that the claimed inequality holds for small enough $\epsilon>0$ and constants $\xi,\zeta>0$.
\end{proof}

For all $s\geq1$, let $\sP_s$ denote the set of all $\Pi\in\sP$ such that $|\Pi_\sparse|=s$. For all $\Pi\in\sP_s$, let $\Pi_\ast$ denote either $\{A\cup\Pi_\sparse,B\}$ or $\{A,B\cup\Pi_\sparse\}$, canonically chosen.

\begin{lemma}\label{lemma:CPiStarBound}
If $\gamma\in\big(\frac{5-\sqrt{5}}{4},1\big)$ then for all $s\geq1$ and all $\Pi\in\sP_s$ we have
$$|\cC_\Pi^\ast|\leq\binom{e(\Pi_\ast)}{m-e(\Pi_\ast^c)}e^{-\nu sn}\,,$$
where $\nu>0$ is a constant depending only on $\gamma.$
\end{lemma}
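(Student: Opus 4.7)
The plan is to decompose each $G \in \cC^\ast_\Pi$ into disjoint pieces, count via Vandermonde, and then compare to the target binomial coefficient using a Stirling/entropy expansion. First I would observe that since $D(G) = \emptyset$, the graph $G$ is the vertex-disjoint union of a co-bipartite graph on $\Pi_\cb$ with complete parts and the graph $G[C]$. A short case analysis on the location of the center of an induced $K_{1,3}$ shows that $G$ is claw-free if and only if $G[C]$ is: a center in $A$ would require three pairwise non-adjacent leaves all lying in the clique $B$ (they cannot lie in $A$ since $G[A]$ is complete, nor in $C$ since $G$ has no $A$--$C$ edges), which is impossible; the case of a center in $B$ is symmetric; and a center in $C$ yields an induced claw already inside $G[C]$.

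Writing $a = |A|$, $b = |B|$, $s = |C|$, and $M := m - \binom{a}{2} - \binom{b}{2}$, the trivial bound $|\cC(s, e_C)| \leq \binom{\binom{s}{2}}{e_C}$ combined with Vandermonde's identity gives
$$|\cC^\ast_\Pi| \;\leq\; \sum_{e_C = 0}^{\binom{s}{2}} \binom{\binom{s}{2}}{e_C} \binom{ab}{M - e_C} \;=\; \binom{ab + \binom{s}{2}}{M}.$$
By symmetry I assume $\Pi_\ast = \{A \cup C, B\}$, so the right-hand side of the claim is $\binom{(a+s)b}{u}$ with $u = M - as - \binom{s}{2}$. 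Setting $f(N,M) := N H(M/N)$ (binary entropy), I would use $\log_2 \binom{N}{M} = f(N,M) + O(\log N)$ together with the partial derivatives $\partial_N f = -\log_2(1 - M/N)$ and $\partial_M f = \log_2((1-M/N)/(M/N))$ to Taylor-expand around $(ab, M)$. With $M/(ab) = \rho + O(\delta)$ for $\rho = 2\gamma - 1$, and $a, b = n/2 + O(\delta n)$, the first-order expansion yields
$$\log_2 \binom{(a+s)b}{u} - \log_2 \binom{ab + \binom{s}{2}}{M} \;=\; \frac{sn}{2} \log_2\!\left(\frac{\rho}{(1-\rho)^2}\right) + O(s^2 + \delta sn + \log n).$$
The quadratic $\rho^2 - 3\rho + 1$ is strictly negative precisely for $\rho \in \big(\tfrac{3-\sqrt{5}}{2}, \tfrac{3+\sqrt{5}}{2}\big)$, so $\log_2(\rho/(1-\rho)^2)$ is a positive constant depending only on $\gamma$ throughout the supercritical range $\gamma \in \big(\tfrac{5-\sqrt{5}}{4}, 1\big)$. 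Reassuringly, the phase-transition threshold arises exactly where this leading factor vanishes.

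The delicate point will be showing that the error terms are genuinely subleading, so that the above expansion yields an $\Omega(sn)$ lower bound. Because $\Pi \in \sP_s \subseteq \sP$, part \ref{item:lem61balancedbipsmallindep} of \Cref{lemma:SuperCloseStructure} gives $s \leq \delta n/2$, whence $s^2 \leq (\delta/2)\,sn$; and the Hessian of $f$ is bounded on the relevant range since $p_0 := M/(ab)$ is bounded away from $0$ and $1$ by constants depending only on $\gamma$. Choosing $\epsilon$ sufficiently small shrinks $\delta$ so that the $O(s^2 + \delta sn)$ remainder is absorbed into half the leading term; combining with the Vandermonde bound and converting base $2$ to base $e$ then produces the claimed constant $\nu > 0$ depending only on $\gamma$.
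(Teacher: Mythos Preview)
Your argument is correct and reaches the same conclusion, but the mechanics differ from the paper's proof in an interesting way. The paper fixes $T=G[C]$, bounds $|\cC^\ast_{\Pi,T}|\le\binom{ab}{M-e(T)}$, and then applies the exact binomial-ratio inequality of \Cref{lemma:BinomialCoeffIneqs}(iii) directly to rewrite this as $\binom{e(\Pi_\ast)}{m-e(\Pi_\ast^c)}$ times the factor $\big(\tfrac{(N-M)^2}{NM}\big)^{sb}\big(\tfrac{N-M}{M}\big)^{s(a-b)+\binom{s}{2}-e(T)}$; the key observation is that $(N-M)^2/(NM)\approx(1-\rho)^2/\rho<1$ in the supercritical regime, which yields the $e^{-\nu' sb}$ saving, and the sum over $T$ is then handled crudely by $|\cT^\ast_\Pi|\le 2^{\binom{s}{2}}$. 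You instead absorb the sum over $T$ \emph{first} via Vandermonde, obtaining the single binomial $\binom{ab+\binom{s}{2}}{M}$, and then compare to the target by a Stirling/entropy Taylor expansion around $(ab,M)$. Both routes land on the same decisive inequality $\rho/(1-\rho)^2>1$ (equivalently $(1-\rho)^2/\rho<1$), valid exactly for $\gamma>\tfrac{5-\sqrt5}{4}$. Your Vandermonde step is cleaner and the observation that $G$ is claw-free iff $G[C]$ is, while true, is not actually needed for the upper bound; the paper's route is more elementary in that it avoids Stirling entirely and keeps all inequalities exact via \Cref{lemma:BinomialCoeffIneqs}.
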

\begin{proof}
Denote $\Pi=\{\{A,B\},C\}$, and we may assume $\Pi_\ast=\{A\cup C,B\}$. For all $T\in\cT^\ast_\Pi$ let $\cC^\ast_{\Pi,T}$ denote the set of all $G\in\cC^\ast_\Pi$ such that $T(G)=T$. Define the quantities $a:=|A|$, $b:=|B|$, $N:=e(\Pi_\ast)$, $M:=m-e(\Pi_\ast^c)$, and $t:=e(T)$. Since $\gamma>\frac{5-\sqrt{5}}{4}$ we have $(1-\rho)^2/\rho<1$, so by \Cref{lemma:SuperCloseStructure} \ref{item:lem61densityclose}, there is a constant $\nu'>0$ depending only on $\gamma$ such that $(N-M)^2/(NM)<e^{-\nu'}$ for small enough $\epsilon$ and large enough $n$. Similarly, we have $(N-M)/M<e$ for large enough $n$. Hence for all $T\in\cT^\ast_\Pi$, we use \Cref{lemma:BinomialCoeffIneqs} to bound
\begin{align*}
|\cC^\ast_{\Pi,T}| &\leq \binom{e(\Pi_\cb)}{m-e(\Pi_\cb^c)-e(T)} \\
&= \binom{e(\Pi_\ast)-sb}{m-e(\Pi_\ast^c)+sa+\binom{s}{2}-e(T)} \\
&\leq \binom{e(\Pi_\ast)}{m-e(\Pi_\ast^c)}\left(\frac{N-M}{N}\right)^{sb}\left(\frac{N-M}{M}\right)^{sa+\binom{s}{2}-e(T)} \\
&= \binom{e(\Pi_\ast)}{m-e(\Pi_\ast^c)}\left(\frac{(N-M)^2}{NM}\right)^{sb}\left(\frac{N-M}{M}\right)^{s(a-b)+\binom{s}{2}-e(T)} \\
&\leq \binom{e(\Pi_\ast)}{m-e(\Pi_\ast^c)}e^{-\nu'sb+s(a-b)+s^2} \leq \binom{e(\Pi_\ast)}{m-e(\Pi_\ast^c)}e^{-\nu''sn} \,,
\end{align*}
which holds for $\nu'':=\nu'/8$ and sufficiently large $n$; we have also used the fact that $\nu'b>|a-b|$ for small enough $\epsilon>0$ since $\Pi\in\sP$ and $|a-b|\leq2\delta$. Since $|\cT^\ast_\Pi|\leq2^{\binom{s}{2}}$,
$$|\cC^\ast_\Pi|=\sum_{T\in\cT^\ast_\Pi}|\cC^\ast_{\Pi,T}|\leq2^{\binom{s}{2}}\binom{e(\Pi_\ast)}{m-e(\Pi_\ast^c)}e^{-\nu''sn}\leq\binom{e(\Pi_\ast)}{m-e(\Pi_\ast^c)}e^{-\nu sn}\,,$$
where $\nu:=\nu''/2$, completing the proof.
\end{proof}

\begin{proof}[Proof of \Cref{thm:almostallCB} \rm{\ref{thm:almostallCBsupercritical}}]
First, for all $\Pi\in\sP$, \Cref{lemma:AlmostAllNoMedVtx} shows
\begin{equation}
|\cC'_\Pi|\leq\binom{e(\Pi_\cb)}{m-e(\Pi_\cb^c)}e^{-\beta n^2}\leq e^{-\beta n^2/2}|\cC^\ast_\Pi| \,, \label{eqn:CPrimePiStarBound}
\end{equation}
where the second inequality is immediate from \Cref{cor:cbuniquepartition}, which shows almost all co-bipartite graphs with edge density $\gamma\in(\frac{1}{2},1)$ admit a unique 2-clique-cover.

We now obtain a bound on $\sum_{T\in\cT_\Pi}|\cC_{\Pi,T}|$ for $\Pi\in\sP$ and $k\geq2$. First notice that for all $T\in\cT^\cb_{\Pi,k}$, every vertex $v\in X(T)$ satisfies $d_T(v,P)<\alpha|P|$ for $P\in\{A,B\}$, which follows from \Cref{lemma:SuperCloseStructure} \ref{item:lem61novtxhighdeg} and the definition of $\cT^\cb_{\Pi,k}$. Since every edge in $T[A\cup B]$ is incident to $X(T)$, we have
\begin{align*}
|\cT^\cb_{\Pi,k}| &\leq \sum_{X\in\binom{V}{k}}\,\prod_{P\in\Pi}\,\prod_{v\in P\cap X}\binom{|P|}{\leq\alpha|P|} \leq \sum_{X\in\binom{V}{k}}\,\prod_{P\in\Pi}\,\prod_{v\in P\cap X}\alpha n\binom{n}{\alpha n} \\
&\leq \binom{n}{k}\left(\alpha n\binom{n}{\alpha n}\right)^k \leq 2^{H(\alpha)kn+2k\log_2n+k\log_2\alpha} \leq 2^{2H(\alpha)kn} \,,
\end{align*}
where we used that $\binom{a}{b}\leq\binom{xa}{xb}$ for $x\geq1$. We also trivially have
$$|\cT^\bt_{\Pi,l}|\leq\binom{\binom{n}{2}}{l}\leq\left(\frac{en^2}{2l}\right)^l\,.$$
Let $\xi,\zeta>0$ be the constants from \Cref{lemma:CPITMatchingBound}, and let $\epsilon>0$ sufficiently small such that $H(\alpha)<\xi/6$. Then for sufficiently large $n$,
\begin{align}
\sum_{T\in\cT_\Pi}|\cC_{\Pi,T}| &= \sum_{k=2}^{n}~\sum_{l=1}^{\epsilon n^2}~\sum_{T\in\cT_{\Pi,k,l}}|\cC_{\Pi,T}| \nonumber\\
&\leq \sum_{k=2}^{n}~\sum_{l=1}^{\epsilon n^2}~\sum_{T\in\cT_{\Pi,k,l}}\binom{e(\Pi_\cb)}{m-e(\Pi_\cb^c)-e(T[\Pi_\sparse])}e^{-\xi kn-\zeta n\cdot\min\{l,n\}} \nonumber\\
&\leq \sum_{k=2}^{n}~\sum_{l=1}^{\epsilon n^2}|\cT^\cb_{\Pi,k}|\cdot|\cT^\bt_{\Pi,l}|\cdot e^{-\xi kn-\zeta n\cdot\min\{l,n\}}\sum_{T\in\cT^\sparse_\Pi}\binom{e(\Pi_\cb)}{m-e(\Pi_\cb^c)-e(T[\Pi_\sparse])} \nonumber\\
&\leq 2|\cC^\ast_\Pi|\cdot e^{-\xi n}\left(\sum_{k=2}^\infty2^{2H(\alpha)kn}e^{-\xi kn/2}\right)\left(\sum_{l=1}^\infty\exp\left(l\log\left(\frac{en^2}{2l}\right)-\zeta n\cdot\min\{l,\,n\}\right)\right) \nonumber\\[5pt]
&\leq e^{-\xi n/2}|\cC^\ast_\Pi| \,, \label{eqn:CPiTSumUP}
\end{align}
where we again used \Cref{cor:cbuniquepartition} to deduce the third inequality, and the last inequality holds since $H(\alpha)\to0$ as $\epsilon\to0$ and the two rightmost factors on the left-hand side of \pref{eqn:CPiTSumUP} are geometric series.

Let $\nu>0$ be the constant from \Cref{lemma:CPiStarBound}. Since for all $\Pi'\in\sB$, there are at most $\binom{n}{s}2^s$ divisions $\Pi\in\sP_s$ such that $\Pi_\ast=\Pi'$, we use \Cref{lemma:CPiStarBound} to deduce
\begin{align}
\sum_{s=1}^{\delta n}\sum_{\Pi\in\sP_s}|\cC^\ast_\Pi| &\leq \sum_{s=1}^{\delta n}\binom{e(\Pi_\ast)}{m-e(\Pi_\ast^c)}e^{-\nu sn} \nonumber\\
&\leq \sum_{\Pi\in\sB}\binom{e(\Pi)}{m-e(\Pi^c)}\sum_{s=1}^{\delta n}\binom{n}{s}2^se^{-\nu sn} \nonumber\\
&\leq e^{-\nu n/2}\left(\sum_{s=1}^\infty e^{-\nu sn/2+s\log n+(\log2)s}\right)\cdot2|\cB_c(n,m)| \leq e^{-\nu n/4}|\cB_c(n,m)|\,, \label{eqn:sumCStarPiBound}
\end{align}
where the third inequality follows from \Cref{cor:cbuniquepartition}, and the last inequality follows by evaluating a geometric series.

Using \pref{eqn:CPrimePiStarBound}, \pref{eqn:CPiTSumUP}, and \pref{eqn:sumCStarPiBound}, we have
\begin{equation}
\begin{split}
\sum_{s=1}^{\delta n}\,\sum_{\Pi\in\sP_s}\Bigg(|\cC^\ast_\Pi|+\sum_{T\in\cT_\Pi}|\cC_{\Pi,T}|+|\cC'_\Pi|\Bigg) &\leq \sum_{s=1}^{\delta n}\sum_{\Pi\in\sP_s}\big(1+e^{-\xi n/2}+e^{-\beta n^2/2}\big)|\cC^\ast_\Pi| \\
&\leq 2e^{-\nu n/4}|\cB_c(n,m)| \,,
\end{split}\label{eqn:sumsPicase1}
\end{equation}
and similarly
\begin{equation}
\begin{split}
\sum_{\Pi\in\sP_0}\Bigg(\sum_{T\in\cT_\Pi}|\cC_{\Pi,T}|+|\cC'_\Pi|\Bigg)\leq\sum_{\Pi\in\sP_0}(e^{-\xi n/2}+e^{-\beta n^2/2})|\cC^\ast_\Pi|\leq2e^{-\xi n/2}|\cB_c(n,m)| \,.
\end{split}\label{eqn:sumsPicase2}
\end{equation}

Let $C>0$ be the constant from \Cref{prop:CNMtypstrucconddistr}. By combining \pref{eqn:SupercritKRDecomp}, \pref{eqn:sumsPicase1}, and \pref{eqn:sumsPicase2}, we obtain
\begin{align*}
|\cC(n,m)| &\leq |\cB_c(n,m)|+\sum_{s=1}^{\delta n}\,\sum_{\Pi\in\sP_s}|\cC^\ast_\Pi|+\sum_{\Pi\in\sP}\Bigg(\sum_{T\in\cT_\Pi}|\cC_{\Pi,T}|+|\cC'_\Pi|\Bigg)+|\cC_\far| \\[10pt]
&\leq (1+2e^{-\nu n/4}+2e^{-\xi n/2})|\cB_c(n,m)|+e^{-Cn^2}|\cC(n,m)| \,,
\end{align*}
completing the proof.
\end{proof}

\begin{proof}[Proof of \Cref{thm:almostallCB} \rm{\ref{thm:almostallCBcritical}}]
Let $\cC^\ast:=\bigcup_{\Pi\in\sP}\cC^\ast_\Pi$. For a fixed set $S\subseteq V$ of size $|S|\leq\delta n/2$, \Cref{lemma:almostallcovnmbalanced} proves that almost all $G\in\cC^\ast$ with $\Pi_\sparse=S$ have the property that $\Pi_\cb(G)=\{A,B\}$ is almost equitable (that is, $||A|-|B||\leq\sqrt{\log n}$). Hence in the remainder, we assume $\Pi\in\sP$ has the property that $\Pi_\cb$ is almost equitable.

Let $\sP^{(1)}$ denote the set of all $\Pi\in\sP$ such that $|\Pi_\sparse|\leq\log n$, and let $\sP^{(2)}:=\sP\setminus\sP^{(1)}$. Let $s\geq1$, $\Pi\in\sP_s$, and $T\in\cT_\Pi$. Denote $\Pi_\cb=\{A,B\}$ and let $a:=|A|$ and $b:=|B|$. Let $s_a,s_b\geq0$ be integers such that $s_a+s_b=s$ and $|a+s_a-b-s_b|$ is minimum. (Notice $s_a$ and $s_b$ are uniquely determined by $\Pi$.) Fix a bipartition $\Pi_\ast=\{A_\ast,B_\ast\}$ of $V$ such that $A\subseteq A_\ast$, $B\subseteq B_\ast$, $|A_\ast|=a+s_a$, and $|B_\ast|=b+s_b$. Define the quantities
\[
\arraycolsep=5mm
\begin{array}{ll}
N:=(a+s_a)(b+s_b)\,, & \displaystyle M:=m-\binom{a+s_a}{2}-\binom{b+s_b}{2}\,, \\[15pt]
K:=s_ba+s_ab\,, & \displaystyle L:=s_aa+s_bb+\binom{s_a}{2}+\binom{s_b}{2}-e(T)\,.
\end{array}\]
Since $\Pi\in\sP_s$, \Cref{lemma:SuperCloseStructure} proves $|a-n/2|\leq\delta n$ and $|b-n/2|\leq\delta n$. Using \Cref{lemma:universalvtxlittleoh1} and a second-order series expansion, we compute
\begin{align}
|\cC^\ast_{\Pi,T}| &\sim \binom{ab}{m-\binom{a}{2}-\binom{b}{2}-e(T)} = \binom{N-K}{M+L} \nonumber\\[5pt]
&= \binom{N}{M}\frac{\prod_{i=1}^{K+L}(N-M-i+1)}{\big(\prod_{i=1}^L(M+i)\big)\big(\prod_{i=1}^K(N-i+1)\big)} \nonumber\\[5pt]
&= \binom{N}{M}\frac{\prod_{i=1}^{K+L}(1-\frac{i-1}{N-M})}{\big(\prod_{i=1}^K(\frac{N}{N-M}-\frac{i-1}{N-M})\big)\big(\prod_{i=1}^L(\frac{M}{N-M}+\frac{i}{N-M})\big)} \nonumber\\[5pt]
&= \binom{N}{M}\left(\frac{N-M}{N}\right)^K\left(\frac{N-M}{M}\right)^L \nonumber\\
&\hspace{4cm}\cdot\exp\left(-\frac{(K+L)^2}{2(N-M)}+\frac{K^2}{2N}-\frac{L^2}{2M}+O\left(\frac{(K+L)^3}{(N-M)^2}\right)\right) \nonumber\\[5pt]
\begin{split}
&= \binom{N}{M}\left(\frac{(N-M)^2}{NM}\right)^L\left(\frac{N-M}{N}\right)^{K-L} \\
&\hspace{4cm}\cdot\exp\left(-\frac{(K+L)^2}{2(N-M)}+\frac{K^2}{2N}-\frac{L^2}{2M}+O\left(\frac{1}{n}\right)\right)\,.
\end{split}\label{eqn:CstarPiTAsymptotics}
\end{align}
Notice that for sufficiently large $n$, we have $L\geq sn/2$, $M\leq\frac{1}{2}(\gamma-1/2)n^2$, and $a+b\geq(1-4\delta)n$. Hence we compute that
\begin{align*}
\frac{(K+L)^2}{2(N-M)}-\frac{K^2}{2N}+\frac{L^2}{2M} &\geq L\left(\frac{K+L}{N-M}+\frac{L}{2M}\right) \\
&\geq \frac{sn}{2}\left(\frac{s(a+b)+\binom{s_a}{2}+\binom{s_b}{2}-e(T)}{\binom{n}{2}-m}+\frac{s}{(2\gamma-1)n}\right) \\
&\geq \frac{(1-4\delta)s^2}{1-\gamma}+\frac{s^2}{4\gamma-2} \geq 6s^2 \,.
\end{align*}

We now claim $((N-M)^2/(NM))^L|=O(n)$. Indeed, since $|a-b|\leq\sqrt{\log n}$, we have
$$\left|(1-\rho)-\frac{N-M}{N}\right|=O\left(\frac{\log n}{n}\right)\hspace{6mm}\text{and}\hspace{6mm}\left|\frac{1-\rho}{\rho}-\frac{N-M}{M}\right|=O\left(\frac{\log n}{n}\right)\,,$$
which implies (using that $(1-\rho)^2/\rho=1$)
$$\left|1-\frac{(N-M)^2}{NM}\right|=O\left(\frac{\log n}{n}\right)\,.$$
The fact that $L=O(n)$ now completes the proof of our claim.

Now if $\Pi$ satisfies $|\Pi_\sparse|=s\geq\log n$ then from \pref{eqn:CstarPiTAsymptotics},
\begin{align*}
|\cC^\ast_{\Pi,T}| &\leq \binom{N}{M}\left(\frac{(N-M)^2}{NM}\right)^L\left(\frac{N-M}{N}\right)^{(s_b-s_a)(a-b)-\binom{s_a}{2}-\binom{s_b}{2}+e(T)}e^{-5s^2} \\
&\leq \binom{N}{M}\left(\frac{N-M}{N}\right)^{-\binom{s_a}{2}-\binom{s_b}{2}}e^{-4s^2} \leq \binom{N}{M}e^{-2s^2}\,,
\end{align*}
where we used that $(s_b-s_a)(a-b)\geq0$ and $1<N/(N-M)<e$. Since for all $s\geq0$ and $\Pi'\in\sP$, there are at most $\binom{n}{s}2^s$ divisions $\Pi\in\sP_s$ such that $\Pi_\ast=\Pi'$, we have 
\begin{align*}
\sum_{\Pi\in\sP^{(2)}}\,\sum_{T\in\cT^\ast_\Pi}|\cC^\ast_{\Pi,T}| & \leq\sum_{s\geq\log n}\,\sum_{\Pi\in\sP_s}\binom{N}{M}e^{-\frac{3}{2}s^2} \\
&\leq \sum_{s\geq\log n}\binom{n}{s}2^se^{-\frac{3}{2}s^2}\sum_{\Pi\in\sP}\binom{e(\Pi)}{m-e(\Pi^c)} \\
&\leq 2\cdot|\cB_c(n,m)|\sum_{s\geq\log n}e^{-\frac{3}{2}s^2+s\log n+(\log 2)n} \leq \frac{n^{-\log n}}{\log n}\cdot|\cB(n,m)|\,,
\end{align*}
where we used that $|\cT^\ast_\Pi|\leq e^{s^2/2}$ for $\Pi\in\sP_s$, and we used a standard Gaussian tail bound to obtain the last inequality. Combining the previous bound with the rest of the proof of \Cref{thm:almostallCB} immediately implies the result.
\end{proof}

\section{The Subcritical Regime}
\label{sec:subcritical}
In this section we prove the third assertion of \Cref{thm:almostallCB}. Throughout the section, let $\gamma\in\big(0,\frac{5-\sqrt{5}}{4}\big)$ and $m\sim\gamma\binom{n}{2}$. There is a global parameter $\epsilon>0$ used throughout the section; in general, all statements hold for sufficiently small $\epsilon$, but in some cases it will be useful to have established that  $0<\epsilon<\min\{\gamma,\frac{5-\sqrt{5}}{4}-\gamma\}/2^{65}$. Likewise, inequalities $f(n)\leq g(n)$ are always meant for large enough $n$.

\subsection{Parameters and Definitions}
Define the parameters and constants
\[\arraycolsep=5mm
\begin{array}{llll}
\displaystyle\tau:=\frac{1}{2}\left(\frac{\epsilon^2}{2^8}\right)^{67}\,, & \delta:=8\sqrt{\epsilon}\,, & \alpha:=8\epsilon^{1/8}\,, & \eta:=\epsilon^{1/16}\,, \\ [10pt]
\omega:=\epsilon^{1/32}\,, & \displaystyle\mu:=\bigg(\frac{5+\sqrt{5}}{20}\gamma\bigg)^{1/2}\,, & \displaystyle\rho:=\frac{3-\sqrt{5}}{2} \,.
\end{array}\]
Let $\blambda_\ast:=(0,2\mu)$ and $W^\ast:=W_{\blambda_\ast}$ (using the definition in \pref{eqn:flambdaGraphonDef}), so $W^\ast$ is the unique optimal graphon $W_{\blambda}\in\cX^\ast_\gamma$ (up to equivalence) of the variational problem \pref{eqn:variationalproblem} satisfying $|\blambda|=2$.

Let $A_1,B_1,\dots,A_\ell,B_\ell\subseteq V$ be disjoint nonempty vertex subsets, let $\Pi_i:=\{A_i,B_i\}$ for all $i\in[\ell]$, and let $\Pi:=\{\Pi_1,\dots,\Pi_\ell\}$; we call such a set $\Pi$ an \emph{division} of $V$. Let $\sD$ denote the set of all divisions of $V$. We will always assume the $\Pi_i$ are ordered such that the numbers $|A_i|+|B_i|$ are nonincreasing in $i$. When there is no ambiguity, we use the notation $\Pi_i$ to refer to the complete bipartite graph $K_{\Pi_i}$, and $\Pi$ to refer to the vertex-disjoint union of bipartite graphs $\Pi_i$. For all divisions $\Pi\in\sD$, let $V(\Pi):=\bigcup\bigcup\Pi\subseteq V$ be the vertex set covered by $\Pi$, let $v(\Pi):=|V(\Pi)|$, and let $\Pi_\sparse:=V\setminus V(\Pi)$.

Let $\Pi=\{\Pi_1,\dots,\Pi_\ell\}\in\sD$ be a division and denote $\Pi_i=\{A_i,B_i\}$ for all $i\in[\ell]$. Also let $v_i:=|A_i\cup B_i|$ for all $i\in[\ell]$. Let $\ell'\in[\ell]$ be the greatest index such that $v_{\ell'}\geq8\delta n$. Let $\blambda=(\lambda_0,\lambda_1,\dots)\in\Lambda$ (see \Cref{sec:GraphonOpt} for the definition of $\Lambda$) and let $W:=W_{\blambda}$. Denote $\mu_i:=\lambda_i-\lambda_{i-1}$ for all $i\geq1$. Let $L$ be the set $\N$ if $|\blambda|=\infty$ and $\{1,\dots,|\blambda|\}$ otherwise. Define the set of divisions
$$\sD_W:=\left\{\Pi\in\sD:\splitfrac{\vphantom{\displaystyle\bigcup}\text{there exists an injection $\varphi:[\ell']\to L$ such that}}{\text{$|v_i-\mu_{\varphi(i)}n|\leq\delta n$ for all $i\in[\ell']$, and $\mu_i\leq\delta$ for all $i\in L\setminus\varphi([\ell'])$}}\right\}\,.$$
Since for every graphon $U\in\cX^\ast_\gamma$ there is a unique sequence $\blambda\in\Lambda$ such that $U$ is equivalent to $W_{\blambda}$\,, the notation $\sD_U:=\sD_{W_{\blambda}}$ is well-defined.

For all graphs $G$ and divisions $\Pi=\{\Pi_1,\dots,\Pi_\ell\}$ with $\Pi_i=\{A_i,B_i\}$, let
\begin{align*}
b(G,\Pi) &:= \sum_{i=1}^\ell\sum_{P\in\Pi_i}e(G^c[P])+\sum_{1\leq i<j\leq\ell}e(G[A_i\cup B_i,A_j\cup B_j])+e(G[V,\Pi_\sparse]) \,.
\end{align*}
Let $\Pi(G)\in\sD$ denote a canonically chosen division minimizing $b(G,\Pi)$, and define $b(G):=b(G,\Pi(G))$. For all $G\in\cC(n,m)$ with $\Pi(G)=\{\Pi_1,\dots,\Pi_\ell\}$, $\Pi_i=\{A_i,B_i\}$, define
$$D(G):=\bigcup_{i=1}^\ell\bigcup_{P\in\Pi_i}E(G^c[P])\cup\bigcup_{1\leq i<j\leq\ell}E(G[A_i\cup B_i,A_j\cup B_j])\cup E(G[V(\Pi),\Pi_\sparse])\,.$$
Let $U(G):=V(D(G))$ denote the set of vertices incident to $D(G)$ and define the graph $T(G):=(U(G),D(G))$. For all $W\in\cX^\ast_\gamma$ and $\Pi\in\sD_W$, define
\[\arraycolsep=1.4pt
\begin{array}{ll}
\cC_{W,\Pi} &:= \left\{G\in\cC(n,m):\splitfrac{\text{$\delta_\square(G,W)<\tau$, $\Pi(G)=\Pi$, and $\exists~\Pi'\in\Pi$}\vphantom{\displaystyle\bigcup}}{\text{such that $|V(\Pi')|\geq\eta n$ and $e_{T(G)}(V,V(\Pi'))\geq1$}}\right\} \,, \\[20pt]
\cC^\ast_{W,\Pi} &:= \left\{G\in\cC(n,m):\splitfrac{\text{$\delta_\square(G,W)<\tau$, $\Pi(G)=\Pi$, and $\forall~\Pi'\in\Pi$,}\vphantom{\displaystyle\bigcup}}{\text{$|V(\Pi')|\geq\eta n$ implies $e_{T(G)}(V,V(\Pi'))=0$}}\right\} \,,
\end{array}\]
and define the associated sets of graphs
\[\arraycolsep=1.4pt
\begin{array}{ll}
\cT_{W,\Pi} &:= \{T(G)\cup G[\Pi_\sparse]:G\in\cC_{W,\Pi}\} \,, \\[5pt]
\cT^\ast_{W,\Pi} &:= \{G[\Pi_\sparse]:G\in\cC^\ast_{W,\Pi}\} \,.
\end{array}\]
For all $T\in\cT_{W,\Pi}$ and $P\in\Pi'\in\Pi$, where $|V(\Pi')|\geq\eta n$, say that a vertex $v\in V$ has
\begin{enumerate}[(\emph{\roman*})]
	\item \emph{low degree in $P$}  if $d_T(v,P)<\alpha|P|$,
	\item \emph{medium degree in $P$} if $\alpha|P|\leq d_T(v,P)\leq(1-\alpha)|P|$, or 
	\item \emph{high degree in $P$} if $d_T(v,P)>(1-\alpha)|P|$.
\end{enumerate}
If a vertex $v$ has medium degree in some $P\in\Pi'\in\Pi$ with respect to $T\in\cT_{W,\Pi}$, we say that $v$ has $(P,T)$-medium degree. Define the sets
\[\arraycolsep=1.4pt
\begin{array}{ll}
\cC'_{W,\Pi} &:= \left\{G\in\cC_{W,\Pi}:\splitfrac{\text{for some $\Pi'=\{A,B\}\in\Pi$ and $v\in V\setminus V(\Pi')$,}\vphantom{\displaystyle\bigcup}}{\text{$|V(\Pi')|\geq\eta n$, $d_{T(G)}(v,A)>(1-\alpha)|A|$, and $d_{T(G)}(v,B)\geq\alpha|B|$}}\right\} \,, \\[20pt]
\cC''_{W,\Pi} &:= \left\{G\in\cC_{W,\Pi}\setminus\cC'_{W,\Pi}:\splitfrac{\text{for some $\Pi'\in\Pi$, $P\in\Pi'$, and $v\in V$,}\vphantom{\displaystyle\bigcup}}{\text{$|V(\Pi')|\geq\eta n$ and $v$ has $(P,T(G))$-medium degree}}\right\} \,,
\end{array}\]
and for all $T\in\cT_{W,\Pi}$ define the set
$$\cC_{W,\Pi,T} := \{G\in\cC_{W,\Pi}\setminus(\cC'_{W,\Pi}\cup\cC''_{W,\Pi}):T(G)\cup G[\Pi_\sparse]=T\}\,.$$
For all $n,m\in\N$, $\tau>0$, and graphons $W\in\cW$, define the neighborhood
$$N_\tau(n,m,W):=\{G\in\cC(n,m):\delta_\square(G,W)<\tau\}\,.$$
In the following lemma, recall that the sets $\Lambda$ and $\cV$ were defined in \Cref{sec:GraphonOpt}, and recall the discretization $W_n$ of a graphon $W$ from \Cref{def:DiscreteGraphon}.

\begin{lemma}\label{lemma:WtoWtildeMetrics}
If $W\in\cV$ and $G\in N_\tau(n,m,W)$ then $\widehat{\delta}_\square(G,W_n)<\epsilon^2/8$ for large enough $n$. Additionally, by editing (i.e. adding or removing) at most $\epsilon n^2$ edges of $G$, we can obtain a graph that is the vertex-disjoint union of co-bipartite graphs and an independent set.
\end{lemma}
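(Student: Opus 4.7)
The plan is to follow the two-part template of \Cref{lemma:SuperCloseStructure}, adding a two-scale cut-norm argument to handle the multi-block structure present in the subcritical regime.

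For the first assertion, the argument mirrors \Cref{lemma:SuperCloseStructure} exactly. Since $W = W_{\blambda}$ is a (possibly countably infinite) piecewise-constant step function, its discretization $W_n$ satisfies $\|W - W_n\|_1 \to 0$: the unresolved region, where the grid of size $1/n$ cannot capture the smallest increment, has measure $O(1/n)$ because $\sum_i (\lambda_i - \lambda_{i-1}) \leq 1$, so at most $n$ increments are resolvable. Hence $\delta_\square(W, W_n) \to 0$, and the triangle inequality gives $\delta_\square(G, W_n) < 2\tau$ for large $n$. Combining \cite[Lemma~8.9]{lovasz2012large} (so that $\widehat\delta_\square(G, W_n) = \delta_\square(W_G, W_n)$) with \cite[Theorem~2.3]{borgs2008convergent} (giving $\widehat\delta_\square(G_1, G_2) \leq 32 \cdot (\delta_\square(G_1, G_2))^{1/67}$) and the choice $\tau = \tfrac{1}{2}(\epsilon^2/2^8)^{67}$ yields $\widehat\delta_\square(G, W_n) \leq 32 \cdot (2\tau)^{1/67} = \epsilon^2/8$.

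For the second assertion, I fix a permutation realizing the $\widehat\delta_\square$ minimum and relabel $V$ so that $\|A_G - A_{W_n}\|_\square \leq \epsilon^2/8$. The block structure of $W_\blambda$ induces a partition $V = \bigcup_i V_i \cup R$ with $V_i = A_i \cup B_i$ corresponding to the block of $W$ over $(\lambda_{i-1}, \lambda_i)$. Pick the threshold $\omega := \epsilon/2$; call block $i$ \emph{large} if $\lambda_i - \lambda_{i-1} > \omega$ and \emph{small} otherwise. Monotonicity of $\lambda_i - \lambda_{i-1}$ in $i$ together with $\sum_i(\lambda_i - \lambda_{i-1}) \leq 1$ gives $\ell_L < 2/\epsilon$ large blocks and $\sum_{\text{small}}(\lambda_i - \lambda_{i-1})^2 \leq \omega$. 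Let $V_L := \bigcup_{\text{large}} V_i$ and $V_S := \bigcup_{\text{small}} V_i$. Construct $G'$ by replacing $G[V_i]$ with $K_{A_i} \cup K_{B_i} \cup G[A_i, B_i]$ for each large $i$ and making $V_S \cup R$ an independent set. The edit count decomposes into four contributions, each bounded via the cut norm: (a) non-edges within each of the $2\ell_L$ large cliques, bounded by $2\ell_L \cdot \epsilon^2 n^2/16 \leq \epsilon n^2/4$ using $S = T = A_i$ where $W_n \equiv 1$; (c) edges between $V_L$ and $V_S \cup R$, bounded by $\epsilon^2 n^2/8$ using $S = V_L$, $T = V_S \cup R$ where $W_n \equiv 0$; and (d) edges inside $V_S \cup R$, bounded using $S = T = V_S \cup R$ and the identity $\sum_{(V_S \cup R)^2} A_{W_n} = \tfrac{1+\rho}{2} \sum_{\text{small}} |V_i|^2 \leq \tfrac{(1+\rho)\omega}{2} n^2$, giving $e(G[V_S \cup R]) \leq \tfrac{(1+\rho)\omega}{4} n^2 + \epsilon^2 n^2/16 \leq \epsilon n^2/4$.

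The main obstacle is (b), removing edges between distinct large blocks: a naive per-pair application of the cut norm would incur $\binom{\ell_L}{2} = \Theta(1/\epsilon^2)$ error terms each of size $\epsilon^2 n^2/8$, blowing up to $\Theta(n^2)$. The resolution will be a two-scale cancellation: since $W_n \equiv 0$ across distinct blocks, the $W_n$-contributions from $S = T = V_L$ and from the individual $S = T = V_i$ for each large $i$ cancel exactly after subtraction, yielding
\[
2\sum_{\substack{i \neq j \\ i,j \text{ large}}} e(G[V_i, V_j]) \;=\; \sum_{V_L^2} A_G \;-\; \sum_{\text{large}} \sum_{V_i^2} A_G \;\leq\; (\ell_L + 1)\,\epsilon^2 n^2/8 \;\leq\; \epsilon n^2/4 + O(\epsilon^2 n^2),
\]
so (b) contributes at most $\epsilon n^2/8 + O(\epsilon^2 n^2)$. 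Summing (a)--(d) gives total edits at most $\tfrac{5}{8}\epsilon n^2 + O(\epsilon^2 n^2) \leq \epsilon n^2$ for sufficiently small $\epsilon$, completing the proof.
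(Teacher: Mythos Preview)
Your proof is correct and follows the same overall scheme as the paper's: the first assertion is identical (discretize, triangle inequality, then the $\delta_\square \mapsto \widehat\delta_\square$ transfer via \cite[Lemma~8.9]{lovasz2012large} and \cite[Theorem~2.3]{borgs2008convergent}), and the second assertion proceeds by relabelling to realise the $\widehat\delta_\square$-minimum, reading off the block partition from $W_n$, and bounding the edits block by block through the cut norm.

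Two technical points are handled differently, and in both cases your version is a little cleaner. For the small-block region, the paper introduces an auxiliary truncated graphon $W_{\blambda'}$ and bounds $e(G_0[J])$ via $d_\square(G_0,H_n)+d_\square(H_n,H'_n)$; you instead apply the cut norm directly with $S=T=V_S\cup R$ and compute the $A_{W_n}$-contribution explicitly as $\tfrac{1+\rho}{2}\sum_{\text{small}}|V_i|^2$, which is more transparent. More substantively, for the cross-block edges (your item (b)) the paper bounds each pair $e(G_0[C_i,C_j])\le \epsilon^2 n^2/8$ separately and then asserts the total is at most $\epsilon n^2/8$; but with $k\le 1/\epsilon$ blocks this would naively accumulate $\binom{k}{2}=\Theta(\epsilon^{-2})$ such errors. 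Your two-scale identity $\sum_{V_L^2}A_G-\sum_i\sum_{V_i^2}A_G$ reduces the number of cut-norm evaluations to $\ell_L+1=O(1/\epsilon)$ and makes the bookkeeping go through cleanly. This is a genuine (if small) improvement over the paper's write-up.

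Minor quibbles: you reuse the symbol $\omega$, which the section already defines as $\epsilon^{1/32}$; and the strict inequality $\ell_L<2/\epsilon$ should be $\le$. Neither affects correctness.
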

\begin{proof}
We may assume $W=W_{\blambda}$, where $\blambda=(\lambda_0,\lambda_1,\dots)\in\Lambda$. Let $\mu_i:=\lambda_i-\lambda_{i-1}$ for $i\geq1$. For all $n\in\N$, let $H_n$ be the weighted graph on $V$ whose $ij$ edge weight is $W(\frac{i}{n},\frac{j}{n})$ with unweighted nodes. In the remainder, all statements hold for large enough $n$. The sequence $W_n=W_{H_n}$ clearly converges to $W$ pointwise almost everywhere and hence in cut metric, so the condition $\delta_\square(G,W)<\tau$ implies $\delta_\square(G,W_n)<2\tau$. By \cite[Lemma~8.9]{lovasz2012large} we have $\delta_\square(G,H_n)=\delta_\square(W_G,W_n)$, and by \cite[Theorem~2.3]{borgs2008convergent} we have
$$\widehat{\delta}(G_1,G_2)\leq32\cdot(\delta_\square(G_1,G_2))^{1/67}$$
for edge-weighted graphs with the same number of unweighted nodes; from these two results we deduce that $\widehat{\delta}_\square(G,H_n)<\epsilon^2/8$.

Let $0\leq k\leq1/\epsilon$ be the greatest index such that $\mu_k\geq\epsilon$. Let $G_0$ be an isomorphic copy of $G$ such that $d_\square(G_0,H_n)=\widehat{\delta}_\square(G,H_n)$, and let $\phi:V(G)\to V(G_0)$ be the isomorphism. For all $i\in[k]$, define the sets
\[\arraycolsep=1.4pt
\begin{array}{ll}
C_{i,1} &:= \displaystyle\left\{j\in[n]:j/n\in\left(\lambda_{i-1}\,,\,\frac{\lambda_{i-1}+\lambda_i}{2}\right)\right\} \,, \\[12pt]
C_{i,2} &:= \displaystyle\left\{j\in[n]:j/n\in\left(\frac{\lambda_{i-1}+\lambda_i}{2}\,,\,\lambda_i\right)\right\} \,, \\[12pt]
C_i &:= C_{i,1}\cup C_{i,2} \,,
\end{array}\]
and let $J:=V\setminus\bigcup_{i=1}^kC_i$. Since $d_\square(G_0,H_n)<\epsilon^2/8$, we have $e(G_0^c[C_{i,j}])\leq\epsilon^2n^2/8$ for all $i\in[k]$ and $j\in[2]$. Similarly, for all distinct $i,j\in[k]$, we have $e(G_0[C_i,C_j])\leq\epsilon^2n^2/8$. Notice also that there are at most $(k+1)n$ edges in $G$ incident to vertices $i\in[n]$ that satisfy $i/n=\lambda_j$ for some $j\in[k]$ ; let $U$ denote the set of these vertices.

Let $\blambda':=(\lambda_0,\dots,\lambda_k)$ and let $H'_n$ be the weighted graph on $V$ whose $ij$ edge weight is $W_{\blambda'}(\frac{i}{n},\frac{j}{n})$ with unweighted nodes. Since the sequence $(W_{\blambda'})_n=W_{H'_n}$ of graphons converges to $W_{\blambda'}$ pointwise almost everywhere, we have
\begin{align*}
e(G_0[J]) &\leq d_\square(G_0,H'_n) \\
&\leq d_\square(G_0,H_n)+d_\square(H_n,H'_n) \\
&\leq \frac{\epsilon^2}{8}+2\int_{[\lambda_k,1]^2}W(x,y)\,dx\,dy \\
&= \frac{\epsilon^2}{8}+2\sum_{k\leq i<|\blambda|}\frac{\mu_i^2}{2}\Bigg(1+\frac{3-\sqrt{5}}{2}\Bigg)\leq \frac{\epsilon}{4}\,,
\end{align*}
where the last inequality holds since $\mu_i<\epsilon$ for all $i>k$. From the above considerations, it follows immediately that by adding at most $\epsilon n^2/4$ edges to $G_0$ in the vertex sets $C_{i,j}$, removing at most $\epsilon n^2/8$ edges from $G_0$ in the edge sets $E(G_0[C_i,C_j])$, removing the edges incident to $U$, and removing at most $\epsilon n^2/4$ edges from $G_0$ in the vertex set $J$, we can obtain a graph that is the vertex-disjoint union of co-bipartite graphs (on the vertex sets $C_i$) and an independent set (on the vertex set $J\cup U$). Since we made at most $\epsilon n^2$ edits (provided $n$ is large enough that $(k+1)n\leq\epsilon n^2/4$), this completes the proof.
\end{proof}

\begin{lemma}\label{lemma:SubCloseStructure}
Let $W\in\cX^\ast_\gamma$. For sufficiently large $n$, all graphs $G\in N_\tau(n,m,W)$ satisfy the following conditions:
\begin{enumerate}[(\emph{\roman*})]
	\item\label{item:lem72PiGBalanced} $\Pi(G)\in\sD_W$\,.
	\item\label{item:lem72Piprimebalanced} For all $\Pi'=\{A,B\}\in\Pi(G)$ such that $|V(\Pi')|\geq\alpha n$, it holds that
	$$\left(\frac{1}{2}-\alpha\right)|V(\Pi')|\leq\min\{|A|,\,|B|\}\leq\max\{|A|,\,|B|\}\leq\left(\frac{1}{2}+\alpha\right)|V(\Pi')|\,.$$
	\item\label{item:lem72crossdensity} For all $\Pi'=\{A,B\}\in\Pi(G)$ such that $|V(\Pi')|\geq\delta n$, we have $|\rho-d_G(A,B)|\leq\delta$.
	\item\label{item:lem72nohighdegvtx} For all $\Pi'=\{A,B\}\in\Pi(G)$ such that $|V(\Pi')|\geq8\delta n$ and all vertices $v\in A$, it holds that $d_{T(G)}(v,A)\leq(1-\alpha)|A|$.
\end{enumerate}
\end{lemma}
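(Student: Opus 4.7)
The plan is to mirror the strategy used to prove \Cref{lemma:SuperCloseStructure} but adapted to the multi-block structure of an arbitrary optimizer $W = W_{\blambda}\in\cX^\ast_\gamma$. The starting point is \Cref{lemma:WtoWtildeMetrics}: for $G\in N_\tau(n,m,W)$ it gives $\widehat{\delta}_\square(G,W_n)<\epsilon^2/8$ and produces an explicit graph $G'$, obtained by at most $\epsilon n^2$ edits of $G$, that is a vertex-disjoint union of co-bipartite graphs on vertex sets of sizes within $\delta n/2$ of $\mu_i n$ (for $\mu_i:=\lambda_i-\lambda_{i-1}$) plus an independent set. The decomposition of $G'$ determines a natural division $\Pi_0\in\sD_W$ with balanced cliques, and since edits bound $b$ we get $b(G,\Pi_0)\leq\epsilon n^2$; by optimality of $\Pi(G)$, therefore $b(G)=b(G,\Pi(G))\leq\epsilon n^2$. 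This universal $O(\epsilon n^2)$ bound on $b(G)$ is the workhorse for all four assertions.

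For \emph{(i)} and \emph{(ii)}, I would argue by contradiction using the cut metric bound together with the small value of $b(G)$. If $\Pi(G)\not\in\sD_W$ — either the number of large blocks or the sizes of the large blocks in $\Pi(G)$ are inconsistent with $\blambda$ up to error $\delta n$ — then any isomorphic realignment $G_\sigma$ of $G$ minimizing $\norm{A_G-P^TA_{W_n}P}_\square$ must disagree with $W_n$ on a set of measure $\Omega(\delta^2)$, contradicting $\widehat{\delta}_\square(G,W_n)<\epsilon^2/8$ (as $\delta=8\sqrt{\epsilon}$ makes $\delta^2\gg\epsilon^2/8$). Similarly, for \emph{(ii)}, if $\Pi'=\{A,B\}\in\Pi(G)$ with $|V(\Pi')|\geq\alpha n$ has $\bigl||A|-|B|\bigr|>2\alpha|V(\Pi')|$, then by $b(G)\leq\epsilon n^2$ the induced $G[V(\Pi')]$ is within $O(\epsilon n^2)$ edits of the complete bipartite graph $\Pi'$, whose internal clique-plus-rectangle density is $|A|^2+|B|^2$ (rather than the symmetric $2(|V(\Pi')|/2)^2$); comparing to the density dictated by $W_n$ on the corresponding window produces a $\Omega(\alpha^2)$ cut-norm discrepancy, again contradicting the cut-distance bound.

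For \emph{(iii)}, I would combine \emph{(ii)} with a local density calculation. Within each block $\Pi'=\{A,B\}\in\Pi(G)$ of size at least $\delta n$, the at most $\epsilon n^2$ edits of \Cref{lemma:WtoWtildeMetrics} alter at most $\epsilon n^2$ edges in $A\times B$, while the corresponding slice of $W_n$ has cross-density exactly $\rho$. Since $|A|\cdot|B|=\Omega(\delta^2 n^2)$, the edits perturb $d_G(A,B)$ by at most $O(\epsilon/\delta^2)=O(1/\epsilon)$ — which is not yet strong enough, so I would instead upgrade this using the cut-norm bound on the specific rectangle $A\times B$: by $\widehat{\delta}_\square(G,W_n)<\epsilon^2/8$ we have $|e(G[A,B])-\rho|A||B||\leq\epsilon^2 n^2/8$, and dividing by $|A||B|\geq\tfrac{1}{16}\delta^2 n^2$ yields $|d_G(A,B)-\rho|\leq2\epsilon^2/\delta^2\leq\delta$. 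The same kind of case analysis as in Lemma~6.1(iii) (referenced forward in that proof) handles the boundary cases. Finally, for \emph{(iv)}, the argument is a direct swap: if $v\in A$ with $|V(\Pi')|\geq8\delta n$ satisfies $d_{T(G)}(v,A)>(1-\alpha)|A|$, then moving $v$ into $B$ (or, if this leaves it with large complementary degree to $B$, into $\Pi_\sparse$) changes $b(G,\Pi)$ by at most $d_G(v)-(1-\alpha)|A|\leq O(\alpha n)-(1-\alpha)\cdot4\delta n<0$, contradicting optimality of $\Pi(G)$.

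The main obstacle I foresee is executing the cut-distance argument for \emph{(i)} cleanly, because the multi-block structure of $W_{\blambda}$ is only approximately reflected in $G$, and one must carefully choose on which scale to compare $G_\sigma$ with $W_n$: blocks corresponding to $\mu_i<\delta$ need not be detectable at all in $\Pi(G)$, which is why $\sD_W$ is defined using the injection $\varphi$ only on blocks of size $\geq 8\delta n$. Handling the bookkeeping between detectable and negligible blocks — making precise how $\Pi_0$ witnesses $\sD_W$-membership and then transferring this to $\Pi(G)$ via optimality — is the technical heart of the proof.
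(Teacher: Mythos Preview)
Your outline is aligned with the paper's approach, and parts (ii)--(iv) are essentially right (the swap in (iv) is almost verbatim the paper's). The real gap is the one you flag yourself: part (i).

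Your contradiction argument compares $G$ directly with $W_n$, but $G$ has no clean block structure---only the bound $b(G)\leq\epsilon n^2$ says it is \emph{close} to one. The paper makes this workable by introducing the intermediate graph $H$ on $V$ with edge set $(E(G)\,\Delta\,D(G))\setminus E(G[\Pi_\sparse])$: this $H$ is \emph{exactly} a vertex-disjoint union of co-bipartite graphs (on the $\Pi_i\in\Pi(G)$) together with an independent set, and it differs from $G$ in at most $b(G)\leq\epsilon n^2$ edges, so $d_\square(G,H)\leq\epsilon$. After relabelling so that $d_\square(G,H_n)=\widehat{\delta}_\square(G,H_n)$, the triangle inequality gives $d_\square(H,H_n)\leq 2\epsilon$ in a \emph{fixed} labelling. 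Now both $H$ and $H_n$ are rigid: each half-block $C_{i,j}$ of $H_n$ is a clique, and since any clique in $H$ lies inside a single $A_s$ or $B_s$, a subset $C'_{i,j}\subseteq C_{i,j}$ missing only $O(\sqrt\epsilon\,n)$ vertices must land in one $V_s$. A short case analysis (each time contradicting $d_\square(H,H_n)\leq 2\epsilon$ whenever a cross-intersection is too large) then yields $|A_s\,\Delta\,C_{i,1}|,\,|B_s\,\Delta\,C_{i,2}|\leq\delta n/4$, which \emph{constructs} the injection $\varphi$ rather than arguing its existence indirectly. Your idea of first exhibiting a witness division $\Pi_0\in\sD_W$ and then ``transferring'' membership to the optimal $\Pi(G)$ does not obviously work: optimality gives $b(G,\Pi(G))\leq b(G,\Pi_0)$, but that inequality alone says nothing about how the block sizes of $\Pi(G)$ relate to those of $\Pi_0$.

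A smaller gap in (iii): the bound $\bigl|e_G(A,B)-\rho|A|\,|B|\bigr|\leq\epsilon^2 n^2/8$ does not follow from $\widehat{\delta}_\square(G,W_n)<\epsilon^2/8$ alone, because under the optimal permutation the images of $A,B$ need not coincide with half-blocks of $W_n$; the misalignment contributes an error of order $|A\,\Delta\,C_{i,1}|\cdot n=O(\delta n^2)$, which swamps $\epsilon^2 n^2/8$. The paper instead proves (iii) from the explicit symmetric-difference bounds just established in (i), computing $d_G(A_s,B_s)$ via the sets $A_s\cap C_{i,1}$ and $B_s\cap C_{i,2}$.
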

\begin{proof}
Denote $\Pi:=\Pi(G)=\{\Pi_1,\dots,\Pi_r\}$, $\Pi_j=\{A_j,B_j\}$, and $V_j=A_j\cup B_j$. Let $r'\in[r]$ be the greatest index such that $|V_{r'}|\geq8\delta n$. We may assume $W=W_{\blambda}$, where $\blambda=(\lambda_0,\lambda_1,\dots)\in\Lambda$. Let $\mu_i:=\lambda_i-\lambda_{i-1}$ for $i\geq1$, and let $1\leq k\leq1/\delta$ be the greatest index such that $\mu_k\geq\delta$. For $i\geq1$, let $C_i$, $C_{i,1}$, and $C_{i,2}$ be the vertex sets defined in the proof of \Cref{lemma:WtoWtildeMetrics}. Notice that $|C_i-\mu_in|\leq2$ and $|C_{i,j}-\mu_in/2|\leq2$. To prove the first assertion of the lemma, it suffices to establish an injection $\varphi:[r']\to[k]$ satisfying the conditions in the definition of $\sD_W$.

Let $H_n$ be the same weighted graph as in the proof of \Cref{lemma:WtoWtildeMetrics} and assume the vertices of $G$ are labeled such that $d_\square(G,H_n)=\widehat{\delta}_\square(G,H_n)$. From the proof of \Cref{lemma:WtoWtildeMetrics}, we know $d_\square(G,H_n)\leq\epsilon^2/8$. Let $H$ be the graph on the vertex set $V$ and edge set $(E(G)\,\Delta\,D(G))\setminus E(G[\Pi_\sparse])$, where $\Delta$ is the symmetric difference (so $H$ is the graph that witnesses the fact that $\Pi$ is optimal for $G$). By \Cref{lemma:WtoWtildeMetrics}, we know $d_\square(G,H)n^2\leq d_1(G,H)n^2=b(G,\Pi)\leq\epsilon n^2$ (where $d_1$ denotes the normalized edit distance), hence
\begin{equation}
d_\square(H,H_n)\leq d_\square(H,G)+d_\square(G,H_n)\leq2\epsilon\,.\label{eqn:lem72dsquareHHnepsDist}
\end{equation}

By definition, for all $i\geq1$ and $j=1,2$, the weighted graph $H_n[C_{i,j}]$ is a clique (i.e. has all edge-weights equal to 1). Since $H$ is a disjoint union of co-bipartite graphs and an independent set, the following holds for all $1\leq i\leq k$ and $j=1,2$: there is a subset $C'_{i,j}\subseteq C_{i,j}$ of size at least $|C_{i,j}|-\ell$ such that $H[C'_{i,j}]$ is a clique, where $\ell$ is the greatest integer satisfying the inequality $\ell\cdot\delta n+\binom{\ell}{2}\leq2\epsilon n^2$\,; it is easy to check that this $\ell$ satisfies $\ell\leq\delta n/32$.

For the remainder of the proof fix $1\leq i\leq k$. Note that since $H[C'_{i,1}]$ is a clique and $H$ is a vertex-disjoint union of co-bipartite graphs and an independent set, we must have $C'_{i,1}\subseteq V_s$ for some $s\in[r]$, and we must then also have $C'_{i,2}\subseteq V_s$. We claim that
\begin{equation}
|A_s\cap C'_{i,1}|\leq\delta n/16\hspace{7mm}\text{or}\hspace{7mm}|A_s\cap C'_{i,2}|\leq\delta n/16\,.\label{eqn:ascapcprimei1contra}
\end{equation}
Assume to the contrary that neither inequality in \pref{eqn:ascapcprimei1contra} held. Now $H[A_s\cap C'_{i,1},A_s\cap C'_{i,2}]$ is a complete bipartite graph since it is a subgraph of $H[A_s]$, but the edge-weights of $H_n[A_s\cap C'_{i,1},A_s\cap C'_{i,2}]$ all equal $\rho$, which implies $d_\square(H,H_n)\geq16\epsilon(1-\rho)n^2>2\epsilon n^2$. This contradicts \pref{eqn:lem72dsquareHHnepsDist}, hence \pref{eqn:ascapcprimei1contra} must hold. Since there is symmetry in $A_s$ and $B_s$, \pref{eqn:ascapcprimei1contra} also holds with $A_s$ replaced by $B_s$.

Define the sets $U_1:=B_s\cap C'_{i,1}$ and $U_2:=A_s\cap C'_{i,2}$. We claim $|U_1|\leq\delta n/16$ and $|U_2|\leq\delta n/16$ (possibly after swapping the roles of $A_s$ and $B_s$). Due to \pref{eqn:ascapcprimei1contra} and symmetry in $A_s$ and $B_s$, we may assume at least one of the two claimed inequalities holds. Assume the second claimed inequality holds but not the first. Now $H[U_1,B_s\cap C'_{i,2}]$ is a complete bipartite graph since it is a subgraph of $H[B_s]$, but the edge-weights of the induced bipartite graph $H_n[U_1,B_s\cap C'_{i,2}]$ all equal $\rho$, which implies $d_\square(H,H_n)>2\epsilon$, contradicting \pref{eqn:lem72dsquareHHnepsDist}. Hence both claimed inequalities hold.

Define the sets $U'_1:=A_s\setminus C_i$ and $U'_2:=B_s\setminus C_i$. We now claim $|U'_1|\leq\delta n/16$ and $|U'_2|\leq\delta n/16$. Assume the first claimed inequality does not hold. Now $H[U'_1,C'_{i,1}]$ is a complete bipartite graph, but the edge-weights of the induced bipartite graph $H_n[U'_1,C'_{i,1}]$ all equal zero, which implies $d_\square(H,H_n)>2\epsilon$, contradicting \pref{eqn:lem72dsquareHHnepsDist}. By symmetry in $A_s$ and $B_s$, both claimed inequalities hold.

Combining the conclusions of the previous three paragraphs, we have
\begin{align*}
|A_s\,\Delta\,C_{i,1}| &= |A_s\setminus C_{i,1}| + |C_{i,1}\setminus A_s| \\
&= |A_s\setminus C_i|+|A_s\cap C'_{i,2}|+|A_s\cap(C_{i,2}\setminus C'_{i,2})|+|B_s\cap C'_{i,1}|+|C_{i,1}\setminus C'_{i,1}| \\
&\leq \delta n/16 + \delta n/16 + \delta n/32 + \delta n/16 + \delta n/32 \leq \delta n/4 \,,
\end{align*}
and by symmetry in $A_s$ and $B_s$ we have $|B_s\,\Delta\,C_{i,2}|\leq\delta n/4$. It follows that
\begin{equation}
|(A_s\cup B_s)\,\Delta\,C_i|\leq|A_s\,\Delta\,C_{i,1}|+|B_s\,\Delta\,C_{i,2}|\leq\delta n/2\,.\label{eqn:AsBsDiSmallDeltaineq}
\end{equation}
We have thus established an injection $f:[k]\to[r]$ such that $|(A_{f(i)}\cup B_{f(i)})\,\Delta\, C_i|\leq\delta n/2$ for all $i\in[k]$. Notice also that $f$ maps onto $[r']$ since if there existed $s\in[r']\setminus f([k])$, then we would have $d_\square(H,H_n)\geq2\epsilon$ (since $\Pi_s$ cannot satisfy a condition similar to \pref{eqn:AsBsDiSmallDeltaineq} for any $C_i$). It follows that $\varphi:[r']\to[k]$ defined by $\varphi(s)=f^{-1}(s)$ is the claimed injection.

To prove \ref{item:lem72Piprimebalanced}, notice that $||C_{i,1}|-|C_{i,2}||\leq2$, so the inequalities $|A_s\,\Delta\,C_{i,1}|\leq\delta n/4$ and $|B_s\,\Delta\,C_{i,2}|\leq\delta n/4$ imply $||A_s|-|B_s||\leq\delta n/2+2$. Using this last inequality, the conclusion of \ref{item:lem72Piprimebalanced} follows easily whenever $|V_s|\geq\alpha n$.

To prove \ref{item:lem72crossdensity}, we use all the same definitions and conclusions as in the first part of the lemma. If we define the sets $D_1:=A_s\cap C_{i,1}$ and $D_2:=B_s\cap C_{i,2}$, since the edge-weights of the induced bipartite graph $H_n[D_1,D_2]$ all equal $\rho$, we know $|e_G(D_1,D_2)-\rho|D_1|\cdot|D_2||\leq\epsilon n^2$. We compute that
\begin{align*}
d_G(A_s,B_s) &= \frac{e_G(A_s\cap C_{i,1},B_s\cap C_{i,2})+e_G(A_s\setminus C_{i,1},B_s)+e_G(A_s,B_s\setminus C_{i,2})}{|A_s|\cdot|B_s|} \\
&\leq \frac{\rho|D_1|\cdot|D_2|+\epsilon n^2+|A_s\,\Delta\,C_{i,1}|\cdot n+|B_s\,\Delta\,C_{i,2}|\cdot n}{|A_s|\cdot|B_s|} \\
&\leq \rho + \epsilon + \delta n^2/2 \leq \rho + \delta\,,
\end{align*}
and a similar calculation shows $d_G(A_s,B_s)\geq\rho-\delta$.

To prove \ref{item:lem72nohighdegvtx}, let $\Pi'=\{A,B\}\in\Pi$ and suppose $d_{T(G)}(v,A)>(1-\alpha)|A|$. By the optimality of $\Pi(G)$, we know $\overline{d}_G(v,B)\geq\overline{d}_G(v,A)$. If $\Pi''\in\sD$ is the division obtained by moving $v$ from $\Pi'$ to $\Pi_\sparse$, then using the fact that $||A|-|B||\leq2\delta n$ (proved in \ref{item:lem72PiGBalanced}),
\begin{align*}
b(G,\Pi'') &= b(G,\Pi)+d_G(v,A)+d_G(v,B)-d_{T(G)}(v,A) \\
&\leq b(G,\Pi) + \alpha|A| + (\alpha|A|+|B|-|A|) - (1-\alpha)|A| \\
&\leq b(G,\Pi) - (1-3\alpha)\cdot8\delta n + 2\delta n < b(G,\Pi) \,,
\end{align*}
which contradicts the optimality of $\Pi=\Pi(G)$, completing the proof.
\end{proof}

For all $\Pi=\{\Pi_1,\dots,\Pi_\ell\}\in\sD$ with $\Pi_i=\{A_i,B_i\}$ and all $0\leq t\leq\binom{n}{2}$, define the set
$$\cM_{\Pi,t}:=\left\{(m_1,\dots,m_\ell)\in\N^\ell:\splitfrac{\sum_{i=1}^\ell(e(\Pi_i^c)+m_i)=m-t\text{ and}\vphantom{\displaystyle\bigcup}}{\vphantom{\displaystyle\bigcup}\text{for all $i\in[\ell]$ s.t. $|V(\Pi_i)|\geq\delta n$, $\rho-\delta\leq\frac{m_i}{|A_i|\cdot|B_i|}\leq\rho+\delta$}}\right\}\,.$$
\Cref{lemma:SubCloseStructure} proves that $\cM_{\Pi,t}$ contains the sequences of numbers describing the possible number of edges between parts in optimal divisions $\Pi(G)$ of graphs $G\in N_\tau(n,m,W)$. On several occasions we will use the fact that for all $W\in\cX^\ast_\gamma$ and $\Pi\in\sD_W$,
\begin{equation}
\max_{0\leq t\leq\epsilon n^2}\sum_{\cM_{\Pi,t}}\prod_{i=1}^\ell\binom{e(\Pi_i)}{m_i} \leq 2\cdot|\cC^\ast_{W,\Pi}| \,,\label{eqn:MaxSumProdBinomCStar}
\end{equation}
which follows from the fact that for all $\bsm\in\cM_{\Pi,t}$, almost all graphs $G[V(\Pi_i)]$ such that $G\in\cC^\ast_{W,\Pi}$ and $e_G(\Pi_i)=m_i$ admit a unique 2-clique-cover, as proven in \Cref{cor:cbuniquepartition}.

\begin{definition}[The random graph $G_{\Pi,T,\bsm}$]\label{def:RandomGraphGPiTm}
Let $\Pi=\{\Pi_1,\dots,\Pi_\ell\}\in\sD$ and $T\in\cT_{W,\Pi}$. Let $0\leq t\leq m$ and let $\boldsymbol{m}\in\cM_{\Pi,t}$. For all $i\in[\ell]$, let $H_i\subseteq\Pi_i$ be the uniformly random bipartite graph with $m_i$ edges (where the graphs $H_1,\dots,H_\ell$ are independent). Let $G_{\Pi,T,\bsm}$ denote the random graph on the vertex set $V$ and edge set
\begin{equation}
E(G)=\left(\bigcup_{i=1}^\ell(E(\Pi_i^c)\cup E(H_i))\right)\Delta\, E(T)\,,\label{eqn:CWPiTRandomGraphEdges}
\end{equation}
where $\Delta$ denotes the symmetric difference.
\end{definition}

\begin{lemma}\label{lemma:SubAlmostAllNoHighHighMedVtx}
For all $W\in\cX^\ast_\gamma$ and $\Pi\in\sD_W$,
$$|\cC'_{W,\Pi}|\leq e^{-\beta n^2}|\cC^\ast_{W,\Pi}| \,,$$
where $\beta=\beta(\epsilon)>0$ is a constant.
\end{lemma}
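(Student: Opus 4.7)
The plan is to mimic the Kleitman--Rothschild-style argument in the proof of \Cref{lemma:AlmostAllNoMedVtx}, replacing \Cref{lemma:JansonPenaltyMedVtx} by \Cref{lemma:JansonPenaltyHighDegVtx} to extract the exponential penalty for a vertex $v$ with one high-degree and one medium-degree incidence into the same block $\Pi'$. We first decompose
\[
|\cC'_{W,\Pi}|\leq\sum_{T,\,\Pi',\,v}|\cC'_{W,\Pi,T,\Pi',v}|\,,
\]
where the triple $(T,\Pi',v)$ ranges over defect graphs $T\in\cT_{W,\Pi}$, blocks $\Pi'=\{A,B\}\in\Pi$ with $|V(\Pi')|\geq\eta n$, and vertices $v\in V\setminus V(\Pi')$ witnessing $d_T(v,A)>(1-\alpha)|A|$ and $d_T(v,B)\geq\alpha|B|$; here $\cC'_{W,\Pi,T,\Pi',v}$ denotes the subset of $\cC'_{W,\Pi}$ with that witness data. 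The outer enumeration costs an $n^2$ factor for the choice of $(\Pi',v)$ and at most $\binom{\binom{n}{2}}{\epsilon n^2/\eta^2}\leq e^{O(\epsilon\log(1/\epsilon)n^2/\eta^2)}$ for the choice of $T$.

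For each such triple, we parameterize $G\in\cC'_{W,\Pi,T,\Pi',v}$ by the edge-count vector $\bsm=(m_1,\dots,m_\ell)$ recording $m_i=e_G(\Pi_i)$, using the random-graph model of \Cref{def:RandomGraphGPiTm} to write
\[
|\cC'_{W,\Pi,T,\Pi',v}|\leq\sum_{\bsm\in\cM_{\Pi,t}}\Bigg(\prod_{i=1}^\ell\binom{e(\Pi_i)}{m_i}\Bigg)\bbP\{G_{\Pi,T,\bsm}\in\cC(n)\}
\]
for the appropriate $t$ determined by $T$. Then we invoke \Cref{lemma:JansonPenaltyHighDegVtx} with $\Pi_1:=\Pi'$ and with $\Pi_2$ the block of $\Pi$ that contains $v$, obtaining $\bbP\{G_{\Pi,T,\bsm}\in\cC(n)\}\leq e^{-\Omega(\eta^2 n^2)}$. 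After using \pref{eqn:MaxSumProdBinomCStar} to replace $\sum_{\bsm}\prod_i\binom{e(\Pi_i)}{m_i}$ by $2|\cC^\ast_{W,\Pi}|$, and absorbing the $e(T)\leq\epsilon n^2/\eta^2$ shift in the $m_i$'s as an $e^{O(\epsilon n^2/\eta^2)}$ factor via \Cref{lemma:BinomialCoeffIneqs}, choosing $\epsilon$ small enough that the Janson penalty dominates yields $|\cC'_{W,\Pi}|\leq e^{-\beta n^2}|\cC^\ast_{W,\Pi}|$ for a constant $\beta=\beta(\epsilon)>0$.

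The hardest step will be verifying that the hypothesis $n_2\geq n_1/8$ of \Cref{lemma:JansonPenaltyHighDegVtx} is met in every subcase: the generic case where $v$ lies in another large block $\Pi''\in\Pi$ with $|V(\Pi'')|\geq|V(\Pi')|/8$ is immediate, but $v$ may instead lie in $\Pi_\sparse$ or in a block too small to serve as $\Pi_2$. For that situation we rerun the Janson argument directly using only the randomness of the bipartite graph $H_{i_0}$ on $\Pi'=\Pi_{i_0}$: construct a matching $M$ of $\Omega(\alpha n)$ edges in $K_{N_T(v,A),N_T(v,B)}$ and pair each $xy\in M$ with $\Omega(n)$ suitable $w\in N_T(v)\setminus V(\Pi')$ avoiding $\{x,y\}$ in $T$, obtaining $\Omega(n^2)$ candidate induced claws centered at $v$, each realized with probability $1-p_{i_0}$ from the single random event $xy\notin H_{i_0}$. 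Since \Cref{lemma:SubCloseStructure}\ref{item:lem72crossdensity} forces $p_{i_0}=\rho\pm\delta$, bounded away from both $0$ and $1$, the resulting Janson bound is still $e^{-\Omega(n^2)}$, closing the remaining case.
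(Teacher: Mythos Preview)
Your overall plan matches the paper's proof: decompose over $(T,\Pi',v)$, model the random bipartite pieces via \Cref{def:RandomGraphGPiTm}, invoke \Cref{lemma:JansonPenaltyHighDegVtx}, and compare to $|\cC^\ast_{W,\Pi}|$ via \pref{eqn:MaxSumProdBinomCStar}. The difficulty you correctly identify is the hypothesis $n_2\geq n_1/8$ in \Cref{lemma:JansonPenaltyHighDegVtx}.

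However, your treatment of the ``hard case'' (where $v$ lies in $\Pi_\sparse$ or in a block too small to serve as $\Pi_2$) has a genuine gap. You propose to find $\Omega(n)$ vertices $w\in N_T(v)\setminus V(\Pi')$ to complete the claws, but nothing guarantees this set is large: the hypotheses on $v$ only bound $d_T(v,A)$ and $d_T(v,B)$ from below, i.e.\ neighbors of $v$ \emph{inside} $V(\Pi')$. A vertex $v\in\Pi_\sparse$ could have all of its $T$-neighbors in $V(\Pi')$ and none outside. And you cannot take $w\in V(\Pi')$ either, since then $w$ shares a part with $x$ or $y$ and $xw$ (or $yw$) is a clique edge of $\Pi'^c$, not a non-edge. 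So your fallback Janson argument does not produce $\Omega(n^2)$ candidate claws as claimed.

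The paper's resolution is simpler: the hard case never occurs. Since $\Pi=\Pi(G)$ minimizes $b(G,\cdot)$, if $v$ had $d_T(v,A)>(1-\alpha)|A|$ and $d_T(v,B)\geq\alpha|B|$ with $|V(\Pi')|\geq\eta n$, then moving $v$ into $\Pi'$ would remove at least $(1-\alpha)|A|+\alpha|B|$ defect edges while creating at most $\alpha|A|$ new ones (non-edges to $A$) plus at most $|V_j|$ new cross-block edges to $v$'s old block $V_j$. For this not to decrease $b$, one needs $|V_j|\gtrsim(1-2\alpha)|A|\geq|V(\Pi')|/4$, so $v$ automatically lies in a block of size $\geq\eta n/4\geq|V(\Pi')|/8$, and \Cref{lemma:JansonPenaltyHighDegVtx} applies directly with $\Pi_2=\Pi_j$. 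Insert this optimality step and your argument goes through with no separate case.
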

\begin{proof}
Denote $\Pi=\{\Pi_1,\dots,\Pi_\ell\}$, $\Pi_i=\{A_i,B_i\}$, and $V_i=A_i\cup B_i$ for all $i\in[\ell]$. For all $T\in\cT_{W,\Pi}$, $\Pi'=\{A,B\}\in\Pi$ such that $|V(\Pi')|\geq\eta n$, and $v\in V\setminus V(\Pi')$, define
\[\arraycolsep=1.4pt
\begin{array}{ll}
\cC'_{W,\Pi,T} &:= \{G\in\cC''_{W,\Pi}:T(G)\cup G[\Pi_\sparse]=T\}\,, \\[5pt]
\cC'_{W,\Pi,A,T,v} &:= \{G\in\cC''_{W,\Pi,T}:\text{$d_T(v,A)>(1-\alpha)|A|$ and $d_T(v,B)\geq\alpha|B|$}\}\,.
\end{array}\]
Fix $T\in\cT_{W,\Pi}$, $\Pi_i\in\Pi$, and $v\in V\setminus V_i$ such that $|V_i|\geq\eta n$ and $\cC'_{W,\Pi,A_i,T,v}$ is nonempty. Define the quantity
$$t:=\sum_{1\leq i<j\leq\ell}e_T(V_i,V_j)+e_T(V,\Pi_\sparse)-\sum_{i=1}^\ell e_T(V_i)\,,$$
let $\bsm\in\cM_{\Pi,t}$, and let $G_{\Pi,T,\bsm}$ be the random graph from \Cref{def:RandomGraphGPiTm}. Every graph $G\in\cC'_{W,\Pi,P,T,v}$ is of the form \pref{eqn:CWPiTRandomGraphEdges} for some $\bsm\in\cM_{\Pi,t}$ and $H\subseteq\Pi$, hence
$$|\cC'_{W,\Pi,P,T,v}|\leq\sum_{\cM_{\Pi,t}}\bbP\{G_{\Pi,T,\bsm}\in\cC(n)\}\prod_{i=1}^\ell\binom{e(\Pi_i)}{m_i}\,.$$
In the remainder we bound the probability that $G_{\Pi,T,\bsm}$ is claw-free. The optimality of the division $\Pi(G)$ implies that $v\in V_j$ and $G\in\cC'_{W,\Pi,A_i,T,v}$ then $v\not\in\Pi_\sparse$ and $|V_j|\geq\eta n/4$ (since otherwise one could obtain a more optimal division by moving $v$ to $V_i$). Hence $v\in A_j$ for some $\Pi_j=\{A_j,B_j\}$ with $|V(\Pi_j)|\geq\eta n/4$. Now by applying \Cref{lemma:JansonPenaltyHighDegVtx} with variable assignments
\[\arraycolsep=5mm
\begin{array}{llll}
\Pi_1\gets\Pi_i & \Pi_2\gets\Pi_j & n_1\gets|V(\Pi_i)| & n_2\gets|V(\Pi_j)| \\[10pt]
m_1\gets m_i & m_2\gets m_j & T\gets T[V_i\cup V_j] & v\gets v \,,
\end{array}\]
we directly obtain
$$\bbP\{G_{\Pi,T,\bsm}\in\cC(n)\}\leq\exp\left(-\frac{\alpha^2\eta p_1p_2}{2^{33}}\cdot (\eta n)^2\right)\leq\exp\left(-\frac{\sqrt{\epsilon}}{2^{30}}\cdot n^2\right)\,,$$
where we used that $n_1\geq\eta n$ and $p_1p_2\geq1/8$. We thus compute
\begin{align*}
|\cC'_{W,\Pi}| &\leq \sum_{t=1}^{\epsilon n^2}\,\sum_{\substack{T\in\cT_{W,\Pi}\\e(T)=t}}\,\sum_{P\in\bigcup\Pi}\,\sum_{v\in V}|\cC'_{W,\Pi,P,T,v}| \\
&\leq \epsilon n^2\cdot\binom{\binom{n}{2}}{\epsilon n^2}\cdot n^2\cdot \exp\left(-\frac{\sqrt{\epsilon}}{2^{30}}\cdot n^2\right)\max_{0\leq t\leq\epsilon n^2}\sum_{\cM_{\Pi,t}}\prod_{i=1}^\ell\binom{e(\Pi_i)}{m_i} \\
&\leq \epsilon n^4\left(\frac{e}{2\epsilon}\right)^{\epsilon n^2}\cdot \exp\left(-\frac{\sqrt{\epsilon}}{2^{30}}\cdot n^2\right)\max_{0\leq t\leq\epsilon n^2}\sum_{\cM_{\Pi,t}}\prod_{i=1}^\ell\binom{e(\Pi_i)}{m_i} \leq e^{-\beta n^2}|\cC^\ast_{W,\Pi}| \,,
\end{align*}
where we used \pref{eqn:MaxSumProdBinomCStar} and the fact that $\sqrt{\epsilon}\gg\epsilon$ as $\epsilon\to0$, completing the proof.
\end{proof}

\begin{lemma}\label{lemma:SubAlmostAllNoMedVtx}
For all $W\in\cX^\ast_\gamma$ and $\Pi\in\sD_W$,
$$|\cC''_{W,\Pi}|\leq e^{-\beta n^2}|\cC^\ast_{W,\Pi}| \,,$$
where $\beta=\beta(\epsilon)>0$ is a constant.
\end{lemma}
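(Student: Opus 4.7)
The argument will parallel Lemma \ref{lemma:AlmostAllNoMedVtx}, applied now to the specific component $\Pi_i\in\Pi$ on which the medium-degree witness lives. Write $\Pi=\{\Pi_1,\ldots,\Pi_\ell\}$ with $\Pi_j=\{A_j,B_j\}$ and $V_j=A_j\cup B_j$. For each $G\in\cC''_{W,\Pi}$, canonically fix a witness $(i,P,v)$ such that $|V_i|\geq\eta n$, $P\in\Pi_i$, and $v\in V$ has $(P,T(G))$-medium degree; after swapping $A_i$ and $B_i$ if needed, assume $P=A_i$. Stratify $\cC''_{W,\Pi}$ by the pair $(i,v)$ and by $T:=T(G)\cup G[\Pi_\sparse]$; letting $t:=e(T)-\sum_je_T(V_j)$, every $G$ in such a stratum is an outcome of the random graph $G_{\Pi,T,\bsm}$ of \Cref{def:RandomGraphGPiTm} for some $\bsm\in\cM_{\Pi,t}$.

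Fix $T, i, v, \bsm$ and iterate over candidate non-neighborhoods $N\subseteq A_i$ and $N'\subseteq B_i$ of $v$ (at most $2^{2n}$ pairs). The induced subgraph of $G_{\Pi,T,\bsm}$ on $V_i\cup\{v\}$ (or on $V_i$ alone when $v\in V_i$, where $v$ is regarded as external to the bipartition $\{A_i\setminus\{v\},B_i\}$) has exactly the form required by \Cref{lemma:JansonPenaltyMedVtx}: a uniformly random bipartite graph on the relevant bipartition, the star $F$ at $v$ determined by $(N,N')$, and a defect graph $T[V_i]$ of at most $e(T)\leq\epsilon n^2\leq\epsilon|V_i|^2/\eta^2$ edges (using $|V_i|\geq\eta n$). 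Since claw-freeness of $G_{\Pi,T,\bsm}$ passes to any induced subgraph, \Cref{lemma:JansonPenaltyMedVtx} gives
\[
\bbP\{G_{\Pi,T,\bsm}\in\cC(n)\}\leq\exp\bigl(-c\,\epsilon^{3/4}p_i|V_i|^2\bigr)\leq\exp\bigl(-c'\,\epsilon^{7/8}n^2\bigr)
\]
for constants $c,c'>0$, where $p_i\geq\rho-\delta$ by \Cref{lemma:SubCloseStructure} \ref{item:lem72crossdensity} and $|V_i|\geq\eta n=\epsilon^{1/16}n$.

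The only technical check is the hypothesis $|N'|\geq\alpha|B_i|$ of \Cref{lemma:JansonPenaltyMedVtx}, which I expect to be the main obstacle. When $v\notin V_i$, the exclusion $G\notin\cC'_{W,\Pi}$ rules out $d_T(v,B_i)>(1-\alpha)|B_i|$ (which, together with $v$'s medium degree in $A_i$, would match the ``high-in-one-part, medium-in-the-other'' pattern defining $\cC'_{W,\Pi}$); since for $v\notin V_i$ the $G$-non-neighborhood of $v$ in $B_i$ equals $B_i\setminus N_T(v,B_i)$, this non-neighborhood has size at least $\alpha|B_i|$. When $v\in V_i$ (so $v\in A_i$, since $d_T(v,B_i)=0$ for $v\in B_i$), optimality of the division $\Pi(G)$ against moving $v$ from $A_i$ to $B_i$ yields $\overline{d}_G(v,B_i)\geq\overline{d}_G(v,A_i)\geq\alpha|A_i|$; combined with $|A_i|/|B_i|\geq 1-8\alpha$ from \Cref{lemma:SubCloseStructure} \ref{item:lem72Piprimebalanced}, this gives $|N'|\geq(\alpha/2)|B_i|$, which suffices after applying the lemma with $\alpha$ replaced by $\alpha/2$.

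Finally, summing over at most $\binom{\binom{n}{2}}{\epsilon n^2}\leq e^{\epsilon(1+\log(1/\epsilon))n^2}$ defect graphs $T$, $O(n^2)$ witness pairs $(i,v)$, and $2^{2n}$ pairs $(N,N')$, and invoking \pref{eqn:MaxSumProdBinomCStar} to bound $\sum_{\bsm\in\cM_{\Pi,t}}\prod_j\binom{e(\Pi_j)}{m_j}\leq 2|\cC^\ast_{W,\Pi}|$, produces
\[
|\cC''_{W,\Pi}|\leq\exp\bigl(\epsilon(1+\log(1/\epsilon))n^2+O(n)-c'\,\epsilon^{7/8}n^2\bigr)\cdot|\cC^\ast_{W,\Pi}|\leq e^{-\beta n^2}\,|\cC^\ast_{W,\Pi}|
\]
for a constant $\beta=\beta(\epsilon)>0$, since $\epsilon^{7/8}\gg\epsilon(1+\log(1/\epsilon))$ for sufficiently small $\epsilon$. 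The only genuinely new step beyond Lemma \ref{lemma:AlmostAllNoMedVtx} is the case analysis in the previous paragraph; everything else transfers directly by replacing the single bipartite component by $\Pi_i$ and absorbing the product over the remaining components into $|\cC^\ast_{W,\Pi}|$ via \pref{eqn:MaxSumProdBinomCStar}.
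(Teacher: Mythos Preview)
Your proposal is correct and follows the same strategy as the paper: stratify by $(T,\Pi_i,P,v)$, apply \Cref{lemma:JansonPenaltyMedVtx} to the component $\Pi_i$, and bound the remaining count via \pref{eqn:MaxSumProdBinomCStar}, using $|V_i|\geq\eta n$ and $p_i\geq\rho-\delta$ to get the penalty $\exp(-c\,\epsilon^{7/8}n^2)$ which beats the $\binom{\binom{n}{2}}{\epsilon n^2}$ count of defect graphs.

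The one substantive difference is that the paper does not iterate over $(N,N')$; it assigns $N=\overline{N}_T(v,A_i)$ and $N'=\overline{N}_T(v,B_i)$ directly and justifies $|N'|\geq\alpha|B_i|$ in one line from $G\notin\cC'_{W,\Pi}$. Your explicit case split on $v\in V_i$ versus $v\notin V_i$, together with the $2^{2n}$ union bound over $(N,N')$ inherited from the supercritical \Cref{lemma:AlmostAllNoMedVtx}, is actually more careful here: the paper's direct assignment captures the $G$-non-neighbors of $v$ in $B_i$ only when $v\notin V_i$ (when $v\in A_i$, those non-neighbors are determined by the random $H_i$, not by $T$), so your treatment makes explicit a detail the paper's write-up leaves implicit. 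The cost of your extra union bound is only $e^{O(n)}$ and is absorbed exactly as you say.
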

\begin{proof}
Denote $\Pi=\{\Pi_1,\dots,\Pi_\ell\}$, $\Pi_i=\{A_i,B_i\}$, and $V_i=A_i\cup B_i$ for all $i\in[\ell]$. For all $T\in\cT_{W,\Pi}$, $\Pi'\in\Pi$ such that $|V(\Pi')|\geq\eta n$, $P\in\Pi'$, and $v\in V$, define the sets
\[\arraycolsep=1.4pt
\begin{array}{ll}
\cC''_{W,\Pi,T} &:= \{G\in\cC''_{W,\Pi}:T(G)\cup G[\Pi_\sparse]=T\}\,, \\[5pt]
\cC''_{W,\Pi,P,T,v} &:= \{G\in\cC''_{W,\Pi,T}:v\text{ has $(P,T(G))$-medium degree}\}\,.
\end{array}\]
Fix $T\in\cT_{W,\Pi}$, $\Pi_i\in\Pi$, $A\in\Pi_i$, and $v\in V$ such that $|V_i|\geq\eta n$ and $\cC''_{W,\Pi,A,T,v}$ is nonempty. Let $B\in\Pi_i\setminus\{A\}$ be the other part in $\Pi_i$. Define
$$t:=\sum_{1\leq i<j\leq\ell}e(T[V_i,V_j])+e(T[V,\Pi_\sparse])-\sum_{i=1}^\ell e(T[V_i])\,,$$
let $\bsm\in\cM_{\Pi,t}$, and let $G_{\Pi,T,\bsm}$ be the random graph from \Cref{def:RandomGraphGPiTm}. Every graph $G\in\cC''_{W,\Pi,P,T,v}$ is of the form \pref{eqn:CWPiTRandomGraphEdges} for some $\bsm\in\cM_{\Pi,t}$ and $H\subseteq\Pi$, hence
$$|\cC''_{W,\Pi,P,T,v}|\leq\sum_{\cM_{\Pi,t}}\bbP\{G_{\Pi,T,\bsm}\in\cC(n)\}\prod_{i=1}^\ell\binom{e(\Pi_i)}{m_i}\,.$$
In the remainder we bound the probability that $G_{\Pi,T,\bsm}$ is claw-free. We apply \Cref{lemma:JansonPenaltyMedVtx} with the following variable assignments:
\[\arraycolsep=5mm
\begin{array}{llll}
\Pi\gets\Pi_i & V\gets V_i & m\gets m_i & v\gets v \\[10pt]
N\gets\overline{N}_T(v,A) & N'\gets\overline{N}_T(v,B) & T\gets T[V_i] \,.
\end{array}\]
Note that $\overline{N}_T(v,B)\geq\alpha|B|$ by definition of $\cC''_{W,\Pi}$, so the hypotheses of \Cref{lemma:JansonPenaltyMedVtx} are indeed met, implying
$$\bbP\{G_{\Pi,T,\bsm}\in\cC(n)\}\leq\exp\bigg(-\frac{\epsilon^{3/4}p}{2}(\eta n)^2\bigg)\leq\exp\bigg(-\frac{\epsilon^{7/8}}{8}n^2\bigg)\,,$$
where we used that $|V_i|\geq\eta n$ and $p\geq1/4$. We thus compute
\begin{align*}
|\cC''_{W,\Pi}| &\leq \sum_{t=1}^{\epsilon n^2}\,\sum_{\substack{T\in\cT_{W,\Pi}\\e(T)=t}}\,\sum_{P\in\bigcup\Pi}\,\sum_{v\in V}|\cC''_{W,\Pi,P,T,v}| \\
&\leq \epsilon n^2\cdot\binom{\binom{n}{2}}{\epsilon n^2}\cdot n^2\cdot\exp\bigg(-\frac{\epsilon^{7/8}}{8}n^2\bigg)\max_{0\leq t\leq\epsilon n^2}\sum_{\cM_{\Pi,t}}\prod_{i=1}^\ell\binom{e(\Pi_i)}{m_i} \\
&\leq \epsilon n^4\left(\frac{e}{2\epsilon}\right)^{\epsilon n^2}\cdot \exp\bigg(-\frac{\epsilon^{7/8}}{8}n^2\bigg)\max_{0\leq t\leq\epsilon n^2}\sum_{\cM_{\Pi,t}}\prod_{i=1}^\ell\binom{e(\Pi_i)}{m_i} \leq e^{-\beta n^2}|\cC^\ast_{W,\Pi}| \,,
\end{align*}
where we used \pref{eqn:MaxSumProdBinomCStar} and the fact that $\epsilon^{7/8}\gg\epsilon$ as $\epsilon\to0$, completing the proof.
\end{proof}

\begin{lemma}\label{lemma:NtaunmWUpperBd}
For all $W\in\cX^\ast_\gamma$ and $\Pi\in\sD_W$,
$$|\cC_{W,\Pi}|\leq e^{-\xi n}|\cC^\ast_{W,\Pi}|\,,$$
where $\xi>0$ is a constant, hence
$$N_\tau(n,m,W)\leq(1+e^{-\xi n})\sum_{\Pi\in\sD_W}|\cC^\ast_{W,\Pi}|\,.$$
\end{lemma}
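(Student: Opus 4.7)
The plan mirrors the supercritical argument (\Cref{lemma:CPITMatchingBound} together with the geometric-series computation culminating in \pref{eqn:CPiTSumUP}), now applied block-by-block across the multiple co-bipartite blocks of $\Pi$. The starting point is the tautological decomposition
\[\cC_{W,\Pi} \;=\; \cC'_{W,\Pi} \,\cup\, \cC''_{W,\Pi} \,\cup\, \bigcup_{T \in \cT_{W,\Pi}} \cC_{W,\Pi,T}\,,\]
where the first two sets are already bounded by $e^{-\beta n^2}|\cC^\ast_{W,\Pi}|$ via \Cref{lemma:SubAlmostAllNoHighHighMedVtx,lemma:SubAlmostAllNoMedVtx}. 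All the remaining work lies in estimating $\sum_{T \in \cT_{W,\Pi}} |\cC_{W,\Pi,T}|$.

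Fix $T \in \cT_{W,\Pi}$. By definition of $\cC_{W,\Pi}$, some block $\Pi_i$ with $|V(\Pi_i)| \geq \eta n$ is incident to a $T$-edge, and I designate such a block canonically (say, by smallest index). For each $\bsm \in \cM_{\Pi,t}$ I plan to bound $\bbP\{G_{\Pi,T,\bsm} \in \cC(n)\}$ by invoking \Cref{lemma:JansonPenaltyMatching} on the block $\Pi_i$, treating $V \setminus V(\Pi_i)$ as the auxiliary set $C$ of that lemma (its size is $\leq n \leq n/\eta$). The hypotheses of \Cref{lemma:JansonPenaltyMatching} are guaranteed by our exclusions: excluding $\cC''_{W,\Pi}$ rules out medium-degree vertices on large parts, and excluding $\cC'_{W,\Pi}$ rules out vertices in other blocks that are simultaneously high on one side and nontrivial on the other side of $\Pi_i$, which are exactly the low-degree conditions required on $C$. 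Writing $k_i$ for the size of a maximum matching in $T[V(\Pi_i)]$ and $l_i$ for the number of $T$-edges between $V(\Pi_i)$ and its complement, the lemma yields a penalty of the form $p^{\Omega(k_i n)}\exp(-\Omega(n\min\{l_i,n\}))$; since the designated defect edge forces $k_i \geq 2$ or $l_i \geq 1$, the penalty is at least $e^{-\Omega(n)}$.

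To aggregate I would parametrize $\cT_{W,\Pi}$ by the designated block $\Pi_i$, the pair $(k_i, l_i)$, the graph $T[\Pi_\sparse]$, and the residual type data. As in the count preceding \pref{eqn:CPiTSumUP}, the number of type graphs with fixed $(k_i, l_i)$ is bounded by $2^{O(H(\alpha) k_i n)} \cdot \binom{\binom{n}{2}}{l_i}$; choosing $\epsilon$ small enough that $H(\alpha)$ is less than the constant in the Janson penalty makes the sum over $k_i \geq 0$ and $l_i \geq 0$ (subject to $k_i + l_i \geq 1$) a convergent geometric series. Absorbing the block-wise binomials $\prod_i \binom{e(\Pi_i)}{m_i}$ via \pref{eqn:MaxSumProdBinomCStar} and noting that $|\Pi| = O(1/\eta) = O(1)$ controls the number of designated-block choices, the entire sum is bounded by $e^{-\xi n}|\cC^\ast_{W,\Pi}|$ for some $\xi > 0$, which gives the first claim. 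The second claim then follows by partitioning $N_\tau(n,m,W)$ according to the canonical division $\Pi(G)$, which lies in $\sD_W$ by \Cref{lemma:SubCloseStructure}~\ref{item:lem72PiGBalanced}, so that $|N_\tau(n,m,W)| = \sum_{\Pi \in \sD_W}(|\cC_{W,\Pi}| + |\cC^\ast_{W,\Pi}|)$.

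The main obstacle I anticipate is the clean verification of the hypotheses of \Cref{lemma:JansonPenaltyMatching} in the multi-block setting: the lemma's auxiliary set $C$ must satisfy $d_F(v,A) < \alpha|A|$ and $d_F(v,B) < \alpha|B|$ for every $v \in C$, and the only way this can fail is if some vertex of another block is high-degree to $\Pi_i$, which is precisely what the exclusion of $\cC'_{W,\Pi}$ prevents. A related minor issue is avoiding double-counting $T$'s whose defect structure touches several large blocks simultaneously, which the canonical choice of $\Pi_i$ resolves. Once these bookkeeping issues are handled, the rest is a routine geometric-series sum essentially identical to the supercritical case.
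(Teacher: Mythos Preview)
There is a genuine gap in the counting step. By applying \Cref{lemma:JansonPenaltyMatching} to only one designated large block $\Pi_i$, you obtain a penalty depending only on $(k_i,l_i)$, but the defect graph $T$ can carry arbitrarily many edges on \emph{other} large blocks --- within $V(\Pi_j)$, between $V(\Pi_j)$ and $V(\Pi_k)$, or from $V(\Pi_j)$ to the sparse part, for $j,k\neq i$. Your claimed bound $2^{O(H(\alpha)k_in)}\binom{\binom{n}{2}}{l_i}$ on the number of type graphs with fixed $(k_i,l_i)$ simply omits these, and they can number $2^{\Theta(\epsilon n^2)}$ while the penalty from $\Pi_i$ alone is only $e^{-O(n)}$ whenever $k_i+l_i=O(1)$; so the geometric series does not close. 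The paper resolves this by applying \Cref{lemma:JansonPenaltyMatching} to \emph{every} large block in parallel (using independence of the random bipartite pieces $H_1,\dots,H_\ell$ to multiply the penalties), parametrizing $T$ by a full vector $\bsd$ with components $d_i,d_{ij},d_{\sparse,i},d_{\high,i}$ ranging over all large blocks, and then comparing each block's contribution to the type-graph count against the corresponding Janson factor termwise.

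There is also a subsidiary problem with your verification of the hypotheses of \Cref{lemma:JansonPenaltyMatching}. Excluding $\cC'_{W,\Pi}\cup\cC''_{W,\Pi}$ does \emph{not} rule out a vertex $v\in C=V\setminus V(\Pi_i)$ that is high in $A_i$ and low in $B_i$: such a $v$ is outside $\cC'_{W,\Pi}$ (which requires high in one side \emph{and} at least $\alpha$ in the other) yet violates the requirement $d_F(v,A_i)<\alpha|A_i|$. The paper handles these vertices by splitting $T$ into $T_\low$ and $T_\high$, gathering the offending vertices into the sets $V_i^\high(T)$ and $D_i(T)$, and feeding them through the $D,D',M'$ mechanism of \Cref{lemma:JansonPenaltyMatching}, which supplies the extra penalty factors $p^{d_{\high,i}v_i/64}$.
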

\begin{proof}
We first introduce definitions that will be used throughout the proof. Denote $\Pi=\{\Pi_1,\dots,\Pi_{\ell'}\}$, $\Pi_i=\{A_i,B_i\}$, $V_i:=A_i\cup B_i$, and $v_i:=|V_i|$ for all $i\in[\ell']$. Let $\ell\in[\ell']$ be the greatest index such that $v_\ell\geq\eta n$. Define the set of graphs
$$\cT'_{W,\Pi}:=\{T(G)\cup G[\Pi_\sparse]:G\in\cC_{W,\Pi}\setminus(\cC'_{W,\Pi}\cup\cC''_{W,\Pi})\}\,.$$
For all $T\in\cT'_{W,\Pi}$ and $i\in[\ell]$, let $M_i(T)$ denote a canonically chosen maximum matching of the graph $T[V_i]$. Let $X_i(T):=V(M_i(T))$ denote the set of vertices covered by $M_i(T)$.

For all $v\in V$, define the set
$$\High(v):=\{i\in[\ell]:\text{$d_T(v,A_i)>(1-\alpha)|A_i|$ or $d_T(v,B_i)>(1-\alpha)|B_i|$}\}\,.$$
Notice that for all $G\in C_{W,\Pi}\,$, if $v\in V_i$ and $j\in\High(v)$ then the optimality of $\Pi=\Pi(G)$ implies $v_i\geq v_j/4$ (since otherwise one could obtain a more optimal division for $G$ by moving $v$ to $V_j$). Let $r\in[\ell']$ be the greatest index such that $v_r\geq\eta n/4$. By definition of $r$, the set $\High(v)$ is empty whenever $i>r$ and $v\in V_i$.

For all $T\in\cT'_{W,\Pi}$ and $i\in[r]$, define the set
$$V_i^\high(T):=\left\{v\in V_i:\High(v)\neq\emptyset\right\}\,,$$
and let $D_i(T)\subseteq V_i^\high(T)$ be a subset of either $A_i$ or $B_i$ of size exactly $|V_i^\high(T)|/2$. For all $T\in\cT'_{W,\Pi}$, let $T_\high$ be the graph on the edge set
$$E(T_\high)=\bigcup_{i=1}^r\,\bigcup_{v\in V_i^\high(T)}\,\bigcup_{j\in\High(v)}E_T(v,V_j)\,,$$
where the vertex set of $T_\high$ is induced by its edges. Also let $T_\low$ be the graph on the edge set $E(T)\setminus E(T_\high)$, where the vertex set is again induced by the edge set.

For all $i\in[r]$, $v\in D_i(T)$ (we may assume $v\in A_i$), and $j\in\High(v)$, there exists a vertex $h_v\in N_T(v,V_j)$ such that $d_T(h_v,B_i)\leq\alpha|B_i|$; indeed, if there were no such vertex $h_v$ then we would have
$$e(T)\geq e_T(N_T(v,V_j),B_i)\geq|N_T(v,V_j)|\cdot\alpha|B_i|\geq\frac{\eta n}{4}\cdot\frac{\alpha\eta n}{16}>\epsilon n^2\,,$$
which contradicts the fact that $e(T)\leq\epsilon n^2$. 

Define the vertex set
$$S:=\Pi_\sparse\cup\bigcup_{\ell<i\leq\ell'}V_i\,.$$
For all vectors $\boldsymbol{d}\in\{0,1,2\dots\}^{\binom{\ell}{2}+2\ell+r}$ we will index $\boldsymbol{d}$ using the components $d_{ij}$ for $1\leq i<j\leq\ell$, the components $d_i$ and $d_{\sparse,i}$ for $i\in[\ell]$, and the components $d_{\high,i}$ for $i\in[r]$.
For all such $\bsd$, define the set of graphs
$$\cT_{W,\Pi,\boldsymbol{d}}:=\left\{T\in\cT'_{W,\Pi}:\splitfrac{\text{for all $i\in[\ell]$, $j\in[\ell]\setminus\{i\}$, and $k\in[r]$, $e_{T_\low}(V_i,V_j)=d_{ij}$}\vphantom{\displaystyle\bigcup}}{\text{$|X_i(T)|=d_i$, $|V_k^\high(T)|=d_{\high,k}$, and $e_{T_\low}(V_i,S)=d_{\sparse,i}$}}\right\}\,.$$

Fix $T\in\cT'_{W,\Pi}$ and define the quantity
$$t:=\sum_{1\leq i<j\leq\ell'}e(T[V_i,V_j])+e(T[V,\Pi_\sparse])-\sum_{i=1}^{\ell'}e(T[V_i])\,.$$
For all $\bsm\in\cM_{\Pi,t}$ let $G_{\Pi,T,\bsm}$ be the random graph from \Cref{def:RandomGraphGPiTm}. \Cref{lemma:SubCloseStructure} proves that every graph $G\in\cC_{W,\Pi,T}$ is of the form \pref{eqn:CWPiTRandomGraphEdges} for some $\bsm\in\cM_{\Pi,t}$ and $H\subseteq\Pi$, hence
\begin{align*}
|\cC_{W,\Pi,T}| \leq \sum_{\cM_{\Pi,t}}\bbP\{G_{\Pi,T,\bsm}\in\cC(n)\}\prod_{i=1}^\ell\binom{e(\Pi_i)}{m_i} \,.
\end{align*}

For all $T\in\cT_{W,\Pi,\boldsymbol{d}}$ and $i\in[r]$, let $J_i:=\{h_v:v\in D_i(T)\}$ and assume without loss of generality $D_i(T)\subseteq A_i$. We can then apply \Cref{lemma:JansonPenaltyMatching} with the following variable assignments:
\[\arraycolsep=5mm
\begin{array}{lllll}
\Pi\gets\Pi_i & m\gets m_i & D\gets D_i(T) & D'\gets J_i & F'\gets T[J_i,B_i] \\[10pt]
k\gets d_i & d\gets d_{\high,i}/2 & l\gets d_{\sparse,i} & C\gets V\setminus V_i & F\gets T_\low[V_i,V\setminus V_i]\,.
\end{array}\]
Similarly for $\ell<i\leq r$, we apply \Cref{lemma:JansonPenaltyMatching} with the variable assignments
\[\arraycolsep=5mm
\begin{array}{lllll}
\Pi\gets\Pi_i & m\gets m_i & D\gets D_i(T) & D'\gets J_i & F'\gets T[J_i,B_i] \\[10pt]
k\gets 0 & d\gets d_{\high,i}/2 & l\gets 0 & C\gets\emptyset & F\gets\emptyset \,.
\end{array}\]
For all $i\in[\ell]$ define $q_i:=\eta^2\big(\sum_{j\neq i}d_{ij}+d_{\sparse,i}\big)$. Since the random bipartite graphs $H_1,\dots,H_r$ from the definition of $G_{\Pi,T,\bsm}$ are all independent, the aforementioned applications of \Cref{lemma:JansonPenaltyMatching} imply
\begin{equation}
\bbP\{G_{\Pi,T,\bsm}\in\cC(n)\} \leq \left(\prod_{i=1}^\ell p_i^{d_iv_i/2^{11}}\exp\left(-\frac{p_i^4}{2^{21}}\cdot v_i\cdot\min\left\{q_i\,,\,v_i\right\}\right)\right)\left(\prod_{i=1}^rp^{d_{\high,i}v_i/64}\right) \,,\label{eqn:PGPiTmleqprodexp}
\end{equation}
where $p_i:=m_i/(|A_i|\cdot|B_i|)$ for all $i\in[r]$.

For all $i\in[\ell]$, $j\in[\ell]\setminus\{i\}$, and $k\in[r]$, define the sets of graphs
\[\arraycolsep=1.4pt
\begin{array}{ll}
\cT_{W,\Pi,\boldsymbol{d}}^i &:= \{T[V_i]:T\in\cT_{W,\Pi,\bsd}\} \,, \\[10pt]
\cT_{W,\Pi,\boldsymbol{d}}^{\sparse,i} &:= \{T_\low[V_i,\,S]:T\in\cT_{W,\Pi,\bsd}\} \,, \\[10pt]
\cT_{W,\Pi,\boldsymbol{d}}^{ij} &:= \{T_\low[V_i,\,V_j]:T\in\cT_{W,\Pi,\bsd}\} \,, \\[10pt]
\cT_{W,\Pi,\bsd}^{\high,k} &:= \{T_\high[V_k,\,V\setminus V_k]:T\in\cT_{W,\Pi,\bsd}\} \,, \\[10pt]
\cT_{W,\Pi}^\sparse &:= \{T[S]:T\in\cT'_{W,\Pi}\} \,.
\end{array}\]
We now bound the sizes of these sets. By combining \Cref{lemma:SubCloseStructure} \ref{item:lem72nohighdegvtx} and the definition of $\cT'_{W,\Pi}$, we know that for all $T\in\cT'_{W,\Pi}$, $G\in\cC_{W,\Pi,T}$, and $i\in[\ell]$, every vertex $v\in A_i$ satisfies $d_T(v,A_i)<\alpha|A_i|$. Additionally, since $M_i(T)$ is a maximum matching of the graph $T[V_i]$, every edge $e\in E_T(V_i)$ has an endpoint in $X_i(T)$. These facts directly imply
\begin{align*}
\begin{split}
|\cT_{W,\Pi,\bsd}^i| &\leq \sum_{X\in\binom{V_i}{d_i}}\,\prod_{P\in\Pi_i}\,\prod_{v\in P\cap X}\binom{|P|}{\leq\alpha|P|} \leq \sum_{X\in\binom{V_i}{d_i}}\,\prod_{P\in\Pi_i}\,\prod_{v\in P\cap X}\alpha v_i\binom{v_i}{\alpha v_i} \\
&\leq \binom{v_i}{d_i}\left(\alpha v_i\binom{v_i}{\alpha v_i}\right)^{d_i} \leq 2^{H(\alpha)d_iv_i+d_i\log_2v_i+d_i\log_2\alpha} \leq 2^{2H(\alpha)d_iv_i} \,.
\end{split}
\end{align*}
We also have the trivial bounds
\begin{align*}
\begin{split}
|\cT_{W,\Pi,\bsd}^{\sparse,i}| &\leq \binom{\binom{n}{2}}{d_{\sparse,i}}\leq\left(\frac{en^2}{2d_{\sparse,i}}\right)^{d_{\sparse,i}} \,, \\
|\cT_{W,\Pi,\bsd}^{ij}| &\leq \binom{v_iv_j}{d_{ij}} \leq\left(\frac{ev_iv_j}{d_{ij}}\right)^{d_{ij}} \,, \\
|\cT_{W,\Pi,\bsd}^{\high,i}| &\leq \binom{n}{(1-\alpha)n}^{d_{\high,i}}\binom{n}{\alpha n}^{d_{\high,i}} \leq 2^{H(\alpha)\cdot d_{\high,i}n} \,,
\end{split}
\end{align*}
which we combine to deduce
\begin{align}
|\cT_{W,\Pi,\bsd}| &\leq \Bigg(\prod_{i=1}^\ell|\cT_{W,\Pi,\boldsymbol{d}}^i|\cdot|\cT_{W,\Pi,\boldsymbol{d}}^{\sparse,i}|\Bigg)\Bigg(\prod_{1\leq i<j\leq\ell}|\cT_{W,\Pi,\boldsymbol{d}}^{ij}|\Bigg)\Bigg(\prod_{i=1}^r|\cT_{W,\Pi,\bsd}^{\high,i}|\Bigg)\cdot|\cT_{W,\Pi}^\sparse| \nonumber\\
\begin{split}
&\leq \exp\Bigg(\sum_{i=1}^\ell\left(2(\log2)H(\alpha)d_iv_i+d_{\sparse,i}\log\left(\frac{en^2}{2d_{\sparse,i}}\right)\right) \\
&\hspace{3cm}+\sum_{1\leq i<j\leq\ell}d_{ij}\log\left(\frac{ev_iv_j}{d_{ij}}\right)+\sum_{i=1}^rH(\alpha)(\log2)d_{\high,i}n\Bigg)\cdot|\cT_{W,\Pi}^\sparse| \,.
\end{split}\label{eqn:TWPidcombinedupperbdd}
\end{align}

For all $T\in\cT_{W,\Pi}$ and $\bsm\in\cM_{\Pi,t}$ let $G_{\Pi,T,\bsm}$ be the random graph from \Cref{def:RandomGraphGPiTm}. Define the set of graphs
$$\cC_{W,\Pi,\bsd}:=\bigcup_{T\in\cT_{W,\Pi,\bsd}}\cC_{W,\Pi,T}\,.$$
Now by comparing terms pairwise between expressions \pref{eqn:PGPiTmleqprodexp} and \pref{eqn:TWPidcombinedupperbdd}, we obtain
\begin{align*}
|\cC_{W,\Pi,\bsd}| &\leq \sum_{t=1}^{\epsilon n^2}\,\sum_{\cM_{\Pi,t}}\,\sum_{\substack{T\in\cT_{W,\Pi,\boldsymbol{d}}\\e(T)=t}}\bbP\{G_{\Pi,T,\bsm}\in\cC(n)\}\prod_{i=1}^\ell\binom{e(\Pi_i)}{m_i} \\
&\leq \epsilon n^2 \cdot |\cT_{W,\Pi,\bsd}| \cdot\left(\prod_{i=1}^\ell p_i^{d_iv_i/2^{11}}\exp\left(-\frac{p_i^4}{2^{21}}\cdot v_i\cdot\min\left\{q_i\,,\,v_i\right\}\right)\right)\left(\prod_{i=1}^rp^{d_{\high,i}v_i/64}\right) \\
&\hspace{4cm} \cdot\max_{0\leq t\leq\epsilon n^2}\sum_{\cM_{\Pi,t}}\prod_{i=1}^\ell\binom{e(\Pi_i)}{m_i} \\
&\leq e^{-\zeta'n}\cdot|\cT_{W,\Pi}^\sparse|\cdot\max_{0\leq t\leq\epsilon n^2}\sum_{\cM_{\Pi,t}}\prod_{i=1}^\ell\binom{e(\Pi_i)}{m_i} \leq e^{-\zeta n}|\cC^\ast_{W,\Pi}|
\end{align*}
for constants $\zeta,\zeta'>0$. In the above calculation we used the fact that $\eta^2d_{\sparse,i}$ and $\eta^2d_{ij}$ each appear in the penalty term involving $q_i$\,; namely, if $v_i\leq q_i$ then since $v_i\geq\eta n$, the penalty from this term is of order $\exp(-\Omega(\alpha n^2))$; otherwise if $q_i<v_i$ then the penalty is of order $\exp(-\omega(\eta^3(d_{ij}+d_{\sparse,i})n))$; in both cases, the terms from $|\cT^{\sparse,i}_{W,\Pi,\bsd}|$ and $|\cT^{ij}_{W,\Pi,\bsd}|$ are compensated for. We also used that $H(\alpha)\to0$ as $\epsilon\to0$, so that for small enough $\epsilon>0$,
$$2(\log2)H(\alpha)d_iv_i<\log\left(\frac{1}{p}\right)d_iv_i\,,$$
and a similar comparison applies to the terms involving $d_{\high,i}$ using the fact that $v_i\geq\eta n$ and $\eta\gg H(\alpha)$ as $\epsilon\to0$. The last inequality bounding $|\cC_{W,\Pi,\bsd}|$ used \pref{eqn:MaxSumProdBinomCStar} (note we used a slightly stronger inequality than \pref{eqn:MaxSumProdBinomCStar}, which holds by definition of $\cC^\ast_{W,\Pi}$ and $\cT_{W,\Pi}^\sparse$).

Let $\beta>0$ be the minimum of the constants from \Cref{lemma:SubAlmostAllNoHighHighMedVtx,lemma:SubAlmostAllNoMedVtx}. Using the above bound on $|\cC_{W,\Pi,\bsd}|$ we compute that
\begin{align*}
|\cC_{W,\Pi}| &\leq \sum_{\bsd}|\cC_{W,\Pi,\bsd}|+|\cC'_{W,\Pi}|+|\cC''_{W,\Pi}| \leq (n^{8r^2}e^{-\zeta n}+2e^{-\beta n^2})|\cC^\ast_{W,\Pi}| \leq e^{-\xi n}|\cC^\ast_{W,\Pi}|
\end{align*}
for a constant $\xi>0$, where we used the fact that $\cC_{W,\Pi,\bsd}$ is nonempty for at most $n^{8r^2}$ of the vectors $\bsd\in\{0,1,2,\dots\}^{\binom{\ell}{2}+2\ell+r}$, completing the proof of the first statement of the lemma.

The second assertion of the lemma follows immediately by observing
$$|N_\tau(n,m,W)|\leq\sum_{\Pi\in\sD_W}(|\cC_{W,\Pi}|+|\cC^\ast_{W,\Pi}|)\,,$$
which uses the statement in \Cref{lemma:SubCloseStructure} that $\Pi(G)\in\sD_W$ for all $G\in N_\tau(n,m,W)$.
\end{proof}

The next lemma is one of the key steps of this section and the proof of \Cref{thm:almostallCB} \rm{\ref{thm:almostallCBsubcritical}}. It asserts that for any optimal graphon $W\in\cX^\ast_\gamma$ that is not equivalent to $W^\ast$, if we make $\epsilon>0$ sufficiently small then there are at least $n^{\Theta(n)}$ more claw-free graphs $G\in\cC(n,m)$ that are close to $W^\ast$ in cut metric than that are close to $W$.

\begin{lemma}\label{lemma:CompareBallsInCutMetricNlogN}
If $W\in\cX^\ast_\gamma$ and $\delta_\square(W,W^\ast)\geq 8\omega$ then
$$|N_\tau(n,m,W)|\leq n^{-\nu n}|N_{\omega}(n,m,W^\ast)|$$
for a constant $\nu>0$ depending only on $\epsilon$.
\end{lemma}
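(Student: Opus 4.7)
My plan is to construct a many-to-one map from $\cC^\ast_{W,\Pi}$ to $N_\omega(n,m,W^\ast)$ whose expansion factor is at least $n^{\Theta(n)}$, driven by McKay's enumeration of cubic claw-free graphs. By \Cref{lemma:NtaunmWUpperBd}, $|N_\tau(n,m,W)| \leq 2\sum_{\Pi \in \sD_W}|\cC^\ast_{W,\Pi}|$; since $|\sD_W| \leq 2^{O(n)}$ (the number of nonnegligible blocks is bounded by $1/\eta$), it suffices to bound $|\cC^\ast_{W,\Pi}|$ by $n^{-2\nu n}|N_\omega(n,m,W^\ast)|$ for each fixed $\Pi \in \sD_W$.

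The first step is to translate $\delta_\square(W,W^\ast) \geq 8\omega$ into a concrete block-size gap. Writing $W = W_{\blambda}$ with $\mu_i := \lambda_i - \lambda_{i-1}$, \Cref{prop:FsubsetLambda} gives $\sum_i \mu_i^2 = 4\mu^2$. By Cauchy--Schwarz, $\sum_i \mu_i \geq 2\mu$, with equality only at the singleton $(2\mu)$ which yields $W^\ast$; a compactness argument on the sequence family $\Lambda$ combined with continuity of $\blambda \mapsto W_{\blambda}$ in cut metric then produces a quantitative constant $c_0 = c_0(\omega) > 0$ such that $\sum_i \mu_i \geq 2\mu + c_0$. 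Consequently, by \Cref{lemma:SubCloseStructure}, each $G \in \cC^\ast_{W,\Pi}$ has at least $c_0 n/2$ fewer sparse-part vertices than a graph close to $W^\ast$. Fix a canonical set $U_0 \subseteq V$ of size $\lfloor(1-2\mu)n\rfloor$ whose last $c_0 n/2$ vertices lie in $\bigcup_{i\geq 2}V(\Pi_i)$.

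For each $G \in \cC^\ast_{W,\Pi}$ and each cubic claw-free graph $H$ on $U_0$, define $\Phi(G,H) = G'$ on $V$ as follows: choose a co-bipartite partition $\Pi^\ast = \{A^\ast, B^\ast\}$ of $V \setminus U_0$ with $A^\ast \supseteq A_1$, $B^\ast \supseteq B_1$, balanced sizes, and extensions drawn canonically from $(V\setminus U_0)\setminus V(\Pi_1)$; declare $G'[A^\ast]$ and $G'[B^\ast]$ cliques; set the cross-edges to be $G[A_1,B_1]$ together with a canonically-chosen set bringing the total edge count to $m$; place $H$ on $U_0$. Then $G'$ is claw-free (both parts are claw-free and edge-disjoint), has $m$ edges, and lies in $N_\omega(n,m,W^\ast)$ since the cubic part contributes $O(n)$ edges and the remainder matches $W^\ast$'s structure. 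By McKay \cite{mckay2003asymptotic}, the number of cubic claw-free graphs on $|U_0|$ vertices is $n^{\Theta(n)}$. To bound the multiplicity of $\Phi$, given $G'$ I would recover $U_0$ as the degree-$3$ vertex set and set $H = G'[U_0]$; reconstruct the original $\Pi$-labeling of $V \setminus U_0$, at most $\ell^n = 2^{O(n)}$ choices; and finally reconstruct the erased $G$-edges on small blocks and $\Pi_\sparse$ using the fact that those configurations appear as an explicit binomial factor in $|\cC^\ast_{W,\Pi}|$, which a careful accounting bounds by $2^{O(n)}$ per $G'$. Dividing would give $|N_\omega(n,m,W^\ast)| \geq n^{\nu n}|\cC^\ast_{W,\Pi}|$ for a suitable $\nu=\nu(\epsilon)>0$, and summing over $\Pi$ yields the claim.

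The principal obstacle is the multiplicity bound: the construction $\Phi$ erases a substantial portion of $G$'s combinatorial content (small-block bipartite structure and sparse-part edges), and an a priori count of those configurations is far too large. The honest resolution likely requires either a more structured choice of $\Phi$ that preserves the small-block data as part of $G'$'s own sparse defect, or an algebraic comparison of the factored expressions for $|\cC^\ast_{W,\Pi}|$ and $|\cC^\ast_{W^\ast,\Pi^\ast}|$ showing that the small-block binomials cancel up to $e^{O(n)}$ while the cubic-graph count provides the full $n^{\Theta(n)}$ surplus. A secondary concern is verifying the edge-count bookkeeping in $\Phi$ so that canonical cross-edges can always be chosen without killing any admissible $H$'s.
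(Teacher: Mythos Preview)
Your high-level architecture is right: the $8\omega$ separation forces the total block length of $W$ to exceed $2\mu$ by a definite amount (the paper proves $\lambda\geq 2\mu+\omega$ directly, no compactness needed), so $W^\ast$ has $\Theta(\omega n)$ more sparse-part vertices available, and those extra vertices can host a cubic claw-free graph, which by McKay's count gives the $n^{\Theta(n)}$ surplus. You also correctly diagnose your own principal obstacle.

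That obstacle is fatal for the map $\Phi$ as defined, not merely a technicality. When you merge all co-bipartite blocks of $G$ into a single one and overwrite the sparse part with $H$, you erase the bipartite structure of every block $\Pi_2,\dots,\Pi_\ell$ and the entire graph $G[\Pi_\sparse]$. The number of ways to fill these back in is not $2^{O(n)}$: each small block $\Pi_i$ with $|V(\Pi_i)|=\Theta(n)$ contributes a factor $\binom{e(\Pi_i)}{m_i}=2^{\Theta(n^2)}$, and the sparse-part graph on $\Theta(n)$ vertices and $t$ edges contributes another $|\cC(s,t)|$ configurations. So the preimage of a single $G'$ under $\Phi$ can be of order $|\cC^\ast_{W,\Pi}|$ itself, and the map carries no information.

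The paper does exactly the ``algebraic comparison'' you propose in your last paragraph, and the missing ingredient is a separate lemma (\Cref{lemma:RPitSameOrderUpToNlogN}) asserting that for \emph{any} two optimizers $W,W'\in\cX^\ast_\gamma$, the co-bipartite-part counts satisfy $\sum_{\Pi\in\sD'_W}|\cR_{\Pi,t}|\asymp\sum_{\Pi'\in\sD'_{W'}}|\cR_{\Pi',t}|$ up to factors $e^{o(n\log n)}$. The reason is that $\log|\cR_{\Pi,t}|=H(W)\binom{n}{2}+O(n)$ by Stirling, and all optimizers share the same entropy $H(W)=r^\ast(\gamma)$. With that lemma one factors $|\cC^\ast_{W,\Pi}|\leq\sum_t|\cR_{\Pi,t}|\cdot|\cC(s(\Pi),t)|$, swaps the $\cR$-sum from $W$ to $W^\ast$ at no $n^{\Theta(n)}$ cost, and then uses $s(\Pi)\leq s$ versus $s(\Pi^\ast)\geq s+2k$ with $k=\lfloor\omega n/4\rfloor$. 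The edge-count bookkeeping you worry about is handled by the explicit inequality $k^{2k}|\cR_{\Pi,t}|\leq|\cR_{\Pi,t+3k}|\cdot\kappa(k)$: removing $3k$ edges from one cross-block costs only $e^{O(k)}$ by \Cref{lemma:BinomialCoeffIneqs}, which is dominated by $\kappa(k)=k^{\Theta(k)}$, and those $3k$ edges are then reinserted as the cubic graph on the $2k$ freed vertices inside $|\cC(s+2k,t+3k)|$. No explicit map between graphs is ever constructed.
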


Before beginning the proof of \Cref{lemma:CompareBallsInCutMetricNlogN} we introduce some more notation and prove one more lemma. Let $W\in\cX^\ast_\gamma$, $\Pi=\{\Pi_1,\dots,\Pi_\ell\}\in\sD_W$, $\Pi_i=\{A_i,B_i\}$, and $V_i:=A_i\cup B_i$. Let $\ell'\in[\ell]$ be the greatest index such that $|V_{\ell'}|\geq\eta n$ (note this is well-defined by definition of $\eta$), and let $\Pi':=\{\Pi_1,\dots,\Pi_{\ell'}\}$. For all such $\Pi$ and for all $0\leq t\leq\binom{n}{2}$, define the set
$$\cR_{\Pi,t} := \left\{F\subseteq\Pi':e_F(V(\Pi_i))=m_i\,,\,i\in[\ell']\,,\,\bsm\in\cM_{\Pi,t}\right\}\,,$$
where $F\subseteq\Pi'$ means $F$ is a subgraph of the disjoint union $\bigcup_{i=1}^{\ell'}K_{\Pi_i}$ of bipartite graphs. Define the vertex set
$$S_\Pi:=\Pi_\sparse\cup\bigcup_{\ell'<i\leq\ell}V_i$$
and the integer $s(\Pi):=|S_\Pi|$. For all $0\leq t\leq\binom{n}{2}$, define the set of claw-free graphs
$$\cD_{\Pi,t}:=\left\{\left(\bigcup_{i=1}^{\ell'}G_i\right)\sqcup S:G_i^c\subseteq\Pi_i\,,\,e_{G_i}(A_i,B_i)=m_i\,,\bsm\in\cM_{\Pi,t}\,,\,S\in\cC(s(\Pi),t)\right\}\,,$$
where $\sqcup$ denotes the vertex-disjoint union.

We claim that for all $\Pi\in\sD_{W^\ast}$ and all $0\leq t\leq4\eta n^2$,
\begin{equation}
|\cR_{\Pi,t}|\cdot|\cC(s(\Pi),t)|=|\cD_{\Pi,t}|\leq 2\cdot|N_{\omega}(n,m,W^\ast)|\,.\label{eqn:RPitCSPitDPit}
\end{equation}
The equality in \pref{eqn:RPitCSPitDPit} holds automatically by definition of $\cR_{\Pi,t}$, $\cD_{\Pi,t}$, and $s(\Pi)$. To see the inequality, first note that by \Cref{lemma:SubCloseStructure} \ref{item:lem72PiGBalanced} every $\Pi=\{\Pi_1,\dots,\Pi_\ell\}\in\sD_{W^\ast}$ satisfies $||V(\Pi_1)|-2\mu n|\leq\delta n$ and $|V(\Pi_i)|\leq8\delta n$ for all $i\geq2$. Also, a uniformly random element $G$ of $\cD_{\Pi,t}$ has regularly distributed edges $E(G[\Pi_i])$ for all $i\in[\ell]$ with high probability. It follows that $\widehat{\delta}_\square(G,W^\ast_n)\leq\omega/2$ with high probability, and since $\delta_\square(G_1,G_2)\leq\widehat{\delta}_\square(G_1,G_2)$ always, inequality \pref{eqn:RPitCSPitDPit} follows.

For positive numbers $X(n,\epsilon)$ and $Y(n,\epsilon)$, define the relation $X(n,\epsilon)\preceq Y(n,\epsilon)$ to mean 
\begin{equation}
\lim_{\epsilon\to0}\liminf_{n\to\infty}\frac{\log Y(n,\epsilon)-\log X(n,\epsilon)}{n\log n}\geq0\,.\label{eqn:nlognNotionOfIneq}
\end{equation}
For all $U\in\cX^\ast_\gamma$, let $\sD'_U$ denote the set of all $\Pi\in\sD_U$ such that $\min\{|V(\Pi')|:\Pi'\in\Pi\}\geq\epsilon n$. Notice the cardinality of $\sD'_U$ is small with respect to the relation $\preceq$ in the following sense: if we let $N:=\ceil{1/\epsilon}+1$ then
\begin{equation}
\begin{aligned}
|\sD'_U| &\leq \sum_{\substack{x_1+\cdots+x_N=n\\x_i\geq0}}\binom{n}{x_1,\dots,x_N}2^{x_1+\cdots+x_N} \\
&\leq (n+N)^N\cdot2^{H(x_1/n\,,\dots,\,x_N/n)n}\cdot 2^{Nn}\preceq1\,,
\end{aligned}\label{eqn:sDprimeUmutlinomialupperbd}
\end{equation}
where we used a standard bound on the multinomial coefficient (see e.g. \cite[Lemma~2.2]{csiszar2004information}) and $H(y_1,\dots,y_n)=-\sum_iy_i\log_2 y_i$ is the entropy.

\begin{lemma}\label{lemma:RPitSameOrderUpToNlogN}
For all $W,W'\in\cX^\ast_\gamma$ and all $0\leq t\leq4\eta n^2$,
\begin{equation}
\sum_{\Pi\in\sD'_{W}}|\cR_{\Pi,t}|\succeq\sum_{\Pi\in\sD'_{W'}}|\cR_{\Pi',t}|\,,\label{eqn:limEpslimNsumDPitrStargamma}
\end{equation}
where the relation $\succeq$ is meant in the sense of \pref{eqn:nlognNotionOfIneq}.
\end{lemma}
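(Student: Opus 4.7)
The plan is to show that both sides equal, up to $n^{o(n)}$ factors, a common leading order $2^{H(\rho)E^\ast}$, where $E^\ast:=\gamma(1+\sqrt{5}/5)n^2/4$ depends only on $\gamma$ via the defining constraint $\sum_i\mu_i^2=\gamma(1+\sqrt{5}/5)$ shared by all $W\in\cX^\ast_\gamma$. From this uniform two-sided asymptotic the $\succeq$ relation is immediate.

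First, I would reduce to a single maximum term. Expanding $|\cR_{\Pi,t}|=\sum_{\bsm\in\cM_{\Pi,t}}\prod_{i=1}^{\ell'}\binom{e(\Pi_i)}{m_i}$ and using $|\cM_{\Pi,t}|\leq n^{O(1/\eta)}$ together with $|\sD'_U|\leq 2^{O(n)}$ from \pref{eqn:sDprimeUmutlinomialupperbd}, at the $n\log n$ scale both sides are equivalent to the single maximum
\begin{equation*}
\Phi(U):=\max_{\Pi\in\sD'_U}\,\max_{\bsm\in\cM_{\Pi,t}}\,\prod_{i=1}^{\ell'}\binom{e(\Pi_i)}{m_i}.
\end{equation*}
For fixed $\Pi$, the inner maximum is attained when the ratios $m_i/e(\Pi_i)$ are equal (by concavity of the log-binomial in $m_i$ under the constraint $\sum_i m_i=\bar m_\Pi$); combined with $\sum_i e(\Pi_i^c)=E_\Pi-v(\Pi)/2+O(\alpha n^2)$ for $\alpha$-balanced bipartitions (\Cref{lemma:SubCloseStructure} \ref{item:lem72Piprimebalanced}), Stirling gives $\max_{\bsm}\prod_i\binom{e(\Pi_i)}{m_i}=2^{H(c_\Pi)E_\Pi+O(n)}$ with $c_\Pi=(m-t+v(\Pi)/2-E_\Pi)/E_\Pi$ and $E_\Pi=\sum_i e(\Pi_i)$.

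Second, maximizing over $\Pi\in\sD'_U$ amounts to maximizing $g(y):=H(c(y))y$ where $c(y)=A/y-1$ and $A:=m-t+v(\Pi)/2$. Setting $g'(y)=0$ yields the equation $H(c)=(1+c)H'(c)$, equivalent after simplification to $c^2-3c+1=0$; the relevant root is precisely $c=\rho=(3-\sqrt{5})/2$, at which $E_\Pi=A/(1+\rho)=E^\ast+O(n)$ (using the identity $m/(1+\rho)=\gamma n^2(5+\sqrt{5})/20=E^\ast$ together with $t\leq 4\eta n^2$ and $v(\Pi)\leq n$). The peak value is therefore $H(\rho)E^\ast+O(n)$, a $U$-independent leading order differing across $U\in\cX^\ast_\gamma$ only through the $O(n)$ dependence on $v(\Pi)$.

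Finally, I would verify that this peak is realized within $\sD'_U$ for every $U\in\cX^\ast_\gamma$ by explicit construction: take $v_i=\lceil\mu_{\varphi(i)}n\rceil$ for an injection $\varphi$ onto the indices with $\mu_i\geq\epsilon-\delta$, with balanced bipartitions, and use the $O(\delta n)$-slack in each $v_i$ to tune $E_\Pi$ into the $O(n)$-window around $A/(1+\rho)$; a matching $\bsm$ exists by rounding $m_i=\lfloor\rho\cdot e(\Pi_i)\rfloor$ and adjusting one coordinate to meet the edge-total constraint. Stirling then provides the lower bound $\Phi(U)\geq 2^{H(\rho)E^\ast-O(n)}$, matching the upper bound. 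The main obstacle is verifying realizability for $W_\blambda\in\cV_\gamma$ with infinitely many components (as in \Cref{fig:graphons-sub}), where the $\epsilon$-truncation removes $\sum_{\mu_i<\epsilon}\mu_i^2\leq\epsilon\sum_i\mu_i\leq\epsilon$ from the sum of squares; this $O(\epsilon n^2)$ shortfall in $\sum v_i^2$ is comfortably absorbed by the $O(\delta n^2)$-slack since $\delta=8\sqrt{\epsilon}\gg\epsilon$, so the peak remains reachable uniformly across $\cX^\ast_\gamma$.
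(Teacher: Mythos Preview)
Your approach is correct and reaches the same conclusion, but via an unnecessarily circuitous route compared to the paper. The paper's proof is much shorter because it exploits a constraint you overlook: the definition of $\cM_{\Pi,t}$ already forces $m_i/e(\Pi_i)\in[\rho-\delta,\rho+\delta]$ for every relevant $i$, and membership $\Pi\in\sD'_U$ already forces $\sum_i e(\Pi_i)=E^\ast+O(\delta n^2)$ via the defining identity $\sum_i\mu_i^2=\gamma(1+\sqrt5/5)$ common to all $U\in\cX^\ast_\gamma$. So Stirling immediately gives $\sum_i\log_2\binom{e(\Pi_i)}{m_i}=H(\rho)E^\ast+O(\delta n^2)+O(n)$ for \emph{every} admissible pair $(\Pi,\bsm)$, with no optimization needed; the paper then simply observes that this quantity equals $H(W)\binom{n}{2}$ and that $H(W)=H(W')$. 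Your calculus step solving $c^2-3c+1=0$ to recover $c=\rho$ is essentially re-deriving \Cref{lemma:kktopt} inside this proof, which is redundant since the constraint set already pins $c$ near $\rho$. What your optimization argument \emph{does} buy is an upper bound that does not rely on the density constraint in $\cM_{\Pi,t}$, but that generality is not needed here.

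One genuine slip: your claim ``$A/(1+\rho)=E^\ast+O(n)$'' is wrong as stated, since $t$ can be as large as $4\eta n^2$, so $A=m-t+v(\Pi)/2$ contributes an $O(\eta n^2)$ term, not $O(n)$. This is harmless for the lemma only because the same $t$ appears on both sides of \pref{eqn:limEpslimNsumDPitrStargamma} and the $t$-dependence cancels in the difference; you should say so explicitly rather than absorb it into an incorrect $O(n)$. The remaining $O(\delta n^2)$ discrepancies (from $\alpha$-balance and the $\delta$-window on densities) are handled, as in the paper, by the outer $\epsilon\to0$ limit in the definition of $\preceq$.
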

\begin{proof}
Let $\blambda=(\lambda_0,\lambda_1,\dots)\in\Lambda$ be the unique sequence such that $W$ is equivalent to $W_{\blambda}$ (using the definition of $W_{\blambda}$ in \Cref{eqn:flambdaGraphonDef}). Denote $\Pi=\{\Pi_1,\dots,\Pi_q\}\in\sD_W$ and let $\ell\in[q]$ be the greatest index such that $|V(\Pi_\ell)|\geq\eta n$. Denote $\Pi'=\{\Pi'_1,\dots,\Pi'_r\}\in\sD_{W'}$ and let $\ell'\in[r]$ be the greatest index such that $|V(\Pi_{\ell'})|\geq\eta n$. By definition of $\cR_{\Pi,t}$ we have
$$|\cR_{\Pi,t}|=\sum_{\cM_{\Pi,t}}\prod_{i=1}^{\ell}\binom{e(\Pi_i)}{m_i}\hspace{7mm}\text{and}\hspace{7mm}|\cR_{\Pi',t}|=\sum_{\cM_{\Pi',t}}\prod_{i=1}^{\ell'}\binom{e(\Pi'_i)}{m_i}\,.$$
Using \pref{eqn:sDprimeUmutlinomialupperbd} and $|\cM_{\Pi',t}|\leq(2\delta n)^\ell$, it follows that \pref{eqn:limEpslimNsumDPitrStargamma} is implied by the assertion
\begin{equation}
\lim_{\epsilon\to0}\liminf_{n\to\infty}\frac{1}{n\log_2n}\left(\min\sum_{i=1}^\ell\log_2\binom{e(\Pi_i)}{m_i}-\max\sum_{i=1}^{\ell'}\log_2\binom{e(\Pi'_i)}{m'_i}\right)\geq0\,,\label{eqn:nlognequivalencesuffcond}
\end{equation}
where the minimum is over all $\Pi\in\sD'_W$ and $\bsm\in\cM_{\Pi,t}$, and the maximum is over all $\Pi'\in\sD'_{W'}$ and $\bsm'\in\cM_{\Pi,t}$. Notice that
\begin{equation}
\sum_{i=1}^\ell\log_2\binom{e(\Pi_i)}{m_i}=\sum_{i=1}^\ell \left(H\left(\frac{m_i}{e(\Pi_i)}\right)e(\Pi_i)+O(n)\right)\,,\label{eqn:sumlogentropyePii}
\end{equation}
which follows easily from Stirling's formula. As $\epsilon\to0$ and $n\to\infty$, we have $m_i/e(\Pi_i)\to\frac{3-\sqrt{5}}{2}$ and $e(\Pi_i)\sim(\lambda_j-\lambda_{j-1})^2n^2/4$ for some $j\geq1$ by \Cref{lemma:SubCloseStructure} \ref{item:lem72PiGBalanced}, which implies \pref{eqn:sumlogentropyePii} is asymptotic to $H(W)\binom{n}{2}+O(n)$. Inequality \pref{eqn:nlognequivalencesuffcond} now follows since $H(W)=H(W')$.
\end{proof}

The main result of \cite{mckay2003asymptotic} is an asymptotic formula for the number $\kappa(n)$ of cubic claw-free graphs on $2n$ vertices:
$$\kappa(n)\sim\frac{(2n)!}{e\sqrt{6\pi n}}\left(\frac{n}{2e}\right)^{n/3}e^{(n/2)^{1/3}}=e^{\frac{7}{3}n\log n+O(n)}\,.$$
The asymptotics of $\kappa(n)$ play a key role in the proof of \Cref{lemma:CompareBallsInCutMetricNlogN}.

\begin{proof}[Proof of \Cref{lemma:CompareBallsInCutMetricNlogN}]
Let $\blambda=(\lambda_0,\lambda_1,\dots)\in\Lambda$ be the unique sequence such that $W$ is equivalent to $W_{\blambda}$ (using the definition of $W_{\blambda}$ in \Cref{eqn:flambdaGraphonDef}). Let $1\leq j\leq1/\eta$ be the greatest index such that $\lambda_j-\lambda_{j-1}\geq\eta$ (this is well-defined by definition of $\eta$), and let $\lambda:=\lambda_j$. We claim that $\lambda\geq2\mu+\omega$. Suppose to the contrary $\lambda<2\mu+\omega$. Then we must have $\lambda_1\geq2\mu-\omega$, since otherwise
\begin{align*}
t(K_2,W) &\leq (2\mu-\omega)^2\cdot\frac{1}{2}\bigg(1+\frac{3-\sqrt{5}}{2}\bigg)+4\omega^2+4\eta = \gamma+\frac{37-\sqrt{5}}{4}\cdot\eta-(5-\sqrt{5})\mu\omega<\gamma\,.
\end{align*}
But the inequalities $2\mu-\omega\leq\lambda_1\leq\lambda<2\mu+\omega$ imply $\delta_\square(W,W^\ast)\leq\norm{W-W^\ast}_1<8\omega$, a contradiction. We thus have $\lambda\geq2\mu+\omega$.

Let $k:=\floor{\omega n/4}$ and $s:=\ceil{(1-2\mu-\omega+\alpha)n}$. It is straightforward from \Cref{lemma:SubCloseStructure} \ref{item:lem72PiGBalanced} and the inequality $\lambda\geq2\mu+\omega$ that $s(\Pi)\leq s$ for all $\Pi\in\sD_W$. Note also that every division $\Pi\in\sD_W$ satisfies $V(\Pi)\geq(2\mu-\delta)n+2k$, and every division $\Pi\in\sD_{W^\ast}$ satisfies $s(\Pi)\geq s+2k$. By definition of $S_\Pi$, every graph $G\in\cC^\ast_{W,\Pi}$ satisfies $e_G(S_\Pi)\leq4\eta n^2$, which directly implies
$$|\cC^\ast_{W,\Pi}|\leq\sum_{t=0}^{4\eta n^2}|\cR_{\Pi,t}|\cdot|\cC(s(\Pi),t)|\leq\sum_{t=0}^{4\eta n^2}|\cR_{\Pi,t}|\cdot|\cC(s,t)|\,.$$

We will use that for all $U\in\cX^\ast_\gamma$ and $\Pi\in\sD_U$, $k^{2k}|\cR_{\Pi,t}|\leq|\cR_{\Pi,t+3k}|\cdot\kappa(k)$, which follows from \Cref{lemma:BinomialCoeffIneqs} since for any $1\leq i\leq\ell'$,
$$k^{2k}\binom{e(\Pi_i)}{m_i}\binom{e(\Pi_i)}{m_i-3k}^{-1}\leq k^{2k}\exp\left(-\left(1-\frac{e(\Pi_i)-m_i+3k}{m_i-3k}\right)3k\right)=k^{2k}e^{O(n)}\leq\kappa(k)\,.$$

Let $N_1(n):=N_{\omega}(n,m,W^\ast)$ and $N_2(n):=N_\tau(n,m,W)$. Using \Cref{lemma:NtaunmWUpperBd}, \Cref{lemma:RPitSameOrderUpToNlogN}, \pref{eqn:RPitCSPitDPit}, and \pref{eqn:sDprimeUmutlinomialupperbd}, and using the relation $\preceq$ in the sense of \pref{eqn:nlognNotionOfIneq}, we compute
\begin{align*}
k^{2k}\cdot|N_2(n)| &\preceq k^{2k}\sum_{\Pi\in\sD_W}|\cC^\ast_{W,\Pi}| \\
&\leq k^{2k}\sum_{\Pi\in\sD'_W}\sum_{t=0}^{4\eta n^2}|\cR_{\Pi,t}|\cdot|\cC(s,t)| \\
&\preceq k^{2k}\max_{0\leq t\leq4\eta n^2}\left\{|\cC(s,t)|\sum_{\Pi\in\sD'_W}|\cR_{\Pi,t}|\right\} \\
&\preceq k^{2k}\cdot\sum_{t=0}^{4\eta n^2}|\cC(s,t)|\sum_{\Pi\in\sD'_{W^\ast}}|\cR_{\Pi,t}| \\
&\preceq \sum_{\Pi\in\sD'_{W^\ast}}\sum_{t=0}^{4\eta n^2-3k}|\cR_{\Pi,t+3k}|\cdot|\cC(s,t)|\cdot\kappa(k) \\
&\preceq \sum_{\Pi\in\sD'_{W^\ast}}\sum_{t=0}^{4\eta n^2}|\cR_{\Pi,t}|\cdot|\cC(s(\Pi),t)| \preceq |N_1(n)| \,.
\end{align*}
Since $k=\floor{\omega n/4}$, it directly follows that $|N_2(n)|\leq n^{-\nu n}|N_1(n)|$ for a constant $\nu>0$ depending only on $\epsilon$, completing the proof.
\end{proof}

\begin{proof}[Proof of \Cref{thm:almostallCB} \rm{\ref{thm:almostallCBsubcritical}}]
Let $\cN=\{W^\ast,W_1,\dots,W_k\}\subseteq\cX^\ast_\gamma$ be a $\tau$-net of $\cX^\ast_\gamma$ in cut metric. Define the sets
\[\arraycolsep=1.4pt
\begin{array}{ll}
\cC_\close &:= \displaystyle N_\tau(n,m,W^\ast)\cup\bigcup_{i=1}^kN_\tau(n,m,W_i) \,, \\[15pt]
\cC_\far &:= \cC(n,m)\setminus\cC_\close \,.
\end{array}\]
Let $I:=\{i\in[k]:\delta_\square(W_i,W^\ast)\geq8\omega\}$ and $\tau':=\tau+8\omega$, so that
$$|\cC_\close|\leq|N_{\tau'}(n,m,W^\ast)|+\sum_{i\in I}|N_\tau(n,m,W_i)|\,.$$
For $\epsilon>0$ small enough, the parameter $\tau'$ is small enough that \Cref{lemma:NtaunmWUpperBd} implies
$$|N_{\tau'}(n,m,W^\ast)|\leq(1+e^{-\beta n})\sum_{\Pi\in\sD_{W^\ast}}|\cC^\ast_{W^\ast,\Pi}|\,,$$
where the sets $\cC^\ast_{W^\ast,\Pi}$ are understood to be redefined with $\tau'$.
Let $C>0$ be the constant from \Cref{prop:CNMtypstrucconddistr} and let $\nu>0$ be the minimum of the constants when \Cref{lemma:CompareBallsInCutMetricNlogN} is applied to the graphons $W_i$, $i\in I$. We compute that
\begin{align*}
|\cC(n,m)| &\leq |\cC_\close| + |\cC_\far| \\
&\leq |N_{\tau'}(n,m,W^\ast)|+\sum_{i\in I}|N_\tau(n,m,W_i)|+|\cC_\far| \\
&\leq \big(1+kn^{-\nu n}+e^{-Cn^2}\big)|N_{\tau'}(n,m,W^\ast)| \\
&\leq \big(1+kn^{-\nu n}+e^{-Cn^2}\big)(1+e^{-\beta n})\sum_{\Pi\in\sD_{W^\ast}}|\cC^\ast_{W^\ast,\Pi}|\,.
\end{align*}
From \Cref{lemma:SubCloseStructure} \ref{item:lem72PiGBalanced} we know that every $\Pi=\{\Pi_1,\dots,\Pi_\ell\}\in\sD_{W^\ast}$ satisfies $||V(\Pi_1)|-2\mu n|\leq\delta n$ and $|V(\Pi_i)|\leq8\delta n$ for all $i\geq2$, proving the first statement of \Cref{thm:almostallCB} \rm{\ref{thm:almostallCBsubcritical}}. The statement that almost every $G\in\cC^\ast_{W^\ast,\Pi}$ has $e_G(S_\Pi)=\Omega(n)$ follows from the argument already carried out in the proof of \Cref{lemma:CompareBallsInCutMetricNlogN}, namely the use of cubic claw-free graphs to establish a lower-bound on $|N_\omega(n,m,W^\ast)|$.
\end{proof}

\section{The Conditional \erdosrenyi{} Random Graph}
\label{sec:GnpTypicalStructure}
In this section we prove \Cref{thm:GnpTypicalStructure}. Several definitions and results from \Cref{sec:subcritical} are used throughout. Define the parameter $\rho:=\frac{3-\sqrt{5}}{2}$. If $\cF\subseteq\cG(n)$ is a set of graphs on $n$ vertices and $p\in(0,1)$ then define the quantity
$$Z_p(\cF):=(1-p)^{\binom{n}{2}}\sum_{G\in\cF}\left(\frac{p}{1-p}\right)^{e(G)}\,.$$
For all $\tau>0$, $n\in\N$, and graphons $W\in\cW$, define the neighborhood
$$N_\tau(n,W):=\{G\in\cC(n):\delta_\square(G,W)<\tau\}\,.$$

\begin{proof}[Proof of \Cref{thm:GnpTypicalStructure}]
Throughout the proof, let $p\in(0,1)$ and let $G$ be the \erdosrenyi{} random graph $G(n,p)$ conditioned on being claw-free.

In the range $p\in(0,\rho)$, \Cref{prop:GNPtypstrucconddistr} proves $\delta_\square(G,W_0)<\epsilon$ with probability at least $e^{-Cn^2}$ for a constant $C=C(\epsilon,p)>0$, which proves $G$ has less than $\epsilon n^2$ edges with high probability.

Next assume $p\in\big(\rho,1\big)$. Let $\epsilon\in(0,(p-\rho)/2^{64})$ and $\tau:=\frac{1}{16}(\epsilon/32)^{67}$. Let $Z_p:=Z_p(\cC(n))$, $\gamma:=(1+p)/2$, and let $U_p:=W^\ast_\gamma$ (where $W^\ast_\gamma$ is the function defined in \pref{eqn:deffstarlambdagraphon}), so that $U_p$ is the unique optimal graphon (up to equivalence) in the variational problem \pref{eqn:variationalproblemGNP}. Note also that $U_p$ has edge density $\gamma$. Define the quantities
\[\arraycolsep=1.4pt
\begin{array}{ll}
Z_\close &:= Z_p(N_\tau(n,U_p)) \,, \\[8pt]
Z_\far &:= Z_p(\cC(n)\setminus N_\tau(n,U_p)) \,.
\end{array}\]
By the counting lemma for graphons \cite[Lemma~10.23]{lovasz2012large}, we know that for all $G\in N_\tau(n,U_p)$, $\big|e(G)-\gamma\binom{n}{2}\big|\leq\tau$. Also, letting $m:=e(G)$, we have $G\in N_{8\tau}(n,m,W_{2m/n^2})$. From the proof of \Cref{thm:almostallCB} \rm{\ref{thm:almostallCBsupercritical}}, we see that
$$|N_{8\tau}(n,m,W_{2m/n^2})|\leq(1+e^{-\xi n})|\cB_c(n,m)|$$
for a constant $\xi>0$. If $C=C(\epsilon,p)>0$ is the constant from \Cref{prop:GNPtypstrucconddistr} then
\begin{align*}
Z_p &= Z_\close+Z_\far \\
&\leq (1-p)^{\binom{n}{2}}\sum_{m=(\gamma-{\tau})\binom{n}{2}}^{(\gamma+{\tau})\binom{n}{2}}|N_{8\tau}(n,m,W_{2m/n^2})|\cdot\left(\frac{p}{1-p}\right)^m+e^{-Cn^2} \\
&\leq (1+e^{-\xi n})(1-p)^{\binom{n}{2}}\sum_{m=(\gamma-{\tau})\binom{n}{2}}^{(\gamma+{\tau})\binom{n}{2}}|\cB_c(n,m)|\cdot\left(\frac{p}{1-p}\right)^m+e^{-Cn^2} \\
&\leq (1+e^{-\xi n})\cdot Z_p(\cB_c(n))+e^{-Cn^2} \,,
\end{align*}
which proves $Z_p=(1+o(1))\cdot Z_p(\cB_c(n))$, hence $G$ is co-bipartite with high probability.

Finally we examine the case $p=\rho$. Let $W_0$ denote the all-zero graphon and let $W\in\cX_\ast^p$ be a graphon such that $\delta_\square(W,W_0)\geq8\omega$, which implies $W$ has edge density at least $8\omega$. We may assume $W$ is of the form $W_{\blambda}$ for some $\blambda=(\lambda_0,\lambda_1,\dots)\in\Lambda$ as defined in \pref{eqn:flambdaGraphonDef}. Let $\gamma:=\int W$ be the edge density of $W$. Notice that $\lambda_1\geq\omega$ since if the opposite were to hold, then $\lambda_{i+1}-\lambda_i<\omega$ for all $0\leq i<|\blambda|$, which would imply $t(K_2,W)\leq4\omega$, a contradiction.

Below, we write the sets $\cC^\ast_{W,\Pi}$ and $\cR_{\Pi,t}$ as they were defined in \Cref{sec:subcritical}, i.e. with respect to the number of edges $m$, since these sets appear in a summation over $m$.

For all $(\gamma-{\tau})\binom{n}{2}\leq m\leq(\gamma+{\tau})\binom{n}{2}$, let $W^m\in\cX^\ast_{2m/n^2}$ denote an optimal graphon with edge density $2m/n^2$ such that $\delta_\square(W^m,W)\leq6\tau$. If $G\in N_\tau(n,W)$ then $\big|e(G)-\gamma\binom{n}{2}|\leq{\tau}$ and $G\in N_{8\tau}(n,m,W^m)$, hence by \Cref{lemma:NtaunmWUpperBd} we have
\begin{align*}
|N_\tau(n,W)| &\leq \sum_{m=(\gamma-{\tau})\binom{n}{2}}^{(\gamma+{\tau})\binom{n}{2}}|N_{8\tau}(n,m,W^m)| \leq (1+e^{-\xi n})\sum_{m=(\gamma-{\tau})\binom{n}{2}}^{(\gamma+{\tau})\binom{n}{2}}\,\sum_{\Pi\in\sD_{W^m}}|\cC^\ast_{W^m,\Pi}|\,.
\end{align*}

Let $k:=\floor{\omega n/4}$ and $s:=\ceil{(1-\omega+\alpha)n}$. It is straightforward from \Cref{lemma:SubCloseStructure} \ref{item:lem72PiGBalanced} and the inequality $\lambda_1\geq\omega$ that $s(\Pi)\leq s$ for all $\Pi\in\sD_W$, and that $s+2k\leq n$. Note that every division $\Pi\in\sD_W$ satisfies $V(\Pi)\geq2k$. By definition of $S_\Pi$, every graph $G\in\cC^\ast_{W^m,\Pi}$ satisfies $e_G(s(\Pi))\leq4\eta n^2$. Let $N_1:=N_\omega(n,W_0)$ and $N_2:=N_\tau(n,W)$. For all $U\in\cX^\ast_\gamma$, let $\sD'_U$ be the set of all $\Pi\in\sD_U$ such that $\min\{|V(\Pi')|:\Pi'\in\Pi\}\geq\epsilon n$. By an argument very similar to \Cref{lemma:RPitSameOrderUpToNlogN}, and using the fact that $I_p(W_0)=I_p(W)=I_p(W^m)=0$, we have that for all $0\leq t\leq4\eta n^2$,
\begin{equation}
1\preceq\sum_{m=(\gamma-{\tau})\binom{n}{2}}^{(\gamma+{\tau})\binom{n}{2}}\,\sum_{\Pi\in\sD'_W}|\cR_{\Pi,t}|\left(\frac{p}{1-p}\right)^m\preceq\sum_{m=0}^{\eta n^2}\,\sum_{\Pi\in\sD'_{W_0}}|\cR_{\Pi,t}|\left(\frac{p}{1-p}\right)^m\preceq1\,,\label{eqn:limEpslimNsumDPitrStargammaZp}
\end{equation}
where the relation $\succeq$ is meant in the sense of \pref{eqn:nlognNotionOfIneq}.

Using \pref{eqn:sDprimeUmutlinomialupperbd} and \pref{eqn:limEpslimNsumDPitrStargammaZp}, and using the relation $\preceq$ in the sense of \pref{eqn:nlognNotionOfIneq}, we compute
\begin{align*}
k^{2k}\cdot Z(N_2) &\preceq (1-p)^{\binom{n}{2}}\sum_{m=(\gamma-{\tau})\binom{n}{2}}^{(\gamma+{\tau})\binom{n}{2}}\,\sum_{\Pi\in\sD_{W^m}}|\cC^\ast_{W^m,\Pi}|\cdot k^{2k}\cdot\left(\frac{p}{1-p}\right)^m \\
&\preceq (1-p)^{\binom{n}{2}}\sum_{m=(\gamma-{\tau})\binom{n}{2}}^{(\gamma+{\tau})\binom{n}{2}}\,\sum_{\Pi\in\sD'_{W^m}}\sum_{t=0}^{4\eta n^2}|\cR_{\Pi,t}|\cdot|\cC(s,t)|\cdot k^{2k}\cdot\left(\frac{p}{1-p}\right)^m \\
&\preceq (1-p)^{\binom{n}{2}}\max_{0\leq t\leq4\eta n^2}\left\{|\cC(s,t)|\cdot k^{2k}\sum_{m=(\gamma-{\tau})\binom{n}{2}}^{(\gamma+{\tau})\binom{n}{2}}\,\sum_{\Pi\in\sD'_{W^m}}|\cR_{\Pi,t}|\cdot\left(\frac{p}{1-p}\right)^m\right\} \\
&\preceq (1-p)^{\binom{n}{2}}\sum_{t=0}^{4\eta n^2}|\cC(s,t)|\cdot\kappa(k) \preceq (1-p)^{\binom{n}{2}}\sum_{t=0}^{4\eta n^2}|\cC(n,t)| \preceq Z(N_1) \,.
\end{align*}
Since $k=\floor{\omega n/4}$, it follows that $Z(N_2)\leq n^{-\zeta n}\cdot Z(N_1)$ for a constant $\zeta>0$.

To complete the proof in the case $p=\rho$, let $\cN=\{W_0,W_1,\dots,W_k\}\subseteq\cX^p_\ast$ be a $\tau$-net of $\cX^p_\ast$ in cut metric. Define the quantities
\[\arraycolsep=1.4pt
\begin{array}{ll}
Z'_\close &:= \displaystyle Z_p\left(N_\tau(n,W_0)\cup\bigcup_{i=1}^kN_\tau(n,W_i)\right) \\[18pt]
Z'_\far &:= Z_p(\cC(n))-Z'_\close \,.
\end{array}\]
Let $I:=\{i\in[k]:\delta_\square(W_i\,,\,W^\ast)\geq8\omega\}$ and $\tau':=\tau+8\omega$, so that
$$Z'_\close \leq Z_p(N_{\tau'}(n,W_0))+\sum_{i\in I}Z_p(N_\tau(n,W_i))\,.$$
If $C>0$ is the constant from \Cref{prop:GNPtypstrucconddistr} then
\begin{align*}
Z_p(\cC(n)) &\leq Z'_\close+Z'_\far \\
&\leq Z_p(N_{\tau'}(n,W_0))+\sum_{i\in I}Z_p(N_\tau(n,W_i))+Z'_\far \\
&\leq \big(1+kn^{-\zeta n}+e^{-Cn^2}\big)\cdot Z_p(N_{\tau'}(n,W_0)) \,,
\end{align*}
which directly implies $\delta_\square(G,W_0)<\epsilon$ with high probability, completing the proof.
\end{proof}

\section{Extremal Structure and Stability of Edge Colorings}
\label{sec:edgecolorings}
Let $\varphi$ be a red--green--blue coloring of $E(K_n)$. For all $u\in V(K_n)$ and $U\subseteq V(K_n)$ define
\begin{align*}
N_r(u) &:= \{v\in V(K_n):\varphi(uv)\text{ is red}\}\,, \\
d_r(u) &:= |N_r(u)|\,, \\
e_r(U) &:= |\{e\in\textstyle\binom{U}{2}:\varphi(e)\text{ is red}\}|\,.
\end{align*}
We will also use the notation $N_r(u,U):=N_r(u)\cap U$ and $d_r(u,U):=|N_r(u,U)|$. Define the green and blue counterparts $N_g(u)$, $N_b(u)$, etc. analogously. Let $E_r(\varphi)$, $E_g(\varphi)$, and $E_b(\varphi)$ denote the sets of red, green, and blue edges, respectively, of the coloring $\varphi$, and let $e_r(\varphi)$, $e_g(\varphi)$, and $e_b(\varphi)$ denote the respective cardinalities of those sets. We now prove \Cref{lemma:c4ATMOSTc3}, which generalizes Mantel's theorem to red--green--blue edge colorings: Mantel's theorem is obtained by considering only those $\varphi$ with $e_g(\varphi)=0$.

\begin{proof}[Proof of \Cref{lemma:c4ATMOSTc3}]
Define the sets $L_1:=\{u:d_r(u)<d_b(u)+1\}$, $L_2:=\{u:d_r(u)=d_b(u)+1\}$, and $L_3:=\{u:d_r(u)>d_b(u)+1\}$. The number of red edges is
\begin{equation}
e_r(\varphi)=\frac{1}{2}\left(\sum_{u\in L_1}d_r(u)+\sum_{u\in L_2}d_r(u)+\sum_{u\in L_3}d_r(u)\right)\,.\label{eqn:rededgeenum}
\end{equation}
If $L_3=\emptyset$ then the assertion is immediate since
$$e_r(\varphi)\leq\frac{1}{2}\sum_{u\in L_1\cup L_2}(d_b(u)+1)=e_b(\varphi)+\frac{n}{2}\,,$$
so we may assume $L_3\neq\emptyset$. Fix $u\in L_3$ and fix $v\in N_r(u)$. Since the red neighborhood of every vertex is a blue clique, we have $d_r(v)\leq d_b(u)+1<d_r(u)\leq d_b(v)+1$, implying $v\in L_1$ and
\begin{equation}
d_r(u)+d_r(v)\leq d_b(u)+d_b(v)+2\,.\label[inequality]{ineq:redblueineq}
\end{equation}
We claim there exists an injection $f:L_3\to L_1$ such that $f(u)\in N_r(u)$ for all $u\in L_3$. Notice that this is sufficient to prove the inequality $e_r(\varphi)\leq e_b(\varphi)+\frac{n}{2}$ for the following reasons. For all $u\in L_3$, since $d_r(u)>d_b(u)+1$ and $f(u)\in N_r(u)$, \Cref{ineq:redblueineq} implies
\begin{equation}
d_r(u)+d_r(f(u))\leq d_b(u)+d_b(f(u))+2\,.\label[inequality]{ineq:redbluefuncineq}
\end{equation}
The definition of $f$, \Cref{eqn:rededgeenum}, and \Cref{ineq:redbluefuncineq} then imply
\begin{align}
e_r(\varphi) &= \frac{1}{2}\left(\sum_{u\in L_1\setminus f(L_3)}d_r(u)+\sum_{u\in L_2}d_r(u)+\sum_{u\in L_3}d_r(u)+\sum_{u\in f(L_3)}d_r(u)\right) \nonumber\\
&= \frac{1}{2}\left(\sum_{u\in L_1\setminus f(L_3)}d_r(u)+\sum_{u\in L_2}(d_b(u)+1)+\sum_{u\in L_3}(d_r(u)+d_r(f(u)))\right) \nonumber\\
&\leq \frac{1}{2}\left(\sum_{u\in L_1\setminus f(L_3)}(d_b(u)+1)+\sum_{u\in L_2}(d_b(u)+1)+\sum_{u\in L_3}(d_b(u)+d_b(f(u))+2)\right) \label[inequality]{ineq:redbluecrucialineq}\\
&= \frac{1}{2}\sum_{u\in V(K_n)}(d_b(u)+1) = e_b(\varphi)+\frac{n}{2}\,.\nonumber
\end{align}

It remains to establish that such $f$ exists. Let $w_1,\dots,w_l$ be an arbitrary ordering of the vertices in $L_3$, and let $W_i:=\{w_1,\dots,w_i\}$. We will prove by induction on $i\in\{1,\dots,l\}$ that there exists an injection $g_i:W_i\to L_1$ such that $g_i(u)\in N_r(u)$ for all $u\in W_i$. The claim is clearly true for $i=1$ since we may set $g_1(w_1)$ to be any member of $N_r(w_1)$. Assume $i\geq2$ and assume the claim holds for all smaller values of $i$. Notice that there must exist at least two distinct vertices in the set $U_i:=N_r(w_i)\setminus g_{i-1}(W_{i-1})$. Indeed, we have $N_r(w_i)\subseteq g_{i-1}(W_{i-1})\cup U_i$ and
$$B:=g_{i-1}^{-1}(N_r(w_i)\setminus U_i)\subseteq N_b(w_i)\,,$$
since if the red neighborhoods of two vertices overlap then those two vertices are joined by a blue edge. If $|U_i|\leq1$ then by injectivity of $g_{i-1}$ it holds that
$$d_b(w_i)=|N_b(w_i)|\geq|B|=|N_r(w_i)|-|U_i|\geq d_r(w_i)-1\,,$$
contradicting our assumption that $w_i\in L_3$, proving our claim that $|U_i|\geq2$. Fixing $x\in U_i$, we can now define an injection
$$g_i(w_j):=\begin{cases}g_{i-1}(w_j)&1\leq j\leq i-1\\x&j=i\end{cases}\,,$$
completing the induction step. Hence $f:=g_l$ satisfies the claimed properties, completing the proof of the inequality $e_r(\varphi)\leq e_b(\varphi)+\frac{n}{2}$.
\end{proof}

The next lemma proves that equality holds in \pref{ineq:thmredblueineq} if and only if $\varphi$ has the the extremal structure $\cE(n)$ defined in \Cref{sec:GraphonOpt}.

\begin{lemma}\label{eqn:edgecolorextremalstructure}
Let $\varphi$ be a red--green--blue coloring of $E(K_n)$ such that $\varphi$ has no triangle colored (red, red, red) nor any colored (red, red, green). Then one has $e_r(\varphi)=e_b(\varphi)+\floor{n/2}$ if and only if $\varphi\in\cE(n)$.
\end{lemma}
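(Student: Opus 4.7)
My plan is to prove both directions by relating the equality case to the argument in \Cref{lemma:c4ATMOSTc3}. For the easy ``if'' direction, one computes directly from the definition of $\cE(n)$: for a block $H_i$ with balanced partition $X_i \cup Y_i$, one has $e_r(H_i) = |X_i||Y_i|$ and $e_b(H_i) = \binom{|X_i|}{2} + \binom{|Y_i|}{2}$, so $e_r(H_i) - e_b(H_i) = \lfloor |H_i|/2\rfloor$; edges between different $H_i$ are green and contribute nothing. Summing and using the parity conditions in the definition of $\cE(n)$ yields $e_r(\varphi) - e_b(\varphi) = \sum_i \lfloor |H_i|/2\rfloor = \lfloor n/2\rfloor$.

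For the ``only if'' direction I would argue by strong induction on $n$, with trivial base cases $n \le 1$. Set $\delta(u) := d_r(u) - d_b(u)$, so the assumed equality is $\sum_u \delta(u) = 2(e_r-e_b) = 2\lfloor n/2\rfloor$. The proof of \Cref{lemma:c4ATMOSTc3} shows that for any red edge $uv$ one has $\delta(u) + \delta(v) \le 2$, with equality forcing both set equalities $N_b(u) = N_r(v)\setminus\{u\}$ and $N_b(v) = N_r(u)\setminus\{v\}$. The equality case of the lemma furthermore forces $\delta(u) + \delta(f(u)) = 2$ for every $u \in L_3$, so when $L_3 \ne \emptyset$ such a ``good'' red edge is provided by the injection $f$. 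When $L_3 = \emptyset$ the constraint $\sum\delta = 2\lfloor n/2\rfloor$ plus $\delta(u)\le 1$ forces all but at most one vertex to satisfy $\delta(u)=1$; a short counting argument rules out the degenerate possibility that every red edge meets the single $L_1$-vertex, so again a red edge with $\delta(u)+\delta(v)=2$ exists.

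Given such an edge $uv$, I would set $A := N_r(v)$ and $B := N_r(u)$ and $H := A \cup B$. The no-(red,red,red) condition applied to triangles $uvw$ gives $A \cap B = \emptyset$, and applied to the two red edges incident to any common vertex forces each of $A,B$ to be a blue clique. Combining the equality $N_b(v) = B\setminus\{v\}$ with the triangle argument $xvy$ for $x\in A$, $y\in N_r(x)\setminus\{v\}$ yields $N_r(x)\subseteq B$, and symmetrically $N_r(y)\subseteq A$ for $y\in B$; hence $e_r(H, V\setminus H) = 0$, $e_r(H)\le |A|\cdot|B|$, and $e_b(H) \ge \binom{|A|}{2}+\binom{|B|}{2}$. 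Computing $\sum_H \delta$ in terms of edge-counts and using $e_r(H,V\setminus H)=0$ gives
\[
\sum_H \delta \;\le\; |H| - (|A|-|B|)^2 - e_b(H,V\setminus H).
\]
Applying \Cref{lemma:c4ATMOSTc3} to the restriction $\varphi' := \varphi|_{V\setminus H}$ yields $\sum_{V\setminus H}\delta \le 2\lfloor (n-|H|)/2\rfloor - e_b(H,V\setminus H)$ (the extra $-e_b$ term comes from the gap between original and restricted degrees). Adding these two bounds and equating with $\sum \delta = 2\lfloor n/2\rfloor$ forces, after a short case analysis on parities, $|A|=|B|$ (or $||A|-|B||=1$ when $|H|$ is odd, which additionally requires $n$ odd), $e_b(H,V\setminus H)=0$, $e_r(H) = |A|\cdot|B|$, and $e_b(H) = \binom{|A|}{2}+\binom{|B|}{2}$. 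Thus $H$ is precisely a block of the form in $\cE(n)$ with $\{X,Y\}=\{A,B\}$, all its external edges are green, and $\varphi'$ lies in the equality case of \Cref{lemma:c4ATMOSTc3} on $n-|H|$ vertices; the inductive hypothesis gives $\varphi' \in \cE(n-|H|)$, and combining this partition with $H$ produces the desired partition of $V$. The parity conditions line up automatically, since $|H|$ is even exactly when $|A|=|B|$, and at most one block (either $H$ or the odd block furnished by induction) can be odd. The main technical hurdle is the global counting step that converts ``there exists a red edge saturating the local bound'' into the stronger conclusion that the entire block $H$ has the complete bipartite red / twin blue clique structure and is green-isolated from $V\setminus H$ — this requires the simultaneous tightness of the restricted-coloring bound, the internal $e_r/e_b$ bounds on $H$, and the external $e_b(H,V\setminus H)$ bound, all forced by one global identity.
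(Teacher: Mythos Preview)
Your argument is correct and complete in outline; the global identity you isolate,
\[
(|A|-|B|)^2 + 4e_b(A,B) + 2e_g(A,B) + 2\bigl(\lfloor (n-|H|)/2\rfloor - (e_r'-e_b')\bigr) + 2e_b(H,V\setminus H) \;=\; |H| + 2\lfloor(n-|H|)/2\rfloor - 2\lfloor n/2\rfloor,
\]
indeed forces every term on the left to vanish (or to equal~$1$ in the one odd--odd case), so the block $H$ is exactly of the required form and the induction goes through. The only place that needs a sentence more care than you indicate is the case $L_3\ne\emptyset$ with $n$ odd: there the chain of inequalities from \Cref{lemma:c4ATMOSTc3} has slack $1/2$ rather than $0$, but a short recount of $\sum\delta$ shows one still must have $|L_1\setminus f(L_3)|=1$ and $\delta(u)+\delta(f(u))=2$ for all $u\in L_3$, so a good edge is again available.

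Your route is genuinely different from the paper's. The paper first proves (for $n$ even) that equality in \Cref{lemma:c4ATMOSTc3} forces $L_3=\emptyset$, using the specific inductive construction of the injection $f$ and the fact that $|U_l|\geq 2$ at the last step; this yields $\delta(u)=1$ for every vertex, from which the block structure follows quickly. The odd case is then reduced to the even case by deleting a suitable vertex. Your approach instead locates a \emph{single} saturating red edge, peels off one full block $H$ via the global counting identity, and recurses on $V\setminus H$. This treats the parities uniformly and avoids reopening the inductive construction of $f$; the cost is the somewhat delicate bookkeeping that all five nonnegative slacks in the displayed identity vanish simultaneously. Both arguments ultimately rest on the same local fact $\delta(u)+\delta(v)\le 2$ along red edges, but organize the passage from local tightness to global structure differently.
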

\begin{proof}
If $\varphi\in\cE(n)$ then clearly \pref{ineq:thmredblueineq} holds with equality. Conversely, suppose equality is attained, and first assume $n$ is even. We claim that $d_r(u)=d_b(u)+1$ for all $u\in V(K_n)$, which immediately implies $\varphi\in\cE(n)$. Let $L_1$, $L_2$, and $L_3$ be as defined in the proof of \Cref{lemma:c4ATMOSTc3}. Since $n$ is even and equality holds in \Cref{ineq:redbluecrucialineq}, the set $L_1\setminus f(L_3)$ must be empty. We claim that $L_3=\emptyset$\,: indeed, the proof of \Cref{lemma:c4ATMOSTc3} shows that at each step $i$ of the induction, the set $U_i$ contains at least two distinct elements; if it were the case that $L_3\neq\emptyset$ then
$$U_l\setminus\{f(w_l)\}=N_r(w_l)\setminus f(W_l)\subseteq L_1\setminus f(L_3)$$
would be nonempty. All vertices $u\in V(K_n)$ thus have $u\in L_2$ and $d_r(u)=d_b(u)+1$.

Suppose equality is attained in \pref{ineq:thmredblueineq} and $n$ is odd. We may assume there is no vertex $u$ for which $d_g(u)=n-1$\,: if there were one then it could be removed while maintaining equality in \pref{ineq:thmredblueineq}, leaving us in the case when $n$ is even. We claim that $L_1\neq\emptyset$\,: if $L_1$ were empty then the set $L_3$ must also be empty (by the argument in the previous paragraph), and all vertices $u\in V(G)$ satisfy $d_r(u)=d_b(u)+1$, implying $n$ is even. We may thus assume $L_1\neq\emptyset$. We claim that for all $u\in L_1$ one has $d_r(u)=d_b(u)$. Indeed, suppose $d_r(u)\leq d_b(u)-1$ for some $u\in L_1$. Invoking \Cref{ineq:redbluefuncineq}, it follows that
\begin{align*}
e_b(\varphi)+\frac{n-1}{2} &= \frac{1}{2}\left(d_b(u)+\sum_{v\in V(K_n)\setminus\{u\}}(d_b(v)+1)\right) \\
&\geq\frac{1}{2}\left((d_r(u)+1)+\sum_{v\in V(K_n)\setminus\{u\}}d_r(v)\right) = e_r(\varphi)+\frac{1}{2}\,,
\end{align*}
contradicting the assumption of equality in \pref{ineq:thmredblueineq}. Fix $u\in L_1$, which we have shown satisfies $d_r(u)=d_b(u)$. The graph $G:=K_n-u$ has an even number of vertices and, for any bijection $\sigma:V(K_{n-1})\to V(G)$, the coloring $\psi:=\varphi\circ\sigma$ of $E(K_{n-1})$ meets \pref{ineq:thmredblueineq} with equality. Hence the analysis for the case when $n$ is even implies $\psi\in\cE(n-1)$, which is to say there is a partition $V(K_{n-1})=H_1\sqcup\cdots\sqcup H_k$ in which $H_i=A_i\sqcup B_i$ and $|A_i|=|B_i|$ for all $i\in[k]$. Notice that the only manner whereby $d_r(u)=d_b(u)$ can be satisfied in $\varphi$ is if there exists $i$ such that $N_r(u)\in\{A_i,B_i\}$ and $\varphi(uv)$ is green for all $v\not\in H_i$, which is precisely the assertion that $\varphi\in\cE(n)$, completing the proof.
\end{proof}

The following lemma is the main technical assertion invoked in the proof of \Cref{lemma:EdgeColStab}.

\begin{lemma}\label{lemma:stabrecursion}
Let $\alpha\in(0,\frac{1}{8})$ and let $n\in\N$ be even. Let $\varphi$ be a red--green--blue coloring of $E(K_n)$ such that there is no triangle colored (red, red, red) nor any colored (red, red, green). Assume $e_r(\varphi)\geq e_b(\varphi)+\frac{n}{2}-\alpha n^2$. Let $\eta\geq2\sqrt{2\alpha}$ and assume there exist at least $\eta n$ vertices $v$ with $d_r(v)>\eta n$. Then there exist disjoint subsets $X,Y,Z\subseteq V(K_n)$ satisfying the following conditions:
\begin{enumerate}[(\emph{\roman*})]
	\item the edges $E(X)\cup E(Y)$ are all colored blue,
	\item $|X|=|Y|$, $|X\cup Y\cup Z|\geq\eta n$, and $|Z|\leq5(2\alpha)^{1/4}n$,
	\item letting $S:=X\cup Y\cup Z$, it holds that $e_r(S,V(K_n)\setminus S)\leq8\sqrt{2\alpha}n^2$,
	\item letting $U:=V(K_n)\setminus S$, it holds that $e_r(U)\geq e_b(U)+\floor{|U|/2}-(\alpha+8\sqrt{2\alpha})n^2$.
\end{enumerate}
\end{lemma}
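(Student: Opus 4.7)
The plan is to use the high-red-degree hypothesis to seed a candidate pair of blue cliques $X$ and $Y$ joined by red edges, then absorb all defects into $Z$ using the slack in the near-equality from \Cref{lemma:c4ATMOSTc3}. I would fix a working threshold $\theta := (2\alpha)^{1/4}$ throughout, chosen so that $\theta^2 = \sqrt{2\alpha}$ matches the bound in (iii) and $\theta$ matches the bound in (ii).

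First, pick any vertex $v$ with $d_r(v) > \eta n$ (guaranteed by hypothesis) and let $Y_0 := N_r(v)$. The two forbidden-triangle conditions force $Y_0$ to be a blue clique: no edge inside $Y_0$ can be red (that would give a red triangle with $v$) or green (that would give an $(r,r,g)$-triangle with $v$). Then define
\[
X_0 := \{u \in V(K_n) \setminus Y_0 : d_r(u, Y_0) \geq (1 - \theta)|Y_0|\}.
\]
Any two $u_1, u_2 \in X_0$ share a common red neighbor $w \in Y_0$ whenever $2(1-\theta) > 1$, and then $u_1 w, u_2 w$ being red forces $u_1 u_2$ blue by the same two forbidden configurations; so $X_0$ is also a blue clique, and $v \in X_0$.

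Next, I would quantify the slack budget. Revisiting the proof of \Cref{lemma:c4ATMOSTc3}, the hypothesis $e_r(\varphi) \geq e_b(\varphi) + \floor{n/2} - \alpha n^2$ is equivalent to
\[
\sum_{u \in L_1 \setminus f(L_3)}(d_b(u) + 1 - d_r(u)) + \sum_{u \in L_3}\bigl(d_b(u) + d_b(f(u)) + 2 - d_r(u) - d_r(f(u))\bigr) \leq 2\alpha n^2,
\]
a sum of non-negative local terms. From this I would deduce that $||X_0| - |Y_0|| = O(\sqrt{\alpha}\, n/\eta)$ (by counting red edges between $X_0$ and $Y_0$ two ways against the slack), and that the total number of non-blue edges internal to $X_0 \cup Y_0$ as well as the total number of red edges leaving $X_0 \cup Y_0$ are each $O(\sqrt{\alpha}\,n^2)$. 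I then form $X \subseteq X_0$, $Y \subseteq Y_0$, and $Z$ by a cleanup: move to $Z$ the excess vertices needed to make $|X|=|Y|$, any vertex in $X_0 \cup Y_0$ carrying more than $\theta|Y_0|$ internal non-blue edges, and any vertex of $X_0 \cup Y_0$ carrying more than $\theta|Y_0|$ red edges leaving $X_0 \cup Y_0$. Each of these populations has size $O((2\alpha)^{1/4}\, n)$ by pigeonhole from the $O(\sqrt{\alpha}\,n^2)$ defect budgets, yielding $|Z| \leq 5(2\alpha)^{1/4}n$, (i), and (ii). For (iii), every surviving boundary vertex has at most $\theta|Y_0|$ outgoing reds, so $e_r(S,U) \leq 2\theta|Y_0|\cdot|S| \leq 8\sqrt{2\alpha}\,n^2$. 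For (iv), decompose $e_r(\varphi) = e_r(U) + e_r(X,Y) + e_r(S,U) + e_r(Z, X \cup Y)$ and analogously for $e_b$; since $|X|=|Y|$ contributes exactly $|X|$ to $\floor{n/2}$ on the right-hand side and $X \cup Y$ contributes $|X||Y|$ reds and $\binom{|X|}{2}+\binom{|Y|}{2}$ blues, subtracting those exact contributions from the original near-equality shows the residual deficit is at most $\alpha n^2 + e_r(S,U) \leq (\alpha + 8\sqrt{2\alpha})n^2$.

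The main obstacle I anticipate is the simultaneous calibration of $\theta$ and the three cleanup thresholds so that $|Z|$, $e_r(S,U)$, and the residual deficit all satisfy the prescribed bounds at once. The total slack from \Cref{lemma:c4ATMOSTc3} is only $2\alpha n^2$, and it must be charged across (a) the size imbalance $||X_0|-|Y_0||$, (b) the internal blue-clique defects, and (c) the outgoing red edges from $S$, without any one category exhausting the budget. The choice $\theta = (2\alpha)^{1/4}$ is essentially forced by requiring both $\theta \cdot n = O((2\alpha)^{1/4} n)$ (for $|Z|$) and $\theta^2 \cdot n^2 = O(\sqrt{2\alpha}\, n^2)$ (for $e_r(S,U)$) from the same defect budget; once these accountings line up, the blue-clique and bipartite-red structure of $X \cup Y$ is immediate from the forbidden triangles.
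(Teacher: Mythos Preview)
Your approach has a genuine gap at the very first step: picking an \emph{arbitrary} vertex $v$ with $d_r(v)>\eta n$ does not work. Consider the extremal single-block coloring on $V=A\cup B$ with $|A|=|B|=n/2$ (all edges inside $A$ and inside $B$ blue, all edges between them red), and modify it by changing all but $\eta n+1$ of the red edges incident to a single vertex $v\in A$ to blue. This creates slack $e_b+\tfrac n2-e_r=n(1-2\eta)-2\leq\alpha n^2$ for large $n$, introduces no green edges, and every vertex still has $d_r>\eta n$, so all hypotheses of the lemma hold. If you now choose this particular $v$, then $Y_0=N_r(v)\subseteq B$ has size $\eta n+1$, while every vertex of $A$ has all of $Y_0$ as red neighbors, so $X_0=A$ has size $n/2$. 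The imbalance $|X_0|-|Y_0|\approx (\tfrac12-\eta)n$ forces $|Z|$ to be a fixed fraction of $n$, not $O((2\alpha)^{1/4}n)$; and $e_r(X_0\cup Y_0,\,V\setminus(X_0\cup Y_0))\approx \tfrac{n^2}{4}(1-2\eta)$, not $O(\sqrt{2\alpha}\,n^2)$. Both (ii) and (iii) fail badly. Your claimed bounds $||X_0|-|Y_0||=O(\sqrt\alpha\,n/\eta)$ and $e_r(X_0\cup Y_0,\overline{X_0\cup Y_0})=O(\sqrt\alpha\,n^2)$ simply do not follow from the $2\alpha n^2$ slack budget: that budget controls sums of per-vertex degree discrepancies, not crossing red edges for an arbitrarily seeded pair of cliques.

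The paper's proof sidesteps this by choosing $x$ with \emph{maximum} red degree in a large near-balanced set $U$, and then $y$ with maximum red degree in $N_r(x)\cap U$. With $X=N_r(y)$ and $Y'=N_r(x)$, the bound $e_r(S,\overline S)\leq 8\sqrt{2\alpha}\,n^2$ is proved directly by a case analysis that repeatedly exploits these two maximality choices: any putative high-red-degree vertex into $\overline S$ would contradict the maximality of $x$ or $y$. Only \emph{after} (iii) is secured does the paper trim the excess $||X|-|Y'||$ into $Z$, and that imbalance is then bounded using the global near-equality combined with the already-established (iii). Your cleanup strategy inverts this dependency---you try to bound $|Z|$ and the crossing reds from the slack alone---but the counterexample shows the slack is far too weak without the maximality input.
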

\begin{proof}
Let $L_1$, $L_2$, and $L_3$ be as defined in the proof of \Cref{lemma:c4ATMOSTc3}. As shown in the proof of \Cref{eqn:edgecolorextremalstructure}, we may assume $L_1\neq\emptyset$\,: if $L_1$ were empty then in fact $\varphi\in\cE(n)$. Note it may be the case that $L_3=\emptyset$. Fix a (possibly empty) injection $f:L_3\to L_1$ such that $f(u)\in N_r(u)$ for all $u\in L_3$, as provided by the proof of \Cref{lemma:c4ATMOSTc3}. Let
\begin{align*}
D_r &:= \frac{1}{2}\left(\sum_{u\in L_1\setminus f(L_3)}d_r(u)+\sum_{u\in L_3}(d_r(u)+d_r(f(u)))\right)\,, \\
D_b &:= \frac{1}{2}\left(\sum_{u\in L_1\setminus f(L_3)}(d_b(u)+1)+\sum_{u\in L_3}(d_b(u)+d_b(f(u))+2)\right)\,,
\end{align*}
and define the sets
\begin{align*}
M_1 &:= \big\{u\in L_1\setminus f(L_3):d_r(u)\geq d_b(u)+1-\sqrt{2\alpha}n\big\}\,, \\
M_3 &:= \big\{u\in L_3: d_r(u)+d_r(f(u))\geq d_b(u)+d_b(f(u))+2-\sqrt{2\alpha}n\big\}\,.
\end{align*}
By our assumption that $e_r(\varphi)\geq e_b(\varphi)+\frac{n}{2}-\alpha n^2$ and by \pref{ineq:redbluecrucialineq} we have $D_b-\alpha n^2\leq D_r\leq D_b$, which implies
\begin{equation}
|(L_1\setminus f(L_3))\setminus M_1|+|L_3\setminus M_3|\leq\sqrt{2\alpha}n\,.\label{eqn:L1fL3M1L3M3sqrt2alphan}
\end{equation}
For all $u\in M_3$, using the fact that $f(u)\in L_1$, we have
\begin{equation}
\begin{aligned}
0\leq d_b(f(u))+1-d_r(u) &\leq \sqrt{2\alpha}n\,, \\
0\leq d_b(u)+1-d_r(f(u)) &\leq \sqrt{2\alpha}n\,,\label{ineq:M3twoineqs}
\end{aligned}
\end{equation}
since otherwise we would contradict the definition of $M_3$. Let $U:=M_1\cup f(M_3)\cup L_2\cup M_3$ and note that \pref{eqn:L1fL3M1L3M3sqrt2alphan} implies $|U|\geq(1-2\sqrt{2\alpha})n$. Make the following definitions
\begin{align*}
x &:= \argmax\{d_r(u):u\in U\}\,, \\
y &:= \argmax\{d_r(u):u\in N_r(x)\cap U\}\,,
\end{align*}
where $\argmax$ selects any vertex $u$ achieving the maximum. Define also the vertex subsets $X:=N_r(y)$, $Y':=N_r(x)$, and $S:=X\cup Y'$. Note that by the hypotheses of the lemma, $d_r(x)>2\sqrt{2\alpha}n$, so $N_r(x)\cap U$ is nonempty and $y$ is well-defined.

Letting $\beta:=8\sqrt{2\alpha}$, we claim that
\begin{equation}
e_r(S,\overline{S})\leq\beta n^2\,,\label[inequality]{ineq:rededgesStoVminusS}
\end{equation}
where $\overline{S}:=V(G)\setminus S$. Suppose that instead the opposite of \pref{ineq:rededgesStoVminusS} held. Then at least one of the two inequalities $e_r(X,\overline{S})>\beta n^2/2$ or $e_r(Y',\overline{S})>\beta n^2/2$ must hold. We will show both cases lead to a contradiction.

\begin{enumerate}[wide, labelwidth=!, labelindent=0pt, leftmargin=0pt]
	\item[\underline{\textbf{Case 1.}}] First assume $e_r(X,\overline{S})>\beta n^2/2$. Let $W:=\bigcup E_r(X,\overline{S})$ be the vertex set covered by the edges $E_r(X,\overline{S})$, and define the bipartite graph $J:=(W,E_r(X,\overline{S}))$. Since $J$ has at least $\beta n^2/2$ edges, it contains a subgraph $J'$ with minimum degree $>\beta n/2$ (see e.g. \cite[p.~6]{diestel2017graph}). In particular $|X\cap W|>\beta n/2$. Notice that $2\sqrt{2\alpha}n<\beta n/2$, hence $U\cap X\cap W$ contains at least one vertex, say $w$. Notice that $y\in N_r(w)$ and $d_r(w,W)\geq\beta n/2$, so the fact that $N_r(w)$ is a blue clique implies $d_b(y)+1\geq d_r(x)+\beta n/2$. We consider the following three mutually exclusive cases.

	\item[\underline{\textbf{Case 1a.}}] If $y\in M_1\cup L_2$ then we compute
	$$d_r(y)\geq d_b(y)+1-\sqrt{2\alpha}n\geq d_r(x)+\frac{\beta n}{2}-\sqrt{2\alpha}n>d_r(x)\,,$$
	contradicting that $x$ was chosen with maximum red degree.
	\item[\underline{\textbf{Case 1b.}}] If $y\in f(M_3)$ then there exists $u\in M_3$ such that $y=f(u)$. The inequalities in \pref{ineq:M3twoineqs} then imply $d_r(u)\geq d_b(y)+1-\sqrt{2\alpha}n$, and we deduce similarly to Case 1a that $d_r(u)>d_r(x)$, a contradiction.
	\item[\underline{\textbf{Case 1c.}}] If $y\in M_3$ then $d_r(y)>d_b(y)+1\geq d_r(x)$ (since $y\in N_r(x)$), a contradiction.
\end{enumerate}

\begin{enumerate}[wide, labelwidth=!, labelindent=0pt, leftmargin=0pt]
	\item[\underline{\textbf{Case 2.}}] Now assume $e_r(Y',\overline{S})>\beta n^2/2$. First note $y\not\in M_3$\,: if $y\in M_3$ then by the proof of \Cref{lemma:c4ATMOSTc3}, there exists a choice of the mapping $f$ such that $x=f(y)$, which implies $d_r(y)>d_r(x)$, instantiating a contradiction. As in Case 1, there exists a vertex $w\in Y'$ such that $d_r(w,\overline{S})>\beta n/2$. Since $x\in N_r(w)$ and $N_r(w)$ is a blue clique, it follows that $d_b(x)+1>d_r(y)+\beta n/2$, and we will use this fact several times in what follows. We consider the following cases.

	\item[\underline{\textbf{Case 2a.}}] Assume $x\in M_1\cup L_2$. First, if $y\in M_1\cup L_2$ then we compute
	\begin{align*}
	d_r(x) &\geq d_b(x)+1-\sqrt{2\alpha}n>d_r(y)+\frac{\beta n}{2}-\sqrt{2\alpha}n \\
	&\geq d_b(y)+1+\frac{\beta n}{2}-2\sqrt{2\alpha}n\geq d_r(x)+\frac{\beta n}{2}-2\sqrt{2\alpha}n>d_r(x)\,,
	\end{align*}
	where we used $2\sqrt{2\alpha}n<\beta n/2$, but this sequence of inequalities is impossible. Hence we may assume $y\in f(M_3)$ (since it was noted $y\not\in M_3$). Then there exists $z\in N_r(y,M_3)$ such that $y=f(z)$, hence \pref{ineq:M3twoineqs} and the fact that $x\in M_1\cup L_2$ imply
	\begin{align*}
	d_r(z) &\geq d_b(y)+1-\sqrt{2\alpha}n\geq d_r(x)-\sqrt{2\alpha}n \geq d_b(x)+1-2\sqrt{2\alpha}n \\
	&> d_r(y)+\frac{\beta n}{2}-2\sqrt{2\alpha}n=d_r(y)+2\sqrt{2\alpha}n\,,
	\end{align*}
	thus again by \pref{ineq:M3twoineqs}, we have that
	\begin{equation}
	d_b(z)+1\leq d_r(y)+\sqrt{2\alpha}n<d_r(z)-\sqrt{2\alpha}n\,.\label[inequality]{ineq:stabineqdbclesdrc}
	\end{equation}
	Since $z\in L_3$, it holds that $N_r(z)\subseteq L_1$ (as noted in the proof of \Cref{lemma:c4ATMOSTc3}). We let $N_b[z]:=N_b(z)\cup\{z\}$ and claim
	\begin{equation}
	K:=N_r(z)\setminus f(N_b[z]\cap L_3)\subseteq L_1\setminus f(L_3)\,.\label{eqn:NrzsetminusfNbzL3}
	\end{equation}
	Indeed, if there were a vertex $z'\in K$ not contained in $L_1\setminus f(L_3)$, then $z'\in f(L_3)$; hence there exists $x'\in L_3$ such that $z'=f(x')\in N_r(x')$. Now since $x',\,z\in N_r(z')$ the edge $(x',z)$ must be blue, but this contradicts the definition of $K$, namely that $f(x')\not\in K$.
	
	Using \pref{eqn:NrzsetminusfNbzL3}, we compute
	\begin{align*}
	|L_1\setminus f(L_3)| &\geq |N_r(z)\setminus f(N_b[z]\cap L_3)| \\
	&\geq |N_r(z)|-|f(N_b[z]\cap L_3)| \\
	&\geq d_r(z)-|N_b[z]| \\
	&= d_r(z)-(d_b(z)+1) \\
	&> \sqrt{2\alpha}n\,,
	\end{align*}
	where the third inequality holds by injectivity of $f$ and the last inequality uses \pref{ineq:stabineqdbclesdrc}. Note that for all $y'\in N_r(z)$ it holds that $d_r(y')\leq d_b(z)+1$ and $d_r(z)\leq d_b(y')+1$, so together with \pref{ineq:stabineqdbclesdrc} we deduce $d_b(y')+1\geq d_r(y')+\sqrt{2\alpha}n$. Using that $|K|>\sqrt{2\alpha}n$, we compute
	\begin{align*}
	D_b-D_r &\geq \frac{1}{2}\left(\sum_{u\in L_1\setminus f(L_3)}(d_b(u)+1)-\sum_{u\in L_1\setminus f(L_3)}d_r(u)\right) \\
	&\geq\frac{1}{2}\sum_{u\in K}(d_b(u)+1-d_r(u)) > \alpha n^2\,,
	\end{align*}
	contradicting our assumption that $D_r\geq D_b-\alpha n^2$.
	\item[\underline{\textbf{Case 2b.}}] If $x\in f(M_3)\cup M_3$ then by \pref{ineq:M3twoineqs} there exists $u\in N_r(x)$ such that
	$$d_r(u)\geq d_b(x)+1-\sqrt{2\alpha}n\geq d_r(y)+\frac{\beta n}{2}-\sqrt{2\alpha}n>d_r(y)\,,$$
	contradicting that $y$ has maximum degree among the vertices in $N_r(x)$.
\end{enumerate}

Hence we have proven \pref{ineq:rededgesStoVminusS}. If $|X|=|Y'|$ then set $Y:=Y'$ and $Z:=\emptyset$. If $|X|<|Y'|$ then we will remove vertices from $Y'$ to obtain a set of equal size as $X$, which is done as follows. Let $k:=|X|$ and $l:=|Y'|-|X|$. Using \pref{ineq:rededgesStoVminusS} and applying \Cref{lemma:c4ATMOSTc3} to the coloring $\varphi$ restricted to $\overline{S}$, we compute
\begin{align*}
e_r(\varphi) &= e_r(S,\overline{S}) + e_r(X,Y') + e_r(\overline{S}) \leq \beta n^2+k(k+l)+e_b(\overline{S})+\frac{n-2k-l}{2}\,, \\
e_b(\varphi) &\geq e_b(X) + e_b(Y') + e_b(\overline{S}) \\
&\geq \binom{k}{2}+\binom{k+l}{2}+e_b(\overline{S})=k(k+l)+\frac{l(l-1)}{2}-k+e_b(\overline{S})\,,
\end{align*}
which, together with the inequality $e_r(\varphi)\geq e_b(\varphi)+n/2-\alpha n^2$, quickly implies the bound $l\leq\sqrt{2(\alpha+\beta)}n\leq5(2\alpha)^{1/4}n$, as claimed. Let $Z\subseteq Y'$ be any subset with $|Z|=l$, and define $Y:=Y'\setminus Z$.

By construction, the edges $E(X)\cup E(Y)$ are all colored blue, $|X|=|Y|$, and $|S|>\eta n$ (since $d_r(x)>\eta n$). Defining $U:=V(K_n)\setminus S$ and applying \Cref{lemma:c4ATMOSTc3} to $S$, we compute
\begin{align*}
e_r(U) &\geq e_r(\varphi)-e_r(S)-\beta n^2 \geq \left(e_b(\varphi)+\frac{n}{2}-\alpha n^2\right)-\left(e_b(S)+\floor*{\frac{|S|}{2}}\right)-\beta n^2 \\ &\geq e_b(U)+\floor*{\frac{n-|S|}{2}}-(\alpha+\beta)n^2 = e_b(U)+\floor*{\frac{|U|}{2}}-(\alpha+\beta)n^2\,,
\end{align*}
where the first inequality invokes \pref{ineq:rededgesStoVminusS}. This completes the proof.
\end{proof}

\begin{proof}[Proof of \Cref{lemma:EdgeColStab}]
Define the function $f_0:[0,1]\to\R$ by $f_0(x)=x$. For all $k\in\N$ let $f_k:[0,1]\to\R$ and $F_k:[0,1]\to\R$ be defined by
\begin{align*}
f_k(x) &= \frac{64}{\epsilon^2}\left(f_{k-1}(x)+8\sqrt{2f_{k-1}(x)}\right)\,, \\
F_k(x) &= \sum_{l=0}^{k-1}\left(8\sqrt{2f_l(x)}+5(2f_l(x))^{1/4}\right)\,.
\end{align*}
Clearly $f_k(0)=0$ and $f_k$ is strictly increasing for all $k\in\N$ (since it is a sum and composition of such functions). Hence, letting $N:=\ceil{16/\epsilon}$, there exists $\delta_0\in(0,\frac{1}{8})$ such that $f_N(\delta_0)<\epsilon^2/2^{11}$. Similarly $F_k(0)=0$ and $F_k$ is strictly increasing for all $k\in\N$, so there exists $\delta_1\in(0,\frac{1}{8})$ such that $F_N(\delta_1)<\epsilon/4$.

We claim $\delta:=\min\{\delta_0,\delta_1,\epsilon^2/64\}$ suffices. So assume $e_r(\varphi)\geq e_b(\varphi)+\floor{n/2}-\delta n^2$, and we claim $d(\varphi,\cF(n))\leq\epsilon n^2$. If $n<8/\epsilon$ then the assertion is immediate since $\delta n^2<1$ and \Cref{eqn:edgecolorextremalstructure} proves $\varphi\in\cF(n)$. Henceforth we assume $n\geq8/\epsilon$. We may further assume $n$ is even: if $n$ is odd then we can select any vertex $v$ and edit each edge incident to $v$ to be green; since $n\leq\epsilon n^2/8$, this operation expends at most $\epsilon n^2/8$ Hamming distance; we may proceed with the coloring $\varphi$ restricted to $K_n-v$, and show this coloring is within $7\epsilon n^2/8$ Hamming distance of $\cF(n)$.

We apply \Cref{lemma:stabrecursion} inductively in the following manner. In each application of \Cref{lemma:stabrecursion} we set $\eta:=\epsilon/16$, noting that by our choice of $\delta$, the inequality $\eta\geq2\sqrt{2\alpha}$ will always hold in the following steps. Let $\alpha_0:=\delta$, $\varphi_0:=\varphi$, and $R_0:=V(K_n)$. For all $l\in\N$, if $|R_{l-1}|\leq\epsilon n/8$ or if $R_{l-1}$ has less than $\epsilon n/16$ vertices $u$ with $d_r(u)>\epsilon n/16$ (with respect to the coloring $\varphi_{l-1}$), then terminate the induction. Otherwise $|R_{l-1}|>\epsilon n/8$ and fix an arbitrary bijection $\psi_l:V(K_{n_l})\to R_{l-1}$, where $n_l:=|R_{l-1}|$. Define the red--green--blue coloring $\varphi_l:=\varphi\circ\psi_l$ of $E(K_{n_l})$. We then apply \Cref{lemma:stabrecursion} to the coloring $\varphi_l$ with $\alpha=f_l(\delta)$, yielding vertex subsets $X_l,Y_l,Z_l\subseteq V(K_{n_l})$ satisfying the conclusions of the lemma. Set $H_l:=\psi_l(X_l\cup Y_l)$, $S_l:=H_l\cup\psi_l(Z_l)$, and
$$R_l:=V(K_n)\setminus\bigcup_{j=1}^lS_j\,.$$
Notice that $|S_l|>\epsilon n/16$ for all $l$ (by the minimum red degree condition), so the induction terminates after some finite number $k\in\N$ of steps, where $k\leq N$.

Define the edge sets
\begin{equation*}
\begin{array}{lll}
\displaystyle A:=\bigcup_{l=1}^k(E(X_l)\cup E(Y_l))\,, & B:=E_b(\varphi)\setminus A\,, & \displaystyle C:=\bigcup_{l=1}^kE(X_l,Y_l)\,, \\[20pt]
\displaystyle D:=\bigcup_{1\leq i<j\leq k}E_r(S_i,S_j)\,, & \displaystyle E:=\bigcup_{l=1}^kE_r(Z_l,V(K_n))\,, & F:=E_r(R_k,V(K_n))\,,
\end{array}
\end{equation*}
where the notations $E_r$ and $E_b$ in those definitions refer to the coloring $\varphi$. Let $\psi$ be the coloring of $E(K_n)$ such that $\psi(e)$ is blue if $e\in A$, red if $e\in C$, and green otherwise. Notice that
\begin{equation}
\{e\in E(K_n):\varphi(e)\neq\psi(e)\}\subseteq B\cup(C\setminus E_r(\varphi))\cup D\cup E\cup F\,,\label{eqn:finalsetsinclusion}
\end{equation}
so it suffices to show the set on the right hand side of \pref{eqn:finalsetsinclusion} has size $\leq7\epsilon n^2/8$. By choice of $\delta$ and the bounds given by \Cref{lemma:stabrecursion} it holds that $|D\cup E|\leq\epsilon n^2/4$. Indeed, for all $l\in\N$ we edited the (at most $f_l(\delta)n^2$) red edges $E_r(S_l,R_l)$ of $\varphi$ to be green; and edited all edges incident to $Z_l$ to be green (there were at most $5(2f_l(\delta))^{1/4}n^2$ such edges). By definition of the condition for terminating the induction, there are at most $\epsilon n^2/8$ red edges incident to $R_k$, hence $|F|\leq\epsilon n^2/8$. To bound $|B|+|C\setminus E_r(\varphi)|$, we first compute
\begin{align*}
e_r(\varphi) &\leq |D\cup E\cup F|+\sum_{l=1}^ke_r(X_l,Y_l)\leq\frac{3\epsilon}{8}n^2+\sum_{l=1}^ke_r(X_l,Y_l) \\
e_b(\varphi) &\geq |A|+|B|\,.
\end{align*}
Noticing that $|A|+\floor{n/2}\geq\sum_{l=1}^ke_r(X_l,Y_l)+|C\setminus E_r(\varphi)|$, and invoking $e_r(\varphi)\geq e_b(\varphi)+\floor{n/2}-\delta n^2$, we compute that
$$\left(\frac{3\epsilon}{8}+\delta\right)n^2\geq|A|+\floor*{\frac{n}{2}}-\sum_{l=1}^ke_r(X_l,Y_l)+|B|\geq|B|+|C\setminus E_r(\varphi)|\,,$$
so using that $\delta<\epsilon/8$, we obtain $|B|+|C\setminus E_r(\varphi)|\leq\epsilon n^2/2$, completing the proof.
\end{proof}

\section*{Acknowledgments}
The authors thank Eoin Hurley for insightful discussions. The first author is supported in part by NSF grant DMS-2348743. The second author is supported by an NSF Graduate Research Fellowship.

\printbibliography

\appendix
\markboth{APPENDIX}{APPENDIX}

\section{Auxiliary Facts}
\begin{lemma}[\normalfont{\cite[p.~5]{bollobas1985random}}]\label{lemma:BinomialCoeffIneqs}
Let $n,m\geq1$ be integers. The following statements are valid.
\begin{enumerate}[(\emph{\roman*})]
	\item If $j$ is an integer such that $j\leq m\leq n$ then
	$$\binom{n-j}{m-j}{\binom{n}{m}}^{-1}\leq e^{-(1-m/n)j}\,.$$
	\item If $j$ is an integer such that $j\geq-m$ and $m\leq n-\max\{1,j\}$ then
	$$\binom{n}{m+j}\binom{n}{m}^{-1}\leq\left(\frac{n-m}{m}\right)^{j}\leq e^{-\big(1-\frac{n-m}{m}\big)j}\,.$$
	\item If $j,k$ are nonnegative integers such that $m+k\leq n-j$ then
	$$\binom{n-j}{m+k}\binom{n}{m}^{-1}\leq\left(\frac{n-m}{n}\right)^j\left(\frac{n-m}{m}\right)^k\,.$$
\end{enumerate}
\end{lemma}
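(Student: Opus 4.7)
The plan is to expand each ratio of binomial coefficients as a telescoping product of linear factors and then bound every factor uniformly by the ``average'' ratio. All three inequalities then reduce to the elementary estimates $1-x \leq e^{-x}$ and $1+x \leq e^x$.

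For part (\emph{i}), I will write
\[
\binom{n-j}{m-j}\binom{n}{m}^{-1} = \frac{(n-j)!\, m!}{n!\,(m-j)!} = \prod_{i=0}^{j-1}\frac{m-i}{n-i}.
\]
Since $m\leq n$, the function $i \mapsto (m-i)/(n-i)$ is nonincreasing on $[0,j-1]$, so each factor is at most $m/n$. Therefore the product is at most $(m/n)^j = (1-(1-m/n))^j \leq e^{-(1-m/n)j}$, the last step by $1-x\leq e^{-x}$.

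For part (\emph{ii}), assume first $j\geq 0$. I will expand
\[
\binom{n}{m+j}\binom{n}{m}^{-1} = \prod_{i=1}^{j}\frac{n-m-i+1}{m+i},
\]
and observe that each factor is at most $(n-m)/m$ (since moving $i$ from $1$ upward only increases the denominator and decreases the numerator). This yields the middle inequality. The right-hand inequality is the case $y=(n-m)/m$ of $y \leq e^{y-1}$, which is $1+x\leq e^x$ with $x=y-1$. For $j<0$ (allowed since $j\geq -m$), I can invert: writing $j=-l$ with $l>0$, one has
\[
\binom{n}{m-l}\binom{n}{m}^{-1} = \prod_{i=0}^{l-1}\frac{m-i}{n-m+i+1}\leq \left(\frac{m}{n-m+1}\right)^l \leq \left(\frac{n-m}{m}\right)^{-l},
\]
where the last step uses that the factor $m/(n-m+1)$ is the reciprocal of $(n-m+1)/m \geq (n-m)/m$; this is exactly $((n-m)/m)^j$. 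The exponential bound then follows as in the nonnegative case.

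For part (\emph{iii}), the cleanest route is to split
\[
\binom{n-j}{m+k}\binom{n}{m}^{-1} = \frac{\binom{n-j}{m+k}}{\binom{n-j}{m}}\cdot\frac{\binom{n-j}{m}}{\binom{n}{m}}.
\]
The first factor is bounded by applying part (\emph{ii}) with $n$ replaced by $n-j$, giving at most $((n-j-m)/m)^k \leq ((n-m)/m)^k$. The second factor telescopes as $\prod_{i=0}^{j-1}(n-m-i)/(n-i)\leq ((n-m)/n)^j$, using monotonicity just as in part (\emph{i}). Multiplying gives the claim. The only care needed throughout is keeping track of the hypotheses $j\leq m\leq n$, $j\geq -m$, and $m+k\leq n-j$ so that all terms in the telescoping products are positive and the monotonicity arguments are valid; this is the sole (minor) obstacle.
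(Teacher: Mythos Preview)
The paper does not give its own proof of this lemma; it is stated in the Auxiliary Facts appendix with a citation to Bollob\'as and no accompanying argument, so there is nothing to compare your approach against. Your telescoping-product argument is the standard elementary proof and is correct for parts (i), (iii), and the $j\geq 0$ case of (ii).

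There is, however, a genuine slip in your treatment of part (ii) for $j<0$. You conclude with ``The exponential bound then follows as in the nonnegative case,'' but that step relies on $y\leq e^{y-1}$ (with $y=(n-m)/m$), and raising this to a \emph{negative} power reverses the inequality: you would need $y\geq e^{y-1}$, which fails whenever $y\neq 1$. Concretely, with $n=3$, $m=1$, $j=-1$ one has $((n-m)/m)^j=1/2$ while $e^{-(1-(n-m)/m)j}=e^{-1}<1/2$, so the second displayed inequality in (ii) is actually false for negative $j$. This is almost certainly a harmless misstatement in the lemma as printed (the paper's applications do not use that branch), but your proof should not claim to establish it. The first inequality of (ii), which you do prove correctly for $j<0$, is the one that matters.
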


\begin{lemma}[Convergence of hypergeometric to binomial]\label{lemma:ConvHypergeomToBinomial}
Let $n,m,k,l$ be nonnegative integers such that $k\leq n$ and $l\leq m\leq n+\min\{0,k-l\}$. If $m=\Theta(n)$, $k=O(\log n)$, and $l=O(\log n)$ as $n\to\infty$ then
$$\binom{n-k}{m-l}\binom{n}{m}^{-1}\sim\left(\frac{m}{n}\right)^l\left(1-\frac{m}{n}\right)^{k-l}\,.$$
\end{lemma}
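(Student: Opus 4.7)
The plan is to expand the ratio of binomial coefficients into a product of linear factors and then approximate each factor, tracking the accumulated error. Writing
\[
\binom{n-k}{m-l}\binom{n}{m}^{-1}
= \frac{m!/(m-l)!}{n!/(n-k)!}\cdot\frac{(n-m)!}{(n-k-m+l)!}
= \frac{\prod_{j=0}^{l-1}(m-j)\cdot\prod_{j=0}^{k-l-1}(n-m-j)}{\prod_{j=0}^{k-1}(n-j)}\,,
\]
I would estimate each of the $l+(k-l)+k = 2k$ factors individually. Since $m=\Theta(n)$, $k=O(\log n)$, and $l=O(\log n)$, each factor in the numerator and denominator is of the form $X(1+O(\log n / n))$ with $X\in\{m,\,n-m,\,n\}$.

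More concretely, for $0\leq j\leq l-1$ we have $m-j = m(1-j/m)$ with $j/m = O(\log n / n)$, so $\prod_{j=0}^{l-1}(m-j) = m^l\bigl(1+O((\log n)^2/n)\bigr)$ by taking logarithms and using $\log(1-x)=-x+O(x^2)$ together with $\sum_{j=0}^{l-1} j/m = O((\log n)^2/n)$. The same estimate works for the $(n-m-j)$ and $(n-j)$ products, giving $\prod_{j=0}^{k-l-1}(n-m-j) = (n-m)^{k-l}(1+O((\log n)^2/n))$ and $\prod_{j=0}^{k-1}(n-j) = n^k(1+O((\log n)^2/n))$.

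Multiplying these three estimates together, the ratio becomes
\[
\frac{m^l \cdot (n-m)^{k-l}}{n^k}\bigl(1+O((\log n)^2/n)\bigr) = \left(\frac{m}{n}\right)^l\left(1-\frac{m}{n}\right)^{k-l}(1+o(1))\,,
\]
which is precisely the claimed asymptotic. There is no real obstacle here: the only thing to check is that the error $(\log n)^2/n$ tends to zero, which it plainly does. The hypothesis $m\leq n + \min\{0,k-l\}$ merely ensures the factors in the numerator of $(n-m-j)$ are nonnegative so the hypergeometric coefficient is well-defined.
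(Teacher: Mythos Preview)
The paper states this lemma as an auxiliary fact without proof, so there is no argument to compare against; your direct product expansion is the standard way to verify it and is correct. One small caveat: your identity
\[
\frac{(n-m)!}{(n-k-m+l)!}=\prod_{j=0}^{k-l-1}(n-m-j)
\]
tacitly assumes $k\geq l$. When $k<l$ the ratio is instead $\bigl(\prod_{j=1}^{l-k}(n-m+j)\bigr)^{-1}$, but the same estimate $(n-m)^{k-l}\bigl(1+O((\log n)^2/n)\bigr)$ goes through and the conclusion is unchanged.
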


\begin{lemma}\label{lemma:erdosgallai}
If $G$ is a graph on $n$ vertices and $m$ edges then $G$ has a matching of size at least $\floor{\min\{\frac{\sqrt{2}}{4}\sqrt{m},\frac{m}{n}\}}$. Hence if $m=\alpha n^2$ then $G$ has a matching of size at least $\frac{1}{8}\alpha n$.
\end{lemma}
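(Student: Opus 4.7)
The plan is to derive the lemma from the combinatorial structure of a maximum matching $M$ of $G$. Let $\nu := |M|$ and $U := V(M)$, so $|U| = 2\nu$. The set $V \setminus U$ is independent, since any edge inside it could be added to $M$, contradicting maximality. Hence every edge of $G$ has at least one endpoint in $U$, and the task is to bound the edges inside $U$ and across the bipartition $(U, V\setminus U)$.

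The crucial step is a tight bound on $e(U, V\setminus U)$ coming from the absence of a length-three augmenting path. For each matched edge $xy \in M$, set $A := N(x) \cap (V\setminus U)$ and $B := N(y) \cap (V\setminus U)$. If there existed distinct $u \in A$ and $v \in B$, the swap $M \mapsto (M \setminus \{xy\}) \cup \{xu, yv\}$ would enlarge the matching, contradicting the maximality of $M$. Therefore either $A = \emptyset$, or $B = \emptyset$, or $A = B$ consists of a single common neighbor of $x$ and $y$ in $V\setminus U$. In every case $|A| + |B| \leq n - 2\nu$ provided $n - 2\nu \geq 2$, and summing over the $\nu$ edges of $M$ gives $e(U, V\setminus U) \leq \nu(n-2\nu)$. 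Combining with the trivial $e(U) \leq \binom{2\nu}{2}$ yields
$$m \;\leq\; \binom{2\nu}{2} + \nu(n - 2\nu) \;=\; \nu(n-1),$$
so $\nu \geq m/(n-1) \geq m/n$. The degenerate cases $n - 2\nu \in \{0,1\}$ force $\nu \geq (n-1)/2$, and the inequality $\nu \geq m/n$ then follows from $m \leq \binom{n}{2}$.

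For the second branch of the minimum, observe that $\frac{\sqrt{2}}{4}\sqrt{m} = \sqrt{m/8}$ is the smaller of the two exactly when $m \geq n^2/8$; in this regime $m/n \geq \sqrt{m/8}$, so the bound $\nu \geq m/n$ already established implies $\nu \geq \sqrt{m/8}$ as well. Taking floors and using integrality of $\nu$ yields the main claim, and the corollary $\nu \geq \alpha n /8$ for $m = \alpha n^2$ follows by substitution, since both $\alpha n$ and $\sqrt{\alpha/8}\,n$ exceed $\alpha n /8$ for all $\alpha \in (0,1]$. I do not anticipate a substantive obstacle: the only care required is the short case analysis on $A,B$ that feeds into the $e(U, V\setminus U) \leq \nu(n-2\nu)$ estimate, together with handling the two degenerate situations for $n - 2\nu$, both of which are essentially immediate from $\nu$ already being close to $n/2$.
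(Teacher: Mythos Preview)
Your proof is correct and takes a genuinely different route from the paper's. The paper invokes the Erd\H{o}s--Gallai theorem as a black box, bounding $m$ by $\max\{\binom{2\nu+1}{2},(n-\nu)\nu+\binom{\nu}{2}\}$ and then checking that both branches are below $m$ when $\nu<\lfloor\mu\rfloor$. You instead give a direct, self-contained augmenting-path argument: for each matching edge $xy$, maximality forbids distinct unmatched neighbours $u\in N(x)\setminus U$ and $v\in N(y)\setminus U$, which caps the total contribution to $e(U,V\setminus U)$ by $n-2\nu$ per edge (outside the two degenerate cases you handle separately). This yields $m\le\nu(n-1)$, hence $\nu\ge m/n$.

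Your bound $\nu\ge m/n$ is in fact strictly stronger than what the lemma asks for, which makes the $\sqrt{m/8}$ branch entirely superfluous: since $\min\{\sqrt{m/8},m/n\}\le m/n\le\nu$, you are done immediately once $\nu\ge m/n$ is established. The case split on whether $m\ge n^2/8$ in your second paragraph is therefore unnecessary (though not wrong). The paper's approach has the advantage of citing a standard result, while yours is more elementary and transparent about why the bound holds; both are short.

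One small remark on the degenerate cases: for $n-2\nu=1$ the per-edge bound $|A|+|B|\le n-2\nu$ can genuinely fail (e.g.\ $K_{2\nu+1}$, where a single unmatched vertex may be adjacent to both endpoints of a matching edge), so it is essential that you treat this separately rather than absorb it into the main estimate --- which you do correctly.
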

\begin{proof}
If $0\leq m\leq\max\{n,7\}$ then the assertion is obvious, so we assume $m>\max\{n,7\}$ (which implies $\min\{\frac{\sqrt{2}}{4}\sqrt{m},\frac{m}{n}\}>1$). Let $\nu(G)$ denote the size of a maximum matching in $G$. \citeauthor{erdos1961minimal} proved in \cite{erdos1961minimal} that every graph $H$ on $k$ vertices with $\nu:=\nu(H)$ satisfies
$$e(H)\leq\max\left\{\binom{2\nu+1}{2},(k-\nu)\nu+\binom{\nu}{2}\right\}\,.$$
Let $\mu:=\min\{\frac{\sqrt{2}}{4}\sqrt{m},\frac{m}{n}\}$. Since $\mu\leq\frac{1}{2}(\sqrt{2m}-1)$, we have
$$m\geq2\mu^2+2\mu-\frac{1}{2}>\frac{(2\mu+1)\cdot2\mu}{2}\geq\binom{2\floor{\mu}+1}{2}\,,$$
where the second inequality uses $\mu>1$. Since $x\mapsto(n-x)x+\frac{1}{2}x(x-1)$ is strictly increasing for $x\in[1,\frac{n}{2}]$, and since (by completing the square) $n-\sqrt{n^2-2m}>\frac{m}{n}\geq\mu$, we have
$$(n-\floor{\mu})\floor{\mu}+\binom{\floor{\mu}}{2}\leq(n-\mu)\mu+\frac{\mu(\mu-1)}{2}\leq n\mu-\frac{\mu^2}{2}<m\,.$$
It follows from the theorem of \citeauthor{erdos1961minimal} that $\nu(G)\geq\floor{\mu}$.
\end{proof}

\begin{lemma}\label{lemma:GnmLeqGnp}
Let $1\leq m\leq n$ be integers and let $p:=m/n$. Let $S$ be the uniformly random subset of $[n]$ of size $m$, and let $T$ be the random subset of $[n]$ where each element is included independently with probability $p$. For any event $A\subseteq2^{[n]}$,
$$\bbP_S\{A\}\leq\sqrt{8np}\cdot\bbP_T\{A\}\,.$$
\end{lemma}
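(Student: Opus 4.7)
The plan is to use the standard conditioning trick: when we condition the binomial random set $T$ on its size being exactly $m$, the conditional distribution is uniform on $\binom{[n]}{m}$, hence equals the law of $S$. Concretely, I would observe
\[
\bbP_T\{A\}\,\geq\,\bbP_T\{A\cap\{|T|=m\}\}\,=\,\bbP_T\{A\mid |T|=m\}\cdot\bbP_T\{|T|=m\}\,=\,\bbP_S\{A\}\cdot\bbP_T\{|T|=m\},
\]
so the lemma reduces to showing $\bbP_T\{|T|=m\}\geq 1/\sqrt{8np}$ for $1\leq m\leq n$.

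For $m=n$ this is trivial since $\bbP_T\{|T|=n\}=1$ and $\sqrt{8n}\geq 1$. For $1\leq m\leq n-1$, I would plug the Stirling bounds $n!\geq\sqrt{2\pi n}(n/e)^n$ and $k!\leq e\sqrt{k}(k/e)^k$ into
\[
\bbP_T\{|T|=m\}=\binom{n}{m}\left(\tfrac{m}{n}\right)^m\left(1-\tfrac{m}{n}\right)^{n-m},
\]
so the factorial estimates produce a clean cancellation $n^n/(m^m(n-m)^{n-m})$ against $(m/n)^m((n-m)/n)^{n-m}$, leaving
\[
\bbP_T\{|T|=m\}\,\geq\,\frac{\sqrt{2\pi}}{e^2\sqrt{np(1-p)}}\,\geq\,\frac{1}{\sqrt{8np(1-p)}}\,\geq\,\frac{1}{\sqrt{8np}},
\]
where the middle inequality holds because $\sqrt{2\pi}/e^2\cdot\sqrt{8}>1$ (a direct numerical check). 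Substituting this lower bound into the first display yields the claim.

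There is no real obstacle here; the only thing to be careful about is the boundary case $m=n$ (handled trivially) and ensuring the Stirling constants work out, which they do with some slack. If one preferred to avoid any numerical check, one could instead use the slightly weaker bound $\bbP_T\{|T|=m\}\geq 1/(en\sqrt{p(1-p)})$ that follows from more elementary Stirling bounds and absorb the constant by a correspondingly weaker constant in front of the $\sqrt{np}$ factor; the stated form $\sqrt{8np}$ is then a matter of convention.
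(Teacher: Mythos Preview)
Your approach is exactly the paper's: condition $T$ on $\{|T|=m\}$ to obtain the law of $S$, then lower-bound $\bbP_T\{|T|=m\}$. The paper carries out the second step by invoking the prepackaged inequality $\binom{n}{k}\geq\sqrt{\tfrac{n}{8k(n-k)}}\,2^{H(k/n)n}$ (MacWilliams--Sloane), which gives precisely $\bbP_T\{|T|=m\}\geq 1/\sqrt{8np(1-p)}$.

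The only issue in your write-up is the numerical check: $\sqrt{2\pi}/e^2\cdot\sqrt{8}=4\sqrt{\pi}/e^2\approx 7.09/7.39<1$, so the crude Stirling pair $n!\geq\sqrt{2\pi n}(n/e)^n$, $k!\leq e\sqrt{k}(k/e)^k$ yields constant $e^4/(2\pi)\approx 8.69$ rather than $8$. (Indeed, the $\sqrt{8}$ bound is tight at $n=2$, $m=1$, where $\bbP_T\{|T|=1\}=1/2=1/\sqrt{8\cdot 2\cdot\tfrac12\cdot\tfrac12}$, so no Stirling estimate that is loose at $1!$ or $2!$ can recover it.) Your final remark is therefore the honest fix: either cite the sharper binomial bound, or accept a slightly larger constant in place of $\sqrt{8}$.
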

\begin{proof}
Using the fact that
$$\binom{n}{k}\geq\sqrt{\frac{n}{8k(n-k)}}2^{H(k/n)n}\,,$$
for $1\leq k\leq n-1$ (e.g. \cite[p.~309]{macwilliams1977theory}), we compute that
\begin{align*}
\bbP_T\{A\} &= \sum_{k=0}^{n}\bbP_T\{A\,|\,|T|=k\}\bbP_T\{|T|=k\} \\
&\geq \bbP_T\{A\,|\,|T|=np\}\bbP_T\{|T|=np\} \\
&= \bbP_S\{A\}\binom{n}{np}p^{np}(1-p)^{n(1-p)} \\
&\geq \bbP_S\{A\}\sqrt{\frac{1}{8p(1-p)n}}2^{H(p)n}p^{np}(1-p)^{n(1-p)} \\
&= \bbP_S\{A\}\sqrt{\frac{1}{8p(1-p)n}}\,,
\end{align*}
completing the proof.
\end{proof}

\begin{lemma}\label{lemma:Wrandomgraphazuma}
Let $A\subseteq[0,1]^2$ be a measurable set. Let $X_1,\dots,X_n$ be i.i.d. uniformly distributed on $[0,1]$. For all $1\leq i<j\leq n$ let $Y_{ij}$ be the indicator for the event $(X_i,X_j)\in A$. For all $\delta>0$ the random variable $Y:=\sum_{1\leq i<j\leq n}Y_{ij}$ satisfies
$$\bbP\left\{Y\leq(1-\delta)|A|\binom{n}{2}\right\}\leq\exp\left(-\frac{\delta^2|A|^2}{32}\cdot n\right)\,.$$
\end{lemma}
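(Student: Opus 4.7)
The plan is to view $Y$ as a function of the $n$ independent random variables $X_1,\dots,X_n$ and apply a bounded-differences / Azuma-type concentration inequality. First, note that $\E[Y_{ij}]=|A|$ for all $i<j$, so $\E[Y]=|A|\binom{n}{2}$. Writing $Y=f(X_1,\dots,X_n)$ where $f(x_1,\dots,x_n):=\sum_{i<j}\mathbf{1}\{(x_i,x_j)\in A\}$, the key observation is that replacing any single coordinate $x_k$ by another value $x'_k$ changes at most $n-1$ of the indicators (those with $i=k$ or $j=k$), and each indicator changes by at most $1$. Thus $f$ satisfies the bounded-differences condition with constants $c_k=n-1$ for every $k\in[n]$.

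By McDiarmid's inequality (equivalently, Azuma--Hoeffding applied to the Doob martingale $Z_k:=\E[Y\mid X_1,\dots,X_k]$, for which the increments satisfy $|Z_k-Z_{k-1}|\leq n-1$), we obtain for every $t>0$ the one-sided bound
\[
\bbP\{Y\leq\E[Y]-t\}\leq\exp\left(-\frac{2t^2}{\sum_{k=1}^n c_k^2}\right)=\exp\left(-\frac{2t^2}{n(n-1)^2}\right)\,.
\]
Setting $t:=\delta|A|\binom{n}{2}=\delta|A|\cdot\tfrac{n(n-1)}{2}$ and simplifying yields
\[
\bbP\left\{Y\leq(1-\delta)|A|\binom{n}{2}\right\}\leq\exp\left(-\frac{\delta^2|A|^2n}{2}\right)\,,
\]
which is considerably stronger than the claimed bound with constant $1/32$, so the inequality follows. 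There is essentially no obstacle here: the slack in the constant shows that the coarse bound $|Z_k-Z_{k-1}|\leq n-1$ (ignoring any cancellation between the ``past'' and ``future'' contributions of revealing $X_k$) already suffices, so no refined martingale analysis is needed.
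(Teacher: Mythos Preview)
Your proof is correct and follows essentially the same approach as the paper: both express $Y$ as a function of the independent variables $X_1,\dots,X_n$, observe the bounded-differences condition with constant $n-1$ (the paper uses $n$), and apply McDiarmid/Azuma to the Doob martingale. The only cosmetic difference is that the paper quotes the bounded-differences inequality in the form $\exp(-\lambda^2/(2n^3))$ rather than $\exp(-2t^2/\sum c_k^2)$, yielding a slightly weaker constant, but both comfortably imply the stated bound with $1/32$.
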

\begin{proof}
The function $f:[0,1]^n\to\N$ defined by
$$f(x_1,\dots,x_n):=\sum_{1\leq i<j\leq n}\bsone\{(x_i,x_j)\in A\}$$
satisfies the bounded differences condition with parameter $n$, that is, whenever two vectors $x,y\in[0,1]^n$ differ on at most one coordinate, we have $|f(x)-f(y)|\leq n$. It follows that the martingale defined by $Z_i:=\E[Y|X_1,\dots,X_i]$ for all $i\in[n]$ is $n$-Lipschitz, that is, $|Z_{i+1}-Z_i|\leq n$ for all $i$ almost surely. Since $\E Y=|A|\binom{n}{2}$, the bounded differences inequality (e.g. \cite[Theorem~4.4.4]{zhao2023graph}) implies that for all $\lambda>0$,
$$\bbP\left\{Y\leq|A|\binom{n}{2}-\lambda\right\}\leq e^{-\lambda^2/(2n^3)}\,,$$
and the result follows by taking $\lambda=\delta|A|\binom{n}{2}$.
\end{proof}

\section{Deferred Proofs}
\label{app:deferred}
\begin{proof}[Proof of \Cref{lemma:limsupentropyupperbound}]
Let $\widehat{\cQ}$ denote the set of graph limits of convergent sequences of graphs in $\cQ(n)$. For all $\gamma\in(0,1)$ let $\widehat{\cQ}_\gamma$ be the set of all $W\in\widehat{\cQ}$ such that $t(K_2,W)=\gamma$. Letting $m_\gamma:=\floor{\gamma\binom{n}{2}}$, we compute
\begin{align*}
&\limsup_{n\to\infty}\frac{1}{\binom{n}{2}}\log_2\bbP\{G\in\cQ(n)\} \\
&\hspace{2cm}= \limsup_{n\to\infty}\frac{1}{\binom{n}{2}}\sup_{\gamma\in[0,1]}\log_2\bbP\{G\in\cQ(n,m_\gamma)\} \\
&\hspace{2cm}= \limsup_{n\to\infty}\frac{1}{\binom{n}{2}}\sup_{\gamma\in[0,1]}\log_2\left(|\cQ(n,m_\gamma)|\left(\frac{p}{1-p}\right)^{m_\gamma}(1-p)^{\binom{n}{2}}\right) \\
&\hspace{2cm}= \sup_{\gamma\in[0,1]}\left\{\limsup_{n\to\infty}\frac{1}{\binom{n}{2}}\log_2|\cQ(n,m_\gamma)|+\gamma\log_2\left(\frac{p}{1-p}\right)+\log_2(1-p)\right\} \\
&\hspace{2cm}\leq \sup_{\gamma\in[0,1]}\left\{\sup_{W\in\widehat{\cQ}_\gamma}H(W)+\gamma\log_2\left(\frac{p}{1-p}\right)+\log_2(1-p)\right\} = \sup_{W\in\overline{\cQ}}I_p(W) \,,
\end{align*}
where we used \cite[Theorem~1]{hatami2018graph} to obtain the inequality.
\end{proof}

We will need the following lemma in the proof of \Cref{lemma:LDPforGNPkl}; note that an analogous result was proven for triangles \cite[Theorem~4.1]{radin2015singularities}.

\begin{lemma}\label{lemma:ChatgammaDgammaEquivOpt}
Let $\gamma\in(0,1)$, $p\in(0,1)$, $n,m\in\N$, and $m\sim\gamma\binom{n}{2}$. Fix a graph $H$ and let $\cF(n,m)$ be the set of induced-$H$-free graphs on $n$ vertices and $m$ edges. Define the set
$$\cF_\gamma:=\{W\in\cW:t(K_2,W)=\gamma,\,t_{\ind}(H,W)=0,\,\rand(W)>0\}$$
and assume $\cF_\gamma\neq\emptyset$. Let $\cX_\gamma$ denote the set of graphons that are limits of convergent sequences of graphs in $\cF(n,m)$. Then $\cF_\gamma\subseteq\cX_\gamma$ and
\begin{equation}
\sup\{I_p(W):W\in\cX_\gamma\}=\sup\{I_p(W):W\in\cF_\gamma\}\,.\label{eqn:SupIpGammaEquality}
\end{equation}
\end{lemma}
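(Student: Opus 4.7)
The plan is to reduce both statements to a perturbation of the $W$-random graph. The starting observation is the identity
\[
I_p(W) = H(W) + \gamma \log_2 p + (1-\gamma)\log_2(1-p),
\]
valid for any $W \in \cW$ with $t(K_2, W) = \gamma$; this follows directly from \pref{eqn:defIpx}. Since cut-metric continuity of $t(K_2,\cdot)$ implies every $W \in \cX_\gamma$ satisfies $t(K_2, W) = \gamma$, the identity applies uniformly on both $\cF_\gamma$ and $\cX_\gamma$, so that equality of the $I_p$-suprema is equivalent to equality of the $H$-suprema. I would prove the inclusion $\cF_\gamma \subseteq \cX_\gamma$ first and then derive the supremum equality from it.

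For the inclusion, fix $W \in \cF_\gamma$. Since $\rand(W) > 0$, there exists $a > 0$ so small that the set $R_a := \{(x,y) : a < W(x,y) < 1 - a\}$ has positive Lebesgue measure $\mu_a$. Define the perturbed sequence
\[
W_n := W + c_n \, \mathbf{1}_{R_a}, \qquad c_n := \mu_a^{-1}\bigl(m/\tbinom{n}{2} - \gamma\bigr),
\]
so that $\int W_n = m/\binom{n}{2}$ and $c_n \to 0$; for large $n$ one has $|c_n| < a$, hence $W_n \in \cW$ and $W_n \to W$ in $L^\infty$ (and therefore in cut metric). Crucially, $W_n = W$ wherever $W \in \{0, 1\}$; since $t_{\ind}(H, W) = 0$ means that for almost every $(x_1, \dots, x_{v(H)}) \in [0,1]^{v(H)}$ there is either an edge $uv \in E(H)$ with $W(x_u, x_v) = 0$ or a non-edge $uv \in E(H^c)$ with $W(x_u, x_v) = 1$, the same pair witnesses the vanishing of the integrand for $W_n$, giving $t_{\ind}(H, W_n) = 0$. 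Now consider the $W_n$-random graph $G_n^\ast := G(n, W_n)$ from \Cref{defn:GammaRandomGraph}. Its expected number of induced $H$-copies equals $n(n-1)\cdots(n - v(H) + 1) \cdot t_{\ind}(H, W_n) = 0$, so $G_n^\ast$ is almost surely induced-$H$-free for every $n$. Moreover $\E[e(G_n^\ast)] = m$ by construction, and conditioning on the vertex labels of \Cref{defn:GammaRandomGraph} followed by a standard one-dimensional local central limit theorem for sums of independent Bernoullis yields $\bbP\{e(G_n^\ast) = m\} \geq c_1 n^{-3/2}$ for a positive constant $c_1$. Combined with the exponential cut-metric concentration $\bbP\{\delta_\square(G_n^\ast, W_n) \geq \epsilon\} \leq \exp(-c_\epsilon n^2)$ proved via bounded-differences martingales as in \Cref{lemma:Wrandomgraphazuma}, conditioning on $\{e(G_n^\ast) = m\}$ still gives $\delta_\square(G_n^\ast, W_n) \to 0$ in probability. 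Hence for each large $n$ some realization $G_n \in \cF(n, m)$ has $\delta_\square(G_n, W) \leq \delta_\square(G_n, W_n) + \norm{W_n - W}_\square \to 0$, which witnesses $W \in \cX_\gamma$.

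For the supremum equality, the inclusion $\cF_\gamma \subseteq \cX_\gamma$ immediately gives $\sup_{\cF_\gamma} I_p \leq \sup_{\cX_\gamma} I_p$. For the reverse inequality, take any $W \in \cX_\gamma \setminus \cF_\gamma$; cut-metric continuity of $t(K_2, \cdot)$ and $t_{\ind}(H, \cdot)$ gives $t(K_2, W) = \gamma$ and $t_{\ind}(H, W) = 0$, so necessarily $\rand(W) = 0$, which forces $W$ to be $\{0,1\}$-valued almost everywhere and hence $H(W) = 0$. Since $\cF_\gamma \neq \emptyset$, any fixed $W_0 \in \cF_\gamma$ has $\rand(W_0) > 0$ and thus $H(W_0) > 0$, giving $\sup_{\cF_\gamma} H \geq H(W_0) > 0 = H(W)$ for every $W \in \cX_\gamma \setminus \cF_\gamma$. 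Combined with the affine identity of the first paragraph, this yields $\sup_{\cX_\gamma} I_p = \sup_{\cF_\gamma} I_p$.

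The principal technical obstacle is the conditional local-limit estimate in the middle paragraph. The choice of $W_n$ is dictated precisely by the requirement that $m$ equal the unconditional mean of $e(G_n^\ast)$, so that the conditioning event $\{e = m\}$ lies in the central regime rather than a large-deviation one; this lets a standard local limit theorem (applied after conditioning on the vertex labels, where the edge count becomes a sum of independent Bernoullis) deliver the required polynomial lower bound, which in turn is more than enough to overwhelm the exponentially small upper tail of the cut-distance concentration and produce a deterministic sequence $G_n \in \cF(n, m)$ converging to $W$.
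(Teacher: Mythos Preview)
Your argument is correct in outline but takes a different route for the inclusion $\cF_\gamma \subseteq \cX_\gamma$ than the paper does. The paper samples $G(n,W)$ directly (no perturbation of $W$) and then \emph{edits} edges to reach the target count: by \Cref{lemma:Wrandomgraphazuma} and Borel--Cantelli, for all large $n$ there are $\Theta(n^2)$ pairs $(i,j)$ with $(X_i,X_j)\in R_W$, and among these both edges and non-edges are plentiful; since $|e(G(n,W))-m|=o(n^2)$ almost surely, one flips the required number of such pairs to land on exactly $m$ edges, and the resulting graph stays induced-$H$-free by exactly the pointwise reasoning you give for $W_n$ (flipping a pair with $0<W<1$ cannot introduce an induced $H$). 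This avoids any local limit theorem. Your perturbation $W\mapsto W_n$ is a nice device that makes $m$ the exact unconditional mean, but the paper's edge-surgery is more elementary and produces a deterministic $G_n$ without conditioning.

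Your conditioning route works, but the LLT step as written is incomplete. After fixing $X_1,\dots,X_n$, the edge count is a sum of independent Bernoullis with conditional mean $\mu_X=\sum_{i<j}W_n(X_i,X_j)$, and the Bernoulli LLT gives $\bbP\{e=m\mid X\}=\Theta(n^{-1})$ only when $|\mu_X-m|=O(n)$. But $\mu_X$ is itself a U-statistic with variance $\Theta(n^3)$, so it typically misses $m$ by order $n^{3/2}$; to get the unconditional bound $\bbP\{e=m\}\gtrsim n^{-3/2}$ you must also argue $\bbP\{|\mu_X-m|\le Cn\}\gtrsim n^{-1/2}$, which follows from a CLT for $\mu_X$ but is not merely ``a standard LLT for sums of independent Bernoullis.'' Once that is added your approach goes through; the paper simply sidesteps the issue. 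Your treatment of the supremum equality, via the affine identity for $I_p$ and the observation that $\rand(W)=0$ forces $H(W)=0$, matches the paper's.
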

\begin{proof}
Let $W\in\cF_\gamma$ and we claim there exists a sequence $G_n\in\cF(n,m)$ of graphs such that $G_n\to W$. The sequence $G(n,W)$ of random graphs (\Cref{defn:GammaRandomGraph}) converges with probability 1 to $W$ in cut metric by \cite[Theorem~4.5]{borgs2008convergent}. For all $n\in\N$ let $m'=m'_n:=e(G(n,W))$. Hence $m'\sim\gamma\binom{n}{2}$ with probability 1. For all $n\in\N$, let $X^n_1,\dots,X^n_n$ and $Y^n_{ij}$, $1\leq i<j\leq n$, be the random variables that are i.i.d. uniformly distributed on $[0,1]$ and that are used to sample $G(n,W)$ as in \Cref{defn:GammaRandomGraph}. For all $n\in\N$ let $Y_n$ denote the number of pairs $1\leq i<j\leq n$ such that $(X^n_i,X^n_j)\in R:=R_W$. Applying \Cref{lemma:Wrandomgraphazuma} with $\delta=1/2$, the random variable $Y_n$ satisfies
$$\bbP\left\{Y_n\leq\frac{|R|n^2}{8}\right\}\leq\exp\left(-\frac{\delta^2|R|^2}{32}\cdot n\right)\,.$$
By the Borel--Cantelli lemma, there are only finitely many $n\in\N$ such that $Y_n\leq|R|n^2/8$ with probability 1. Whenever $Y_n>|R|n^2/8$, there exist $c:=|R|n^2/8$ edges $\{i_1,j_1\},\dots,\{i_c,j_c\}$ such that $(X^n_{i_k},X^n_{j_k})\in R$ for all $k\in[c]$; in this case Chernoff's inequality implies that the random variable $Z_n:=\sum_{k=1}^cY^n_{i_kj_k}$ satisfies
$$\bbP\left\{|Z_n-c/2|\geq n/4\right\}\leq2e^{-n/32}\,.$$
Again by the Borel--Cantelli lemma, with probability 1 there are only finitely many $n\in\N$ such that both $Y_n>|R|n^2/8$ and $|Z_n-c/2|\geq n/4$. For all $n\in\N$ define the random graph $H_n$ on the vertex set $[n]$ as follows:
\begin{enumerate}[(\emph{\roman*})]
	\item If $m'=m$ then let $H_n=G$.
	\item If any of the following three conditions holds, then let $H_n$ be an arbitrary graph in $\cF(n,m)$: (1) $Y_n\leq|R|n^2/8$; (2) $Y_n>|R|n^2/8$ and $|Z_n-c/2|\geq n/4$; or (3) $Y_n>|R|n^2/8$ and $|Z_n-c/2|<n/4$ and $|m-m'|>c/2-n/4$.
	\item Otherwise, $Y_n>|R|n^2/8$ and $|Z_n-c/2|<n/4$ and $d:=|m-m'|\leq c/2-n/4$. In this case there exist edge sets $E_1=\{i_kj_k:k\in[d]\}$ and $E_2=\{i_k'j_k':k\in[d]\}$ such that $E_1\subseteq E(G)$, $E_2\subseteq E(\overline{G})$, and $(X_u,X_v)\in R_W$ for all $uv\in E_1\cup E_2$. If $m>m'$ then let $H_n=(V,E(G)\cup E_2)$; otherwise $m<m'$, and let $H_n=(V,E(G)\setminus E_1)$.
\end{enumerate}
By definition, the graph $H_n$ always belongs to $\cF(n,m)$, and the conditions of case (\emph{ii}) are met for only finitely many $n\in\N$ with probability 1. It follows that $H_n\to W$ in cut metric with probability 1, completing the proof of the inclusion $\cF_\gamma\subseteq\cX_\gamma$.

To prove the second part of the lemma, first note that the supremum of $I_p(W)$ over $\cX_\gamma$ is achieved only if $\rand(W)>0$. Indeed, if $W\in\cX_\gamma$ is a graphon with $\rand(W)=0$ then $I_p(W)=\gamma\log_2(\frac{p}{1-p})+\log_2(1-p)$, and if $W'\in\cX_\gamma$ is a graphon with $r=:\rand(W')>0$ (such a graphon exists by the first part of the lemma) then $H(W')>0$ and we have
$$I_p(W')=H(W')+\gamma\log_2\left(\frac{p}{1-p}\right)+\log_2(1-p)>I_p(W)\,.$$
The result follows if we prove $\{W\in\cX_\gamma:\rand(W)>0\}\subseteq\cF_\gamma$. Indeed, $W$ is the limit of a convergent sequence $G\in\cF(n,m)$ of graphs, and since $\rand(W)>0$, we know $v(G_n)\to\infty$. Hence continuity of the homomorphism density implies $t(F,\Gamma)=\lim_{n\to\infty}t(F,G_n)$ for all fixed graphs $F$, in particular, $t(K_2,W)=\gamma$ and $t_{\ind}(K_{1,3},W)=0$.
\end{proof}

\begin{proof}[Proof of \Cref{lemma:LDPforGNPkl}]
For all $\gamma\in(0,1)$ let $\cF_\gamma$ denote the set of all $W\in\cF$ with $t(K_2,W)=\gamma$. Fix $\gamma\in(0,1)$ such that there exists $W\in\cF_\gamma$ with $\rand(W)>0$, and let $G\sim G(n,W)$. Letting $m_\gamma:=\floor{\gamma\binom{n}{2}}$, we claim
\begin{equation}
H(G)\leq\log_2|\cF(n,m_\gamma,H)|+o(n^2)\,,\label{eqn:HGleqlog2FnmgammaH}
\end{equation}
where $H(G)$ is the entropy of $G$ as a discrete random variable. For all $n,m\in\N$ let $\cS(n)$ be the set of all $H\in\cG(n)$ such that $\bbP\{G=H\}>0$, and let $\cS(n,m):=\cS(n)\cap\cG(n,m)$. We write $\bbP\{H'\}:=\bbP\{G=H'\}$ and $\bbP\{\cH\}:=\bbP\{G\in\cH\}$ for a graph $H'$ and set of graphs $\cH$.

Let $0<L<U<1$ be fixed constants such that the set $R:=\{(x,y):L<W(x,y)<U\}$ has positive measure. For all $\epsilon\in(0,1)$ let $R_\epsilon\subseteq R$ be a fixed subset of measure $\epsilon|R|$. Let $X_1,\dots,X_n$, and $Y_{ij}$, $1\leq i<j\leq n$, denote the random variables associated with $G(n,W)$ (see \Cref{defn:GammaRandomGraph}). Let
$$Y:=\sum_{1\leq i<j\leq n}\bsone\{(X_i,X_j)\in R_\epsilon\}\hspace{6mm}\text{and}\hspace{6mm}Z:=\sum_{\substack{1\leq i<j\leq n\\(X_i,X_j)\in R_\epsilon}}\bsone\{Y_{ij}\leq W(X_i,X_j)\}\,.$$
By \Cref{lemma:Wrandomgraphazuma}, $Y$ is concentrated around its mean $y_\epsilon:=|R_\epsilon|\binom{n}{2}$. Conditioned on $Y$, the random variable $Z$ also concentrates around its mean $LY\leq z_\epsilon\leq UY$ by Chernoff's inequality. Let $\delta=\delta(\epsilon)>0$ be sufficiently small and let
$$\cS'(n):=\left\{H\in\cS(n):\bbP\left\{H\,\bigg|\,\genfrac{}{}{0pt}{0}{y_\epsilon-\delta{\textstyle\binom{n}{2}}\leq Y\leq y_\epsilon+\delta{\textstyle\binom{n}{2}}\vphantom{\displaystyle\bigcup}}{z_\epsilon-\delta{\textstyle\binom{n}{2}}\leq Z\leq z_\epsilon+\delta{\textstyle\binom{n}{2}}}\right\}>0\right\}$$
and $\cS'(n,m):=\cS'(n)\cap\cG(n,m)$. Let $\eta:=\epsilon|R|\min\{L,1-U\}$ and we claim that for all $(\gamma-\eta)\binom{n}{2}\leq m\leq(\gamma+\eta)\binom{n}{2}$,
\begin{equation}
\log|\cS'(n,m)|\leq\log|\cS(n,m_\gamma)|+o(n^2)\,.\label{eqn:SprimeSnmineqClaim}
\end{equation}
Indeed, for all $H\in\cS'(n,m)$, there exist numbers $x_1,\dots,x_n\in[0,1]$ and $y_{ij}\in[0,1]$ in the support of $G$ (in the sense of \Cref{defn:GammaRandomGraph}) such that the number of pairs $(x_i,x_j)\in R_\epsilon$ is in the interval $y_\epsilon\pm\delta\binom{n}{2}$ and the number of $y_{ij}$ such that $y_{ij}\leq W(x_i,x_j)$ is in the interval $z_\epsilon\pm\delta\binom{n}{2}$. If $m<\gamma\binom{n}{2}$ then we can obtain a graph $H'\in\cS(n,m_\gamma)$ in the support of $G$ by changing at most $\delta\binom{n}{2}$ of the values $y_{ij}$ from belonging to the interval $(W(x_i,x_j),1)$ to belonging to the interval $(0,W(x_i,x_j))$, and analogously if $m>\gamma\binom{n}{2}$. Since $|R_\epsilon|=\epsilon|R|<\epsilon$, the mapping $f:\cS'(n,m)\to\cS(n,m_\gamma):H\mapsto H'$ has the following property: for all $H'\in\cS(n,m_\gamma)$ there are at most $2^{\epsilon\binom{n}{2}}$ graphs $H\in\cS'(n,m)$ such that $f(H)=H'$. It directly follows that
$$|\cS'(n,m)|\leq|\cS(n,m_\gamma)|2^{\epsilon\binom{n}{2}}\,,$$
completing the proof of \pref{eqn:SprimeSnmineqClaim}.

Since $e(G)\binom{n}{2}^{-1}\to\gamma$ with probability 1, we use \pref{eqn:SprimeSnmineqClaim} to compute that
\begin{align*}
H(G) &= -\sum_{m=0}^{\binom{n}{2}}\bbP\{\cS(n,m)\}\sum_{H'\in\cS(n)}\bbP\{H'\,|\,\cS(n,m)\}\log_2\bbP\{H'\} \\
&= -(1+o(1))\sum_{m=(\gamma-\eta)\binom{n}{2}}^{(\gamma+\eta)\binom{n}{2}}\bbP\{\cS'(n,m)\}\sum_{H'\in\cS(n)}\bbP\{H'\,|\,\cS'(n,m)\}\log_2\bbP\{H'\} \\
&\leq (1+o(1))\log_2\left(\sum_{m=(\gamma-\eta)\binom{n}{2}}^{(\gamma+\eta)\binom{n}{2}}|\cS'(n,m)|\right) \\[5pt]
&\leq (1+o(1))\log_2|\cS(n,m_\gamma)| \leq (1+o(1))\log_2|\cF(n,m_\gamma,H)| \,,
\end{align*}
where we used concavity of the entropy, completing the proof of \pref{eqn:HGleqlog2FnmgammaH}. Using \pref{eqn:HGleqlog2FnmgammaH} and the fact that $H(G)\binom{n}{2}^{-1}\to H(W)$ (see \cite[Theorem~D.5]{janson2010graphons}), we obtain
\begin{align*}
&\liminf_{n\to\infty}\frac{1}{\binom{n}{2}}\log_2\bbP\{G\in\cF(n,H)\} \\
&\hspace{2cm}\geq \liminf_{n\to\infty}\frac{1}{\binom{n}{2}}\log_2\bbP\{G\in\cF(n,m_\gamma,H)\} \\
&\hspace{2cm}= \liminf_{n\to\infty}\frac{1}{\binom{n}{2}}\log_2\left(|\cF(n,m_\gamma,H)|\left(\frac{p}{1-p}\right)^{m_\gamma}(1-p)^{\binom{n}{2}}\right) \\
&\hspace{2cm}\geq \liminf_{n\to\infty}\frac{1}{\binom{n}{2}}\left(H(G(n,W))+m_\gamma\log_2\left(\frac{p}{1-p}\right)\right)+\log_2(1-p) \\
&\hspace{2cm}= H(W)+\gamma\log_2\left(\frac{p}{1-p}\right)+\log_2(1-p) = I_p(W) \,,
\end{align*}
and taking the supremum over $W\in\cF$ on the right-hand side proves the first inequality.

To prove the opposite inequality first let $\cX_\gamma$ be the set of graphons $W$ that are limits of convergent sequences $G_n\in\cF(n,H)$ such that $t(K_2,W)=\gamma$. Now \Cref{lemma:ChatgammaDgammaEquivOpt} proves
\begin{equation}
\sup_{W\in\cF_\gamma}H(W)=\sup_{W\in\cX_\gamma}H(W)\,.\label{eqn:lemma29FFhatEqualEntropy}
\end{equation}
so the upper bound follows directly by applying \Cref{lemma:limsupentropyupperbound}, completing the proof.
\end{proof}

\begin{proof}[Proof of \Cref{lemma:FnmGraphonOptAsymps}]
Let $W\in\cF_\gamma$ and $G\sim G(n,W)$. The proof of \Cref{lemma:LDPforGNPkl} shows $H(G)\leq\log_2|\cF(n,m,H)|+o(n^2)$, so using the fact that $H(G)\binom{n}{2}^{-1}\to H(W)$,
$$\liminf_{n\to\infty}\frac{1}{\binom{n}{2}}\log_2|\cF(n,m,H)|\geq\liminf_{n\to\infty}\frac{H(G)}{\binom{n}{2}}=H(W)\,.$$
Applying \Cref{lemma:limsupentropyupperbound} to $\cF(n,m,H)$ with $p=1/2$, and again using \Cref{lemma:ChatgammaDgammaEquivOpt}, we obtain the opposite inequality, completing the proof.
\end{proof}

\begin{proof}[Proof of \Cref{lemma:kktopt}]
Define the Lagrangian
$$L(x,y,\lambda,\mu)=yH\left(\frac{c-x}{y}\right)+\lambda\left(x+y-1\right)+\mu(y-x)\,.$$
The equation $\nabla L=0$ implies
\begin{align*}
\frac{\partial L}{\partial x}(x,y) &= \log\left(\frac{c-x}{y}\right)-\log\left(1-\frac{c-x}{y}\right)+\lambda-\mu = 0 \,, \\
\frac{\partial L}{\partial y}(x,y) &= -\log\left(1-\frac{c-x}{y}\right)+\lambda+\mu = 0 \,,
\end{align*}
hence
\begin{equation}
\lambda=\log\left(1-\frac{c-x}{y}\right)-\frac{1}{2}\log\left(\frac{c-x}{y}\right)\,,\hspace{5mm}\mu=\frac{1}{2}\log\left(\frac{c-x}{y}\right)\,.\label{eqn:kktdualvars}
\end{equation}
The Karush--Kuhn--Tucker (KKT) conditions assert that $\lambda(x+y-1)=0$ and $\mu(y-x)=0$, and that the dual variables $\lambda$ and $\mu$ are nonpositive. From the KKT conditions we deduce $x=y$, since otherwise $\mu=0$, implying the strictly suboptimal objective value $f(x,y)=0$. In the remainder we take $x=y$ and analyze the two cases $\lambda=0$ and $\lambda<0$ pertaining to the first KKT equation. If $\lambda=0$ then \pref{eqn:kktdualvars} implies $(c-x)/y=(3-\sqrt{5})/2$, which further implies $x=y=\frac{5+\sqrt{5}}{10}c$\,; in this case $f$ takes the value
$$g_1(c):=\frac{5+\sqrt{5}}{10}H\left(\frac{3-\sqrt{5}}{2}\right)c\,.$$
Note that the inequality $x+y\leq1$ implies $\lambda=0$ is feasible if and only if $0<c\leq(5-\sqrt{5})/4$. Alternatively if $\lambda<0$, then the KKT condition asserts $x+y=1$ and $x=y=1/2$, and $f$ takes the value $g_2(c):=\frac{1}{2}H(2c-1)$. It is easy to verify that $g_1(c)$ is the line tangent to $g_2(c)$ at $c=(5-\sqrt{5})/4$, which also proves the second statement of the lemma. Hence by strict concavity of $g_2(c)$ we have $g_1(c)\geq g_2(c)$ with equality if and only if $c=(5-\sqrt{5})/4$. \Cref{eqn:kktlemma} now follows since we proved that $\lambda<0$ whenever $c>(5-\sqrt{5})/4$.
\end{proof}

\section{Asymptotic Enumeration of Co-Bipartite Graphs}
\label{sec:counting}
In this section we prove the following proposition, which gives a formula for the asymptotic number co-bipartite graphs at edge density between $\frac{1}{2}$ and $1$.

\begin{proposition}\label{prop:asymp-bcnm}
Let $\gamma\in(\frac{1}{2},1)$ and $n,m\in\N$. If $m\sim\gamma\binom{n}{2}$ then
$$|\cB_c(n,m)|\sim\left(\frac{r+1}{2}+\sum_{k=1}^\infty\left(2\gamma-1\right)^{k^2+rk}\right)\binom{n}{\floor{n/2}}\binom{\floor{n^2/4}}{m-\binom{\floor{n/2}}{2}-\binom{\ceil{n/2}}{2}}\,,$$
where $r=n\bmod{2}$.
\end{proposition}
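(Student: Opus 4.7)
Each $G\in\cB_c(n,m)$ has bipartite complement $\bar G$ with $\binom{n}{2}-m$ edges, and its ordered 2-clique-covers correspond bijectively to the proper 2-colorings of $\bar G$, of which there are exactly $2^{c(\bar G)}$, where $c(\bar G)$ denotes the number of connected components of $\bar G$ (isolated vertices counted as singletons). Enumerating ordered pairs $(A,E')$ with $A\subseteq[n]$ of size $a$ and $E'\subseteq K_{A,[n]\setminus A}$ of size $m-\binom{a}{2}-\binom{n-a}{2}$, then grouping by the resulting co-bipartite graph, yields the double-counting identity
\[
S(n,m)\;:=\;\sum_{a=0}^{n}\binom{n}{a}\binom{a(n-a)}{m-\binom{a}{2}-\binom{n-a}{2}}\;=\;\sum_{G\in\cB_c(n,m)}2^{c(\bar G)}.
\]
Since $2^{c(\bar G)}\geq 2$ trivially, $|\cB_c(n,m)|\leq S(n,m)/2$. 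My plan is to prove the matching lower bound $|\cB_c(n,m)|\sim S(n,m)/2$ and then evaluate $S(n,m)$ asymptotically.

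\textbf{Matching lower bound.} For each ordered bipartition $(A,B)$ with $a=|A|$ ``balanced,'' say $|a-n/2|\leq n^{2/3}$, the bipartite graph $H\subseteq K_{A,B}$ whose cross-edges are the non-edges of $G$ is a uniform random bipartite graph with $\binom{n}{2}-m$ edges, and its density converges to $p=2(1-\gamma)\in(0,1)$. Since $p$ is a positive constant, standard first-moment bounds give $\bbP\{H\text{ has an isolated vertex}\}=O(n(1-p)^{\Theta(n)})=o(1)$, and analogous bounds on small components of $H$ yield $\bbP\{H\text{ disconnected}\}=o(1)$ uniformly over balanced $a$. Combined with the Gaussian-type decay $\binom{n}{a}\leq e^{-\Omega(n^{1/3})}\binom{n}{\floor{n/2}}$ for $|a-n/2|>n^{2/3}$, this shows $\sum_{G:\,c(\bar G)\geq 2}2^{c(\bar G)}=o(S(n,m))$, and hence $S(n,m)\sim 2|\cB_c(n,m)|$.

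\textbf{Asymptotic evaluation of $S(n,m)$.} Parameterize $a=\floor{n/2}+j$ and set $N_0=\floor{n^2/4}$ and $M_0=m-\binom{\floor{n/2}}{2}-\binom{\ceil{n/2}}{2}$. A short computation gives $a(n-a)=N_0-\ell_j$ and $m-\binom{a}{2}-\binom{n-a}{2}=M_0-\ell_j$, where $\ell_j=j^2$ when $n$ is even and $\ell_j=j(j-1)$ when $n$ is odd. The elementary identity $\binom{N_0-\ell}{M_0-\ell}\binom{N_0}{M_0}^{-1}=\prod_{i=0}^{\ell-1}\frac{M_0-i}{N_0-i}$, together with $M_0/N_0\to 2\gamma-1$ and the monotonicity of the factors in $i$, yields both the pointwise convergence to $(2\gamma-1)^{\ell_j}$ and the uniform dominating bound $((2\gamma-1)+O(1/n))^{\ell_j}$ needed for term-by-term summation. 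Since $\binom{n}{\floor{n/2}+j}/\binom{n}{\floor{n/2}}\to 1$ for each fixed $j$ and is always $\leq 1$, pairing symmetric indices ($j\leftrightarrow -j$ when $n$ is even and $j\leftrightarrow 1-j$ when $n$ is odd, noting $\ell_j=\ell_{-j}$ and $\ell_j=\ell_{1-j}$ respectively) collects the sum as
\[
S(n,m)\sim\binom{n}{\floor{n/2}}\binom{N_0}{M_0}\cdot 2\left(\tfrac{r+1}{2}+\sum_{k=1}^{\infty}(2\gamma-1)^{k^2+rk}\right),
\]
and dividing by $2$ yields the claimed asymptotic.

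\textbf{Main obstacle.} The delicate step is the connectivity argument in the second paragraph: one must verify uniformly over balanced $a$ that the expected numbers of isolated vertices and of disconnecting bisections in the random bipartite graph are both $o(1)$, and confirm that the unbalanced tail of $S(n,m)$ is absorbed by the Gaussian decay of $\binom{n}{a}$. The binomial asymptotics of the third paragraph are otherwise routine, though some care is needed to ensure that tail contributions from $|j|$ in the full admissible range $|j|=O(n)$ (where the summand is nonzero) remain dominated by $(2\gamma-1)^{\ell_j(1-o(1))}$.
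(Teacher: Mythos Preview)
Your approach is correct and follows the same outline as the paper's: double-count pairs (co-bipartite graph, 2-clique-cover), show that the cover is almost surely unique, and evaluate the resulting sum asymptotically via the ratio $\binom{N_0-\ell}{M_0-\ell}/\binom{N_0}{M_0}=\prod_{i<\ell}\frac{M_0-i}{N_0-i}\to(2\gamma-1)^\ell$. Your use of ordered covers and the identity $S(n,m)=\sum_G 2^{c(\bar G)}$ is an equivalent repackaging of the paper's $\Cov_{n,m}$, and your balance threshold $n^{2/3}$ versus the paper's $\sqrt{\log n}$ is cosmetic.

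One point where your write-up is actually sharper than the paper's: you correctly equate ``unique 2-clique-cover'' with \emph{connectivity} of $\bar G$, whereas the paper's proof of its Corollary (``unique cover iff no universal vertex'') states a criterion that is not literally equivalent --- no universal vertex in $G$ only means $\bar G$ has no isolated vertex. At constant edge density $2(1-\gamma)$ the dominant way for a random bipartite graph to be disconnected is via an isolated vertex, so the paper's Lemma on universal vertices still delivers the needed $o(1)$ bound, but your formulation via connectivity is the cleaner one and requires exactly the ``small components'' estimate you flag as the main obstacle.
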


\noindent Note that by taking $\gamma=3/4$ in \Cref{prop:asymp-bcnm}, the asymptotics of $\log|\cB_c(n,m)|$ match the asymptotics of $\log|\cB_c(n)|$, which reflects the fact that almost all co-bipartite graphs have edge density approximately $3/4$. The proof of \Cref{prop:asymp-bcnm} is standard based on methods of earlier works including \cite{erdos1976enumeration,osthus2003densities}.

The \emph{imbalance} of a bipartition $\{A,B\}$ of $V=[n]$ is defined to be $||A|-|B||$, and we say that a bipartition is \emph{almost equitable} if its imbalance is at most $\sqrt{\log n}$. Define the set of 2-clique-covered graphs on $n$ vertices and $m$ edges
$$\Cov_{n,m}:=\{(G,\{A,B\}):G\in\cG(n,m),\,V=A\sqcup B,\,G[A]=K_A,\,G[B]=K_B\}\,.$$
We show in \Cref{lemma:almostallcovnmbalanced} that almost every $(G,\{A,B\})\in\Cov_{n,m}$ is almost equitable. In \Cref{lemma:universalvtxlittleoh1,cor:cbuniquepartition}, we show that almost all co-bipartite graphs $G\in\cB_c(n,m)$ admit a unique 2-clique-cover. By enumerating almost-equitable 2-clique-covered graphs on $n$ vertices and $m$ edges, we obtain an asymptotic formula for $|\cB_c(n,m)|$.

\begin{lemma}\label{lemma:almostallcovnmbalanced}
Let $\gamma\in(\frac{1}{2},1)$ and $m\sim\gamma\binom{n}{2}$. For almost all $(G,\{A,B\})\in\Covnm$, the bipartition $\{A,B\}$ is almost equitable, that is,
$$|\Covnm|\sim\sum_{A,B}\binom{|A|\cdot|B|}{m-\binom{|A|}{2}-\binom{|B|}{2}}\,,$$
where the sum is over almost-equitable bipartitions $\{A,B\}$ of $V$.
\end{lemma}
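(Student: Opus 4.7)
The plan is to parametrize the bipartitions in $\Covnm$ by the size $a=|A|$ of the smaller part (so $a\in\{0,1,\dots,\floor{n/2}\}$), use an algebraic identity to simplify the counting expression, and then bound the ratio of weights at imbalance $k=n-2a$ against the balanced weight. Concretely, for each $a$ the number of 2-clique-covered graphs with smaller part of size $a$ is
\[
w(a) := c_a \binom{n}{a}\binom{a(n-a)}{m(a)}\,,\qquad m(a) := m-\binom{a}{2}-\binom{n-a}{2}\,,
\]
where $c_a=\tfrac12$ if $a=n/2$ and $c_a=1$ otherwise. The key observation I would use is that $a(n-a)-m(a) = \binom{n}{2}-m=:Q$ is independent of $a$, so by complementing within the cross-part I can rewrite $\binom{a(n-a)}{m(a)}=\binom{a(n-a)}{Q}$. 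Both factors $\binom{n}{a}$ and $\binom{a(n-a)}{Q}$ are then manifestly maximized at $a=a_0:=\floor{n/2}$, and $Q/\floor{n^2/4}\to 2(1-\gamma)$ so that $(N-Q)/N\to 2\gamma-1\in(0,1)$ where $N:=a_0\ceil{n/2}=\floor{n^2/4}$.

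Next I would bound the ratio $w(a_0-j)/w(a_0)$. The first factor satisfies $\binom{n}{a_0-j}/\binom{n}{a_0}\leq 1$. For the second, with $r:=n\bmod 2$ a short calculation gives $(a_0-j)(n-a_0+j)=N-s$ with $s=j(j+r)$, and hence
\[
\frac{\binom{N-s}{Q}}{\binom{N}{Q}} = \prod_{i=0}^{s-1}\frac{N-Q-i}{N-i} \leq \left(\frac{N-Q}{N}\right)^{s} = (2\gamma-1+O(1/n))^{j(j+r)}\,,
\]
where the inequality uses that $(N-Q-i)/(N-i)$ is monotone decreasing in $i$ for $Q>0$. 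Thus
\[
\frac{w(a_0-j)}{w(a_0)} \leq (2\gamma-1+o(1))^{\,j(j+r)} \qquad \text{uniformly in } 0\leq j\leq a_0\,.
\]

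The lemma then follows by summing the tail: since $2\gamma-1\in(0,1)$ is a fixed constant, the tail sum satisfies
\[
\sum_{j\geq \sqrt{\log n}/2}(2\gamma-1+o(1))^{j(j+r)} \;=\; O\bigl((2\gamma-1)^{(\log n)/4}\bigr) \;=\; n^{-\Omega(1)} \;=\; o(1)\,,
\]
so the contribution from bipartitions of imbalance exceeding $\sqrt{\log n}$ is $o(w(a_0))$ and is negligible against the sum over almost-equitable bipartitions. The main obstacle is really just getting a uniform bound that also holds when $j$ is a nontrivial fraction of $n$, but this is handled painlessly by the monotonicity observation on $(N-Q-i)/(N-i)$; that trick avoids the messy Taylor expansion one would otherwise need and gives the required superexponential decay in $j$ for all admissible $j$, completing the proof.
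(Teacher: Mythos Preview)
Your proof is correct and proceeds along a genuinely different, somewhat cleaner route than the paper's. The key difference is your observation that $a(n-a)-m(a)=\binom{n}{2}-m=:Q$ is independent of $a$, which lets you rewrite the cross-part binomial as $\binom{a(n-a)}{Q}$; both factors $\binom{n}{a}$ and $\binom{a(n-a)}{Q}$ are then manifestly maximized at $a_0=\floor{n/2}$, and the monotonicity of $(N-Q-i)/(N-i)$ in $i$ gives the clean product bound $\binom{N-s}{Q}/\binom{N}{Q}\leq((N-Q)/N)^s$. The paper instead works directly with $\binom{N-\floor{k^2/4}}{M-\floor{k^2/4}}/\binom{N}{M}$, invokes the auxiliary \Cref{lemma:BinomialCoeffIneqs} to get a bound of the form $\exp(-2(1-\gamma)\floor{k^2/4})$, and then sums via a Gaussian tail estimate. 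Both routes deliver the same superexponential decay (of order $(2\gamma-1)^{k^2/4}$) and hence an $n^{-\Omega(1)}$ tail; your version is more self-contained and avoids any Taylor-type estimate, while the paper's approach reuses a lemma that is needed elsewhere. One small wrinkle you glossed over: the constants $c_a$ introduce an extra factor of $2$ in $w(a_0-j)/w(a_0)$ when $n$ is even, but this is harmless for the $o(1)$ conclusion.
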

\begin{proof}
For all $n\in\N$ define the set
$$I_n:=\begin{cases}\{2j:j\in\N,\,\sqrt{\log n}<2j\leq n-2\}&n\text{ is even}\\\{2j+1:j\in\N,\,\sqrt{\log n}<2j+1\leq n-2\}&n\text{ is odd}\end{cases}\,,$$
so that $I_n$ is the set of possible imbalances of a bipartition of $n$ vertices that are \emph{not} almost equitable. Using \Cref{lemma:BinomialCoeffIneqs} and $m\sim\gamma\binom{n}{2}$, we find that for $k\in I_n$,
\begin{align}
&\binom{(n-k)(n+k)/4}{m-\binom{(n-k)/2}{2}-\binom{(n+k)/2}{2}}{\binom{\floor{n/2}\cdot\ceil{n/2}}{m-\binom{\floor{n/2}}{2}-\binom{\ceil{n/2}}{2}}}^{-1} \nonumber\\
&\hspace{3cm}= \binom{\floor{n^2/4}-\floor{k^2/4}}{m-(n^2+k^2)/4+n/2}{\binom{\floor{n^2/4}}{m-\floor{n^2/4}+\floor{n/2}}}^{-1} \nonumber\\
&\hspace{3cm}= \binom{\floor{n^2/4}-\floor{k^2/4}}{m-\floor{n^2/4}-\floor{k^2/4}+\floor{n/2}}{\binom{\floor{n^2/4}}{m-\floor{n^2/4}+\floor{n/2}}}^{-1} \label{eqn:binomratioCovNMBound}\\
&\hspace{3cm}\leq \exp\left(-\left(1-\frac{m-\floor{n^2/4}+\floor{n/2}}{\floor{n^2/4}}\right)\floor*{\frac{k^2}{4}}\right) \nonumber\\
&\hspace{3cm}\sim \exp\left(-2(1-\gamma)\floor*{\frac{k^2}{4}}\right) < e^{-\frac{1}{4}(1-\gamma)k^2} \nonumber\,,
\end{align}
provided $n$ is large enough. Hence the number of $(G,\{A,B\})\in\Covnm$ where $\{A,B\}$ is not almost equitable is at most
\begin{align}
&\sum_{k\in I_n}\binom{n}{(n-k)/2}\binom{(n-k)(n+k)/4}{m-\binom{(n-k)/2}{2}-\binom{(n+k)/2}{2}} \nonumber\\
&\hspace{3cm}\leq \binom{n}{\floor{n/2}}\sum_{k\in I_n}\binom{(n-k)(n+k)/4}{m-\binom{(n-k)/2}{2}-\binom{(n+k)/2}{2}} \nonumber\\
&\hspace{3cm}\leq \left(\sum_{k=\floor{\sqrt{\log n}}}^\infty e^{-\frac{1}{4}(1-\gamma)k^2}\right)\binom{n}{\floor{n/2}}\binom{\floor{n/2}\cdot\ceil{n/2}}{m-\binom{\floor{n/2}}{2}-\binom{\ceil{n/2}}{2}} \nonumber\\
&\hspace{3cm}\leq \frac{n^{-4/(1-\gamma)^2}}{\sqrt{\log n}}\cdot\frac{1}{2}\binom{n}{\floor{n/2}}\binom{\floor{n/2}\cdot\ceil{n/2}}{m-\binom{\floor{n/2}}{2}-\binom{\ceil{n/2}}{2}} \nonumber\\
&\hspace{3cm}\leq \frac{n^{-4/(1-\gamma)^2}}{\sqrt{\log n}}\cdot|\Covnm|\,,\label{eqn:SumInequitableBiparts}
\end{align}
where we used a standard Gaussian tail bound, and the last inequality holds since the second factor on the left-hand side of \pref{eqn:SumInequitableBiparts} is a lower bound on the number of 2-clique-covered graphs $(G,\{A,B\})\in\Covnm$ satisfying $||A|-|B||\leq1$.
\end{proof}

\begin{lemma}\label{lemma:universalvtxlittleoh1}
Let $\gamma\in(\frac{1}{2},1)$ and $m\sim\gamma\binom{n}{2}$. Let $0\leq k\leq\frac{1}{2}\sqrt{\gamma-1/2}\cdot n$ be an integer and let $\{A,B\}$ be a bipartition of $V$ with imbalance $k$. Let $G$ be a uniformly random graph on $n$ vertices and $m$ edges conditioned on the event that $G[A]$ and $G[B]$ are cliques. The probability that $G$ has a universal vertex, i.e. one adjacent to all other vertices, is at most $e^{-Cn}$ for a constant $C=C(\gamma)>0$ and large enough $n$.
\end{lemma}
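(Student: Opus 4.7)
My plan is to reduce to a hypergeometric model on the edges between $A$ and $B$, bound the edge density $p$ between the two parts away from $1$, and then union bound the ``universal vertex'' events using \Cref{lemma:BinomialCoeffIneqs}(i). Assume WLOG that $a := |A| \leq |B| =: b$, so $a = (n-k)/2$ and $b = (n+k)/2$. The hypothesis $k \leq \frac{1}{2}\sqrt{\gamma - 1/2}\cdot n < n/2$ guarantees $a \geq n/4$ for large $n$. Let $N := ab$ and $m' := m - \binom{a}{2} - \binom{b}{2}$. Conditioned on $G[A]$ and $G[B]$ being cliques, the bipartite graph $G[A,B]$ is uniformly distributed over bipartite graphs on $A\cup B$ with exactly $m'$ edges, so $p := m'/N$ is the relevant edge density.

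The first step is to show $1 - p$ is bounded below by a positive constant depending only on $\gamma$. Since
\[
1 - p \;=\; \frac{N - m'}{N} \;=\; \frac{\binom{n}{2} - m}{ab}
\]
and $m \sim \gamma\binom{n}{2}$ while $ab = (n^2 - k^2)/4 \leq n^2/4$, we obtain $1 - p \geq 2(1-\gamma) + o(1)$ uniformly in $k$. Fix $\eta := (1-\gamma)$; then $1-p \geq \eta$ for $n$ sufficiently large, independent of $k$ in the allowed range.

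Next, for any fixed vertex $v \in A$, the event that $v$ is universal is exactly the event that all $b$ potential edges from $v$ into $B$ are present. In the hypergeometric model this has probability
\[
\frac{\binom{N-b}{m'-b}}{\binom{N}{m'}} \;\leq\; e^{-(1-p)b},
\]
by \Cref{lemma:BinomialCoeffIneqs}(i) applied with $n \gets N$, $m \gets m'$, $j \gets b$; an analogous bound with $a$ and $b$ swapped holds for $v \in B$. A union bound therefore yields
\[
\bbP\{G \text{ has a universal vertex}\} \;\leq\; a\, e^{-(1-p)b} + b\, e^{-(1-p)a} \;\leq\; n\, e^{-(1-p)a}.
\]
Plugging in $(1-p) \geq \eta$ and $a \geq n/4$ gives an upper bound of $n\, e^{-\eta n/4}$, which is at most $e^{-Cn}$ for any $C < \eta/4$ once $n$ is large.

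There is no real obstacle here; the only delicate point is confirming that $1-p$ admits a lower bound depending only on $\gamma$ even when the imbalance $k$ grows linearly with $n$, and this follows from the crude estimate $ab \leq n^2/4$. Everything else is a direct application of \Cref{lemma:BinomialCoeffIneqs}(i) and a union bound.
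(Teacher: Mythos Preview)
Your proof is correct and follows essentially the same approach as the paper: both reduce to the hypergeometric model on the cross-edges, apply \Cref{lemma:BinomialCoeffIneqs}(i) to bound the probability a fixed vertex is universal, and finish with a union bound. The only cosmetic differences are that the paper bounds just the worst case (a vertex in the larger part) after noting it dominates, and explicitly verifies the hypothesis $m' \geq a$ of \Cref{lemma:BinomialCoeffIneqs}(i); you might add a line noting $m' = \Theta(n^2) \gg b$ so that the lemma indeed applies with $j \gets b$.
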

\begin{proof}
Since $\{A,B\}$ has imbalance $k$, we may assume $|A|=(n-k)/2$ and $|B|=(n+k)/2$. For vertices $v\in A$ and $w\in B$, the inequality $|A|\leq|B|$ implies $\bbP_{G}\{v\text{ is universal}\}$ is at most $\bbP_{G}\{w\text{ is universal}\}$. The upper bound on $k$ ensures that
$$m-\binom{(n-k)/2}{2}-\binom{(n+k)/2}{2}-\frac{n-k}{2}=m-\frac{n^2+k^2-2k}{4}\geq0\,,$$
provided $n$ is large enough. Using \Cref{lemma:BinomialCoeffIneqs} and $m\sim\gamma\binom{n}{2}$, the probability that $w$ is universal in $G$ is at most
\begin{align*}
&\binom{(n-k)(n+k)/4-(n-k)/2}{m-\binom{(n-k)/2}{2}-\binom{(n+k)/2}{2}-(n-k)/2}\binom{(n-k)(n+k)/4}{m-\binom{(n-k)/2}{2}-\binom{(n+k)/2}{2}}^{-1} \\
&\hspace{4cm}\leq \exp\left(-\left(1-\frac{m-\binom{(n-k)/2}{2}-\binom{(n+k)/2}{2}}{(n-k)(n+k)/4}\right)\frac{n-k}{2}\right) \\
&\hspace{4cm}= \exp\left(-\left(1-\frac{4m-n^2-k^2+2n}{n^2-k^2}\right)\frac{n^2-k^2}{2(n+k)}\right) \\
&\hspace{4cm}\leq \exp\left(-\frac{n^2-2m-n}{2n}\right) \sim e^{-\frac{1}{2}(1-\gamma)n+\frac{1}{2}}\,.
\end{align*}
It follows that the probability $G$ contains a universal vertex is at most $n\bbP\{w\text{ is universal}\}\leq e^{-Cn}$ for a constant $C=C(\gamma)>0$.
\end{proof}

\begin{corollary}\label{cor:cbuniquepartition}
Let $\gamma\in(\frac{1}{2},1)$ and $m\sim\gamma\binom{n}{2}$. Almost all $G\in\cB_c(n,m)$ admit a unique covering by two cliques.
\end{corollary}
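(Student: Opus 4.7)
The plan is to reduce the problem to bounding the number of pairs $(G, \{A, B\}) \in \Covnm$ for which $\overline{G}$ is disconnected. The key observation is that a 2-clique-cover of $G$ corresponds bijectively to an unordered 2-coloring of the bipartite graph $\overline{G}$, and the number of such 2-colorings equals $2^{c(G)-1}$, where $c(G)$ denotes the number of connected components of $\overline{G}$; thus uniqueness of the cover is equivalent to $\overline{G}$ being connected. Writing $\cB^\ast := \{G \in \cB_c(n,m) : c(G) = 1\}$ and $\cB^{\ast\ast} := \cB_c(n,m) \setminus \cB^\ast$, and letting $\Covnm^\ast \subseteq \Covnm$ denote the pairs with $\overline{G}$ disconnected, we have $|\Covnm^\ast| = \sum_{G \in \cB^{\ast\ast}} 2^{c(G)-1} \geq 2|\cB^{\ast\ast}|$ and $|\Covnm| = |\cB^\ast| + |\Covnm^\ast|$. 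Hence establishing $|\Covnm^\ast| \leq P_n \cdot |\Covnm|$ with $P_n = o(1)$ will yield $|\cB^{\ast\ast}| \leq P_n |\cB^\ast|/(2(1-P_n)) = o(|\cB_c(n,m)|)$, which is the desired conclusion.

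To control $|\Covnm^\ast|$, we fix a bipartition $\{A, B\}$ and condition on $G[A], G[B]$ being cliques and $e(G) = m$; then $H := G[A, B]$ is uniformly random among bipartite graphs on $A \cup B$ with $m' := m - \binom{|A|}{2} - \binom{|B|}{2}$ edges, and $\overline{G}$ is disconnected iff the bipartite complement $\overline{H}$ is disconnected. We union-bound over separating sets: for each $S \subsetneq A \cup B$ with $s_1 := |S \cap A|$, $s_2 := |S \cap B|$, and $1 \leq s := s_1 + s_2 \leq n/2$, the probability that all $k := s_1(|B|-s_2) + s_2(|A|-s_1)$ potential cross-edges lie in $H$ equals $\prod_{i=0}^{k-1}(m'-i)/(|A||B|-i) \leq (m'/|A||B|)^k \leq (2\gamma - 1 + o(1))^k$. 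For almost-equitable $\{A, B\}$ (to which we may restrict by \Cref{lemma:almostallcovnmbalanced} at the cost of an $o(|\Covnm|)$ error), elementary manipulation using $|A|, |B| \sim n/2$ and the bound $s_1 s_2 \leq s^2/4$ gives $k = s_1|B| + s_2|A| - 2 s_1 s_2 \geq sn/2 - s^2/2$, so that
\[
\Pr[\overline{H}\text{ disconnected}] \;\leq\; \sum_{s=1}^{\lfloor n/2 \rfloor} \binom{n}{s}(2\gamma-1)^{sn/2 - s^2/2} \;=\; O\bigl(n(2\gamma-1)^{n/2}\bigr) \;=\; o(1),
\]
the series being dominated by $s = 1$ since the ratio of consecutive terms is $O(n(2\gamma-1)^{n/2 - s - 1/2})$, which tends to zero. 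Summing over bipartitions and invoking \Cref{lemma:almostallcovnmbalanced} for the non-equitable contribution then gives $|\Covnm^\ast| = o(|\Covnm|)$.

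The main obstacle is that \Cref{lemma:universalvtxlittleoh1} only directly handles the case $s = 1$ (a universal vertex in $G$ is an isolated vertex in $\overline{G}$); the argument above extends it to arbitrary $s$ by verifying that the consecutive ratios in the sum over $s$ decay super-polynomially, so the universal-vertex case studied in \Cref{lemma:universalvtxlittleoh1} drives the total. The remaining steps---absorbing non-equitable bipartitions via \Cref{lemma:almostallcovnmbalanced} and the final algebraic deduction $|\cB^{\ast\ast}|/|\cB_c(n,m)| \to 0$---are then routine.
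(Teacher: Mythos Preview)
Your proof is correct and actually patches a gap in the paper's argument. The paper asserts ``a co-bipartite graph admits a unique covering by two cliques if and only if it has no universal vertex'' and then invokes \Cref{lemma:universalvtxlittleoh1}; but that equivalence is false in general (take $C_4=K_4$ minus a perfect matching: two distinct $2$-clique partitions, no universal vertex). The correct criterion is the one you use --- uniqueness holds iff $\overline G$ is connected --- and a universal vertex in $G$ is merely the special case of an isolated vertex in $\overline G$, i.e.\ your $s=1$ term. Your union bound over all separating sets therefore genuinely extends \Cref{lemma:universalvtxlittleoh1} to $s\ge 2$, and your observation that the $s=1$ contribution dominates explains why the paper's shortcut still lands on the right conclusion. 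Two minor points: your bound $k\ge sn/2-s^2/2$ silently drops an $O(s\sqrt{\log n})$ correction coming from the almost-equitable hypothesis, which is harmless since it is absorbed into the leading $sn/2$; and the consecutive ratio $\tfrac{n-s}{s+1}(2\gamma-1)^{n/2-s-1/2}$ is not $o(1)$ uniformly in $s$ near $s=n/2$, but the terms in that range are individually at most $2^n(2\gamma-1)^{\Theta(n^2)}$ and hence negligible, so your estimate $\sum_s\binom{n}{s}(2\gamma-1)^{s(n-s)/2}=O\bigl(n(2\gamma-1)^{n/2}\bigr)$ stands.
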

\begin{proof}
Let $W\subseteq\Covnm$ denote the set of 2-clique-covered graphs $(G,\{A,B\})$ on $n$ vertices and $m$ edges such that $\{A,B\}$ is almost equitable and $G$ has a unique 2-clique-cover. Let
\begin{align*}
X &:= \{(G,\{A,B\})\in\Cov_{n,m}\setminus W:||A|-|B||\leq\sqrt{\log n}\} \\
Y &:= \{(G,\{A,B\})\in\Cov_{n,m}\setminus W:||A|-|B||>\sqrt{\log n}\} \,.
\end{align*}
We clearly have $|W|\leq|\cB_c(n,m)|\leq|\Cov_{n,m}|$ and $\Cov_{n,m}=W\cup X\cup Y$. \Cref{lemma:almostallcovnmbalanced} proves $|W\cup X|\gg|Y|$, and since a co-bipartite graph admits a unique covering by two cliques if and only if it has no universal vertex, \Cref{lemma:universalvtxlittleoh1} proves $|W|\gg|X|$. It follows that $\cB_c(n,m)\sim|W|$, completing the proof.
\end{proof}

\begin{proof}[Proof of \Cref{prop:asymp-bcnm}]
We first calculate the asymptotics of ratios of binomial coefficients. For all $n\in\N$ define the set
$$J_n:=\begin{cases}\{2j:j\in\{0,1,\dots\},\,2j\leq\sqrt{\log n}\}&n\text{ is even}\\\{2j+1:j\in\{0,1,\dots\},\,2j+1\leq\sqrt{\log n}\}&n\text{ is odd}\end{cases}\,,$$
so that $J_n$ is the set of possible imbalances of an almost-equitable bipartition of $n$ vertices. For all $k\in J_n$ we compute that
\begin{equation}
\binom{n}{(n+k)/2}\binom{n}{\floor{n/2}}^{-1} = \binom{n}{(n-k)/2}\binom{n}{\floor{n/2}}^{-1} \sim 1\,,\label{eqn:bcnmasynmpratio1}
\end{equation}
where the asymptotic relation follows from \Cref{lemma:ConvHypergeomToBinomial} and $k=O(\log n)$. We now continue the calculation from \Cref{eqn:binomratioCovNMBound}, this time assuming $k\in J_n$ and using \Cref{lemma:ConvHypergeomToBinomial} to obtain asymptotics:
\begin{align}
&\binom{(n-k)(n+k)/4}{m-\binom{(n-k)/2}{2}-\binom{(n+k)/2}{2}}{\binom{\floor{n/2}\cdot\ceil{n/2}}{m-\binom{\floor{n/2}}{2}-\binom{\ceil{n/2}}{2}}}^{-1} \nonumber\\
&\hspace{3cm}= \binom{\floor{n^2/4}-\floor{k^2/4}}{m-\floor{n^2/4}-\floor{k^2/4}+\floor{n/2}}{\binom{\floor{n^2/4}}{m-\floor{n^2/4}+\floor{n/2}}}^{-1} \nonumber\\
&\hspace{3cm}\sim \left(\frac{m-\floor{n^2/4}+\floor{n/2}}{\floor{n^2/4}}\right)^{\floor{k^2/4}} \sim \left(2\gamma-1\right)^{(k^2-r)/4}\,\label{eqn:bcnmasynmpratio2}
\end{align}
where $r=n\bmod{2}$. \Cref{lemma:almostallcovnmbalanced,cor:cbuniquepartition}, together with (\ref{eqn:bcnmasynmpratio1}) and (\ref{eqn:bcnmasynmpratio2}), imply that if $n$ is even, we have
\begin{align}
|\cB_c(n,m)| &\sim \frac{1}{2}\binom{n}{n/2}\binom{n^2/4}{m-2\binom{n/2}{2}}+\sum_{k\in J_n\setminus\{0\}}\binom{n}{(n+k)/2}\binom{(n-k)(n+k)/4}{m-\binom{(n-k)/2}{2}-\binom{(n+k)/2}{2}} \nonumber\\
&\sim \binom{n}{n/2}\binom{n^2/4}{m-2\binom{n/2}{2}}\Bigg(\frac{1}{2}+\sum_{k\in J_n\setminus\{0\}}\left(2\gamma-1\right)^{k^2/4}\Bigg) \nonumber\\
&\sim \binom{n}{n/2}\binom{n^2/4}{m-2\binom{n/2}{2}}\left(\frac{1}{2}+\sum_{\ell=1}^{\infty}\left(2\gamma-1\right)^{\ell^2}\right) \,,\label{eqn:bcnmasympevencase}
\end{align}
where we substituted $\ell=k/2$. Similarly, if $n$ is odd then
\begin{align}
|\cB_c(n,m)| &\sim \sum_{k\in J_n}\binom{n}{(n+k)/2}\binom{(n-k)(n+k)/4}{m-\binom{(n-k)/2}{2}-\binom{(n+k)/2}{2}} \nonumber\\
&\sim \binom{n}{\floor{n/2}}\binom{\floor{n/2}\cdot\ceil{n/2}}{m-\binom{\floor{n/2}}{2}-\binom{\ceil{n/2}}{2}}\sum_{k\in J_n}\left(2\gamma-1\right)^{(k^2-1)/4} \nonumber\\
&\sim \binom{n}{\floor{n/2}}\binom{\floor{n^2/4}}{m-\binom{\floor{n/2}}{2}-\binom{\ceil{n/2}}{2}}\sum_{\ell=0}^{\infty}\left(2\gamma-1\right)^{\ell^2+\ell} \,,\label{eqn:bcnmasympoddcase}
\end{align}
where we substituted $\ell=(k-1)/2$. Combining \pref{eqn:bcnmasympevencase} and \pref{eqn:bcnmasympoddcase} completes the proof.
\end{proof}

\end{document}